\documentclass[12pt]{article}
\usepackage{latexsym}
\usepackage{amsthm}
\usepackage{amssymb}
\usepackage{amsmath}
\usepackage{rotating}
\usepackage{multirow}
\usepackage{mathrsfs}
\usepackage{wasysym}
\usepackage{enumerate}
\usepackage{esint}
\usepackage{hyperref}
\usepackage{rawfonts}
\input{prepictex}
\input{pictex}
\input{postpictex}
\DeclareMathOperator{\End}{End}
\def\dir{\not \negthickspace D}
\DeclareMathOperator{\Diff}{\zap{Diff}}
\DeclareGraphicsExtensions{.pdf, .jpg}
\usepackage[OT2,OT1]{fontenc}
\def\cyr{%
\renewcommand\rmdefault{wncyr}%
\renewcommand\sfdefault{wncyss}%
\renewcommand\encodingdefault{OT2}%
\normalfont
\selectfont}
\DeclareMathAlphabet{\zap}{OT1}{pzc}{m}{it}
\DeclareTextFontCommand{\textcyr}{\cyr}
\def\CC{\mathbb C}
\def\HH{\mathbb H}

\DeclareMathOperator{\Hom}{Hom}

\newtheorem{thm}{Theorem}[section]
\newtheorem{prop}[thm]{Proposition}
\newtheorem{defn}[thm]{Definition}
\newtheorem{cor}[thm]{Corollary}

\newenvironment{xpl}{\mbox{ }\\ {\bf  Example.}\mbox{ }}{
\hfill $\diamondsuit$\mbox{}\bigskip}
\newenvironment{rmk}{\mbox{ }\\{\bf  Remark.}\mbox{ }}{
\hfill $\diamondsuit$\mbox{}\bigskip}
\def\BW{\mathcal{BW}}
\def\ZZ{{\mathbb Z}}
\def\NN{\mathbb{N}}
\def\RR{{\mathbb R}}
\def\CP{\mathbb{CP}}
\def\cpbar{\overline{\CP}_2}

\DeclareMathOperator{\Ind}{Ind}

\DeclareMathOperator{\vol}{Vol}

\DeclareMathOperator{\kod}{Kod}
\DeclareMathOperator{\hull}{Hull}
\DeclareMathOperator{\Tor}{Torsion}
\begin{document}

\title{An Overview of  Einstein $4$-Manifolds}

\author{Claude LeBrun}

\date{} 

\maketitle

\section{Einstein Metrics}

A Riemannian metric $g$ on a smooth $n$-manifold $M$ is said to be {\em Einstein} if it has constant Ricci curvature. Since the 
{\em Ricci curvature}  is by definition   the function 
\begin{eqnarray*}
UTM &\longrightarrow& \RR\\
v&\longmapsto& r(v,v)  
\end{eqnarray*}
on the unit tangent bundle   $UTM:= \{ v\in TM~|~ g(v,v) =1\}$, where  $r$  denotes   the Ricci tensor  of $g$, 
 this is equivalent to saying that $g$ solves the  {\em Einstein equation}  \cite{bes}
\begin{equation}
\label{eineq}
r= \lambda g
\end{equation}
for  a  real constant $\lambda$  that  is then called  the {\em Einstein constant} of $g$.
Of course, this equation first arose 
in the  context of general relativity,
where  \eqref{eineq} is usually called  the ``Einstein vacuum equation
   with cosmological constant,'' although in that context  the metric
    $g$ would be taken to be     indefinite\footnote{While we Riemannian geometers habitually  denote  a Riemannian  metric by $g$, very few of us 
   seem to be aware of the   historical origin of this notation, which   arose from   Einstein's conceptualization  of the space-time metric $g$ as being  the {\em gravitational potential!}}     
   rather than Riemannian.  In fact, Einstein did  briefly consider the possibility that the constant $\lambda$ in \eqref{eineq} should  
   be taken to be non-zero, although he  later came to reject  this proposal, and was 
   eventually   quoted \cite[p. 44]{gamow}  as saying that he considered   the idea to be  ``the biggest  blunder
   of his life.''  So,  even as  we  honor Einstein by calling  \eqref{eineq} the ``Einstein equation'' 
   and  by calling $\lambda$ the ``Einstein constant,'' we might  do well to  also remember that 
    these are honors that 
 Einstein himself might have been  far from eager to accept!

   In this article, the term {\em Einstein manifold} will always  mean  a pair $(M,g)$  consisting of a  smooth compact connected $n$-manifold $M$ (always without boundary)   and 
    an  Einstein metric $g$ on $M$. Before long, we will actually  narrow our scope and focus entirely on  the $n=4$ case. But before doing so,    we will first   explain  why dimension four 
    is special for the problem, both in the sense that it represents a transition between low-dimensional ($n\leq 3$) and high dimensional ($n\geq 5$) behaviors, and in the sense
    that it is the setting for a host of phenomena that  have  no  analogs at all in other dimensions.

   Of course,  one of the most celebrated   problems in present-day Riemannian geometry is to determine precisely which smooth closed $n$-manifolds admit Einstein metrics. 
   But this {\em existence problem} is naturally  accompanied by a companion {\em uniqueness problem}  that only becomes intelligible 
    after some careful discussion.   The subtlety   is that  whenever $M$ admits an Einstein metric $g$, then it always admits an infinite-dimensional family  of others. This is  an unavoidable 
    consequence of the fact that the  Einstein condition    \eqref{eineq} is diffeomorphism-invariant; thus,  if 
     $g$ is an Einstein metric, and if $\Psi : M \to M$ is any self-diffeomorphism of $M$, 
   then $\Psi^*g$ is also an Einstein metric. At the same time, however,   $(M,g)$ and $(M, \Psi^*g)$ are actually {\em isometric},
   and should therefore be understood as being  geometrically indistinguishable.

   Yet another  easy  way of generating new Einstein metrics from a given one is 
   to {\em rescale}. This works because multiplying a metric $g$ by a positive constant $c$ does not change the Levi-Civita connection $\nabla$, 
   which after all is characterized by 
   $$\nabla g = 0 \quad \text{and} \quad \Tor (\nabla )=0, $$
  so that constant rescaling   therefore  changes neither  the Riemann curvature tensor $\mathcal{R}$,  as characterized in our conventions by 
   $${\mathcal{R}^a}_{bcd} v^b = 2\nabla_{[c}\nabla_{d]} v^a ,$$
   nor the Ricci tensor, as defined by 
   $$r_{ab} = {\mathcal{R}^c}_{acb}.$$
   As a consequence,  if $g$ satisfies $r= \lambda g$, then $\widehat{g} := c g$ satisfies $\widehat{r} = \widehat{\lambda} \widehat{g}$, where $\widehat{r}=r$ and $\widehat{\lambda} = c^{-1}\lambda$. 
   Of course, for $c\neq 1$, the resulting  Einstein manifolds $(M,cg)$ are no longer isometric to the  Einstein manifold $(M,g)$ we started with;  instead, they are said to be  said to be {\em homothetic} to it. 
   On the other hand, the rescaling operation $g \rightsquigarrow cg$ affects the total volume $V = \int_M d\mu_g$ of $(M,g)$ by $V\rightsquigarrow c^{n/2}V$, and some authors \cite{bes}  therefore 
    choose to avoid this
   issue by  only considering Einstein manifolds of total volume $1$. 
   
 In light of this discussion, we   now define the  {\em moduli space} of Einstein metrics on any  given smooth compact $n$-manifold $M$ to be 
 $$\mathscr{E}(M) := \{ \text{Einstein metrics } g \text{ on } M\} / (\Diff (M) \times \RR^+ ),$$
   where the diffeomorphism group 
   $$\Diff (M) := \{ \text{diffeomorphisms } \Psi : M \to M \}$$
   acts on metrics by pull-backs, while  the multiplicative group $\RR^+$ of positive real numbers acts on metrics by rescaling. 
While  this in particular endows 
 $\mathscr{E}(M)$ with   the quotient topology 
induced by  the $C^{\infty}$-topology on the space of Einstein metric tensors,  it is 
it is worth emphasizing  that, by an argument \cite{andmod,bes} involving elliptic regularity \cite{det-kaz},  this coincides  with the metric topology
 induced by the Gromov-Hausdorff distance \cite{gromhaus} 
between unit-volume Einstein metrics. The simplest version of the   above-mentioned 
uniqueness problem  is simply  to  determine whether or not $\mathscr{E}(M)$ consists of a single point; but failing this, the challenge 
becomes to   understand the    topological properties of $\mathscr{E}(M)$, starting with     whether or not  it is connected. 

Finally, notice  that we can naturally decompose the Ricci tensor $r$  of any Riemannian manifold $(M^n, g)$ as 
$$r =\mathring{r} + \frac{s}{n}g$$
where 
$$s= r^a_a= {\mathcal{R}^{ab}}_{ab}$$
denotes  the {\em scalar curvature}, and where 
$\mathring{r} = r - \frac{s}{n}g$
is  called the {\em trace-free Ricci tensor}. In these terms, the contracted Bianchi identity 
$$\nabla\cdot r = \frac{1}{2}d s$$
implies  that 
$$\nabla \cdot \mathring{r} = (\frac{1}{2}-\frac{1}{n} ) ds.$$
Thus,  in any dimension $n \neq 2$, the Einstein condition \eqref{eineq} is equivalent to 
\begin{equation}
\label{better}
\mathring{r}=0,
\end{equation}
since \eqref{better}  implies that $s$ is constant, thereby allowing us to 
 set   $\lambda = \frac{s}{n}$. 

   \section{Why  Focus on Dimension Four?} \label{optic} 
   
 We will now recount   some  of the key ways   in which the Einstein manifolds 
  seem to behave differently in dimension four than   in other dimensions.

In dimensions $2$ and $3$, the Ricci tensor algebraically determines
the entire Riemann curvature tensor. As  a consequence, 
 a  Riemannian  manifold $(M,g)$ of dimension 2 or 3  is  Einstein if and only if it has constant 
sectional curvature.  One 
  striking consequence  is that,  when  non-empty, 
    the moduli space of Einstein metrics on any closed $2$- or   $3$-manifold is always connected;
 indeed,  in the $3$-dimensional case, it  typically   consists  of a single point \cite{mostow}. 
But while   every $2$-manifold does actually  admit constant-curvature metrics, 
the situation in dimension $3$ is considerably more subtle. Indeed, since the universal cover
of a constant-curvature $3$-manifold must  be diffeomorphic to either 
$S^3$ or $\RR^3$,  it follows that any $3$-manifold with $\pi_2\neq 0$ cannot admit Einstein metrics. 
However, Perelman \cite{lott} was able to show that 
Hamilton's Ricci flow naturally breaks an arbitrary compact $3$-manifold up 
into Einstein and collapsed pieces, in a way that leads to a proof \cite{bbbpm,lott} of Thurston's geometrization conjectures \cite{thurston-orb,thur3}. 
For example, when an oriented  $3$-manifold has $\pi_2\neq 0$, 
this occurs because the manifold  contains  $S^2 \times \RR$  ``necks''    that the  Ricci flow then  
pinches off in order to  express the manifold as a  {\em connected sum} of  {\sf prime} $3$-manifolds. 
These geometrization results,
along  with recent proofs of several  other remarkable conjectures made by  Thurston, provide us with  a
vivid    picture  of  the  closed $3$-manifold  that admit constant-curvature  metrics. 
For example, a closed $3$-manifold admits an Einstein metric iff  \cite{agol}  it is finitely covered by $S^3$, $T^3=S^1 \times S^1 \times S^1$, 
or a closed-surface bundle over $S^1$ for which the monodromy map of the  higher-genus fiber  is pseudo-Anosov.

By contrast, Einstein metrics in  high dimensions have an entirely different feel to them. Here, even on the most 
familiar manifolds $M^n$, $n\geq 5$, the Einstein moduli space $\mathscr{E}(M)$ tends to be highly disconnected. 
For example, it was first proved by  B\"ohm \cite{bohm}  that $\mathscr{E}(S^5)$ has {\em infinitely many} connected components; 
and a very different  second proof of this fact by Collins and Szekelyhidi \cite{collszek} was recently extended  by Liu, 
Sano, and Tasin \cite{liusanotasin} to prove the analogous assertion  on all the odd-dimensional spheres $S^{2m+1}$, $m \geq 2$. 
 Related constructions  of Boyer, Galicki, and Koll\'ar  \cite{bg,bgk} moreover yield  Einstein metrics on ``typical'' simply-connected spin $5$-manifolds,
and in many of these cases the Einstein moduli space can also be shown to have infinitely many connected components. 
On the other hand, constructions   of Catanese and LeBrun \cite{cat} and of   R\u{a}sdeaconu and \c{S}uvaina \cite{raresioana}
produce various    high-dimensional smooth  manifolds that  admit both both $\lambda > 0$ and $\lambda < 0$ Einstein metrics;
thus,  differential topology cannot even reliably  predict the sign of the Einstein constant in this high-dimensional  realm.

Dimension four, by contrast, combines many of the best features of both  realms. In dimension $n=4$, 
the Einstein equation is  in no sense   locally trivial: as in higher dimensions, most solutions of \eqref{eineq} no longer
have constant sectional curvature. Nonetheless, in spite of this local flexibility, there are various settings
where Einstein $4$-manifolds display a global rigidity that is reminiscent of the situation we have described  in lower dimensions. 
The first result of this type was discovered by Berger  \cite{bergb}:

\begin{thm}[Berger] \label{toro} 
Let  $T^4= S^1 \times S^1 \times S^1\times S^1$ denote  the $4$-torus, just considered   as  a smooth compact $4$-manifold. 
Then every  Einstein metric 
$g$ on $T^4$ is flat. Consequently  every such $(T^4,g)$ is  isometric to   Euclidean $4$-space
$\RR^4$ divided by a lattice $\Lambda \cong \ZZ^{\oplus 4}$ of translations, and   the 
moduli space $\mathscr{E}(T^4)$ of Einstein metrics on $T^4$ is therefore   connected.
\end{thm}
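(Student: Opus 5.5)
The plan is to combine the Gauss--Bonnet--Chern formula for the Euler characteristic with the topology of $T^4$. For a compact oriented Riemannian $4$-manifold, the Chern--Gauss--Bonnet theorem, written in terms of the decomposition of the curvature tensor into the Weyl tensor $W$, the trace-free Ricci tensor $\mathring{r}$ and the scalar curvature $s$, reads
$$\chi(M) = \frac{1}{8\pi^2}\int_M \left( |W|^2 - \frac{|\mathring{r}|^2}{2} + \frac{s^2}{24}\right) d\mu_g .$$
I will first recall (or rederive from the Pfaffian of the curvature form) this formula with its normalization. For an Einstein metric, \eqref{better} kills the middle term, so the integrand becomes $|W|^2 + s^2/24 \geq 0$ pointwise.

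The second step uses $\chi(T^4)=0$, which follows from the Künneth formula or from the existence of a nowhere-vanishing vector field. The integral of a pointwise non-negative quantity then vanishes, so $W\equiv 0$ and $s\equiv 0$ everywhere. Since $\mathring{r}=0$ as well, all three irreducible pieces of $\mathcal{R}$ vanish, and hence $\mathcal{R}\equiv 0$: the metric $g$ is flat. This is the heart of Berger's argument. I expect the main obstacle to be stating the Gauss--Bonnet integrand with the correct constants and signs. Only the relative signs matter: the $|\mathring{r}|^2$ term carries the minus sign, and the other two terms are positive.

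Next comes the identification with $\RR^4/\Lambda$. A compact flat manifold is complete, so by the Killing--Hopf theorem $(T^4,g)$ is isometric to $\RR^4/\Gamma$, where $\Gamma$ is a discrete, torsion-free, cocompact group of Euclidean isometries with $\Gamma\cong\pi_1(T^4)=\ZZ^{\oplus 4}$. By Bieberbach's first theorem, the translations in $\Gamma$ form a normal lattice subgroup $\Lambda'$ of finite index, and $\Gamma/\Lambda'$ injects into $O(4)$ as the holonomy group. We need this holonomy to be trivial. Since $\Gamma$ is abelian, every $\gamma=(A,a)\in\Gamma$ commutes with every translation $t_v$, $v\in\Lambda'$. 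This forces $Av=v$ for all $v$ in the lattice $\Lambda'$, which spans $\RR^4$, so $A=I$. Hence $\Gamma=\Lambda$ is a lattice of translations.

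Finally, connectedness of $\mathscr{E}(T^4)$. Flat metrics on $T^4$ correspond to lattices $\Lambda\subset\RR^4$, up to the identification induced by diffeomorphisms. Any two lattices are related by some $L\in GL^+(4,\RR)$ (after composing with a reflection if necessary), and $GL^+(4,\RR)$ is connected. A path $L_t$ from $I$ to $L$ gives a continuous path of flat metrics $L_t^*g_{\mathrm{Euc}}$ on the fixed torus $\RR^4/\Lambda$. Therefore the space of Einstein metrics on $T^4$ is path-connected, and so is its quotient $\mathscr{E}(T^4)$, which is in fact $O(4)\backslash GL(4,\RR)/GL(4,\ZZ)$ modulo scaling.
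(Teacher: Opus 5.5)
Your proof is correct, but it takes a different route from the paper.

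\textbf{The paper's route.} The paper obtains Theorem \ref{toro} as an immediate corollary of the Hitchin--Thorpe Theorem \ref{ht}. Since $\chi(T^4)=\tau(T^4)=0$, one is in the equality case of \eqref{htineq}. That case is characterized using the Cheeger--Gromoll splitting theorem, Bieberbach's theorem, the holonomy reduction to $\mathbf{SU}(2)$, and the Kodaira classification. The conclusion is that a finite cover of $(T^4,g)$ is either a flat torus or a Calabi--Yau $K3$, and the $K3$ alternative is excluded because $\pi_1(T^4)$ is infinite.

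\textbf{Your route.} You use $\chi(T^4)=0$ directly in the Gauss--Bonnet formula \eqref{cgb}, which is equivalent to saying that both \eqref{cible} and \eqref{but} vanish. This kills $s$, $W_+$ and $W_-$ at once and gives flatness pointwise, with no global structure theory at all. This is essentially Berger's original argument. It also proves the more general fact that every compact Einstein $4$-manifold has $\chi\geq 0$, with equality only for flat metrics.

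\textbf{What the paper's route buys.} The gain is uniformity. Your Gauss--Bonnet argument cannot reach Theorem \ref{hitme}, since $\chi(K3)=24$. The signature-weighted quantity $2\chi+3\tau$ vanishes for both $T^4$ and $K3$, so it handles both cases together. The cost is that it only controls $s$ and $W_+$, which is why the heavier rigidity analysis is needed.

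\textbf{Details you add.} You also supply steps the paper leaves implicit. First, you show the holonomy is trivial because $\Gamma$ is abelian: commuting with the translation lattice forces $A=I$. Second, you prove $\mathscr{E}(T^4)$ is connected. For that step there is a slightly quicker argument: every flat metric is the pull-back under a diffeomorphism of a constant-coefficient metric on $\RR^4/\ZZ^4$, and such metrics form a convex set.
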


 The reader should incidentally avoid confusing   Theorem \ref{toro} with an  older  result of  Bochner \cite{bochner} which holds in all dimensions, but
only classifies    Einstein metrics on tori with $\lambda \geq 0$. By contrast, Berger's result  also excludes the existence 
of  Einstein metrics  on the $4$-torus 
with $\lambda < 0$. To date, no such result has ever been  obtained in any dimension $n\geq 5$.  

The next such result is due to Hitchin \cite{hit}, and instead concerns a specific  remarkable  smooth compact {\em simply-connected} $4$-manifold. 

\begin{thm}[Hitchin] \label{hitme} 
Let $K3$ denote  the smooth oriented simply-connected  four-manifold  underlying  the  compact  complex surface 
$$\{ [t,u,v,w]\in \CP_3~|~ t^4 + u^4 + v^4 + w^4 =0\}.$$
Then any Einstein metric $g$ on $K3$ is Ricci-flat  and hyper-K\"ahler, in the sense that it is K\"ahler with respect to 
an $S^2$-family of orientation-compatible   integrable complex structures 
 on $K3$.  As a consequence, the moduli space $\mathscr{E} (K3)$ is therefore connected. 
\end{thm}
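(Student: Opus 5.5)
The plan is Hitchin's argument, which combines the Gauss--Bonnet--Chern and signature formulas with the equality case of the Hitchin--Thorpe inequality. The topological inputs about $K3$ are standard: it has Euler characteristic $\chi = 24$ and signature $\tau = -16$, so $2\chi + 3\tau = 0$. It is also simply connected and spin, with $b_+ = 3$ and $b_- = 19$. For an oriented Riemannian $4$-manifold we have
\begin{equation*}
2\chi + 3\tau = \frac{1}{4\pi^2}\int_M \left( 2|W_+|^2 + \frac{s^2}{24} - \frac{|\mathring{r}|^2}{2}\right) d\mu_g ,
\end{equation*}
and $2\chi - 3\tau$ is given by the same formula with $W_-$ in place of $W_+$. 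If $g$ is Einstein then $\mathring{r}=0$, so $2\chi+3\tau = 0$ forces
$$\int (2|W_+|^2 + s^2/24)\, d\mu = 0 .$$
Hence $W_+ \equiv 0$ and $s \equiv 0$. So $g$ is Ricci-flat, and its curvature lies entirely in $W_-$.

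Next I would convert this into a holonomy statement. With the splitting $\Lambda^2 = \Lambda^+ \oplus \Lambda^-$, the curvature of the induced connection on $\Lambda^+$ is the block
$$W_+ + \frac{s}{12} + \mathring{r}\text{-terms},$$
and all of these vanish. So $\Lambda^+$ is a flat bundle. Because $K3$ is simply connected, it is trivialized by three parallel self-dual $2$-forms $\omega_1,\omega_2,\omega_3$, which we may take orthonormal of pointwise length $\sqrt2$. Each $\omega_i$ determines an almost complex structure $J_i$ via $g$ that is parallel for the Levi-Civita connection. Parallel almost complex structures are integrable, and $g$ is Kähler with respect to each. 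The $J_i$ satisfy the quaternion relations, and for $(a,b,c)\in S^2$ the structure $aJ_1+bJ_2+cJ_3$ is again parallel, orthogonal and orientation-compatible. This gives the hyper-Kähler $S^2$-family, and Ricci-flatness is already in hand.

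For the moduli statement, every Einstein metric is then a hyper-Kähler metric, in particular Kähler Ricci-flat for some complex structure on $K3$. I would invoke the Yau solution of the Calabi conjecture together with the structure of the moduli of marked $K3$ surfaces: the global Torelli theorem, surjectivity of the period map (Todorov, Siu), and the fact that all complex structures on the $K3$ manifold are deformation equivalent (Kodaira). From these, the space of hyper-Kähler metrics modulo diffeomorphism and scale is the image of the connected Grassmannian-type period domain. Concretely, the positive $3$-planes in $H^2(K3,\RR)\cong \RR^{3,19}$ form a connected symmetric space $SO(3,19)/(SO(3)\times SO(19))$. One then quotients by the arithmetic group and removes or accounts for the walls; a quotient of a connected space is connected. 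So $\mathscr{E}(K3)$ is connected.

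The first two steps are elementary once the Chern--Weil formulas are granted. The main obstacle is the final step, because connectedness of $\mathscr{E}(K3)$ rests on heavy inputs: Yau's theorem, Torelli and surjectivity of the period map, and the fact that every complex structure on the smooth $K3$ manifold arises from a $K3$ surface. Here I would simply cite these results rather than reprove them.
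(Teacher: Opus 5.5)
Your proposal is correct and is essentially the paper's argument. The paper gets this as an immediate corollary of the equality case of Theorem~\ref{ht}: there $\mathbf{p_1}(\Lambda^+)=2\chi+3\tau=0$, written out as \eqref{gb+}, forces $s\equiv 0$ and $W_+\equiv 0$, so $(\Lambda^+,\nabla)$ is flat and, since $K3$ is simply connected, the holonomy lies in $\mathbf{SU}(2)=\mathbf{Sp}(1)$; the connectedness of $\mathscr{E}(K3)$ is likewise just delegated to Yau, Kodaira, Siu and Kobayashi--Todorov.

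Two small points in your last step. First, $SO(3,19)/(SO(3)\times SO(19))$ is really the Grassmannian of \emph{oriented} positive $3$-planes and has two components; the connected space you want is $O(3,19)/(O(3)\times O(19))$. Second, after deleting the walls you should note that the complement stays connected, because the walls form a locally finite family of codimension-$3$ submanifolds.
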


Rather interestingly,  Hitchin actually proved the  first  part  of Theorem \ref{hitme}   several years before the existence of such metrics 
on $K3$ was established by  Yau's proof   \cite{yauma}   of the Calabi conjecture.  Deducing the fact that  the moduli space
$\mathscr{E}(K3)$  is  therefore connected then delicately depends on  the work  of several  others, including Kodaira \cite{kodrr},
 Siu \cite{siu},  and Kobayashi-Todorov \cite{kobtod}.


The next result of this type, due to Besson, Courtois, and Gallot \cite{bcg}, concerns real hyperbolic $4$-manifolds. 

\begin{thm}[Besson-Courtois-Gallot]
Let $M^4$ be a smooth compact isometric quotient of 
hyperbolic 4-space ${\cal H}^4=SO_+(4,1)/SO(4)$,
 and let $g_0$ be its tautological  metric of constant sectional curvature. 
 Then the moduli space $\mathscr{E}(M)$ of Einstein metrics
 on $M$ consists of a single point, represented by $g_0$. 
That is, 
every Einstein metric $g$  on $M$   is of the form 
$g=\Psi^*(cg_0)$ for  some self-diffeomorphism  $\Psi$ of $M$ and some positive constant 
   $c$.  
  \label{rehyp}
\end{thm}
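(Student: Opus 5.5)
The plan is to follow the Besson--Courtois--Gallot strategy. It combines the Gauss--Bonnet--Chern formula with their volume-entropy rigidity theorem, and finishes with Mostow rigidity.

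First, for any metric $g$ on a compact oriented $4$-manifold, the Gauss--Bonnet--Chern formula gives
$$\chi(M)=\frac{1}{8\pi^2}\int_M\Big(|W|^2+\frac{s^2}{24}-\frac{|\mathring{r}|^2}{2}\Big)d\mu_g .$$
If $g$ is Einstein, the $\mathring{r}$ term vanishes, so $8\pi^2\chi(M)\geq \int s^2/24\, d\mu = \frac{s^2}{24}V$. Equality holds exactly when $W\equiv 0$. For the hyperbolic metric, $W=0$, so $8\pi^2\chi = \frac{s_0^2}{24}V_0$. Normalize $g_0$ to sectional curvature $-1$, so $s_0=-12$ and $\lambda_0=-3$. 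Then $V_0 = \frac{4\pi^2}{3}\chi(M)$, and the comparison reads
$$V(g)\,\lambda^2 \le V(g_0)\lambda_0^2,$$
with equality iff $g$ is conformally flat. Also $\chi(M)>0$, which rules out $\lambda\ge 0$: flat metrics would force $\chi=0$, and for $\lambda>0$ Myers' theorem would make $\pi_1$ finite, whereas $\pi_1(M)$ is infinite. So after rescaling I may assume $\lambda = -3=\lambda_0$. The inequality then says $V(g)\le V(g_0)$.

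Second, I need the reverse inequality, and this is where the real work lies. Bishop--Gromov volume comparison, applied on the universal cover with $r\geq -3g$, bounds the volume entropy: $h(g)\le 3 = h(g_0)$. The Besson--Courtois--Gallot theorem states that on a closed manifold carrying a hyperbolic metric $g_0$, every metric satisfies $h(g)^4V(g)\ge h(g_0)^4V(g_0)$, with equality iff $g$ is homothetic-isometric to $g_0$. Combining the two gives $3^4 V(g)\ge h(g)^4V(g)\ge 3^4V(g_0)$, hence $V(g)\ge V(g_0)$.

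I expect the entropy-volume inequality to be the main obstacle. To prove it, I would construct the barycenter map. For each $x$ in the universal cover, push the measure $e^{-c\,d_g(x,y)}d\mu_g(y)$ to the sphere at infinity of ${\cal H}^4$ via the boundary map of the $\pi_1$-equivariant quasi-isometry. Then take the barycenter using Busemann functions. This produces equivariant smooth maps $F_c$ homotopic to the identity whose Jacobians satisfy $|\mathrm{Jac}\,F_c|\le (c/3)^4$ as $c\downarrow h(g)$. The Jacobian estimate is a linear-algebra inequality for $\det K/\det H^{1/2}$, and this is the delicate step. Since $F_c$ has degree one, integrating gives $V(g_0)\le (h/3)^4V(g)$. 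For the equality case, the Jacobian estimate is sharp only when $dF$ is a homothety, which makes the limiting map an isometry.

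Finally, the two inequalities force $V(g)=V(g_0)$. Equality in the BCG inequality then makes $g$ isometric to $g_0$ via a map homotopic to the identity; alternatively, equality in the first step gives $W=0$, so $g$ is hyperbolic, and Mostow rigidity supplies an isometry. In either case $g=\Psi^*(cg_0)$, so $\mathscr{E}(M)$ is a single point.
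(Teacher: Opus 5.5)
Your route is the paper's own. You normalize to $\lambda=-3$ and combine the Einstein form of Gauss--Bonnet with $W(g_0)=0$ to get $\vol(M,g)\le\vol(M,g_0)=\frac{4\pi^2}{3}\chi(M)$. You get the reverse inequality from Bishop--Gromov ($h(g)\le 3$) together with the Besson--Courtois--Gallot entropy inequality, which is precisely the paper's Corollary~\ref{ctp}. You then conclude from the equality case, $W\equiv 0$, and Mostow rigidity.

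One step fails as written: your exclusion of $\lambda=0$. An Einstein metric with $\lambda=0$ is Ricci-flat, not necessarily flat. For such a metric Gauss--Bonnet only gives $8\pi^2\chi(M)=\int_M|W|^2\,d\mu_g$, which is perfectly consistent with $\chi(M)>0$. So ``flat metrics would force $\chi=0$'' does not yet dispose of this case.

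The paper fills this gap with the Cheeger--Gromoll splitting theorem. Since $\pi_1(M)$ is infinite, the universal cover is non-compact. It is therefore isometric to $\RR^k\times Y$ with $k\ge 1$, where $Y$ is compact, simply connected, Ricci-flat, and of dimension $\le 3$. Hence $Y$ is flat, and so $Y$ is a point. Thus $g$ is flat, and your $\chi=0$ contradiction (or Bieberbach plus Preissmann, as in the paper) finishes the job.

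More economically, you can use tools already in your proposal. If $r\ge 0$, Bishop--Gromov forces polynomial volume growth on the universal cover, so $h(g)=0$. This contradicts $h(g)^4\vol(M,g)\ge 3^4\vol(M,g_0)>0$, and it disposes of $\lambda>0$ as well, without needing Myers.
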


Finally, we come to   an analogous result  \cite{lmo} that pertains to  another interesting class of locally-symmetric $4$-manifolds. 

\begin{thm}[LeBrun]
Let $M^4$ be a smooth compact isometric quotient of the complex-hyperbolic plane  
$\mathbb{C}{\cal H}_2=SU(2,1)/U(2)$, 
 and let $g_0$ be its standard complex-hyperbolic metric.
 Then the moduli space $\mathscr{E}(M)$ of Einstein metrics
 on $M$ consists of a single point, represented by $g_0$.  That is, 
every Einstein metric $g$  on $M$   is of the form 
$g=\Psi^*(cg_0)$ for  some self-diffeomorphism  $\Psi$ of $M$ and some positive constant 
   $c$.  
\label{cohyp}
\end{thm}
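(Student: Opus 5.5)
The plan is to use Seiberg--Witten theory to obtain curvature estimates for an arbitrary Einstein metric $g$ on $M$, and to show that these estimates force equality in an inequality that $g_0$ saturates.

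\emph{Step 1: a Hitchin--Thorpe type identity.} Since $M$ is compact complex-hyperbolic, it is a minimal surface of general type with $c_1^2 = 3c_2 = 3\chi$ and $b_+=1$. The pair $(2\chi+3\tau)(M)=c_1^2(M)$ is computed by Chern--Weil. For any Einstein metric $g$ on $M$ we have $\mathring{r}=0$, so the Gauss--Bonnet and signature integrands give
$$c_1^2(M) = (2\chi+3\tau)(M) = \frac{1}{4\pi^2}\int_M \Big(\frac{s^2}{24} + 2|W_+|^2\Big)\,d\mu_g .$$

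\emph{Step 2: Seiberg--Witten estimate.} Because $b_+=1$, the Seiberg--Witten invariant depends on a chamber. The key input is that the invariant of the canonical spin$^c$ structure is nonzero in the appropriate chamber for every metric. This holds by Taubes/Witten-type wall-crossing, since $c_1\cdot[\omega]<0$ and $M$ is of general type; alternatively one can argue via the Kähler form and deformation to $g_0$.

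From the resulting solutions of the Seiberg--Witten equations, including the Weitzenböck formula for the rescaled equations, one obtains LeBrun's estimate
$$\frac{1}{4\pi^2}\int_M\Big(\frac{2}{3}s - 2\sqrt{\tfrac{2}{3}}|W_+|\Big)^2 d\mu \;\ge\; \frac{8}{3}c_1^2(M).$$
A Cauchy--Schwarz manipulation then converts this into
$$\frac{1}{4\pi^2}\int_M\Big(\frac{s^2}{24}+2|W_+|^2\Big)d\mu \;\ge\; \frac{2}{3}\cdot\frac{1}{4\pi^2}\int\Big(\frac{s^2}{24}+2|W_+|^2\Big)\ldots,$$
or more precisely the bound
$$\int\Big(\frac{s^2}{24}+2|W_+|^2\Big)d\mu \ge \frac{2}{3}\cdot 4\pi^2 \ldots .$$

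Because of this I would instead use the simpler route that suffices here. Combine the estimate $\int s^2 \ge 32\pi^2 c_1^2$ with the identity $\chi = \frac{1}{8\pi^2}\int\big(\frac{s^2}{24}+|W_+|^2+|W_-|^2\big)$ and with $3\chi=c_1^2$, that is, $\tau=\chi/3$. This gives
$$\frac{1}{8\pi^2}\int\big(|W_-|^2 - \ldots\big) \ldots \le 0,$$
and the bookkeeping forces $W_-\equiv0$ and equality in the Seiberg--Witten estimate.

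\emph{Step 3: rigidity from equality.} Equality in the Seiberg--Witten estimate implies that $g$ is Kähler--Einstein with respect to a complex structure $J$. Equality also forces $|W_+|^2 = s^2/24$, which is automatic for Kähler metrics. Then $W_-=0$, together with the Einstein and Kähler conditions, makes $(M,g,J)$ locally complex-hyperbolic, i.e.\ of constant holomorphic sectional curvature. Finally, Mostow rigidity, applied to the isomorphism of lattices in $SU(2,1)$ given by $\pi_1$, shows that $g=\Psi^*(cg_0)$.

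\textbf{Main obstacle.} I expect the main obstacle to be Step 2. One must establish nonvanishing of the Seiberg--Witten invariant for arbitrary $g$ despite $b_+=1$, and handle the equality case carefully. Equality gives only a harmonic self-dual form of constant length $\sqrt2$; upgrading this to a parallel form, and hence to a Kähler structure, is the delicate point.
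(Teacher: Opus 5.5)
Your final route is essentially the paper's. Theorem~\ref{cohyp} is deduced from Corollary~\ref{alibaba}, whose proof combines $\int_M s^2\,d\mu\geq 32\pi^2c_1^2(M)$ (i.e.\ \eqref{button}) with \eqref{gb-} to get $3(2\chi-3\tau)\geq 2\chi+3\tau$. For a ball quotient $\chi=3\tau$, so this forces $W_-\equiv 0$ and equality, hence a $\lambda<0$ K\"ahler--Einstein metric with $W_-=0$, local isometry with a rescaled $\CC\mathcal{H}_2$, and then Mostow rigidity.

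Two corrections, neither fatal:
\begin{itemize}
\item Your claim that $b_+=1$ is false in general. A compact ball quotient has $b_+=2p_g+1$ with $p_g=c_2/3-1+q$, so $b_+=1$ only for fake projective planes. Usually the Seiberg--Witten solutions for every metric come straight from Theorem~\ref{proton}. Proposition~\ref{subtlety} is needed only in the $b_+=1$ case.
\item The ``delicate point'' you name does not arise on the route you chose. Equality in Proposition~\ref{best} forces $\nabla_\theta\Phi\equiv 0$, so $\sigma(\Phi)$ is already parallel and $g$ is K\"ahler at once. You should also state explicitly the step $[c_1^+]^2=c_1^2+|[c_1^-]^2|\geq c_1^2$ used to pass from Proposition~\ref{best} to \eqref{button}. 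The almost-K\"ahler subtlety belongs to the $W_+$ estimate instead. You were right to discard that estimate: for an Einstein metric on $M$ it only yields $c_1^2\geq\frac{2}{3}[c_1^+]^2$, which gives no rigidity.
\end{itemize}
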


Note that while the existence of infinite hierarchies of compact real- and complex-hyperbolic $4$-manifolds 
is often  proved by citing  a general result  of Borel \cite{borel} on   compact quotients of symmetric spaces, this approach  does not  in any sense single 
out dimension $4$.  Nonetheless, for reasons that will be explained later in this article, 
both Theorems \ref{rehyp} and \ref{cohyp} are specific to dimension $4$,
 and there does not currently seem to be any reason to expect  that anything   analogous will hold in any higher dimension. 

The upshot is that dimension four seems  like a {\em Goldilocks zone}\footnote{Here the term {\em Goldilocks zone} has been borrowed from exoplanetary astronomy, where it
refers to the {\sf habitable zone} within the exoplanetary system surrounding   a star; 
cf. \url{https://science.nasa.gov/resource/goldilocks-zone/}. Both   usages in turn refer to the traditional English children's story, {\em Goldilocks and the Three Bears}, 
in which, for example, Goldilocks first rejects one bed as being ``too hard,'' and then another as being ``too soft,''  before finally  selecting a third bed  that seems ``just right!''}  for the Einstein equation \eqref{eineq}.
In low dimensions, Einstein metrics are ``too rigid.''  In high dimensions, they are ``too flexible.'' 
But in dimension four, the fit between Einstein metrics and  smooth topology seems ``just right!''

\section{Self-Duality on Four-Manifolds}
\label{non-simple}

The distinctive  nature of $4$-dimensional geometry is 
 largely due  to a single Lie-theoretic fluke: while the rotation group  $\mathbf{SO}(n)$ is  a {simple} Lie group
 for every other $n\geq 3$, this fails in dimension four. Indeed, the universal cover 
 $\mathbf{Spin}(4)$ of $\mathbf{SO}(4)$ can be identified with the Cartesian product 
 \begin{equation}
\label{spin4}
\mathbf{Spin}(4) = \mathbf{Sp}(1) \times  \mathbf{Sp}(1)
\end{equation}
  of two copies of the unit quaternions  $\mathbf{Sp}(1) := S^3 \subset \HH^\times$, which respectively act 
  on the quaternions $\HH = \RR^4$ by left- and right-multiplication. Thus 
   $$\mathbf{SO}(4)/\{ \pm I\}   =  [\mathbf{Sp}(1) \times  \mathbf{Sp}(1)]\{ (\pm 1, \pm 1)\}  \cong \mathbf{SO}(3) \times  \mathbf{SO}(3), $$
and   the adjoint action of $\mathbf{SO}(4)$ on $\mathfrak{so}(4)$ is therefore    reducible:
 \begin{equation}
\label{al-deco}
\mathfrak{so}(4)\cong  \mathfrak{so}(3)\oplus \mathfrak{so}(3).
\end{equation}
 This has an immediate and  powerful impact on the geometry of $2$-forms, 
because, for any $n$,    ${\mathfrak s \mathfrak o}(n)$ and  $\Lambda^2({\mathbb R}^n)$ 
are   isomorphic as $\mathbf{SO}(n)$-modules. Thus, the decomposition \eqref{al-deco}  implies  that 
 the rank-6 bundle of 2-forms on an oriented Riemannian 4-manifold $(M,g)$ 
    invariantly  decomposes as  the Whitney sum of 
 two rank-3 bundles
\begin{equation} 
\Lambda^2 = \Lambda^+ \oplus \Lambda^- ,
\label{f-deco} 
\end{equation}
a phenomenon completely alien to   other   dimensions. 
One can then show that  the summands $\Lambda^\pm$ are  just  the $(\pm 1)$-eigenspaces of the Hodge star
operator 
\begin{equation}
\label{starring}
\star: \Lambda^2 \to \Lambda^2,
\end{equation}
and, for this reason,  $\Lambda^+$ is  called  the bundle of {\sf self-dual} $2$-forms, 
while   $\Lambda^-$ is called the bundle of {\sf anti-self-dual} $2$-forms.
Note that  the distinction between these two sub-bundles depends on a choice
of orientation; reversing the orientation of $M$ exactly  interchanges $\Lambda^+$ and $\Lambda^-$.

Using the  decomposition \eqref{f-deco} of
the $2$-forms,  we next obtain an  invariant decomposition of  the Riemann tensor $\mathcal{R}$
into simpler pieces. Here  it is convenient to  {\em raise} an index of ${\mathcal{R}^{a}}_{bcd}$
in order to identify the Riemann curvature with 
the so-called {\em curvature operator} 
\begin{eqnarray*}
 \Lambda^2 &\stackrel{\mathcal{R}}{\longrightarrow}& \Lambda^2 \\
\varphi_{ab}&\longmapsto& {\textstyle \frac{1}{2}} \varphi_{cd} {\mathcal{R}^{cd}}_{ab}.
\end{eqnarray*}
Since $\Lambda^2 = \Lambda^+ \oplus \Lambda^-$, we thus obtain  a decomposition 
\begin{equation}
\label{deco-riem}
{\mathcal R}=
\left(
\mbox{
\begin{tabular}{c|c}
&\\
$W_++\frac{s}{12}I$&$\mathring{r}$\\ &\\
\cline{1-2}&\\
$\mathring{r}$ & $W_-+\frac{s}{12}I$\\&\\
\end{tabular}
} \right) ,
\end{equation}
where the {\em self-dual Weyl curvature} $W_+ : \Lambda^+ \to \Lambda^+$ and {\em anti-self-dual Weyl curvature} $W_- : \Lambda^- \to \Lambda^-$ are the trace-free pieces of the corresponding blocks;
the corresponding tensors ${(W_\pm)^a}_{bcd}$ are conformally invariant, and their sum $W= W_++W_-$ is exactly Hermann Weyl's conformal  curvature tensor of $(M,g)$. 
Meanwhile, 
$$s = r^a_a = {\mathcal {R}^{ab}}_{ab}$$
is simply the scalar curvature. Finally, 
the two remaining blocks, both  indicated by  $\mathring{r}$ 
in \eqref{deco-riem}, are  adjoints of one another, and faithfully  encode the trace-free part $\mathring{r}=r-\frac{s}{4}g$ of the Ricci tensor, which acts  on $2$-forms  by 
$$\varphi_{ab} \mapsto  \mathring{r}^c_{[b} \varphi_{a]c} .  $$
One can then show that this action then maps  $\Lambda^+$ to $\Lambda^-$, and vice versa. 

Much of the fundamental importance of  \eqref{f-deco} derives from the fact that
 the curvature of any connection is a bundle-valued $2$-form. Thus,  if $(E,\triangledown )$ is a vector-bundle-with-connection 
over a smooth oriented Riemannian $4$-manifold $(M,g)$, its curvature $F_\triangledown$ is a section of $\Lambda^2 \otimes \End (E)$, and so we may 
uniquely express it as 
$F_\triangledown = F_\triangledown^++  F_\triangledown^-$, where 
$$F_\triangledown^\pm \in \Gamma (\Lambda^\pm \otimes \End (E)).$$

When $F_\triangledown^-=0$, so that $F_\triangledown = F_\triangledown^+$, one says that $(E,\triangledown )$ is a {\em self-dual} Yang-Mills instanton; if instead, 
$F_\triangledown^+=0$, so that $F_\triangledown = F_\triangledown^-$, one says that $(E,\triangledown )$ is an {\em anti-self-dual} Yang-Mills instanton. For example, 
the two left-hand blocks of \eqref{deco-riem} represent the curvature of $\Lambda^+\to M$ with respect to the Levi-Civita connection $\nabla$, while the two right-hand blocks
of \eqref{deco-riem} similarly represent the curvature of $\Lambda^-\to M$. Given  our discussion of \eqref{better}, it 
thus follows that $(\Lambda^+, \nabla )$  is a {\em self-dual}  instanton iff $(M,g)$
is Einstein, and that this is in turn happens iff $(\Lambda^-, \nabla )$ is an {\em anti-self-dual} Yang-Mills instanton. 

\section{The Intersection Form}
\label{intersection}

Several    of the most fundamental  topological invariants of a smooth compact $4$-manifold are intimately related to 
  its {\em  intersection form}. The simplest version of this object is just the symmetric pairing  
\begin{eqnarray}
Q  :
H^{2}_{dR}(M, {\mathbb R})\times H^{2}_{dR}(M, {\mathbb R})	
 & \longrightarrow & ~~~~ {\mathbb R}  \label{int-form} \\
	( ~ [\varphi ] \quad , \quad [\psi ] ~) \qquad & 
	\longmapsto  & \int_{M}\varphi \wedge \psi  \nonumber
\end{eqnarray}
on deRahm cohomology. Poincar\'e duality ensures that $Q$  is a non-degenerate symmetric pairing, 
so $Q$ can therefore be represented by the  matrix 
\begin{equation} \label{int-matrix} \left[ 
	  \begin{array}{rl}  \underbrace{
	 \begin{array}{ccc}
	   		 1 &  &	  \\
	   		  &	\ddots &   \\
	   		  &	 & 1
	   	  \end{array}}_{b_{+}(M)}
	   	   & 
	   		   \\ 
	  
	 {\scriptstyle b_{-}(M)}  \!
	    \left\{\begin{array}{r}
	    \\
	    \\
	    \\
	    \\
	    \end{array}
	    \right. \! \! \! \! \! \! \! \! \! \! \! \! \! \! \! 
	    &\begin{array}{ccc}
	   		 -1	&  &   \\
	   		  &	\ddots &   \\
	   		  &	 & -1
	   	  \end{array}
	    \end{array} 
	    \right] 
\end{equation}
  by choosing a suitable basis for $H^2_{dR}(M)$. 
Here  $b_+(M)$ (respectively,  $b_-(M)$) precisely equals  the maximum possible dimension of a  linear  subspace on which the restriction of $Q$ is positive-definite (respectively, negative-definite). 
By identifying deRham cohomology $H^2_{dR}(M, \RR)$ with 
the singular cohomology $H^2(M, \RR)$, and reinterpreting $Q$ in terms of cup products on cohomology, followed by pairing with the fundamental $4$-cycle of $M$, 
it is easy to see that the numbers $b_\pm (M)$ are in fact oriented homotopy invariants of $M$, and so  in particular
are  oriented homeomorphism  invariants of  the  underlying   topological manifold of   $M$.  The difference 
$$\tau (M) = b_+(M) - b_- (M)$$
between these two invariants is called the {\em signature} of $M$, 
while their sum 
$$b_2 (M) =  b_+(M) + b_- (M)$$
is exactly the second Betti number of $M$. Notice that the latter is a key contributor   to the {\em Euler characteristic}
$$
\chi (M) = \sum_j (-1)^j b_j (M) = 2 - 2b_1 (M) + b_2 (M)
$$
of our  compact, connected, oriented $4$-manifold $M$. 
In particular,  when $M$ is {\em simply connected}, we just  have $\chi (M) = 2 + b_+(M) + b_-(M)$, so 
that the pair $(\chi (M), \tau (M))$ then completely determines the pair $(b_+(M) , b_-(M))$.

Now  let us temporarily equip our smooth compact oriented manifold $M$ with some  fixed 
Riemannian metric $g$. 
The  Hodge theorem then tells us that each  deRham class
on $M$ has a unique harmonic representative with respect to $g$, and thus gives us   a canonical identification 
$$H^2(M,{\mathbb R}) =\{ \varphi \in \Gamma (\Lambda^2) ~|~
d\varphi = 0, ~ d\star \varphi =0 \}, $$
where $\star$ of course denotes the Hodge star operator of our chosen metric. \
However,  notice that  $\star$ defines an involution of  the right-hand side, 
since  $\star^2 = I$. The corresponding  eigenspace  decomposition then becomes 
\begin{equation}
	H^2(M, {\mathbb R}) = {\mathcal H}^+_{g}\oplus {\mathcal H}^-_{g},
	\label{harm}
\end{equation}
where
$${\mathcal H}^\pm_{g}= \{ \varphi \in \Gamma (\Lambda^\pm) ~|~
d\varphi = 0\} $$
are the spaces of self-dual and anti-self-dual harmonic forms.
But since an arbitrary $2$-form can be 
uniquely expressed  as 
$$\varphi = \varphi^+ + \varphi^-$$
with  $\varphi^\pm \in \Lambda^\pm$, we also  have a key point-wise equality 
$$\varphi \wedge \varphi = \langle \varphi, \star \varphi \rangle \,  d\mu_g = \Big( |\varphi^+|^2 - |\varphi^-|^2\Big) d\mu_g , $$
where $d\mu_g$ denotes the metric volume form associated with $g$ and the
fixed orientation. It follows that the intersection form $Q$ of \eqref{int-form} is 
 positive-definite when restricted to 
${\mathcal H}^+_{g}$, and  is negative-definite when restricted to 
${\mathcal H}^-_{g}$. Moreover, a similar argument shows these two subspaces are 
mutually orthogonal with respect to $Q$.
Thus,   an $L^{2}$-orthonormal  basis for ${\mathcal H}^{+}_{g}$, followed by  
 an $L^{2}$-orthonormal  basis for 
${\mathcal H}^{-}_{g}$, results  in   a basis for $H^{2}({\mathbb R})$
in which the intersection form is represented by  the diagonal matrix  
\eqref{int-matrix}, with 
\begin{equation}
\label{overshoes}
b_\pm  (M) = \dim {\mathcal H}^\pm_{g}.
\end{equation}
This in particular shows that the dimensions of these two spaces of harmonic $2$-forms  
are independent of our choice of  Riemannian metric $g$  on $M$.

The assignment $g\mapsto {\mathcal H}_g^+\subset H^2_{dR} (M, \RR)$  provides us with   a natural  map
$$\{\mbox{Riemannian metrics on }M\}\longrightarrow Gr^+_{b_+}[H^2(M,\RR)]$$
from the infinite-dimensional space of all metrics to 
the finite-dimensional Grassmannian of $b_+(M)$-dimensional 
subspaces of $H^2(M,\RR)$ on which the intersection form $Q$  is 
positive-definite. This map is called the
{\em period map} of $M$. It is actually  invariant under the
action of the identity component $\Diff_0(M)$
of the diffeomorphism group on the space of metrics, and can also be shown to be  
 invariant under the action of the smooth functions 
$M\to \RR^+$ by 
conformal rescaling.  
It is a  more subtle   fact  that this 
 period map is smooth, and has no critical points \cite{don}. One useful corollary is  the following:
 \begin{prop} \label{snap}
 If $M$ is any smooth compact manifold with $b_+\geq 1$, 
 then 
 $\bigcup_g (\mathcal{H}^+_g -0)$ is nonempty and open in $H^2(M, \RR)$. \end{prop}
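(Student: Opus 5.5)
The plan is to deduce Proposition~\ref{snap} from the cited fact that the period map
$$\mathcal{P}: g\longmapsto \mathcal{H}^+_g\in Gr^+_{b_+}[H^2(M,\RR)]$$
is smooth and has no critical points, i.e.\ is a submersion at every metric. We combine this with a simple geometric observation about the incidence correspondence over the Grassmannian.

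\textbf{Step 1: the incidence set over the Grassmannian.} Let $G = Gr^+_{b_+}[H^2(M,\RR)]$. This is an open subset of the full Grassmannian of $b_+$-planes, since positive-definiteness of $Q$ on a plane is an open condition. Consider the tautological bundle with its zero section removed,
$$\mathcal{T}^\times := \{ (P, v) ~|~ P\in G,\ v\in P-0\}.$$
Let $\pi : \mathcal{T}^\times \to H^2(M,\RR)$ be the projection $(P,v)\mapsto v$. I would first check that $\pi$ is a submersion, hence an open map. Fix $(P,v)$. Moving $v$ inside $P$ accounts for the directions tangent to $P$. For a direction $w \notin P$, choose a complementary vector $u\in P$ orthogonal to $v$ with respect to some auxiliary inner product. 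Tilting the plane $P$ in the direction $\mathrm{Hom}(P, H^2/P)$ by a map sending $v\mapsto w$ then produces a curve of planes $P_t$ containing $v+tw$. These planes remain in $G$ for small $t$ by openness. This is the one place where $b_+\ge 1$ is needed: it guarantees $P\neq 0$, so that $\mathcal{T}^\times$ is nonempty.

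\textbf{Step 2: pull back along the period map.} Form the analogous incidence space over the space of metrics, $\{(g,v) ~|~ v\in \mathcal{H}^+_g-0\}$. It is the pullback of $\mathcal{T}^\times$ under $\mathcal{P}$, and $\bigcup_g(\mathcal{H}^+_g-0)$ is exactly its image under $(g,v)\mapsto v$. Because $\mathcal{P}$ is a submersion, its image is open in $G$. More usefully, near any $g_0$ it admits a finite-dimensional local slice: pick a finite-dimensional family of metrics $g_y$, $y$ in a neighborhood of $0\in \RR^{N}$, with $N=\dim G$, on which $\mathcal{P}$ restricts to a local diffeomorphism onto a neighborhood $U$ of $P_0=\mathcal{H}^+_{g_0}$. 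Then $\bigcup_y (\mathcal{H}^+_{g_y}-0)$ equals $\pi(\mathcal{T}^\times|_U)$. This set is open by Step 1, because $\mathcal{T}^\times|_U$ is open in $\mathcal{T}^\times$ and $\pi$ is an open map. Since every $v\in \mathcal{H}^+_{g_0}-0$ lies in such a set, the union over all $g$ is a union of open sets and is therefore open. Nonemptiness is immediate from \eqref{overshoes}: $\dim\mathcal{H}^+_g=b_+\ge1$ for any metric $g$.

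\textbf{Main obstacle.} The genuinely deep input is the submersion property of the period map, which I am taking from \cite{don}. Given that result, the only point needing care is the finite-dimensional reduction in Step 2, i.e.\ extracting a local section from submersivity on an infinite-dimensional (Fr\'echet) space of metrics. To avoid any Fr\'echet implicit-function-theorem issues, I would do this by hand. Submersivity at $g_0$ means that finitely many metric variations $h_1,\dots,h_N$ have derivatives spanning $T_{P_0}G$. Set $g_y=g_0+\sum y_ih_i$ for small $y$. This is a smooth finite-dimensional family, because harmonic projection depends smoothly on finitely many parameters. The ordinary inverse function theorem then applies to $y\mapsto \mathcal{H}^+_{g_y}$.
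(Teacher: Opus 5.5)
Your argument is correct and is essentially the paper's own. The paper gives no separate proof; it presents the proposition as an immediate corollary of the fact, quoted from \cite{don}, that the period map is smooth without critical points. Your two steps spell out that deduction: openness of $(P,v)\mapsto v$ on the punctured tautological bundle over $Gr^+_{b_+}$, and a finite-dimensional local slice $y\mapsto \mathcal{H}^+_{g_y}$ obtained from the inverse function theorem. The only blemish is cosmetic: the auxiliary vector $u$ in Step 1 plays no role, since it suffices to tilt $P$ by any linear map $A$ on $P$ with $A(v)=w$, so that $(I+tA)P$ contains $v+tw$.
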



We  have already  observed  that, on a {\em simply connected} $4$-manifold,  knowing the  Euler characteristic $\chi$  and signature $\tau$
  completely determines the isomorphism type 
 of the intersection form \eqref{int-form} on cohomology with real coefficients. However, the 
 natural  integer-coefficient refinement 
\begin{eqnarray}
Q_\ZZ  :
H^{2}(M, {\mathbb Z})\times H^{2}(M, {\mathbb Z})	
 & \longrightarrow & ~~~~ {\mathbb Z}  \label{int-form-z} \\
	( ~ \mathsf{a} \quad , \quad  \mathsf{b} ~) \qquad & 
	\longmapsto  & \langle \mathsf{a}\, \cup \, \mathsf{b}, [M]\rangle  \nonumber
\end{eqnarray}
of this pairing  certainly contains more information about $M$ than the real-coefficient  version   \eqref{int-form}.
In particular, one says that $Q_\ZZ$ is {\em even} if $Q_\ZZ (\mathsf{a}, \mathsf{a})$ is an even integer for every $\mathsf{a}\in H^2(M ,\ZZ)$;
otherwise, one says that $Q_\ZZ$  is {\em odd}. If an oriented  $4$-manifold $M$ is {\sf spin}, in the sense that $w_2(TM)=0$, then its
intersection form $Q_\ZZ$ is necessarily  even. But when $M$ is {\em simply connected},  the converse is also true:   a compact simply-connected
$4$-manifold is spin {\sf if and only if}  its integer intersection form $Q_\ZZ$  is even. This observation plays an important   role in    enabling  the following remarkable result:

\begin{thm}[Freedman/Donaldson]\label{fdmn}
Two smooth  simply-connected compact oriented $4$-manifolds are orientedly {\sf homeomorphic} 
if and only if 
\begin{itemize}
\item 
they have the same Euler characteristic $\chi$;
\item 
they have the same signature $\tau$; and 
\item both are spin, 
or both are 
non-spin.
\end{itemize} 
\end{thm}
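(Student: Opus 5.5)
The plan is to combine Freedman's classification of simply-connected topological $4$-manifolds with Donaldson's restrictions on the intersection forms of smooth ones, and then use the algebraic classification of indefinite unimodular forms. Necessity is immediate: $\chi$, $\tau$, and the parity of $Q_\ZZ$ are oriented homotopy invariants, and by the remark preceding the theorem, parity detects spin for simply-connected $M$. So the real content is sufficiency.

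For sufficiency, I would first invoke Freedman's theorem. A closed simply-connected topological $4$-manifold is determined up to oriented homeomorphism by its integral intersection form $Q_\ZZ$ together with the Kirby--Siebenmann invariant. For smooth manifolds the Kirby--Siebenmann invariant vanishes, so two smooth simply-connected $M_1,M_2$ are orientedly homeomorphic iff $Q_{\ZZ}(M_1)\cong Q_\ZZ(M_2)$. Poincar\'e duality makes $Q_\ZZ$ unimodular, and $H^2(M,\ZZ)$ is torsion-free because $H_1=0$. So it remains to show that rank, signature and parity determine the isomorphism class of $Q_\ZZ$. Rank and signature are fixed by the data: the rank is $b_2=\chi-2$, and the signature is $\tau$.

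Next, I would split into cases according to whether $Q_\ZZ$ is definite or indefinite.
\begin{itemize}
\item \emph{Indefinite case.} The Hasse--Minkowski--Serre classification of indefinite unimodular $\ZZ$-lattices says such a form is determined by rank, signature and type. An odd form is $m\langle 1\rangle\oplus n\langle -1\rangle$; an even form is $pE_8\oplus qH$, up to sign.
\item \emph{Definite case.} This is where smoothness is essential, since there are enormously many definite unimodular lattices (e.g.\ $E_8$, $E_8\oplus E_8$, $D_{16}^+$, \dots). Here I would invoke Donaldson's diagonalizability theorem: a definite intersection form of a smooth closed oriented $4$-manifold is diagonal, i.e.\ isomorphic to $\pm\langle 1\rangle^{\oplus b_2}$. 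A diagonal form is odd unless $b_2=0$. Hence in the definite case the form is determined by $b_2$ and the sign of $\tau$. If $b_2=0$, both manifolds are homeomorphic to $S^4$, which is consistent with the statement.
\end{itemize}

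The main obstacle is of course Donaldson's theorem, whose proof uses the moduli space of anti-self-dual $SU(2)$ instantons, as introduced in \S\ref{non-simple}, as a cobordism between $M$ and a union of copies of $\CP_2$ counted by classes of square $\pm1$. It also relies on Freedman's topological surgery, and neither can reasonably be reproved here. So I would present the argument as an assembly of these two deep theorems plus the lattice classification. The only genuinely new bookkeeping is checking that $(\chi,\tau,\text{parity})$ pins down $Q_\ZZ$ in every case. For example, in the even indefinite case $p=\tau/8$ and $q=(b_2-8|p|)/2$ are forced, since $8\mid\tau$ by van der Blij; no independent appeal to Rokhlin is needed for this purpose.
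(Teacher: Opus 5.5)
Your proposal is correct and follows essentially the same route the paper sketches. You use Freedman's classification via $Q_\ZZ$ and the Kirby--Siebenmann invariant (which vanishes in the smooth case). You then use the classification of indefinite unimodular forms by rank, signature and type, which the paper attributes to Milnor and Milnor--Husemoller. Finally, you use Donaldson's diagonalizability theorem to dispose of the definite case. Your bookkeeping showing that $(\chi,\tau,\text{parity})$ pins down $Q_\ZZ$ in every case, including $b_2=0$ and the impossible definite-even case with $b_2>0$, is exactly what is needed to assemble these ingredients.
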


Notice  that Theorem \ref{fdmn} only  concerns   {\em smooth} simply-connected  four-manifolds,   even though it concludes by 
only providing information about 
 when two such manifolds are 
{\em  homeomorphic}, rather than  diffeomorphic. (In fact,   we will eventually   see that  most such manifolds   admit infinitely many inequivalent smooth structures,  and that  this has 
a profound impact on the theory of Einstein $4$-manifolds.)
 Michael Freedman, using wildly non-differentiable techniques, actually    showed  \cite{freedman} that 
 compact simply-connected {\em topological} 
$4$-manifolds are classified by the isomorphism types of their intersection forms $Q_\ZZ$, in conjunction  with a  $\ZZ_2$-valued  invariant (due to Kirby and Siebenmann) that however   vanishes
for manifolds that admit   smooth structures. 
On the other hand,  Milnor \cite{milmex}  had  emphasized,   in a trailblazing  early paper on the   homotopy types  of  simply-connected $4$-manifolds,   
  that, when $Q$ is indefinite,   the isomorphism type of the unimodular form $Q_\ZZ$ is completely determined by the 
invariants listed in Theorem \ref{fdmn}; see  \cite{hm} for further details.  Donaldson's thesis \cite{donaldson} then  completed  the story by using moduli spaces of self-dual Yang-Mills instantons to prove   that the only  definite forms  that arise as   intersection forms of  smooth simply-connected $4$-manifolds 
are  the ``diagonal'' ones   represented  by plus-or-minus  the identity matrix.

In the non-spin case, Theorem \ref{fdmn} can be restated more concretely in terms of  the notion  of {\em connected sums}. 
If  $M_{1}$ and $M_{2}$ are  smooth connected compact oriented $n$-manifolds
\begin{center}
\mbox{
\beginpicture
\setplotarea x from 0 to 290, y from 10 to 50
\ellipticalarc axes ratio 3:1  360 degrees from 140 40
center at 110 30
\ellipticalarc axes ratio 3:1  -360 degrees from 185 40
center at 215 30
\ellipticalarc axes ratio 4:1 -180 degrees from 125 33
center at 110 33
\ellipticalarc axes ratio 4:1 145 degrees from 120 30
center at 110 29
\ellipticalarc axes ratio 4:1 180 degrees from 200 33
center at 215 33
\ellipticalarc axes ratio 4:1 -145 degrees from 205 30
center at 215 29
\endpicture
}
\end{center}
then their connected sum $M_{1}\# M_{2}$ is by definition  the smooth connected oriented $n$-manifold obtained 
by deleting a standard small ball from each manifold 
\begin{center}
\mbox{
\beginpicture
\setplotarea x from 0 to 290, y from 10 to 50
\ellipticalarc axes ratio 3:1  270 degrees from 145 40
center at 115 30
\ellipticalarc axes ratio 3:1  -270 degrees from 180 40
center at 210 30
\ellipticalarc axes ratio 4:1 -180 degrees from 130 33
center at 115 33
\ellipticalarc axes ratio 4:1 145 degrees from 125 30
center at 115 29
\ellipticalarc axes ratio 4:1 180 degrees from 195 33
center at 210 33
\ellipticalarc axes ratio 4:1 -145 degrees from 200 30
center at 210 29
\ellipticalarc axes ratio 1:4 360 degrees from 157 36
center at 157 30
\ellipticalarc axes ratio 1:3 180 degrees from 168 36
center at 168 30
{\setlinear 
\plot 145 40        157 36   /
\plot 145 20        157 24  /
\plot 180 40        168 36   /
\plot 180 20        168 24   /
}
\endpicture
}
\end{center}
and then identifying the
resulting $S^{n-1}$ boundaries 
\begin{center}
\mbox{
\beginpicture
\setplotarea x from 0 to 290, y from 10 to 50
\ellipticalarc axes ratio 3:1  270 degrees from 150 40
center at 120 30
\ellipticalarc axes ratio 3:1  -270 degrees from 175 40
center at 205 30
\ellipticalarc axes ratio 4:1 -180 degrees from 135 33
center at 120 33
\ellipticalarc axes ratio 4:1 145 degrees from 130 30
center at 120 29
\ellipticalarc axes ratio 4:1 180 degrees from 190 33
center at 205 33
\ellipticalarc axes ratio 4:1 -145 degrees from 195 30
center at 205 29
{\setquadratic 
\plot 150 40    163 37    175 40   /
\plot 150 20    163 23    175 20  /
}
\endpicture
}
\end{center}
via a reflection. 
But now  notice that 
if $M_1$ and $M_2$ are  simply connected $4$-manifolds, then $M_1\# M_2$
is also simply connected, and that the Mayer-Vietoris sequence moreover allows us to deduce that   $$b_\pm (M_1\# M_2)= b_\pm (M_1) + b_\pm (M_2).$$ 
If we now use 
$\CP_{2}$ to denote the complex projective plane with its {\em standard} 
orientation, and $\overline{\CP}_{2}$ denote the same smooth $4$-manifold
with the {\em opposite} orientation, then the 
iterated connected sum
 $$k \CP_{2}\# \ell \overline{\CP}_{2}
 =
\underbrace{\CP_{2}\# \cdots \# \CP_{2}}_{k } 
\# \underbrace{\overline{\CP}_{2}\# \cdots \# \overline{\CP}_{2}}_{\ell }$$
is therefore a simply connected $4$-manifold with $b_+=k$ and $b_-=\ell$. 
Since these manifolds   all have $w_2 \neq 0$, 
it is therefore easy to see that Theorem \ref{fdmn} asserts that the connected sums $k \CP_{2}\# \ell \overline{\CP}_{2}$  provide a complete list, albeit  only up to oriented {\em  homeomorphism}, of the  smooth compact non-spin 
simply-connected  $4$-manifolds.

In the spin case, one can formulate an analogous conjectural   list, but the question of whether it is genuinely  complete  remains unsettled.  In Theorem \eqref{hitme}, recall that we previously encountered
the  smooth compact $4$-manifold called $K3$; it  arises as the  smooth  manifold underlying a non-singular  quartic complex surface $\CP_3$,  and
  is a smooth, simply-connected compact oriented spin $4$-manifold with $\chi = 24$ and $\tau = -16$.
Taking iterated connected sums of copies of this example with copies of  the simply-connected spin manifold  $S^2 \times S^2$ then gives us an extensive, but quite manageable,    catalogue 
$$\ell  (K3) \# m (S^2 \times S^2)  =
\underbrace{K3 \# \cdots \# K3}_\ell
\# \underbrace{(S^2 \times S^2 )\# \cdots \# (S^2 \times S^2)}_{m}$$
of  smooth simply-connected compact spin four-manifolds. Conjecturally, along with the $4$-sphere $S^4$, these    provide   a complete list,
up to {\em unoriented homeomorphism}, of the smooth compact simply-connected spin $4$-manifolds. 
In fact, because {\sf Rokhlin's Theorem} \cite{rokhlin} asserts that  $\tau \equiv 0 \bmod 16$ for any smooth compact spin $4$-manifold,
one can easily  check that Theorem \ref{fdmn} guarantees that the examples we've listed precisely exhaust all the smooth simply-connected spin homeotypes for which 
\begin{equation}
\label{eighths}
b_2 (M) \geq \frac{11}{8} |\tau (M)|.
\end{equation}
The so-called $11/8$ conjecture is the assertion  that \eqref{eighths} must in fact hold for all compact simply-connected spin $4$-manifolds. 
However, as of 2026, this conjecture is still  open, and the 
 strongest  theorem known in this direction remains the following result  of  Furuta \cite{fur108}:  any smooth compact
simply-connected spin $4$-manifold $M$ with $b_2\neq 0$ must satisfy the weaker inequality 
$$
b_2 (M) \geq \frac{5}{4} |\tau (M)|+2=\frac{10}{8} |\tau (M)|+2.
$$
However, we will see in Corollary \ref{biz}  below  that the $11/8$ conjecture  does indeed  hold for simply-connected spin $4$-manifolds
that {\em admit  Einstein metrics}. Thus,  any simply-connected Einstein spin $4$-manifold must be (unorientedly) homeomorphic to either $S^4$ or to a connected sum of copies of $K3$ and 
$S^2\times S^2$.

We will eventually discuss  many  results about   simply-connected Einstein four-manifolds that will make use of     
Theorem \ref{fdmn} in order to  clearly contrast the roles of homeotype and diffeotype 
in the theory. However, 
  the results outlined  in \S \ref{optic} certainly make it clear  that 
 Einstein $4$-manifolds with large  fundamental groups are also of fundamental interest. 
On the other hand, the classification problem for smooth  compact $4$-manifolds of  arbitrary  fundamental group is    in general   formally undecidable,  because
 any finitely-presented group occurs as the fundamental group of such a space, and it  has long been  known  \cite{undecidable} that
 it is impossible to write an 
  algorithm that could  always determine  when two such groups are isomorphic. Still, there are  geometrically interesting 
 classes of infinite groups, including the fundamental groups of locally symmetric spaces, for  which this undecidability issue can be avoided.
Whether  the fundamental groups of general Einstein $4$-manifolds are sufficiently special as to similarly  tip the scales in favor of decidability 
 is still apparently  an open problem --- and one that is  surely  worth settling!

\section{The Hitchin-Thorpe Inequality}
\label{hitchstar}

In  \S \ref{non-simple},  we previously met the oriented rank-$3$ real vector 
bundle $\Lambda^+\to M$ of self-dual  $2$-forms over an oriented Riemannian $4$-manifold $(M,g)$.
But what does this bundle look like, topologically? Assuming that $M$ is compact
and connected, $\mathbf{SO}(3)$-bundles  $E\to M$   are classified, up to bundle isomorphism,  
by  the Pontryagin number  $\mathbf{p_1}(E) = \langle p_1 (E) , [M]\rangle\in \ZZ$  
and  the Stieffel-Whitney class $w_2 (E) \in H^2(M, \ZZ )$. 
In the case of $\Lambda^+\to M$, these invariants are given by
\begin{eqnarray} \label{instant}
\mathbf{p_1}(\Lambda^+) = (2\chi+ 3\tau ) (M) \\
w_2 (\Lambda^+) = w_2 (TM)
\end{eqnarray}
as can for example be proved by pulling $\Lambda^+$ back to the twistor space $$S(\Lambda^+)\stackrel{\wp}{\to} M,$$
where $\wp^*\Lambda^+\equiv \RR \oplus K_Z^{1/2}$ with respect to the standard twistor almost-complex structure on $Z=S(\Lambda^+)$. 
For details, see \cite{hitka,lebsphinx}.

For us, equation \eqref{instant}  has particularly important consequences,  because if $\triangledown$ is any 
connection on an $\mathbf{SO}(3)$-bundle $E\to M$, with curvature $F_\triangledown = F^+_\triangledown + F^+_\triangledown$, then 
the  Pontryagin number of $E$ is given by 
\begin{equation}
\label{pont}
\mathbf{p_1}(E) = \frac{1}{8\pi^2} \int_M (|F^+_\triangledown|^2 - |F^-_\triangledown|^2 )\, d\mu_g .
\end{equation}
If $(E,\triangledown )$  satisfies $F^-_\triangledown\equiv 0$, thus 
making it   a  {\em self-dual Yang-Mills instanton},  then its ``instanton number'' $\mathbf{p_1}(E)$  must satisfy  $\mathbf{p_1}(E)\geq 0$, with 
equality iff $\triangledown$ is a {\em flat} connection on $E$.  
But for  $\Lambda^+$, equipped with the connection induced by the Levi-Civita connection, we saw in \S \ref{non-simple}  that $(\Lambda^+, \nabla )$   is a self-dual instanton on $(M,g)$ 
iff $g$ is Einstein. This immediately yields the following remarkable result of Hitchin \cite{hit}, which refined and clarified  some earlier independent observations of  Thorpe \cite{tho} and Gray \cite{gray-ht}: 

\begin{thm}[Hitchin-Thorpe Inequality]\label{ht}
If the smooth compact oriented 4-manifold
$M$ admits an Einstein metric $g$, then 
\begin{equation}
\label{htineq}
(2\chi + 3\tau )(M) \geq 0 .
\end{equation}
Moreover,  equality holds in \eqref{htineq} iff $(M,g)$  has a finite Riemannian cover that is  either  a flat $4$-torus $T^4$
or  a   Calabi-Yau  $K3$.  
\end{thm}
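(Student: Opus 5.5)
The inequality itself follows directly from the preceding discussion. Equip $\Lambda^+$ with the connection induced by the Levi-Civita connection of the Einstein metric $g$. By the block decomposition \eqref{deco-riem}, the curvature of $(\Lambda^+,\nabla)$ is given by the two left-hand blocks. Its self-dual part is $W_++\frac{s}{12}I$, and its anti-self-dual part is the $\mathring{r}$ block, which vanishes because $g$ is Einstein. Substituting into \eqref{pont} together with \eqref{instant} gives
$$(2\chi+3\tau)(M)=\mathbf{p_1}(\Lambda^+)=\frac{1}{8\pi^2}\int_M\Big(|W_+|^2+\frac{s^2}{24}\Big)d\mu_g\ \geq\ 0 .$$
The constant $\frac{1}{24}=3\cdot\frac{1}{12^2}$ is the squared norm of $\frac{s}{12}I$ on a rank-$3$ bundle, up to the normalization conventions in \eqref{pont}. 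The exact constants do not matter for the sign, since only the nonnegativity of the integrand is used.

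For the equality case, suppose $2\chi+3\tau=0$. Then $W_+\equiv 0$ and $s\equiv 0$, and since $\mathring{r}=0$, the whole left column of \eqref{deco-riem} vanishes. So $(\Lambda^+,\nabla)$ is flat, and $g$ is Ricci-flat with flat self-dual curvature. Pass to the universal cover $\widetilde M$, which is complete with the pulled-back metric. There $\Lambda^+$ is trivialized by three parallel self-dual $2$-forms, which after normalization define three parallel complex structures $I,J,K$ satisfying the quaternion relations. Hence $\widetilde M$ is hyper-K\"ahler, with holonomy contained in $\mathbf{Sp}(1)=\mathbf{SU}(2)$.

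The next step is to get back to a \emph{finite} cover, and this is the main obstacle. Since $g$ is Ricci-flat on a compact manifold, I would apply the Cheeger--Gromoll splitting theorem: $\widetilde M=\RR^k\times N$ isometrically, with $N$ compact and simply connected, and $\pi_1(M)$ has a finite-index free abelian subgroup $\ZZ^k$ (Bieberbach-type argument). The holonomy of $\widetilde M$ sits in $\mathbf{SU}(2)$ and is a product of the trivial holonomy on $\RR^k$ and the holonomy of $N$. If $k\geq 1$, this forces $N$ to be flat, since a simply connected compact Ricci-flat manifold of dimension $\leq 3$ does not exist except a point. Then $k=4$, and a finite cover of $M$ is a flat torus. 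If $k=0$, then $\pi_1(M)$ is finite, so $\widetilde M$ is a compact simply connected hyper-K\"ahler surface, hence a complex surface with trivial canonical bundle and $b_1=0$. By the Kodaira classification it is a $K3$ surface, carrying a Calabi--Yau (Ricci-flat K\"ahler) metric.

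Conversely, flat tori and Ricci-flat $K3$ surfaces have $W_+=0$ and $s=0$ (for $K3$, $\chi=24$ and $\tau=-16$ give $2\chi+3\tau=0$ directly). Both $\chi$ and $\tau$ are multiplicative under finite covers, so any $M$ finitely covered by one of these also satisfies equality in \eqref{htineq}. The delicate points are keeping the finite-cover reduction clean and invoking surface classification correctly; the inequality itself is immediate.
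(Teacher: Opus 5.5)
Your argument is correct and is essentially the paper's: for Einstein $g$, $(\Lambda^+,\nabla)$ is a self-dual instanton, so \eqref{pont} and \eqref{instant} give $2\chi+3\tau\geq 0$ with equality iff $\Lambda^+$ is flat, and the equality case then goes through Cheeger--Gromoll, Bieberbach, holonomy in $\mathbf{SU}(2)$ and the Kodaira classification exactly as you describe. The only slip is in the constants of your displayed formula: the paper's \eqref{cible} reads $\frac{1}{4\pi^2}\int_M(\frac{s^2}{24}+2|W_+|^2)\,d\mu_g$, and $3\cdot 12^{-2}=\frac{1}{48}$, not $\frac{1}{24}$ — but, as you note, this does not affect the sign argument.
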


While  the first part  of   Theorem \ref{ht}  clearly follows from the above discussion, the second part requires further explanation.
We have already seen that in the equality case, $(M,g)$ would have to be Ricci-flat, and that  $\Lambda^+$ would have to be flat with respect to the Levi-Civita connection.
  However, 
 the Cheeger-Gromoll splitting theorem \cite{cg} implies that the universal cover $(\widetilde{X},\widetilde{g})$ 
 of any compact Ricci-flat $n$-manifold $(X,g)$  must be    the  Riemannian 
 product of a  simply-connected compact Ricci-flat manifold  $(Y^{n-k},h)$ with   a  Euclidean space $\RR^{k}$ of  complementary   dimension  $k\in \{ 0, \ldots , n\}$.
 But since we have already seen that any Ricci-flat manifold of dimension  $< 4$ is actually flat, this   means, in our case,  that  either   $(\widetilde{M}^4,\widetilde{g})$  is compact,
  or  
  $(M^4,g)$ is flat.  However, \label{ricflat} 
  if  $(M^4,g)$ is flat,   Bieberbach's Theorem
  \cite{bie}  asserts that some finite cover of $(M^4,g)$ is a flat torus, and  we are  done. Otherwise, the flatness of $\Lambda^+$ implies that 
   the compact universal cover $(\widetilde{M},\widetilde{g})$ 
  has holonomy  $\subset \mathbf{SU}(2)$, thus making   $\widetilde{g}$  a Calabi-Yau metric on a simply-connected compact complex surface $(\widetilde{M},J)$
  with $c_1=0$.
     The classification theory  of compact complex surfaces \cite{bpv,kodrr} therefore  tells us   that $(\widetilde{M},J)$ must   be a $K3$ surface,
     and so must be diffeomorphic  to a non-singular  quartic  hypersurface in $\CP_3$. 
     
    \bigskip 
          
  Notice that  Theorems \ref{toro} and  \ref{hitme} 
  have  now  become   immediate corollaries of Theorem \ref{ht}.
 Another immediate consequence is the following:
     
     \begin{cor} \label{biz}
      Let $M$ be a smooth simply-connected spin $4$-manifold that admits an Einstein metric $g$. Then $M$ is (unorientedly) homeomorphic 
     to either $S^4$ or  a connected sum 
     $$\ell ( K3 )\# m (S^2 \times S^2),$$
     where   $m \geq \ell$ if $\ell > 1$.
     \end{cor}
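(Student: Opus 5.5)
The plan is to combine the Hitchin--Thorpe inequality of Theorem \ref{ht}, applied with \emph{both} orientations of $M$, with Rokhlin's theorem and the homeomorphism classification of Theorem \ref{fdmn}. Reversing the orientation of an Einstein manifold keeps $g$ Einstein and sends $\tau \mapsto -\tau$. So Theorem \ref{ht} gives $2\chi(M) \geq 3|\tau(M)|$, with equality for the orientation that has $\tau\le 0$ only in the rigid case described there. We are free to reverse orientation, since the conclusion is only up to unoriented homeomorphism, so we fix the orientation with $\tau(M)\le 0$. Because $M$ is simply connected, $\chi = 2 + b_2$. Because $M$ is spin, $Q_\ZZ$ is even, and Rokhlin's theorem then gives $\tau = -16\ell$ for some integer $\ell\geq 0$.

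\emph{Definite case.} If $Q_\ZZ$ is definite, Donaldson's theorem forces it to be diagonal, i.e.\ $\pm I$. A diagonal form is odd unless $b_2=0$, so evenness gives $b_2=0$. Theorem \ref{fdmn} then shows that $M$ is homeomorphic to $S^4$.

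\emph{Indefinite case.} Here $Q_\ZZ$ is even, unimodular and indefinite, so by Milnor's classification it is determined by its rank and signature. This already shows that $M$ is homeomorphic, via Theorem \ref{fdmn}, to any model spin manifold with the same $b_2$ and $\tau$. Write $b_2 = 16\ell + 2n$, where $n = b_+$. The model $\ell(K3)\# m(S^2\times S^2)$ has $\tau=-16\ell$ and $b_2 = 22\ell + 2m$. Matching these values requires choosing $m = n - 3\ell$, so we must check that $n \geq 3\ell$; if $\ell=0$ we simply take $m=n$. The Hitchin--Thorpe inequality reads
\[ 2(2 + 16\ell + 2n) \geq 48\ell , \]
that is, $n \ge 4\ell - 1$. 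For $\ell \ge 1$ this gives $n\ge 3\ell$, and hence $m = n-3\ell \geq \ell-1$. The model is therefore available, and Theorem \ref{fdmn} supplies the homeomorphism.

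\emph{The bound $m\geq \ell$ for $\ell>1$.} This is the step that needs the equality clause of Theorem \ref{ht}, and I expect it to be the only delicate point. Suppose $m = \ell - 1$. Then $n = 4\ell-1$ and $2\chi + 3\tau = 0$. By the rigidity part of Theorem \ref{ht}, $M$ has a finite cover that is a flat $T^4$ or a Calabi--Yau $K3$. Since $M$ is simply connected, $M$ is itself diffeomorphic to $T^4$ or $K3$. The torus is excluded because $\pi_1(T^4)\ne 0$, so $M$ is $K3$, which has $\tau=-16$ and hence $\ell = 1$. Thus for $\ell>1$ we must have $m\geq \ell$, which completes the argument.
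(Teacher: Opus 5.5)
Your proposal is correct and follows essentially the same route as the paper: you orient $M$ so that $\tau\le 0$, use Rokhlin and Donaldson to reduce to the forms $2\ell\mathsf{E}_8\oplus n\mathsf{H}$, and turn the Hitchin--Thorpe inequality into $n\ge 4\ell-1$. You then use the rigidity clause of Theorem \ref{ht} to exclude $n=4\ell-1$ unless $M$ is $K3$, and conclude with Theorem \ref{fdmn}. (Incidentally, your definite case does not even need Donaldson's theorem, since Hitchin--Thorpe alone gives $b_+\ge 4\ell-1$, which rules out a nonzero negative-definite even form.)
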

     \begin{proof}
     By reversing the orientation of $M$ if necessary, we may assume that $\tau (M) \leq 0$. By the Rokhlin's theorem \cite{rokhlin}, Donaldson's theorem \cite{donaldson}, and the classification \cite{hm} of
     unimodular forms over $\ZZ$, the intersection form of $M$ is then  isomorphic to $2\ell \mathsf{E}_8\oplus k \mathsf{H}$ for suitable natural numbers $k$ and $\ell$,
     where 
$$ \mathsf{E}_8:= \left[\begin{array}{rrrrrrrr}-2 & 1 &  &  &  &  &  &  \\1 & -2 & 1 &  &  &  &  &  \\ & 1 & -2 & 1 &  &  &  &  \\ &  & 1 & -2 & 1 &  &  &  \\ &  &  &1 & -2 & 1 &  & 1 \\ &  &  &  &1 & -2 & 1 &  \\ &  &  &  &  & 1 & -2 &  \\ &  &  &  & 1 &  &  & -2\end{array}\right]$$
is a celebrated  negative-definite  unimodular  form on $\ZZ^8$, 
and where 
$$
\mathsf{H} := \left[\begin{array}{cc}0 & 1 \\1 & 0\end{array}\right]
$$
is the intersection form of $S^2 \times S^2$. 
    But a  simply-connected $4$-manifold $M$ with  intersection form  $2\ell \mathsf{E}_8\oplus k \mathsf{H}$ must have  $\chi (M) = 2+ 16 \ell + 2k$ and $\tau (M) = -16 \ell$, and hence 
     $(2\chi + 3\tau )(M) = 4 +4k - 16 \ell$. Thus,   $(2\chi + 3\tau )(M) \geq 0$ iff $k \geq 4\ell  -1$, and $(2\chi + 3\tau )(M) > 0$  iff $k \geq 4\ell$. Since \eqref{htineq} can only
     be saturated by a simply-connected Einstein $4$-manifold if it is diffeomorphic $K3$, and since the intersection form of $K3$ is isomorphic to $2 \mathsf{E}_8\oplus 3 \mathsf{H}$, this shows that
      any correctly-oriented  simply-connected     Einstein, spin $4$-manifold  will  have the same intersection form as $\ell ( K3 )\# m (S^2 \times S^2)$,
     where $m:= k-3\ell$ moreover satisfies $m \geq \ell$ if $\ell > 1$. Theorem \ref{fdmn} therefore  guarantees that any such $M$ must  be homeomorphic to $\ell ( K3 )\# m (S^2 \times S^2)$, where $m\geq \ell$
     if $\ell >1$.
     \end{proof}

In the non-spin case,  we similarly obtain the following::

 \begin{cor} \label{wiz}
  Let $M$ be a smooth simply-connected non-spin $4$-manifold that admits an Einstein metric $g$. 
  Then $M$ is orientedly homeomorphic to $k \CP_2\# \ell \cpbar$ for some $k,\ell \in \NN$  
   satisfying  $5k+3 \geq  \ell \geq (k-3)/5$.
  \end{cor}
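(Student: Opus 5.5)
The plan is to imitate the proof of Corollary \ref{biz}: first pin down the homeomorphism type with Theorem \ref{fdmn}, then apply Theorem \ref{ht} to both orientations of $M$. Since $M$ is smooth, simply connected and non-spin, Theorem \ref{fdmn} together with the discussion following it says that $M$, with its given orientation, is orientedly homeomorphic to $k\CP_2\#\ell\cpbar$. Here $k=b_+(M)$ and $\ell=b_-(M)$. This takes care of the first assertion, so everything else is a numerical inequality on $(k,\ell)$.

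We then compute the invariants of $M$. Simple connectivity gives $b_1=0$, so
$$\chi(M)=2+k+\ell, \qquad \tau(M)=k-\ell,$$
and therefore $(2\chi+3\tau)(M)=4+5k-\ell$. An Einstein metric does not depend on a choice of orientation, so $g$ is also an Einstein metric on $\overline{M}$. Reversing orientation leaves $\chi$ unchanged and replaces $\tau$ by $-\tau$. Applying Theorem \ref{ht} to $M$ and to $\overline{M}$ yields the two inequalities
$$4+5k-\ell\geq 0, \qquad 4+5\ell-k\geq 0.$$

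The stated bounds are strict: $\ell\le 5k+3$ and $\ell\ge (k-3)/5$. The first is equivalent to $4+5k-\ell>0$, and the second to $5\ell-k+3\geq 0$, i.e. $4+5\ell-k>0$, since all quantities are integers. The only remaining step, and the only point needing any thought, is to exclude the equality cases. Suppose $(2\chi\pm3\tau)(M)=0$. The rigidity part of Theorem \ref{ht} then says that $(M,g)$ has a finite cover that is a flat $T^4$ or a Calabi--Yau $K3$. Because $M$ is simply connected, that cover is $M$ itself. The torus is ruled out because it is not simply connected, and $K3$ is ruled out because it is spin while $M$ is not. Hence both inequalities are strict, which gives $5k+3\geq\ell\geq(k-3)/5$. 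As a consistency check, the case $k=\ell=0$ gives $M$ homeomorphic to $S^4$; this is spin, so that case does not actually arise here, and the bound remains valid regardless.
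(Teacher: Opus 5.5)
Your proposal is correct and follows the paper's proof essentially step for step. You identify $M$ with $k\CP_2\#\ell\cpbar$ via Theorem \ref{fdmn} and compute $(2\chi+3\tau)(M)=4+5k-\ell$. You then use the rigidity clause of Theorem \ref{ht} to make the inequality strict, since simply connected and non-spin rules out $T^4$ and $K3$, and reverse the orientation to obtain $k\le 5\ell+3$. Your explicit exclusion of the equality cases and the integer bookkeeping just spell out what the paper states in one line.
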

  \begin{proof} We  already observed that Theorem \ref{fdmn} implies that any simply connected non-spin $M$ must be homeomorphic 
  to $k \CP_2\# \ell \cpbar$  for some $(k,\ell)\neq (0,0)$. We then have $(2\chi + 3\tau )(M) = 4+ 5k - \ell$. Since Theorem \ref{ht} implies that
 equality in  \eqref{htineq} cannot hold for a simply-connected non-spin manifold,  it follows in our setting that a necessary condition for the existence of 
 an Einstein metric $g$ on $M$ is that $5k+3 \geq  \ell$. But since the Einstein condition is independent of orientation, and since change of orientation 
 interchanges $k$ and $\ell$, we must also have $5\ell +3 \geq  k$ if $M$ admits an Einstein metric. 
  \end{proof} 
  
 Indeed, systematically reversing the orientation of $M$  allows us to deduce the following useful  consequence of  Theorem \ref{ht}: 

\begin{cor}
\label{mira}
If the smooth compact oriented 4-manifold
$M$ admits an Einstein metric $g$, then we also have 
\begin{equation}
\label{revineq}
(2\chi - 3\tau )(M) \geq 0 .
\end{equation}
Moreover,  equality holds in \eqref{revineq} iff $(M,g)$  has a finite Riemannian cover that is  either  a flat $4$-torus $T^4$
or  a   (reverse-oriented) Calabi-Yau  $\overline{K3}$.  
\end{cor}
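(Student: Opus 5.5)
The plan is to apply Theorem \ref{ht} to the orientation-reversed manifold $\overline{M}$, the same smooth manifold as $M$ but with the opposite orientation. The key observation is that the Einstein condition \eqref{eineq} involves only the metric $g$ and not any choice of orientation. So if $g$ is Einstein on $M$, then the same $g$ is an Einstein metric on the smooth compact oriented $4$-manifold $\overline{M}$. Theorem \ref{ht} then gives $(2\chi+3\tau)(\overline{M})\geq 0$.

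Next I will record how the invariants change under orientation reversal. The Euler characteristic $\chi = \sum_j (-1)^j b_j$ does not depend on orientation, so $\chi(\overline{M})=\chi(M)$. Reversing orientation replaces the fundamental class $[M]$ by $-[M]$. Hence the intersection form \eqref{int-form} is replaced by $-Q$, which swaps $b_+$ and $b_-$ and gives $\tau(\overline{M}) = -\tau(M)$. Equivalently, in the Riemannian picture, $\star$ changes sign, so $\Lambda^+$ and $\Lambda^-$ are interchanged, as already noted in \S\ref{non-simple}. Substituting into the inequality for $\overline{M}$ yields $(2\chi - 3\tau)(M)\geq 0$, which is \eqref{revineq}.

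For the equality statement, equality in \eqref{revineq} for $(M,g)$ means exactly that $(\overline{M},g)$ saturates \eqref{htineq}. By the rigidity clause of Theorem \ref{ht}, this happens if and only if $(\overline{M},g)$ has a finite Riemannian cover that is either a flat $T^4$ or a Calabi-Yau $K3$ with its complex orientation. Reversing orientation back, the corresponding cover of $(M,g)$ is either a flat torus or $\overline{K3}$. For the torus case the orientation is irrelevant, since $T^4$ admits orientation-reversing diffeomorphisms. For the converse direction, such a cover saturates the reversed inequality, because $\chi$ and $\tau$ are both multiplied by the degree of a finite covering. Indeed, $\overline{K3}$ has $\chi=24$ and $\tau=16$, so $2\chi-3\tau=0$, and the flat torus has $\chi=\tau=0$.

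There is no real obstacle in this argument. The only point that needs care is checking that orientation reversal flips the sign of $\tau$ while leaving $\chi$ and the Einstein condition unchanged, and then translating the orientation in the equality case so that $K3$ becomes $\overline{K3}$.
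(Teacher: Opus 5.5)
Your proposal is correct and follows the paper's own route. The paper also derives Corollary \ref{mira} by applying Theorem \ref{ht} to the orientation-reversed manifold $\overline{M}$, using that $\chi$ and the Einstein condition are orientation-independent while $\tau$ changes sign. It additionally notes an alternative proof: $(\Lambda^-,\nabla)$ is an anti-self-dual instanton exactly when $g$ is Einstein.
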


 This can in fact be proved  in a rather different way, by applying our previous  observation 
that $(M,g)$ is Einstein iff 
$(\Lambda^-,\nabla )$ is  an {\em anti-self-dual} Yang-Mills instanton. On a compact
oriented Einstein $4$-manifold $(M,g)$, this then tells us that 
the ``instanton number'' of $\Lambda^-$ must satisfy
$$\mathbf{p_1}(\Lambda^-) \leq 0,$$
with equality iff the Levi Civita connection $\nabla$ on $\Lambda^-$ is flat. However, since 
$$-\mathbf{p_1}(\Lambda^-) = \langle p_1(\Lambda^-), [\overline{M}]\rangle = ( 2\chi - 3\tau ) (M),$$
Corollary \ref{mira} then follows rather directly.  

But all of this discussion  can also  be given  more  immediate  geometric meaning by  writing out \eqref{pont} explicitly for $(\Lambda^+, \nabla )$ as
\begin{equation}
\label{gb+}
(2\chi + 3\tau ) (M) = \frac{1}{4\pi^2}\int_M \left(\frac{s^2}{24}+2|W_+|^2
  -\frac{|\stackrel{\circ}{r}|^2}{2}
\right)d\mu_g
\end{equation}
and for $(\Lambda^-, \nabla )$ as 
\begin{equation}
\label{gb-}
(2\chi - 3\tau ) (M) = \frac{1}{4\pi^2}\int_M \left(\frac{s^2}{24}+2|W_-|^2
  -\frac{|\stackrel{\circ}{r}|^2}{2}
\right)d\mu_g~.
\end{equation}
In particular, when $g$ is Einstein, we therefore have 
\begin{equation}
\label{cible}
(2\chi + 3\tau ) (M) = \frac{1}{4\pi^2}\int_M \left(\frac{s^2}{24}+2|W_+|^2
\right)d\mu_g \geq 0 
\end{equation}
and 
\begin{equation}
\label{but}
(2\chi - 3\tau ) (M) = \frac{1}{4\pi^2}\int_M \left(\frac{s^2}{24}+2|W_-|^2
\right)d\mu_g \geq 0 .
\end{equation}
As we will see 
in later sections of this article,  Theorem \ref{hitme} and  Corollary \ref{mira} can therefore  be interestingly  improved whenever we can obtain   non-trivial lower bounds
for the integrals appearing in \eqref{cible} and \eqref{but}. 
It is also worth pointing out  that the sum and difference of equations (\ref{gb+}--\ref{gb-}) now also  yield  the  $4$-dimensional {\sf Gauss-Bonnet} and {\sf signature} formulas
  \begin{eqnarray} \label{cgb} 
\chi (M)&=& \frac{1}{8\pi^2}\int_M \left(\frac{s^2}{24}+|W^+|^2
+ |W^-|^2  -\frac{|\stackrel{\circ}{r}|^2}{2}
\right)d\mu \\ \label{thr} 
\tau (M)&=& \frac{1}{12\pi^2}\int_M \Big(|W^+|^2
- |W^-|^2  \Big) d\mu 
\end{eqnarray}
that  are  the usual starting points for most proofs of \eqref{htineq} and \eqref{revineq}; cf. e.g. 
 \cite{bes,gray-ht,hit,tho} or  other standard references. 

%
%

\

%

\section{The Seiberg-Witten Equations}
\label{sweet}

In \S \ref{hitchstar}, we saw that Theorem \ref{ht} and Corollary \ref{mira} imply that  many smooth compact $4$-manifolds 
do not admit Einstein metrics. However, we also noticed that these results could be strengthened if we could somehow find positive  lower bounds
for the right-hand integrals in \eqref{gb+} or \eqref{gb-}. In this section, we will introduce  the 
 Seiberg-Witten equations, and begin to see how they  can in fact  give rise
to positive lower bounds for these expressions. Remarkably, however, the resulting strengthened   obstructions to the existence of Einstein metrics  will  
depend on  the {\em diffeomorphism} type, rather than the {\em homeomorphism} type,
of the $4$-manifold in question.  This ultimately reflects the fact that the $4$-manifold invariants at play here are  defined directly in terms of   solution spaces of  certain PDE,
which therefore  directly involves  a choice  of  smooth structure on the relevant manifold. These 
 PDE in fact crucially involve a Dirac operator
coupled to a connection on a line bundle. To provide a coherent context for this, we will therefore need to begin by  briefly explaining the notions  of {\em spin} and  {\em spin$^c$ structures}
on  $4$-manifolds; for  more thorough  treatments of these topics,   see   \cite{lawmic,lebsphinx,morgan}.

We previously noted  in \eqref{spin4} that  the universal cover $\mathbf{Spin}(4)$ of $\mathbf{SO}(4)$ is isomorphic to 
the product $\mathbf{Sp}(1)\times \mathbf{Sp}(1)$ of two copies of the multiplicative group  $\mathbf{Sp}(1)$  of unit-norm quaternions. Given an oriented  Riemannian 
$4$-manifold, this makes it natural to ask whether the bundle $\mathfrak{F} \to M$ of oriented orthonormal frames has a double cover
$\widetilde{\mathfrak{F}} \to\mathfrak{F}$ which is fiber-wise modeled on the universal cover $[\mathbf{Sp}(1)\times \mathbf{Sp}(1)]\to \mathbf{SO}(4)$;
when this is possible,  a choice of such an $\widetilde{\mathfrak{F}}$ is called a {\em spin structure} on $(M,g)$.  One can show that $M$ admits a spin structure if and only if 
 $w_2(TM)=0$; and  when this happens,  one then says that $M$ is a {\em spin manifold}. When $M$ is spin,  $H^1(M,\ZZ_2)$   then acts freely and transitively on the 
 set of spin structures on $M$. Because the Gram-Schmidt procedure defines a natural diffeomorphism between the oriented-orthonormal-frame bundles
 associated to any two  metrics $g$ and $g^\prime$ on $M$, all of these  notions  are actually metric-independent. 
%
%
 
 This  now allows us to define 
  the (chiral) spin bundles of a spin Riemannian    $4$-manifold $(M,g)$.  Since    a choice of 
 spin structure on $(M,g)$  is specified by a  principal $\mathbf{Spin}(4)$-bundle
 $$\begin{array}{lc}
  \mathbf{Sp}(1)\times \mathbf{Sp}(1)    \to   & \widetilde{\mathfrak{F}} \\
          &\downarrow \\
         &M
\end{array} 
$$ 
that double-covers the oriented-orthonormal-frame-bundle $\mathfrak{F}$, 
the  standard isomorphism between $\mathbf{Sp}(1)$ and $\mathbf{SU}(2)$ turns the two factor-projections $ [\mathbf{Sp}(1)\times \mathbf{Sp}(1)]\to \mathbf{Sp}(1)$
into a pair $(\varrho_+, \varrho_-)$ of  independent  representations of $\mathbf{Spin}(4)$ on $\CC^2$. The associated bundle construction 
 therefore gives rise to a pair of rank-$2$ Hermitian vector bundles 
 $$\mathbb{S}_\pm =  \widetilde{\mathfrak{F}} \times_{(\mathbf{Spin}(4), {\varrho_\pm})} \CC^2$$
 over $M$, both of which  come equipped with built-in trivializations 
  $$\wedge^2 \mathbb{S}_+=\wedge^2 \mathbb{S}_-= {\CC}.$$
 Since  the bundles $TM$, $\Lambda^+$ and $\Lambda^-$ also arise from the associated bundle construction for specific  representations
 of $\mathbf{SO}(4)$ (and hence of $\mathbf{Spin}(4)$),  careful inspection now reveals that we also have canonical isomorphisms
\begin{eqnarray}
\Lambda^1 \otimes \CC  &=& \Hom (\mathbb{S}_+, \mathbb{S}_-) \label{dog1}\\
\Lambda^+ \otimes \CC &=& \End_0( \mathbb{S}_+) \label{dog+}\\
\Lambda^- \otimes \CC &=& \End_0 (\mathbb{S}_-), \label{dog-}
\end{eqnarray}
where $\End_0$ denotes the trace-free endomorphisms of the relevant bundle. 
On the other hand, since   the Levi-Civita connection on $TM$ naturally induces  a principal $\mathbf{SO}(4)$-connection on the
oriented-orthonormal-frame bundle  $\mathfrak{F}$, and because this then pulls back to a principal $\mathbf{Spin}(4)$-connection
on $\widetilde{\mathfrak{F}}$, the bundles $\mathbb{S}_\pm$ carry induced Hermitian connections $\nabla$ that  are compatible with  their  $\mathbf{SU}(2)=\mathbf{Sp}(1)$
structures. By composing the vector-bundle morphism 
$\bullet : {\Lambda}^1\otimes \mathbb{S}_+
\to \mathbb{S}_-$ induced by \eqref{dog1}
with the covariant derivative operator $$\nabla : \Gamma (\mathbb{S}_+ )\to \Gamma (\Lambda^1 \otimes \mathbb{S}_+ ),$$ 
 we then obtain the (chiral) {\em Dirac operator}

\setlength{\unitlength}{1ex}
\begin{center}\begin{picture}(36,17)(0,3)
\put(10,17){\makebox(0,0){$ \Gamma ({ {\mathbb S}}_+)$}}
\put(18,19){\makebox(0,0){$\dir$}}
\put(18,5){\makebox(0,0){$\Gamma ({\Lambda}^1\otimes { {\mathbb S}}_+)$}}
\put(26,17){\makebox(0,0){$\Gamma ({ {\mathbb S}}_-)$}}
\put(15,12){\makebox(0,0){${\nabla}$}}
\put(21,12){\makebox(0,0){${\Large \bullet} $}}
\put(11,15.5){\vector(2,-3){6}}
\put(19,6.5){\vector(2,3){6}}
\put(14,17){\vector(1,0){8}}
\end{picture}\end{center}
which is an  first-order operator, with  index  given   \cite{hitharm,lawmic} by
\begin{equation}
\label{ind-dir}
\Ind (\dir ) =\dim \ker (\dir ) -\dim \ker (\dir^* ) = - \frac{\tau (M)}{8}.
\end{equation}
Moreover, the interchanging $\mathbb{S}_+$ and $\mathbb{S}_-$ in this construction simply yields
the adjoint $\dir^*$ of our Dirac operator. 
This has two striking  consequences. 

First, because $\mathbb{S}_\pm$ are quaternionic line bundles, 
each carries a parallel complex-anti-linear endomorphism  ${\zap j}: \mathbb{S}_\pm \to \mathbb{S}_\pm$ with ${\zap j}^2=-I$,
and the Dirac operator and its adjoint therefore  satisfy $\dir\circ {\zap j} = {\zap j}\, \circ\dir$ and $\dir^*\circ{\zap j} = {\zap j}\, \circ\dir^*$. This makes both 
 $\ker (\dir )$ and $\ker (\dir^* )$ into  quaternionic vector spaces, so   both  must consequently  have {\em even} complex dimension. It follows
 that $\Ind (\dir )$ must be even, and \eqref{ind-dir} therefore implies {\sf Rokhlin's Theorem}  \cite{hitharm,lawmic,rokhlin}: {\em the signature
 of any smooth spin compact $4$-manifold must be divisible by $16$.} Here we remind the reader  that this fact has  previously been mentioned, and  played an important  role    in our discussion 
 of the spin case of Theorem \ref{fdmn}.

Second, the so-called Lichnerowicz Weitzenb\"ock formula \cite{hitharm,lawmic,lic} 
\begin{equation}
\label{lichne}
\dir^*\!\! \dir = \nabla^*\nabla +\frac{s}{4}
\end{equation}
now gives rise to obstructions to the existence of positive-scalar-curvature metrics on certain $4$-manifolds. 
Indeed, if $M$ is a smooth compact spin $4$-manifold with signature $\tau (M) < 0$, then, for any Riemannian metric
$g$ on $M$, the index formula \eqref{ind-dir} predicts that there must be some  $\Phi \in \Gamma (\mathbb{S}_+ )$ 
that satisfies $\dir \Phi =0$ and $\Phi\not\equiv 0$. Applying \eqref{lichne} to $\Phi$, and then taking the $L^2$-inner-product with $\Phi$, 
we then have 
$$0=\int_M |\dir \Phi |^2 d\mu_g = \int_M  |\nabla \Phi |^2  d\mu_g + \int_M \frac{s}{4} |\Phi |^2  d\mu_g\geq \int_M \frac{s}{4} |\Phi |^2  d\mu_g ,$$ 
which would be  a contradiction if the scalar curvature $s$ of $g$ were everywhere positive.
 For example,  this argument shows that 
 $K3$, which has $\tau = -16$, cannot admit a metric with $s>0$; and indeed, a refinement of the same argument shows that 
 the only Riemannian metrics on $K3$ with  $s\geq 0$ are the Ricci-flat, hyper-K\"ahler ones.  By reversing the orientation of $M$ when necessary, 
 we  also now see \cite{hitharm,lawmic} that a smooth compact spin $4$-manifold $M$ 
 with  $\tau (M) \neq  0$ can  thus never admit metrics of positive scalar curvature.
 
 However,  we have also seen that non-spin $4$-manifolds exist in  profusion, and it is only natural to wonder whether any analogue of these 
 arguments might be possible in a non-spin setting. Of course, 
  the above construction of the
 Dirac operator  falls apart in the absence of a spin structure.  Nonetheless, one {\em can}  always construct  twisted analogues  of the Dirac operator
 on any smooth compact $4$-manifold, by instead introducing the more flexible notion of  a
{\em spin$^c$ structure}.  
  Here,  a spin$^c$ structure on a Riemannian $4$-manifold $(M, g)$ is specified  by a choice of  a principal $\mathbf{Spin}^c (4)$-bundle
$\widehat{\mathfrak{F}}\to M$, 
where 
$$\mathbf{Spin}^c (4):= [\mathbf{Spin} (4)\times \mathbf{U}(1)]/\ZZ_2 = [\mathbf{Sp} (1)\times \mathbf{Sp} (1)\times \mathbf{U}(1)]/\langle (-1,-1,-1)\rangle ,$$
together with an  isomorphism between the principal $\mathbf{SO}(4)$-bundle 
$\widehat{\mathfrak{F}}/\mathbf{U}(1)$ and the oriented-orthonormal-frame bundle $\mathfrak{F}\to M$.
Up to isomorphism,  such a structure $\mathfrak{c}$ is  determined by the Chern class $c_1\in H^2 (\mathfrak{F},\ZZ)$
of the circle bundle $\widehat{\mathfrak{F}}\to \mathfrak{F}$, where  this  $c_1$   can    be  taken to be any element of $H^2 (\mathfrak{F},\ZZ)$ 
whose restriction to a fiber yields  the non-trivial element of $$H^2 (\mathbf{SO}(4),\ZZ) \cong \ZZ_2.$$ Every smooth compact $4$-manifold $M$ admits \cite{hiho,lebsphinx} 
such structures, while  the cohomology group $H^2(M, \ZZ)$ acts freely and transitively on the spin$^c$ structures on $M$ via  pull-back to $\mathfrak{F}$, followed by addition 
in $H^2(\mathfrak{F}, \ZZ)$.

Now dropping  one $\mathbf{Sp} (1)$-factor  or the other   gives rise to two different surjective homomorphisms
  $$ [\mathbf{Sp} (1)\times \mathbf{Sp} (1)\times \mathbf{U}(1)]/\langle (-1,-1,-1)\rangle
   \longrightarrow [\mathbf{Sp} (1)\times \mathbf{U}(1)]/\langle (-1,-1)\rangle = \mathbf{U}(2)$$
   and hence to a pair  $(\varrho_+, \varrho_-)$ of   different representations of $\mathbf{Spin}^c (4)$ on $\CC^2$. 
   The associated Hermitian vector bundles 
   $$\mathbb{V}_\pm =  \widehat{\mathfrak{F}} \times_{(\mathbf{Spin}^c(4), {\varrho_\pm})} \CC^2$$
   then satify   generalized versions of some key properties 
   \begin{eqnarray}
\Lambda^1 \otimes \CC  &=& \Hom (\mathbb{V}_+, \mathbb{V}_-) \label{bone1}\\
\Lambda^+ \otimes \CC &=& \End_0( \mathbb{V}_+) \label{bone+}\\
\Lambda^- \otimes \CC &=& \End_0 (\mathbb{V}_-), \label{bone-}
\end{eqnarray}
that we previously noticed  in our discussion of  the spin bundles. One  key difference, however, is  that the line-bundle 
$$L=\wedge^2 \mathbb{V}_+=\wedge^2 \mathbb{V}_-$$
is typically  no longer trivial, but  instead  satisfies 
\begin{equation}
\label{lift}
c_1(L) \equiv w_2 (TM) \bmod 2  ,
\end{equation}
in the sense that $\boldsymbol{\varrho} (c_1(L))=w_2 (TM)$ in   the long exact sequence 
\begin{equation}
\label{promo}
\cdots \to H^2 (M, \ZZ ) \stackrel{2\cdot}{\to} H^2 (M, \ZZ ) \stackrel{\boldsymbol{\varrho}}{\to}  H^2 (M, \ZZ_2 )  \stackrel{\boldsymbol{\beta}}{\to}  H^3 (M, \ZZ )\to \cdots
\end{equation}
where  the fact  that  $W_3(M):=\boldsymbol{\beta} (w_2(TM)) =0$ for any $M^4$ is the key point required to   show \cite{hiho,lebsphinx} that every  $4$-manifold $M$ admits spin$^c$ structures. 

At least locally, the spin and spin$^c$ formalisms are related by 
\begin{equation}
\label{formality}
 \mathbb{V}_\pm =  \mathbb{S}_\pm  \otimes  L^{1/2},
\end{equation}
and this   formal statement   also serves a useful purpose globally by reminding us that, 
 while the transition functions of $\mathbb{S}_\pm$ and  $L^{1/2}$ are {\em a priori} sign-ambiguous, 
 these ambiguous  signs can actually be consistently chosen so as to yield the    transition functions of  the well-defined  vector-bundles
$\mathbb{V}_\pm$. At the same time, the free and transitive action of $H^2 (M, \ZZ)$ on spin$^c$ structures  is concretely realized by 
$$\mathbb{V}_\pm \rightsquigarrow   \mathbb{V}_\pm \otimes E$$
where an arbitrary element of $H^2 (M, \ZZ)$ is represented by the first Chern class $c_1(E) \in H^2 (M, \ZZ)$ of
a complex line-bundle $E\to M$. This action   has  the effect that 
$$L \rightsquigarrow   L \otimes E^{\otimes 2}$$
and hence 
$$c_1(L) \rightsquigarrow c_1(L)  + 2 c_1(E) . $$
In particular,  \eqref{promo} guarantees   that   \eqref{lift} is   the only  constraint that needs to  be satisfied  for  an element of $H^2 (M, \ZZ)$ to arise as $c_1(L)$ for a spin$^c$ structure. 
Moreover,  if the $2$-torsion in $H^2 (M, \ZZ)$ is trivial --- as happens, for example,   when $M$ is simply-connected ---
a  spin$^c$ structure on $M$ is then  completely characterized by 
the corresponding 
cohomology classes  $c_1(L)$ satisfying \eqref{lift}. In general, however,  knowing $c_1(L)$ 
will only determine the  spin$^c$ structure modulo the $2$-torsion subgroup of $H^2(M, \ZZ)$.

The formal expression \eqref{formality} also reveals an important   way  in which spin$^c$ structures 
are more loosely tethered to 
Riemannian geometry than are  spin structures,   because  the Levi-Civita connection of $(M,g)$ does not induce  
a preferred connection on the twisted spin-bundles $\mathbb{V}_\pm$. Instead, these bundles inherit a
{\em family} of different Hermitian connections $\nabla_\theta$, parameterized by the set of  principal $\mathbf{U}(1)$ connections $\theta$
on  $L$. But once having made such a choice, we may then follow the covariant-derivative operator 
$$\nabla_{\theta} : \Gamma ({\mathbb V}_{+})\to \Gamma (\Lambda^1\otimes {\mathbb V}_{+})$$
with  the ``Clifford multiplication''  map
$$\bullet: \Lambda^1\otimes {\mathbb V}_{+}\to {\mathbb V}_{-}$$
induced by \eqref{bone+}, and thereby obtain a  
spin$^c$ analogue  
$$\dir_\theta : \Gamma (\mathbb{V}_+) \to \Gamma (\mathbb{V}_-)$$
of the standard Dirac operator:
\setlength{\unitlength}{1ex}
\begin{center}\begin{picture}(36,17)(0,3)
\put(10,17){\makebox(0,0){$ \Gamma ( \mathbb{V}_+)$}}
\put(18,19){\makebox(0,0){$\dir_\theta$}}
\put(18,5){\makebox(0,0){$\Gamma ({\Lambda}^1\otimes { {\mathbb V}}_+)$}}
\put(26,17){\makebox(0,0){$\Gamma (\mathbb{V}_-)$}}
\put(15,12){\makebox(0,0){$\nabla_\theta$}}
\put(21,12){\makebox(0,0){${\Large \bullet} $}}
\put(11,15.5){\vector(2,-3){6}}
\put(19,6.5){\vector(2,3){6}}
\put(14,17){\vector(1,0){8}}
\end{picture}\end{center}
This is once again 
 an elliptic first-order  operator,  but its   index is instead  now given   \cite{hitharm,lawmic} by
\begin{equation}
\label{ind-dir-c}
\Ind (\dir_\theta ) =\dim \ker (\dir_\theta ) -\dim \ker (\dir^*_\theta ) =  \frac{c_1^2(L)-\tau (M)}{8} 
\end{equation}
where $c_1^2(L):= \langle c_1(L)\cup c_1(L), [M] \rangle = Q (c_1(L), c_1(L))$. The Lichnerowicz Weitzenb\"ock formula  \eqref{lichne} now generalizes \cite{hitharm,lawmic} 
as 
\begin{equation}
\label{lichne-c}
\dir_{\theta}^{\ast}\! \dir_{\theta} = \nabla_\theta^*\nabla_\theta + \frac{s}{4} - \frac{1}{2} F_\theta^+\bullet
\end{equation}
where  the self-dual part  $F_{\theta}^{+}\in i  \Lambda^+$ of the curvature of $L$ 
  acts on $\mathbb{V}_+$ via \eqref{bone+}. As a consequence, any $\Phi \in \Gamma (\mathbb{V}_+)$ satisfies 
  \begin{equation}
\label{wtw} 
\langle \Phi , \dir_{\theta}^{\ast}\! \dir_{\theta} \Phi  \rangle = \frac{1}{2}\Delta |\Phi |^2 + |\nabla_{\theta} \Phi |^2 + 
\frac{s}{4} |\Phi |^2 + 2 \langle -iF_{\theta}^{+} , \sigma (\Phi ) \rangle 
\end{equation}
for a  real-quadratic map 
$$\sigma : \mathbb{V}_+\to \Lambda^+$$
that
is  uniquely characterized   by 
  $$\sigma (\Phi)\bullet  \Psi =  i \left[ \langle \Psi , \Phi \rangle \Phi - \frac{1}{2} |\Phi|^2 \Psi \right]$$
  in terms of  the (Clifford) multiplication  defined by  \eqref{bone+}, so that 
\begin{equation}
\label{radial}
|\sigma (\Phi )| = \frac{|\Phi|^2}{2\sqrt{2}}
\end{equation} 
for any $\Phi\in \mathbb{V}_+$, and 
\begin{equation}
\label{clef} 
 \langle \omega , \sigma (\Phi ) \rangle = -\frac{i}{4} \langle  \omega  \bullet \Phi , \Phi \rangle 
\end{equation}
for any complex-valued self-dual $2$-form $\omega \in \CC\otimes \Lambda^+$.

On  a spin manifold, the 
standard Dirac operator is entirely determined by the relevant   Riemannian metric,  and this explains why the Dirac operator
can be used to prove, via  \eqref{lichne},  that 
positive-scalar-curvature metrics cannot exist on $4$-dimensional  spin manifolds of non-zero signature.   
However, this  does not easily  generalize to the spin$^c$ setting, because the curvature
of the connection $\theta$ on the  line bundle $L$ appears  in \eqref{wtw},
and is in principle entirely independent of the Riemannian geometry of $(M,g)$. 
To overcome this, one therefore needs to impose  conditions on $\theta$ in order to somehow  tie it 
more closely to the   Riemannian geometry of $(M,g)$.  A truly remarkable way of doing this,   with powerful geometric ramifications,  was   first  introduced  by Witten 
 \cite{witten}, and  involves  requiring  $\Phi$ and $\theta$ to jointly  solve  a non-linear coupled 
system of PDE. 

Given a spin$^c$ structure on a compact oriented Riemannian $4$-manifold $(M,g)$, the {\sf Seiberg-Witten equations} demand 
  that a  Hermitian connection $\theta$ on $L$ and a twisted spinor $\Phi \in \Gamma (\mathbb{V}_+)$ jointly  satisfy 
 the coupled equations 
 \begin{eqnarray} \dir_\theta\Phi &=&0\label{drc}\\
 F_\theta^+&=&i \sigma(\Phi) .\label{sd}\end{eqnarray}
 However,  \eqref{wtw} and \eqref{radial} then imply  that  $\Phi$  satisfies  
  \begin{equation}
 0=	2\Delta |\Phi|^2 + 4|\nabla_{\theta}\Phi|^2 +s|\Phi|^2 + |\Phi|^4 ,	
 	\label{wnbk}
 \end{equation}
 where $\Delta = d^*d$ is the positive-spectrum Laplacian on functions.
 In particular, 
one immediately sees that the Seiberg-Witten equations  (\ref{drc}--\ref{sd}) cannot admit an {\em irreducible}  solution 
relative to a metric $g$ with  $s >0$, where a solution $(\Phi, \theta )$ is  called {irreducible} if  $\Phi \not\equiv 0$. 
But  why would one even expect  solutions of this system  to exist, anyway? 
The Seiberg-Witten  system is non-linear, so one certainly cannot appeal to an index calculation
to predict the existence of  solutions. Instead,  Witten had the  remarkable insight that one can instead define a 
new invariant of a smooth compact oriented $4$-manifold with  fixed  spin$^c$  structure by  ``counting'' solutions
of these equations,   in a  manner that can then be shown to be metric-independent. 

But  whenever the solution space of the Seiberg-Witten equations  is non-empty, it is automatically infinite-dimensional, 
because 
 the {\em gauge group}  
$$\mathscr{G} = \{ \mbox{smooth maps } {\zap f} : M\to S^1\subset \CC \}$$
of circle-valued functions 
 acts on solutions of   (\ref{drc}--\ref{sd}) by 
 $$(\Phi , \theta ) \longmapsto ({\zap f}\Phi , \theta + 2d\log {\zap f}),$$
 thereby carrying solutions of (\ref{drc}--\ref{sd}) into new solutions that 
 essentially just differ by automorphisms of $L$, and so are  
  geometrically really
 the same. Given a spin$^c$ structure $\mathfrak{c}$ on $M$, we are  thus led to consider, for each  Riemannian metric $g$,
 the Seiberg-Witten {\em moduli space} 
 \begin{equation}
\label{moduli}
 \mathfrak{M}_\mathfrak{c} (g) = \{ \mbox{solutions of  (\ref{drc}--\ref{sd})}\}/ \mathscr{G}, 
\end{equation}
and, as we will see,  this moduli space can always  be shown to be compact. 
However, $ \mathfrak{M}_\mathfrak{c} (g)$ 
is not  necessarily a manifold, and overcoming this obstacle is the key  to defining Witten's invariant. 

To this end, we now 
 generalize the Seiberg-Witten equations by replacing  \eqref{sd} 
 with the ``perturbed'' equation 
\begin{equation}
\label{ptsd}
iF_\theta^++\sigma(\Phi) =\eta
\end{equation}
for some self-dual $2$-form $\eta\in \Gamma (\Lambda^+)$.
If the harmonic part $\eta_H$ of $\eta$ satisfies 
\begin{equation}
\label{no-harm}
\eta_H \neq 2\pi [c_1(L)]^+,
\end{equation}
where $[c_1(L)]^+$ is the image of $c_1(L)$ under the projection $H^2(M, \RR)\to {\mathcal H}^+_{g}$
defined by \eqref{harm}, then any solution of \eqref{drc} and \eqref{ptsd} is  irreducible, in the
sense that $\Phi\not\equiv 0$. The Smale-Sard theorem  \cite{smale-sard} then allows one to show that, 
 for a set of $\eta$ of the second Baire category,
the moduli space 
$$ \mathfrak{M}_\mathfrak{c} (g, \eta ) = \{ \mbox{solutions of  \eqref{drc} and \eqref{ptsd}}\}/ \mathscr{G}$$
is a compact manifold of 
dimension
\begin{equation}
\label{mdim}
\dim \mathfrak{M}_\mathfrak{c} (g, \eta ) = \frac{c_1^2(L) - (2\chi + 3\tau )(M)}{4},
\end{equation}
where, when  this ``expected dimension'' is negative, 
equation  \eqref{mdim}   is just understood to  mean    that $\mathfrak{M}_\mathfrak{c} (g, \eta ) = \varnothing$
for generic $\eta$.

To prove this claim, one first   imposes the 
harmless gauge-fixing condition 
 \begin{equation}
\label{gauge} 
d^*(\theta - \theta_0) =0 , 
\end{equation}
relative to some   arbitrarily-chosen reference connection $\theta_0$  on $L$. This then   cuts 
 down the action of the infinite-dimensional gauge group $\mathscr{G}$ to  that of the $1$-dimensional Lie group
$$\mathscr{G}_0 = \{ \mbox{harmonic  maps } {\zap f} : M\to S^1 \}= S^1 \rtimes H^1 (M, \ZZ ),$$
and  reduces the problem to understanding the fibers of  the ``monopole map''  
\begin{eqnarray}
L^2_{k} ({\mathbb V}_+)  \oplus  L^2_{k} (\Lambda^1)
&{\longrightarrow}& 
L^2_{k-1}({\mathbb V}_-) \oplus L^2_{k-1}(\Lambda^+)  \oplus L^2_{k-1}/\RR \label{raw}\\
(\Phi, ~\vartheta )\qquad  &\longmapsto& (D_{\theta_0 + i\vartheta} \Phi , ~iF^+_{\theta_0} - d^+\vartheta + \sigma (\Phi ) , ~d^*\vartheta )
\nonumber
\end{eqnarray}
for any sufficiently large $k$.  By ellipticity, this is a Fredholm map, and  the Index Theorem, applied to the linearization, 
tells us that the  Fredholm index of \eqref{raw} equals the right-hand side 
of  \eqref{mdim} plus one, where the $+1$ arises from our having modded out by the constant functions $\RR$
in the codomain. The  moduli space $\mathfrak{M}_\mathfrak{c} (g, \eta )$  then becomes the fiber  over 
 $(0,\eta , 0)$, modulo the action of  $\mathscr{G}_0$. 
Because the linearization of $\dir_\theta \oplus d^*$ at an irreducible solution  always maps surjectively  onto 
$L^2_{k-1}({\mathbb V}_-)   \oplus L^2_{k-1}/\RR$, 
 the Smale-Sard theorem \cite{smale-sard} then tells us that  $(0,\eta, 0)$ is a regular value 
for  generic $\eta$,
and  the corresponding fiber is therefore   a smooth manifold by the implicit function theorem. 
On the other hand, 
compactness follows from  the generalization 
\begin{equation}
\label{pwbk}
 0=	2\Delta |\Phi|^2 + 4|\nabla_{\theta}\Phi|^2 +s|\Phi|^2 + |\Phi|^4 - 8 \langle \eta , \sigma (\Phi )\rangle
\end{equation}
of \eqref{wnbk} arising from \eqref{drc} and \eqref{ptsd}, because this Weitzenb\"ock formula
implies that any irreducible solution satisfies the  point-wise  bound 
\begin{equation}
\label{squeeze}
 |\Phi |^2 \leq \max (2\sqrt{2}|\eta | - s)
\end{equation}
everywhere. Since the action of $\mathscr{G}_0$ also allows us to  assume that the harmonic part of $\vartheta$ is 
bounded,   boot-strapping then  shows that 
$(\Phi , \vartheta)$ belongs to a bounded subset of $L^2_{k+1} ({\mathbb V}_+)  \oplus  L^2_{k+1} (\Lambda^1)$
for any large $k$, and  the Rellich theorem then says that  its image in $L^2_{k} ({\mathbb V}_+)  \oplus  
L^2_{k} (\Lambda^1)$ is necessarily compact. Since \eqref{no-harm} again  implies that every solution is irreducible, 
 $\mathscr{G}_0$ acts
freely, and  $\mathfrak{M}_\mathfrak{c} (g, \eta )$
is therefore  a smooth compact manifold, whose  dimension is given by \eqref{mdim},  for ``most'' choices of $\eta$.  

To define the earliest version of the Seiberg-Witten invariant, we now restrict ourselves to the case where the ``expected dimension'' \eqref{mdim} 
of the moduli space is {\em zero}, and also assume that  $b_+(M) \geq 2$. Remarkably enough, the 
 expected dimension \eqref{mdim}  vanishes  if and only if 
 the spin$^c$ structure $\mathfrak{c}$ is the one  determined   by some  globally defined orientation-compatible 
almost-complex structure 
$J$ on  $M$. Because we have assumed that  $b_+(M) \geq 2$,  any two regular-value choices of $\eta$ satisfying  \eqref{no-harm}
can  be joined by a smooth path $\eta (t)$ satisfying \eqref{no-harm} for all $t$, and such a path 
 then has an arbitrarily  small deformation that is transverse to the monopole map \eqref{raw}. Taking the inverse image
and modding out by $\mathscr{G}_0$ then gives a $1$-dimensional cobordism between $0$-dimensional 
moduli spaces associated with our two points. We may therefore define a $\ZZ_2$-valued invariant  \cite{KM}  by simply
 setting $n_\mathfrak{c}(M)\in \ZZ_2$ equal to $\# \mathfrak{M}_\mathfrak{c} (g, \eta ) \bmod 2$ when $(0,\eta, 0)$
 a regular value of \eqref{raw}. This  invariant is then {\em metric-independent}, because one can more generally 
construct cobordisms of the moduli spaces by  considering paths
$(g(t), \eta (t))$ where  the metric also varies. When   $n_\mathfrak{c}(M)$ 
 is non-zero, it then  follows that the {\em unperturbed} Seiberg-Witten equations (\ref{drc}--\ref{sd}) must have a solution, relative to the
given spin$^c$ structure $\mathfrak{c}$,  for any metric $g$. Indeed, 
if there were no solution for a metric $g$ satisfying $[c_1(L)]^+\neq 0$,
then 
$\eta =0$ would satisfy \eqref{no-harm}, and the absence of solutions would then make $(0,0,0)$ 
  a  {\em regular value} of the monopole map \eqref{raw}; thus, the count of solutions would then 
say  that  $n_\mathfrak{c}(M)$ vanished, thereby  contradicting our hypothesis. 
On the other hand, when  $[c_1(L)]^+= 0$ with respect to a given  metric $g$, 
we can still  produce a {\em reducible} solution of the equations
by just setting $\Phi\equiv 0$ and then  choosing $\theta$ so as to make  the $2$-form $F_\theta$  harmonic; thus, the assertion 
remains  true even in this ``degenerate'' case, albeit for essentially   trivial reasons.

Of course, the above discussion would have been a total waste of effort  in the absence of   examples where the invariant is non-zero. 
Fortunately, however, such examples do actually exist in profusion:

\begin{thm}[Witten {\em et al.}] \label{proton}
 If $(M^4,J)$ is a compact  complex surface of {\em K\"ahler type}  with $b_+ > 1$,  then $n_\mathfrak{c} (M) \neq 0$
for the  spin$^c$ structure  $\mathfrak{c}$ determined by the integrable almost-complex structure $J$. 
 \end{thm}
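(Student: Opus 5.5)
We will compute the invariant by choosing a convenient metric and perturbation rather than a generic one: a K\"ahler metric $g$ with K\"ahler form $\omega$, together with a large multiple of $\omega$ as the perturbation. This is Taubes' approach in the K\"ahler setting. Since $n_\mathfrak{c}(M)$ is metric- and perturbation-independent when $b_+>1$, it suffices to show that for $\eta$ of the form $\eta = -\frac{r}{4}\omega + (\text{small generic correction})$ with $r\gg 0$, the moduli space consists of exactly one point, counted with multiplicity one.

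First we make the bundles concrete. For the spin$^c$ structure determined by $J$ we have $\mathbb{V}_+ = \Lambda^{0,0}\oplus \Lambda^{0,2}$ and $L = K^{-1}$. The Dirac operator is $\sqrt{2}(\bar\partial + \bar\partial^*)$ when $\theta$ is the Chern connection on $K^{-1}$, and in general it is this operator twisted by a unitary connection. We write $\Phi = (f,\phi)$ with $f$ a function and $\phi$ a $(0,2)$-form, both twisted by a line bundle $E$ with $L = K^{-1}\otimes E^2$. For the canonical structure $E$ is trivial, but we keep $E$ general in the argument.

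Decomposing $\Lambda^+\otimes\CC = \CC\omega \oplus \Lambda^{2,0}\oplus\Lambda^{0,2}$, the equation \eqref{ptsd} splits into an $\omega$-component and a $(0,2)$-component:
\begin{equation*}
(iF_\theta)^{\omega} = \tfrac{1}{8}(|\phi|^2-|f|^2)+\tfrac{r}{4}, \qquad F^{0,2}_\theta \propto \bar f\phi .
\end{equation*}

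The key step is a Weitzenb\"ock/integration-by-parts argument. Apply $\bar\partial$ to the Dirac equation $\bar\partial f + \bar\partial^*\phi = 0$ and integrate against $\phi$. Because $F^{0,2}$ is proportional to $\bar f\phi$, this yields
\begin{equation*}
\int |\bar\partial^*\phi|^2 + \tfrac{1}{8}|f|^2|\phi|^2 \,d\mu = 0 ,
\end{equation*}
up to positive constants. Hence either $f\equiv 0$ or $\phi\equiv 0$. If $f\equiv 0$, integrating the $\omega$-component shows that $\int |\phi|^2 + 2r\,\mathrm{vol}$ equals a topological quantity $\propto c_1(L)\cdot[\omega]$. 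This is impossible once $r$ is large, so $\phi\equiv 0$.

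Then $F^{0,2}=0$, so $\theta$ makes $E$ holomorphic and $f$ is a holomorphic section of $E$ solving the vortex-type equation
\begin{equation*}
i\Lambda F_\theta = \tfrac{1}{8}(r-|f|^2).
\end{equation*}
For the canonical structure $E$ is trivial, so $f$ is constant and $\theta$ is the Chern connection. Thus $|f|^2 = r - s$-type data, suitably adjusted, and after perturbing $\omega$'s conformal factor, or solving the Kazdan--Warner equation $\Delta u + \tfrac{1}{8}(e^{u}r - \ldots)=0$, there is a unique solution up to gauge. Existence and uniqueness of the solution of this scalar equation follow from the standard maximum principle and sub/super-solution argument for the Kazdan--Warner type equation, since the right side is monotone in $u$.

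The main obstacle is transversality: showing that this single point is a regular point of the monopole map \eqref{raw}, so that it genuinely counts as $1 \bmod 2$. The plan is to linearize at $(f,\phi)=(\text{const},0)$ and run the same $\bar\partial$/$\bar\partial^*$ splitting on the deformation complex. The kernel then reduces to holomorphic sections of the trivial bundle, modulo gauge, that are also infinitesimal vortex deformations; the linearized vortex operator $\Delta + \tfrac{1}{8}|f|^2$ is invertible. Hence the cokernel vanishes, because the index is $0$. Finally, $b_+>1$ guarantees that the chamber condition \eqref{no-harm} holds along the whole path to this large-$r$ perturbation, so $n_\mathfrak{c}(M)=1\neq 0$.
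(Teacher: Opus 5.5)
Your proposal is correct in outline: it is the standard Witten-style computation. It differs from the paper's route in the choice of perturbation.

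\textbf{The paper's route.} The paper takes $\eta=\frac{1+s}{4}\omega$, which absorbs the curvature $iF^+_{\theta_0}=\frac{s}{4}\omega$ of the Chern connection on $K^{-1}$. Then $\Phi=(1,0)$ together with $\theta_0$ solves the perturbed equations exactly. A Weitzenb\"ock argument shows this is the only solution modulo gauge, and regularity is checked at this explicit point.

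\textbf{Your route.} You take a constant multiple $\frac{r}{4}\omega$, kill $\phi$ with the identity $\bar\partial_\theta^2=F^{0,2}_\theta$, and are left with a vortex equation. Its existence and uniqueness then have to come from Kazdan--Warner.

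\textbf{What each buys.} The paper's choice gives existence for free and needs no nonlinear PDE. Yours is the template that works for every spin$^c$ structure $L=K^{-1}\otimes E^2$, where solutions correspond to effective divisors in the class $c_1(E)$ and no explicit solution is available; it is also the model for Taubes' symplectic argument. Note that $r\gg0$ is only needed to land in the right chamber, $r>\bar{s}$ (the average scalar curvature). That is also exactly the solvability condition for your Kazdan--Warner equation. If you move $s$ into the perturbation, $\eta=\frac{r+s}{4}\omega$, then $(\sqrt{r},0)$ with $\theta_0$ solves your vortex equation outright, and this is precisely the paper's construction.

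\textbf{Slips to repair.}

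First, in the convention $iF^+_\theta+\sigma(\Phi)=\eta$ the perturbation must be a \emph{positive} multiple of $\omega$; your own $\omega$-component equation already uses $+\frac{r}{4}$. With $-\frac{r}{4}\omega$, it is the branch $\phi\equiv0$ that gets excluded instead, and the surviving solutions correspond to canonical curves rather than a single regular point.

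Second, drop the ``small generic correction.'' Any $(0,2)$-component of $\eta$ destroys $F^{0,2}_\theta\propto\bar f\phi$, on which your key identity rests, and you verify regularity directly anyway.

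Third, ``$f$ is constant and $\theta$ is the Chern connection'' is false in unitary gauge when $s$ is non-constant, since it would force $|f|^2=r-s$ in the paper's normalization. What is true is this: $\bar\partial_\theta$ makes the topologically trivial $E$ into a degree-zero holomorphic line bundle with a nonzero section, so $E$ is holomorphically trivial. That is what disposes of the $\mathrm{Pic}^0$ directions when $b_1>0$. Hence $f$ is constant only after a complex gauge transformation, whose modulus is then fixed by Kazdan--Warner. The analogous point in your linearization is that $\dot a^{0,1}=-\bar\partial(\dot f/f)$ is $\bar\partial$-exact, so harmonic $(0,1)$-forms contribute nothing to the kernel.
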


 Here a compact complex surface $(M^4,J)$ is said to be of {\em K\"ahler type} if it admits K\"ahler metrics. Remarkably, 
 this is equivalent  \cite{nick,siu}  to the topological condition that $b_1(M)$ is {\em even}, and this in turn is  equivalent \cite{bpv}
 to the topological condition that $b_+(M)$ is {\em odd}. Here the spin$^c$ structure determined by the integrable complex structure 
 $J$ is the one for which 
 $$\mathbb{V}_+= \Lambda^{0,0} \oplus \Lambda^{0,2}, \quad \mathbb{V}_- =\Lambda^{0,1}$$ 
  in a canonical manner for any metric $g$ which is {\em Hermitian},  
 in the sense that $g(J\cdot , J\cdot) = g$.  In the special case when the $g$ is also K\"ahler,
 which happens iff the associated $2$-form $\omega = g(J\cdot , \cdot )$ of $g$ 
   is closed, 
 then there is moreover a canonical choice of connection $\theta_0$  (called the {\em Chern connection}) on the anti-canonical line bundle $L=K^{-1}=\wedge^2 \mathbb{V}_\pm$
 such that  the associated spin$^c$ Dirac operator $\dir_{\theta_0}$ is just given by 
 $$\sqrt{2} (\bar{\partial} + \bar{\partial}^*): \Gamma (\Lambda^{0,0} \oplus \Lambda^{0,2} ) \to \Gamma (\Lambda^{0,1})$$ 
 where $\bar{\partial}$ is the standard  Dolbeault operator of several complex variables, and where $\bar{\partial}^*$ is its formal adjoint.
Moreover, the bundle of real self-dual $2$-forms relative to  $g$ is just 
$$\Lambda^+ = \RR \, \omega \oplus \Re e (\Lambda^{0,2})$$
while   $\sigma : \mathbb{V}_+\to \Lambda^+$ is exactly given by 
$$\sigma (f, \phi) =  (|f|^2 - |\phi|^2 ) \frac{\omega}{4} + \Im m (\bar{f}\phi),$$
where  an arbitrary section $\Phi$ of $\Lambda^{0,0} \oplus \Lambda^{0,2}$ has been 
expressed as a pair $(f,\phi )$ of a complex-valued function $f$ and a $(0,2)$-form $\phi$. 
On the other hand, the curvature $F$ of the Chern connection $\theta_0$ is exactly $-i\rho$, where $\rho = r(J\cdot, \cdot )$ is the Ricci form of $g$, and its self-dual part  is therefore 
 $$
 F^+= -i \frac{s}{4}\omega , 
 $$
where $s$ and $\omega$ are once again the scalar curvature and K\"ahler form of our K\"ahler metric $g$.
Thus, if $g$ is any K\"ahler metric on $(M,J)$, and if we choose our perturbation self-dual $2$-form  to be
$$\eta = \frac{1+s}{4}\omega ,$$
 then 
$$\Phi = (1,0) \in \Gamma (\Lambda^{0,0} \oplus \Lambda^{0,2})$$
and $\theta_0$ together solve the perturbed Seiberg-Witten equations \eqref{drc} and \eqref{ptsd} for this choice of $\eta$.
Careful use of the Weitzenb\"ock formula \eqref{lichne-c} then allows one to show \cite[Theorem 2]{spccs} 
that, for this choice of $\eta$, the solution we have just  exhibited  is in fact the {\em unique} solution 
of (\ref{drc},\ref{ptsd}), modulo  the action of the gauge group $\mathscr{G}$. After checking that  this  explicit solution
 also   represents a regular point of the monopole map,  Theorem \ref{proton} therefore follows. 

Let us now   review  some of the rudiments of complex-surface theory, where one  of the key  operations is that of  {\em blowing up}  \cite{bpv,GH}.
This replaces some  point $p\in N$ of a complex surface  with a $\CP_1$ of normal bundle $\mathcal{O}(-1)$, thereby resulting in 
a new smooth complex surface $M$; moreover, this ``blown up'' complex surface $M$ is diffeomorphic to  $N \# \overline{\CP}_2$, where 
$\overline{\CP}_2$ is the oriented manifold obtained from $\CP_2$ by reversing 
its standard orientation. Here it is important to observe that the ``tautological'' $\mathcal{O}(-1)$  line-bundle over
$\CP_1$ is defined as a subbundle of the rank-$2$ trivial bundle, and so comes equipped with a
holomorphic map to $\CC^2$ that  collapses the zero section to the origin, but is a biholomorphism elsewhere. 
This allows us to  define
blowing up as the procedure which  replaces  a complex-coordinate ball  in the manifold with  the inverse image of 
a  ball 
about $0$ in $\CC^2$ in the $\mathcal{O}(-1)$  line bundle. 
\begin{center}
\includegraphics[scale=.4]{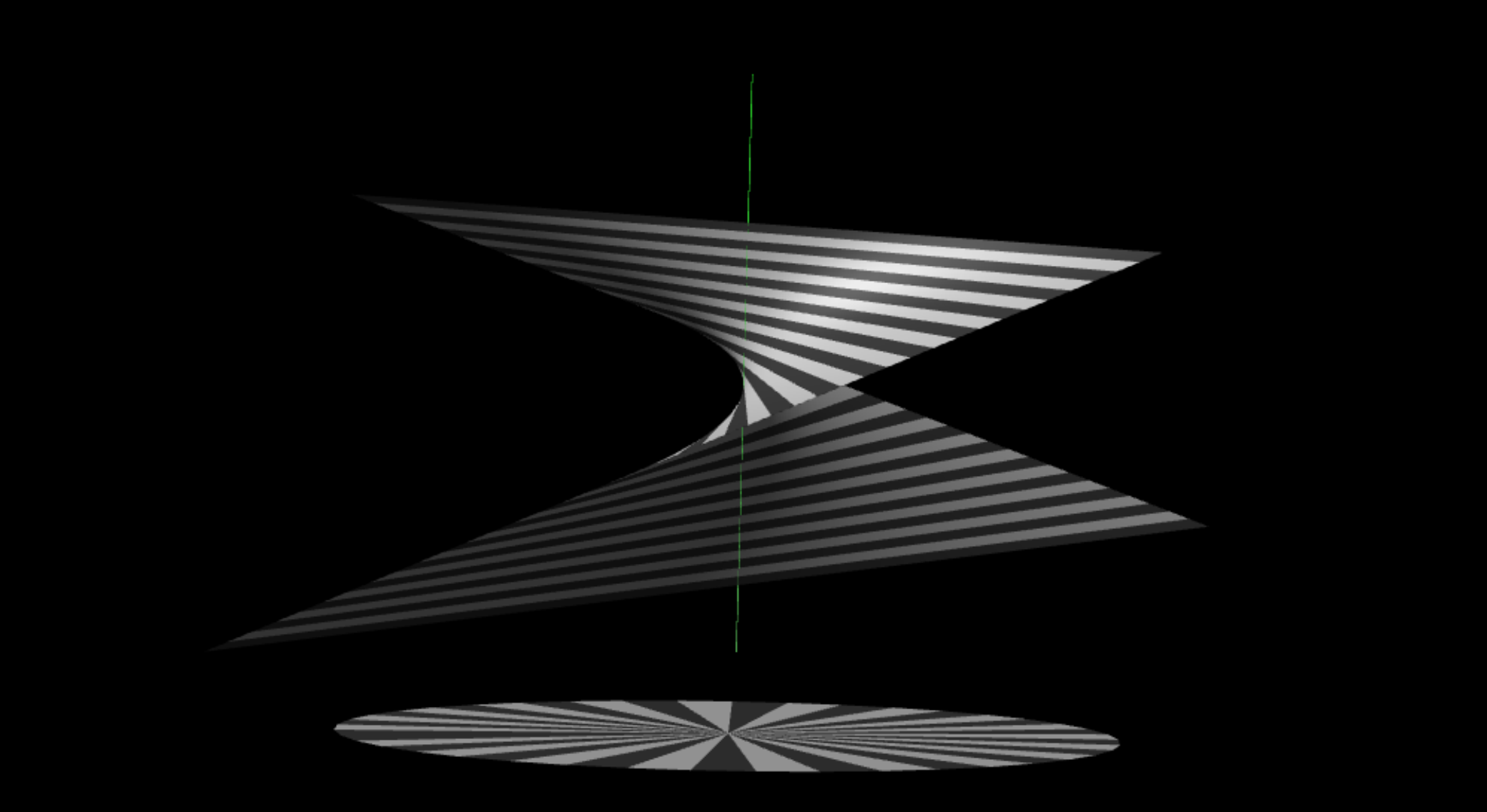}
\end{center} 


\noindent  Next    observe   that  that any choice of Hermitian inner product on the  line bundle $\mathcal{O}(-1) \to \CP_1$ 
defines an orientation-reversing diffeomorphism 
between the total space of  $\mathcal{O}(-1)$ and the total space of its dual line bundle $\mathcal{O}(1)$. Howeer,  the total space of  $\mathcal{O}(1)$
is biholomorphic to  $\CP_2-\{ pt\}$. These two observations therefore explain why, at  the level of 
smooth topology,  
 blowing up a point amounts to taking a connected sum with $\cpbar$.

 Any complex surface $M$ containing a $\CP_1$ of normal bundle $\mathcal{O}(-1)$
can conversely be ``blown down'' to produce a complex surface $N$, of which  $M$ then becomes  the one-point blow-up. 
This  operation of ``blowing down'' can in principle be iterated, but the process must terminate after finitely many steps, 
as each blow-down decreases $b_2$ by $1$. When  a complex surface
$X$ cannot be blown down any further, it is called {\em minimal}, and the upshot is that any
complex surface $M$ can be obtained from a {\em minimal} complex surface 
$X$ by blowing up finitely many times. In this situation,  we then say that $X$ is a {\em minimal model} of $M$. 

Since every complex surface has a minimal model, Kodaira's strategy for  classifying compact complex surfaces was therefore 
to proceed by  classifying  the minimal ones. In his classification scheme,  a single  complex-analytic 
 invariant,  nowadays called the {\em Kodaira dimension}  \cite{bpv,GH}, plays   the dominant  role. 
This invariant measures the growth-rate of the space of sections  of positive powers of the canonical line bundle $K:=\Lambda^{2,0}$, 
and   is defined to be  
$$\kod (M, J) = \limsup_{j \to +\infty}  \frac{\log \dim H^0 (M,\mathcal{O}(K^{\otimes j}))}{\log j}.$$
The only possible values of $\kod (M, J)$ are $-\infty$, $0$, $1$, and $2$, because 
the Kodaira dimension is actually  just the 
 largest complex dimension
of the image of $M\dasharrow \mathbb{P} [ H^0 (M,\mathcal{O}(K^{\otimes j}))]^*$ among all the various  ``pluricanonical''  maps 
associated with the  line bundles $K^{\otimes j}$, $j \in \ZZ^+$. (For these purposes,  we however impose the unusual  convention that  $\dim \varnothing :=-\infty$.)
Blowing up or down   leaves the Kodaira dimension unchanged, and the minimal model of a complex surface is moreover 
{\em unique} whenever  $\kod \neq -\infty$.
Finally, a compact complex surface  is  said to be 
of {\em general type} iff it has   $\kod = 2$.

 \begin{prop}
 Let $(M^4,J)$ be a compact complex surface that satisfies the strict Hitchin-Thorpe inequality 
 $$(2\chi + 3 \tau )(M) > 0.$$
Then $(M, J)$ is of K\"ahler type, and hence has $b_+ (M)$ odd. If $b_+ (M) > 1$, moreover, then the complex surface 
$(M^4,J)$ is  of general type. 
 \end{prop}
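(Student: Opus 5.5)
We plan to rely on the Enriques–Kodaira classification \cite{bpv}, together with the identity $c_1^2 = (2\chi + 3\tau)(M)$ for any compact complex surface. This identity holds because $c_1^2(K^{-1}) = p_1(\Lambda^2 TM) + 2e$. More directly, for an almost-complex $4$-manifold one has $c_1^2 = 2\chi + 3\tau$, as follows from $p_1 = c_1^2 - 2c_2$, $c_2 = \chi$ and $\tau = p_1/3$. The hypothesis therefore says exactly that $c_1^2(M,J) > 0$.

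\textbf{Step one: K\"ahler type.} Suppose $b_1(M)$ is odd, so that $(M,J)$ is not of K\"ahler type. Then $b_+(M) = 2p_g(M)$, by the standard formula $b_+ = 2p_g + 1$ or $2p_g$ according as $b_1$ is even or odd \cite{bpv}. Noether's formula $12\chi(\mathcal{O}) = c_1^2 + c_2$ combined with $\chi(\mathcal{O}) = 1 - q + p_g$ and $b_1 = 2q - 1$ gives $c_1^2 + c_2 = 12(1 - q + p_g)$. Writing $c_2 = \chi = 2 - 2b_1 + b_+ + b_-$ and $\tau = b_+ - b_-$, one checks that this forces $c_1^2 = 2\chi + 3\tau = -b_- \le 0$; equivalently, $\tau = -b_2$ in the non-K\"ahler case.

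The cleanest bookkeeping is as follows. From Noether, $c_1^2 = 12\chi(\mathcal{O}) - c_2$. Using $\chi(\mathcal{O}) = (\chi + \tau)/4$, valid for any compact complex surface by Hirzebruch–Riemann–Roch, one gets $c_1^2 = 2\chi + 3\tau$. Now substitute $b_+ = 2p_g$ and $b_1 = 2q - 1$ into $\chi + \tau = 2 - 2b_1 + 2b_+ = 4(1 - q + p_g)$. This is automatically consistent, so it yields no contradiction by itself. Instead, I will invoke the classification: every non-K\"ahler compact surface is either in class VII, a primary or secondary Kodaira surface, or a non-K\"ahler elliptic surface. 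In each case $c_1^2$ of the minimal model is $\le 0$: it equals $-b_2$ for class VII and $0$ for the elliptic and Kodaira cases. Blowing up only decreases $c_1^2$. Hence $c_1^2 > 0$ forces K\"ahler type, and the remark after Theorem \ref{proton} then gives $b_+$ odd.

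\textbf{Step two: general type when $b_+ > 1$.} Now take $(M,J)$ K\"ahler with $b_+ \ge 3$, so $p_g \ge 1$ and $\kod \ge 0$. Let $X$ be the unique minimal model, with $M = X \# k\,\cpbar$. Then $c_1^2(M) = c_1^2(X) - k$, so $c_1^2(X) > 0$. For minimal surfaces of Kodaira dimension $0$, $K_X$ is torsion and $c_1^2(X) = 0$. For Kodaira dimension $1$, $X$ is properly elliptic and the canonical bundle formula gives $K_X^2 = 0$. Therefore $\kod(X) = 2$, and $M$ is of general type.

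\textbf{Main obstacle.} The main obstacle is the non-K\"ahler case in Step one, where one must cite the classification carefully. In particular, for class VII surfaces one needs $c_1^2 = -b_2 \le 0$, which follows from $b_+ = 0$ and $\chi(\mathcal{O}) = 0$ via the formula above. The rest is routine bookkeeping with $c_1^2 = 2\chi + 3\tau$.
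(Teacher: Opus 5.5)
Your argument is correct and is essentially what the paper relies on. The paper states this proposition without proof, as a standard consequence of the Enriques--Kodaira classification \cite{bpv}, and your final Steps one and two carry that out: non-K\"ahler minimal surfaces have $c_1^2=-b_2$ (class VII) or $c_1^2=0$, and blow-ups only lower $c_1^2$; when $p_g\geq 1$, a minimal model with $\kod\in\{0,1\}$ has $c_1^2=0$.

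You should, however, delete two false asides. Your final argument uses neither of them.
\begin{itemize}
\item The identity $c_1^2(K^{-1})=p_1(\Lambda^2TM)+2e$ is wrong, because $p_1(\Lambda^2TM)=\mathbf{p_1}(\Lambda^+)+\mathbf{p_1}(\Lambda^-)=6\tau$. The correct bundle-theoretic statement is $c_1^2=\mathbf{p_1}(\Lambda^+)=2\chi+3\tau$, which comes from $\Lambda^+\cong\RR\oplus K$.
\item The claim that Noether's formula forces $c_1^2=-b_-$, i.e.\ $\tau=-b_2$, on every non-K\"ahler surface is false. It holds in class VII, where $b_1=1$ and $b_+=0$. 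It fails for a primary Kodaira surface, which has $b_+=b_-=2$, $\tau=0$ and $c_1^2=0$. As you then noticed, Noether's formula is equivalent to $c_1^2=2\chi+3\tau$ and yields nothing new.
\end{itemize}

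Step one can also be done without running through the list of non-K\"ahler surfaces. Riemann--Roch and Serre duality give $h^0(K^{\otimes m})+h^0(K^{\otimes(1-m)})\geq\chi(\mathcal{O})+\frac{m(m-1)}{2}c_1^2$. So $c_1^2>0$ forces algebraic dimension $2$, hence projectivity \cite{bpv}.
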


\label{hypersurface}

As an illustration of these ideas, consider the complex  surfaces
$$X_\ell := \{ [t:u:v:w]\in  \CP_3~|~ t^\ell + u^\ell + v^\ell + w^\ell =0\} . $$ 
Each is  the zero set of a holomorphic  section of a positive line bundle, namely  $\mathcal{O}(\ell)$,  that is transverse to the zero section.
The Lefschetz Hyperplane-Section Theorem \cite{GH} therefore implies that 
 every $X_\ell$ is simply connected. 
When $\ell\neq 3$, these complex surface $X_\ell$ are all {\em minimal};  however, by contrast, the cubic surface $X_3$  is {\em non-minimal},
and indeed  turns out \cite{GH} to be   biholomorphic to a blow-up of $\CP_2$ at six  distinct points. 
The  canonical line bundle $K:=\Lambda^{2,0}$
of $X_\ell$ is exactly  the restriction of the $\mathcal{O} (\ell - 4)$ line bundle from $\CP_3$, and, because  this is a positive line bundle
when $\ell \geq 5$, and trivial when $\ell =4$,  it is actually easy to  prove  that $X_\ell$ is 
minimal when $\ell \geq 4$. The fact that $K=\mathcal{O}(\ell - 4)$ for these examples also makes it easy  to show  that 
$$
\kod (X_\ell )  = \left\{
\begin{array}{ll} -\infty  &\mbox{if  }  \ell \leq 3\\
~ 0 &\mbox{if } \ell = 4  \\
 ~  2 &\mbox{if } \ell \geq 5.
\end{array}\right.
$$ 
In particular,  $X_\ell$ is of {\em general type} iff $\ell \geq 5$. On the other hand,  $M_4$ is just 
our familiar model for  $K3$.
One can also show that  that  $X_\ell$ is spin iff $\ell$ is even, and that 
$$b_+(X_\ell ) = \frac{(\ell -1) (\ell -2) (\ell -3) }{3} +  1 , \quad b_-(X_\ell) = b_+(X_\ell )+ \frac{(\ell+2) \ell (\ell-2)}{3},$$
and these facts   then allows one to exactly determine the homeomorphism type of $X_\ell$ by applying  Theorem \ref{fdmn}.

Next,  notice that $b_+(X_\ell ) > 1$ iff $\ell \geq 4$. In this range,   Theorem \ref{proton} then tells us that 
$n_{\mathfrak{c}}\neq 0$ for the spin$^c$ structure induced by the complex structure.
When $\ell \geq 4$, it therefore follows that  that the smooth $4$-manifold $X_\ell$ cannot admit a metric 
of positive scalar curvature, since the pointwise estimate \eqref{squeeze} would otherwise forbid the existence  of 
solutions of the perturbed Seiberg-Witten equations (\ref{drc},\ref{ptsd})  for any metric $g$ with $s>0$ and any sufficiently small perturbation $\eta$. 
 When $\ell \geq 4$ is {\em even}, this is not really new, because the spin manifold $X_\ell$ then has $\tau < 0$, and one can apply
the Lichnerowicz argument; but, more surprisingly, we also obtain,  for every {\em odd}  $\ell \geq 5$,  a simply-connected non-spin manifold that does not
admit metrics of positive scalar curvature. In particular, while these simply-connected non-spin manifolds are {\em homeomorphic} to connected sums of  copies of $\CP_2$ 
and $\cpbar$ by Theorem \ref{fdmn}, they are certainly not {\em diffeomorphic} to such connected sums, because a direct surgery argument \cite{gvln} shows that 
the connected sum of any finite collection of positive-scalar-curvature
$4$-manifolds also carries  positive-scalar-curvature metrics. On the other hand, our requirement  that  $\ell \geq 4$ in this discussion  is sharp, because $X_1$, $X_2$ and $X_3$ do
actually 
admit metrics with $s > 0$, because they are    respectively diffeomorphic to $\CP_2$, $S^2 \times S^2$, and 
$\CP_2\# 6 \cpbar$.

 All of these manifolds $X_\ell$ actually  admit Einstein metrics. Indeed, each of them admits a K\"ahler-Einstein metric\footnote{i.e. an Einstein metric that 
 happens to also be K\"ahler.} compatible with the given 
 complex structure. When $\ell \geq  5$,  this follows from the 
 Aubin-Yau theorem \cite{aubin,yau}, which produces K\"ahler-Einstein metrics with $\lambda < 0$ on these complex manifolds, because their canonical line bundles
 $K$ are all ample. Next,  the $K3$ surface $M_4$  is a compact complex manifold of K\"ahler type
 that has  trivial cononical line bundle, so Yau's theorem \cite{yauma} guarantees that it admits a $\lambda =0$  K\"ahler-Einstein metric
 in every K\"ahler class. Finally, when $1\leq \ell \leq 3$,  the $X_\ell$ admit $\lambda > 0$ K\"ahler-Einstein  metrics;  for  $\ell = 1, 2$,
 these are just the familiar Fubini-Study metric on $\CP_2$ and  product Einstein metric on $S^2 \times S^2$, while for  $\ell = 3$,
 the existence of a K\"ahler-Einstein metric on our so-called  Fermat cubic surface was first proved by Siu \cite{s}. (On an arbitrary   smooth
 cubic surface, an analogous K\"ahler-Einstein metric also exists, but the proof  of its existence  \cite{sunspot} came much  later, and is  far more delicate.) 
The fact that the Hitchin-Thorpe inequality
$$(2\chi+ 3\tau ) (X_\ell) = c_1^2 (X_\ell ) = \ell (\ell - 4)^2 \geq 0$$
holds for all of these examples is of course more elementary, and just   reflects  the fact that $c_1(X_\ell )$ ``has a sign''
for each $\ell$. As a consquence,   when $\ell$ is even,  Corollary \ref{biz} guarantees that the corresponding simply-connected spin  manifold
is always    homeomorphic to the connected sum of an appropriate number  of copies of $K3$ and $S^2 \times S^2$.

For $4$-manifolds with  $b_+(M)=1$, the above  examples   of $X_1$, $X_2$ and $X_3$ show  that Seiberg-Witten theory must somehow be modified in this setting. 
However, the required modification is surprisingly modest, even if  it does force  one 
to  pay
 careful attention to an  additional  subtlety. 
 The key observation is that  when $b_+(M) = 1$, the vector space 
 $\mathcal{H}^+_g$ is $1$-dimensional, and  so   removing a point from $\mathcal{H}^+_g$  disconnects 
it 
into  two open rays. 
Consequently, 
 for each spin$^c$ structure $\mathfrak{c}$, 
the set of pairs $(g, \eta)$, where $g$ is a Riemannian metric, and where $\eta$ is a self-dual $2$-form
satisfying \eqref{no-harm} with respect  to $g$,  consists of   exactly two connected 
 components, 
$\sphericalangle^+$ and $\sphericalangle^-$,  called {\sf chambers}. 
Applying the previous discussion to each chamber  then produces  two distinct invariants
$n_{\mathfrak{c}} (M, \sphericalangle^\pm)\in \ZZ_2$, which  are typically different; an expression for their  difference  is  called a {\sf wall-crossing formula}. 
 As long as $[c_1(L)]^+\neq 0$ for  a given  metric $g$ and spin$^c$ structure $\mathfrak{c}$, the same  arguments
used when $b_+(M)\geq 2$  will then guarantee the existence of an irreducible solution of the ``unperturbed'' Seiberg-Witten equations 
(\ref{drc}--\ref{sd}) whenever 
 $n_\mathfrak{c}\neq 0$   for the  chamber containing $\eta=0$.

Now  notice that if $b_+(M) =1$ and $b_-(M) \neq 0$, then $(H^2(M, \RR), Q )$ is
 a copy of $b_2(M)$-dimensional Minkowski space. The set of ``timelike'' cohomology classes $\alpha\in H^2(M, \RR)$ with
 $$\alpha^2:= Q( \alpha , \alpha ) > 0$$ is thus an open double cone consisting  of  two connected components, or {\em nappes}. 
 The choice of a ``time orientation'' for $(H^2(M, \RR), Q )$ then amounts to labeling one of these nappes, 
 henceforth denoted by  $\mathscr{C}^+$,  
 as the set of ``future-pointing'' time-like vectors, while  declaring that  the remaining  nappe  $\mathscr{C}^-$ 
consists of ``past-pointing'' time-like vectors.
The impact of the chamber-dependent  invariants on the unperturbed Seiberg-Witten equations (\ref{drc}--\ref{sd})
is  then entirely  governed by   the image
$[c_1(L)]^+\in \mathcal{H}_g$ of the first Chern class 
under Minkowski-space-orthogonal projection   to  $\mathcal{H}^+_g\subset H^2(M,\RR)$. 
Since $\mathcal{H}_g^+-\{ 0\}\subset \mathscr{C}^+ \cup \mathscr{C}^-$ for any Riemannian metric $g$, 
 the ``no reducible solutions'' condition $[c_1(L)]^+\neq 0$ implies 
  that either $[c_1(L)]^+\in \mathscr{C}^-$ or $[c_1(L)]^+\in \mathscr{C}^+$.  In the first case, the self-dual $2$-form
$\eta =0$ belongs to the chamber $\sphericalangle^+$ 
containing the open-ray component of $\mathcal{H}_g^+-\{ 0\}$ that terminates in $\mathscr{C}^+$,
while in the second case $\eta =0$ belongs to the chamber $\sphericalangle^-$
containing the open-ray component of $\mathcal{H}_g^+-\{ 0\}$ that terminates
 in $\mathscr{C}^-$. Thus, in order to use the invariant $n_\mathfrak{c}(M, \sphericalangle^\pm)$  to predict the the existence of solutions to
 (\ref{drc}--\ref{sd}), one must simply  keep track of whether $[c_1(L)]^+$ is future-pointing or past-pointing for a given metric $g$.
 
 But if we are just trying to determine whether  $M$ admits  Einstein metrics, we may also  assume, without loss of generality,  that  $M$ satisfies the Hitchin-Thorpe inequality \eqref{htineq}.
 However, the above discussion  also assumed that the {\em expected dimension} \eqref{mdim} of the Seiberg-Witten moduli space {\em is zero},  
 or equivalently that 
 $$c_1^2 (M) = (2\chi + 3\tau )(M).$$
  In more geometrical terms, this  exactly  means  that the given spin$^c$ structure $\mathfrak{c}$ arises from an almost-complex structure $J$ on $M$.
 The Hitchin-Thorpe inequality \eqref{htineq} thus   tells us that 
 $$c_1^2 (L)\geq 0,$$
so that $c_1(L)$ is then a time-like or null vector in $(H^2(M, \RR), Q)$. If $c_1(L)$ is not a torsion class,  
it therefore follows that $[c_1(L)]^+$ belongs to the same 
\begin{center}
\includegraphics[scale=.9]{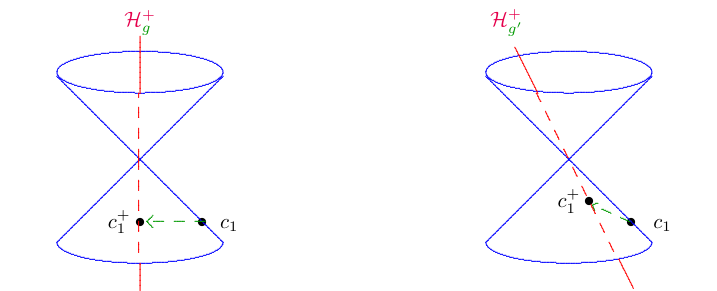}
\end{center} 

\noindent 
nappe,
 say  $\mathscr{C}^-$,  for all metrics $g$, so that if  $n_{\mathfrak{c}}(M, \sphericalangle^+)\neq 0$ for the 
future  chamber $\sphericalangle^+$, one can then conclude  that the (unperturbed)  Seiberg-Witten equations (\ref{drc}--\ref{sd}) must have a solution
for every Riemannian metric $g$ on $M$, and for the fixed spin$^c$ structure $\mathfrak{c}$. On the other hand, 
 there are many natural geometrical problems that venture  far enough outside  the  realm of  Einstein metrics that they require  one to 
consider Seiberg-Witten theory on $4$-manifold with $b^+=1$ but for spin$^c$ structures with   $c_1^2(L) < 0$. 
In these contexts, the answer to  whether $[c_1(L)]^+$ is past-pointing or future-pointing  
  depends on the metric.
  \begin{figure}[htb]
  \centerline{\includegraphics[scale=.9]{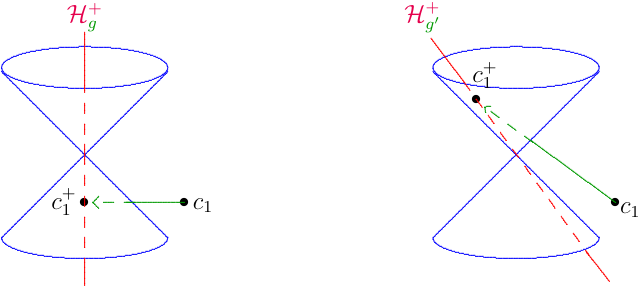} }
\end{figure}
 Fortunately, this technical difficulty  
  can  often  be overcome    \cite{FM,lno} by carefully playing  several spin$^c$ structures off against one another.
  
  \bigskip
  
  In order to  better understand  the chambered aspect of our story, let us now point out some of the most salient ramifications in the case of 
  compact complex surfaces: 

\begin{prop}\label{subtlety}
Let $(M,J)$ be a compact complex surface that  satisfies the strict Hitchin-Thorpe inequality
$(2\chi + 3\tau)(M) > 0$, but has  $b_+(M) =1$. Then either $\kod (M, J) = -\infty$ or $\kod (M, J) = 2$.
Moreover,   $(M,J)$ is of K\"ahler type, so we may therefore   define 
a time orientation on $(H^2_{dR}(M, \RR),Q)$ 
  by declaring any K\"ahler class $[\omega]\in H^2_{dR}(M, \RR)$ to be future-pointing.

  \bigskip
  
   \noindent 
 If $\kod (M, J) = 2$, so that $(M,J)$ is of general type, then $c_1^+$ is past-pointing for any metric $g$ on $M$ and the spin$^c$ structure determined 
 by $J$. This implies that 
 $\eta =0$ belongs to the chamber $\sphericalangle^+$ containing all future-pointing perturbations, and $n_{\mathfrak{c}}(M, \sphericalangle^+)\neq 0$
 for this chamber.  For any smooth  metric $g$ on $M$ and the spin$^c$ structure determined by $J$, there is consequently a solution of the unperturbed Seiberg-Witten equations.

  \bigskip
  
  \noindent 
  By contrast, if $\kod (M, J) = -\infty$, then $(M,J)$ is obtained from $\CP_2$ by a sequence of blow-ups and blow-downs, and $M$ therefore admits Riemannian metrics
  of positive scalar curvature. In this case,  $\eta =0$ always belongs to the chamber $\sphericalangle^-$ containing the  past-pointing perturbations, 
  and $n_{\mathfrak{c}}(M, \sphericalangle^-)= 0$ for this chamber. Nonetheless,   $n_{\mathfrak{c}}(M, \sphericalangle^+)\neq 0$ for the remaining chamber. 
  \end{prop}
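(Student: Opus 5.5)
The plan is to first pin down the Kodaira dimension using the Enriques--Kodaira classification, and then treat the two surviving cases separately with Seiberg--Witten theory and the chamber analysis described above.

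\textbf{Classification.} By the previous Proposition, $(M,J)$ is of K\"ahler type, so $b_1$ is even. Fix a K\"ahler class $[\omega]$; it has $[\omega]^2>0$, which gives a well-defined time orientation. Since $c_1^2(M)=(2\chi+3\tau)(M)>0$, the canonical class $c_1(K)=-c_1$ is timelike, so $c_1(K)\neq 0$ in $H^2(M,\RR)$. Blowing up lowers $c_1^2$, so the minimal model $X$ also has $c_1^2(X)>0$.

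I would then go through the Kodaira dimensions of minimal surfaces:
\begin{itemize}
\item If $\kod=0$, the classification gives $c_1^2(X)=0$ and hence $c_1^2(M)\le 0$, which is excluded.
\item If $\kod=1$, the surface is an elliptic fibration with $K_X^2=0$, which is likewise excluded.
\end{itemize}
Only $-\infty$ and $2$ remain. For $\kod=-\infty$ with $b_1$ even, the minimal models are $\CP_2$ and ruled surfaces. A ruled surface over a curve of genus $g\ge 1$ has $c_1^2=8(1-g)\le 0$, and blowing up only lowers this, so $M$ is rational. Hence $M$ is obtained from $\CP_2$ by blow-ups and blow-downs, so $M\approx \CP_2\# k\cpbar$ or $S^2\times S^2$. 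These spaces carry $s>0$ metrics by the surgery result cited earlier.

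\textbf{General type.} Here $K$ of the minimal model $X$ is nef and big, so $c_1(K_X)\cdot[\omega]>0$. On $M$, write $c_1(K_M)=\pi^*c_1(K_X)+\sum E_j$. Since $c_1(L)=-c_1(K_M)$ and $c_1^2(L)>0$, the class $c_1(L)$ is timelike and lies in a single nappe, independent of the metric. That nappe is determined by its pairing with $[\omega]$:
\[
c_1(L)\cdot[\omega]=-\pi^*K_X\cdot[\omega]-\sum E_j\cdot[\omega]<0 .
\]
This pairing is negative because the $E_j$ are holomorphic curves and so have positive area. Hence $c_1(L)$, and therefore its projection $[c_1(L)]^+$ for every $g$, is past-pointing. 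The discussion preceding the Proposition then places $\eta=0$ in $\sphericalangle^+$.

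To compute $n_{\mathfrak c}(M,\sphericalangle^+)$, I take a K\"ahler metric and the perturbation $\eta=\frac{1+s}{4}\omega$ as in the proof of Theorem~\ref{proton}. This perturbation can be chosen with large positive multiple of $\omega$, so it lies in $\sphericalangle^+$. The uniqueness and regularity of the solution $(1,0)$ carry over verbatim, giving $n_{\mathfrak c}\neq 0$. Existence of unperturbed solutions for every metric then follows by the cobordism argument within the chamber.

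\textbf{Rational case.} Since $M$ admits a metric with $s>0$, the estimate \eqref{squeeze} forbids solutions with $\eta=0$. For this metric $[c_1(L)]^+\ne0$, because $c_1(L)$ is timelike. So $(g,0)$ is a regular point with empty moduli space, giving $n_{\mathfrak c}=0$ in its chamber. The Kähler perturbation argument still yields $n_{\mathfrak c}(M,\sphericalangle^+)\ne 0$, so the chamber containing $0$ must be $\sphericalangle^-$. Equivalently, $c_1(L)\cdot[\omega]>0$ for rational surfaces; for instance $-K$ is effective on $\CP_2$ blown up, or one can check directly that $c_1\cdot[\omega]>0$.

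\textbf{Expected obstacle.} The main difficulty will be the classification bookkeeping: correctly ruling out $\kod=0,1$ and irrational ruled surfaces, including the non-minimal cases. I will handle this by working on the minimal model and using $c_1^2(M)\le c_1^2(X)$.
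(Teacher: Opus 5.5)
Your argument is correct and uses the same ingredients the paper assembles around this statement, which it records without a separate proof: the classification forcing $\kod\in\{-\infty,2\}$ and rationality, the fact that a timelike $c_1(L)$ fixes the time-orientation of $[c_1(L)]^+$ for every metric, the K\"ahler solution from the proof of Theorem~\ref{proton} giving $n_{\mathfrak c}(M,\sphericalangle^+)\neq 0$, and a positive-scalar-curvature metric forcing the invariant of the chamber containing $\eta=0$ to vanish in the rational case. Two small slips to tidy: since $[\omega]$ lives on $M$, the general-type pairing should read $\pi^*c_1(K_X)\cdot[\omega]>0$ (some $mK_X$ is a nonzero effective divisor whose total transform has positive $\omega$-area), and the K\"ahler pair lies in $\sphericalangle^+$ because $\eta_H-2\pi[c_1(L)]^+=\frac{1}{4}\omega$ is future-pointing, not because $\frac{1+s}{4}\omega$ is itself a large multiple of $\omega$.
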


 We now have   an effective criterion for predicting  the existence of solutions of the unperturbed Seiberg-Witten equations
  on many  complex surfaces. But, independent of {\em why} they might exist, the mere existence of  solutions of (\ref{drc},\ref{sd}) has 
   concrete,  remarkable geometric consequences \cite{lno,lric}.

\begin{prop}\label{best}
Let $(M^4,g)$ be a smooth compact oriented Riemannian manifold,
let $\mathfrak c$ be a spin$^c$ structure on $M$, and let 
$c_1^+= [c_1(L)]^+$ denote the self-dual part of the harmonic $2$-form 
representing the first Chern class $c_1(L)$ of $\mathfrak c$. 
If   there is a  solution of the unperturbed Seiberg-Witten equations (\ref{drc}--\ref{sd}) 
on $M$ for $g$ and  ${\mathfrak c}$, then the scalar curvature 
$s$ of $g$ satisfies 
$$
\int_{M}s^{2}d\mu_{g} \geq \int_{M}s_-^{2}d\mu_{g} \geq 32\pi^{2} [c_{1}^{+}]^{2} 
$$
where $s_-:=\min (s,0)$ at each point. Moreover, when $c_1^+\neq 0$,  equality can only  occur if  $g$
is a K\"ahler metric of constant {negative} scalar curvature whose 
 complex structure $J$ induces the spin$^c$ structure $\mathfrak{c}$ on $M$. 
\end{prop}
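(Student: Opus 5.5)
The plan is to combine the Weitzenb\"ock identity \eqref{wnbk} with the Seiberg-Witten curvature equation \eqref{sd} and the Hodge-theoretic fact that the harmonic part of $iF^+_\theta/2\pi$ is $c_1^+$.

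First, integrate \eqref{wnbk} over $M$. The Laplacian term drops out, and $|\nabla_\theta\Phi|^2\geq 0$, so
$$\int_M(-s)|\Phi|^2\,d\mu \geq \int_M |\Phi|^4\,d\mu .$$
Since $-s\leq -s_-=|s_-|$ pointwise, the Cauchy-Schwarz inequality gives
$$\int |\Phi|^4 \leq \int |s_-|\,|\Phi|^2 \leq \Big(\int s_-^2\Big)^{1/2}\Big(\int|\Phi|^4\Big)^{1/2},$$
hence $\int s_-^2\,d\mu\geq\int|\Phi|^4\,d\mu$. The first inequality $\int s^2\geq\int s_-^2$ is trivial.

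Second, bound $\int|\Phi|^4$ from below by $[c_1^+]^2$. By \eqref{sd} and \eqref{radial}, $|F^+_\theta|^2=|\sigma(\Phi)|^2=|\Phi|^4/8$, so $\int|\Phi|^4=8\int|F_\theta^+|^2$. Now $\frac{i}{2\pi}F_\theta$ is a closed form representing $c_1(L)$. Its self-dual part $\frac{i}{2\pi}F^+_\theta$ has $L^2$-orthogonal projection onto $\mathcal{H}^+_g$ equal to the harmonic representative of $c_1^+$. To see this, write $F=F_H+d\alpha$ with $F_H$ harmonic; then $(d\alpha)^+$ is $L^2$-orthogonal to the self-dual harmonic forms, since $\int\langle d\alpha,\psi\rangle=\int\langle\alpha,d^*\psi\rangle=0$ for harmonic $\psi$. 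Therefore
$$\int|F^+_\theta|^2\,d\mu\geq 4\pi^2\int|c_1^+|^2\,d\mu=4\pi^2\,[c_1^+]^2,$$
where the last equality uses that $\varphi\wedge\varphi=|\varphi|^2d\mu$ for self-dual $\varphi$. Combining the two steps gives $\int s_-^2\geq 8\cdot 4\pi^2[c_1^+]^2=32\pi^2[c_1^+]^2$, as claimed.

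The main obstacle is the equality case. If equality holds and $c_1^+\neq0$, then every inequality above is sharp:
\begin{itemize}
\item sharpness in the Weitzenb\"ock step forces $\nabla_\theta\Phi\equiv0$;
\item sharpness in Cauchy-Schwarz forces $|\Phi|^2$ to be a constant multiple of $|s_-|$, and, together with the pointwise comparison $-s\leq|s_-|$, forces $s=s_-$ wherever $\Phi\neq 0$;
\item sharpness in the projection step forces $F^+_\theta$ to be harmonic.
\end{itemize}
Since $c_1^+\neq 0$, $\Phi\not\equiv0$. Parallel sections have constant length, so $|\Phi|$ is a nonzero constant and $s=-|\Phi|^2$ is a negative constant. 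Then $\sigma(\Phi)$ is a parallel self-dual $2$-form of constant length $|\Phi|^2/2\sqrt2$. Rescaling it to length $\sqrt2$ gives a parallel self-dual form $\omega$, which defines an orthogonal almost-complex structure $J$ with $\nabla J=0$. So $g$ is K\"ahler with K\"ahler form $\omega$, and its scalar curvature is constant and negative.

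It remains to check that $J$ induces $\mathfrak c$. Here $\Phi$ is a nonvanishing section of $\mathbb V_+$, and $\sigma(\Phi)$ is a positive multiple of $\omega$ (the sign convention in $\sigma$ determines orientation compatibility). This splits $\mathbb V_+=\CC\Phi\oplus\Phi^\perp$, and the splitting matches $\Lambda^{0,0}\oplus\Lambda^{0,2}$ for $J$, so $L\cong K^{-1}$ and $\mathfrak c$ is the spin$^c$ structure of $J$. I would verify this identification carefully using the explicit K\"ahler formulas for $\sigma$ given after Theorem~\ref{proton}.
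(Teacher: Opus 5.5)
Your proposal is correct and follows essentially the same route as the paper: integrate the Weitzenb\"ock formula \eqref{wnbk}, apply Cauchy--Schwarz against $s_-$ to get $\int s_-^2\,d\mu\geq \int|\Phi|^4 d\mu = 8\int|F^+_\theta|^2 d\mu$, and then use the fact that $iF^+_\theta-2\pi c_1^+$ is $L^2$-orthogonal to the self-dual harmonic forms. Your equality case is more detailed than the paper's, which only notes that equality forces $\nabla_\theta\Phi\equiv 0$ (hence $\sigma(\Phi)$ parallel) and $s$ a negative constant. In your last step, the cleanest way to make the identification precise is to write $\mathbb{V}_+=(\Lambda^{0,0}\oplus\Lambda^{0,2})\otimes E$ and observe that $\Phi$ is a nowhere-zero section of the $E$-summand, so $E$ is trivial; knowing only $L\cong K^{-1}$ would determine $\mathfrak{c}$ only modulo $2$-torsion.
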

\begin{proof}
Integrating the Weitzenb\"ock formula (\ref{wnbk}), we have
$$0= \int [ 4|\nabla_\theta \Phi |^2 + s|\Phi|^2 + |\Phi|^4 ] d\mu , $$
and it follows that 
$$\int (-s_-) |\Phi|^2 d\mu \geq \int |\Phi|^4 d\mu .$$
Applying the Cauchy-Schwarz inequality to the
left-hand side therefore yields  
$$
\left(\int s_-^2 ~d\mu  \right)^{1/2}\left(\int |\Phi |^4 d\mu  \right)^{1/2} \geq \int |\Phi |^4 d\mu .
$$
We therefore have  
$$
\int s^2 d\mu \geq \int s_-^2 ~d\mu \geq  \int |\Phi |^4 d\mu  = 8 \int |F_{\theta}^{+}|^2 d\mu ,
$$ 
and   the inequality is strict unless  $\nabla_\theta \Phi \equiv 0$ and $s$ is a non-positive constant. 
However, $iF_{\theta}^{+}-2\pi c_1^+$  is an exact form plus a co-exact form, and 
so is $L^2$-orthogonal to the harmonic  forms. This gives us
the inequality 
$$\int |F_{\theta}^{+}|^2 d\mu \geq 4\pi^2 \int |c_1^+|^2 d\mu = 4\pi^2 \int c_1^+\wedge c_1^+,$$
and the last expression may be re-interpreted as the intersection pairing 
$[c_1^+]^2$ 
of 
the de Rham class  $c_1^+$ with itself. This gives us the desired
inequalities
$$\int s^2 d\mu \geq \int s_-^2 ~d\mu \geq  32\pi^2 [c_1^+]^2,$$
and, when the right-hand side is non-zero, equality can only happen if 
$\sigma (\Phi )$  is parallel
and $g$  has constant negative scalar curvature.  
\end{proof}

 With this in mind, let us take a fresh look at the borderline  $c_1^2 (M , J)=0$ case of the Hitchin-Thorpe inequality \eqref{htineq}.
 Here it will be useful to introduce to the so-called {\em Kummer model} of $K3$, which is obtained from the product
 $T^4=T^2 \times T^2$ of two {\em elliptic curves} (genus-one Riemann surfaces)  by first remembering that each factor can be obtained from 
 $\CP_1$ as a ramified   double cover with four branch-points. Thus, each $T^2$ factor  admits a $\ZZ_2$-action with 
 four fixed points, and applying this {\em Weierstrass involution} to both factors simultaneously then gives us a holomorphic 
 involution of the product $T^4$ with $16$ fixed points: 
 
\begin{center}
\mbox{
\beginpicture
\setplotarea x from 0 to 200, y from -5 to 160
\putrectangle corners at 80 160 and 160 80
\put {$_{\times}$} [B1] at 80 105
\put {$_{\times}$} [B1] at 105 105
\put {$_{\times}$} [B1] at 135 105
\put {$_{\times}$} [B1] at 160 105
\put {$_{\times}$} [B1] at 80 80
\put {$_{\times}$} [B1] at 105 80
\put {$_{\times}$} [B1] at 135 80
\put {$_{\times}$} [B1] at 160 80
\put {$_{\times}$} [B1] at 80 135
\put {$_{\times}$} [B1] at 105 135
\put {$_{\times}$} [B1] at 135 135
\put {$_{\times}$} [B1] at 160 135
\put {$_{\times}$} [B1] at 80 160
\put {$_{\times}$} [B1] at 105 160
\put {$_{\times}$} [B1] at 135 160
\put {$_{\times}$} [B1] at 160 160
\put {$T^2$} [B1] at 160 10 
\put {$T^2$} [B1] at  10 70
\put {$T^4$} [B1] at  175  120
\ellipticalarc axes ratio 5:2  360 degrees from 150 40
center at 120 30
\ellipticalarc axes ratio 4:1 -180 degrees from 135 33
center at 120 33
\ellipticalarc axes ratio 4:1 145 degrees from 130 30
center at 120 29
\ellipticalarc axes ratio 2:5  -360 degrees from 40 150 
center at 30 120 
\ellipticalarc axes ratio 1:4  180 degrees from  33 135
center at  33 120
\ellipticalarc axes ratio 1:4 -145 degrees from  30 130
center at  29 120
\ellipticalarc axes ratio 1:2 140 degrees from 70 35
center at 70 30
\ellipticalarc axes ratio 1:2 140 degrees from 70 25
center at 70 30
\ellipticalarc axes ratio 2:1 140 degrees from 35 68
center at 30 68
\ellipticalarc axes ratio 2:1 140 degrees from 25 68
center at 30 68
\arrow <3pt> [.1,.3] from 70 35 to 71 35
\arrow <3pt> [.1,.3] from 70 25 to 69 25
\arrow <3pt> [.1,.3] from 35 68 to 35 67
\arrow <3pt> [.1,.3] from 25 68 to 25 69
{\setlinear
\setdashes 
\plot   30 62  30 82  /
\plot  60 30 83 30   /
\plot  160 30 180 30   /
\plot   30 160  30  180  /
}
\endpicture
}
\end{center}
The quotient $T^4/\ZZ_2$ is an orbifold complex surface with $16$ singular points, with $\CC^2 /\{ \pm 1\}$ serving as a local model near each singularity.\footnote{In 1865, Kummer \cite{dolgachev,kummer} 
showed that if we  take our $T^4$ to be the Jacobi variety of any complex curve of genus two, the resulting complex orbifold is isomorphic to a quartic hypersurface in $\CP_3$ with $16$  nodal 
singularities. This fact  begins to explain why its desingularization is diffeomorphic to our non-singular quartic model $X_4$ of $K3$.}
However, we can avoid  introducing  these singularities by first blowing up the $16$ fixed points before modding out by $\ZZ_2$.
The exceptional divisors then map to $16$ copies of $\CP_1$, each with normal bundle $\mathcal{O}(-2)$. The result is then a compact complex surface with $c_1=0$,
and hence a $K3$ surface by 
Andr\'e  Weil's   definition. The map from this $K3=\widetilde{T^4/{\ZZ_2}}$ to $\CP_1=T^2/\ZZ_2$ induced by the first-factor projection also now gives us an ``elliptic fibration'' for which the regular
  fibers are  all isomorphic  to $T^2$,
but for which there are also
four singular fibers.

\begin{center}
\mbox{
\beginpicture
\setplotarea x from 0 to 200, y from -5 to 160
\putrectangle corners at 80 160 and 160 80
\put {$_{\times}$} [B1] at 80 105
\put {$_{\times}$} [B1] at 105 105
\put {$_{\times}$} [B1] at 135 105
\put {$_{\times}$} [B1] at 160 105
\put {$_{\times}$} [B1] at 80 80
\put {$_{\times}$} [B1] at 105 80
\put {$_{\times}$} [B1] at 135 80
\put {$_{\times}$} [B1] at 160 80
\put {$_{\times}$} [B1] at 80 135
\put {$_{\times}$} [B1] at 105 135
\put {$_{\times}$} [B1] at 135 135
\put {$_{\times}$} [B1] at 160 135
\put {$_{\times}$} [B1] at 80 160
\put {$_{\times}$} [B1] at 105 160
\put {$_{\times}$} [B1] at 135 160
\put {$_{\times}$} [B1] at 160 160
\put {$\circ$} [B1] at 120 33 
\put {$\bullet$} [B1] at 158 27
\put {$\bullet$} [B1] at 82 27
\put {$\bullet$} [B1] at 107 27
\put {$\bullet$} [B1] at 135 27
\put {${\CP_1}$} [B1] at 185 25
\put {$T^2$} [B1] at  10 70
\put {$\widetilde{T^4/{\ZZ_2}}$} [B1] at  185  120
\ellipticalarc axes ratio 5:2  360 degrees from 150 40
center at 120 30
\ellipticalarc axes ratio 2:5  -360 degrees from 40 150 
center at 30 120 
\ellipticalarc axes ratio 1:4  180 degrees from  33 135
center at  33 120
\ellipticalarc axes ratio 1:4 -145 degrees from  30 130
center at  29 120
\arrow <2pt> [1,3] from 120 75 to 120 50
{\setlinear 
\plot   120 80 120 160   /
}
\endpicture
}
\end{center}

 Now, we have already noted that  $K3$ admits Ricci-flat K\"ahler metrics, and that  $\kod (K3) =0$.  Moreover, $n_{\mathfrak{c}}(K3)\neq 0$
 for the standard spin$^c$ structure on $K3$ with $c_1(L)=0$. But we  also  have $n_{\mathfrak{c}^\prime}(K3)= 0$ for any other spin$^c$
 structure $\mathfrak{c}^\prime\neq \mathfrak{c}$ arising from an almost-complex structure $J^\prime$. Indeed, since $(2\chi +3\tau )(K3)=0$, and because $H^2(K3,\ZZ)$
 is torsion-free, the corresponding 
 Chern class 
 $c_1(L^\prime)$ of any such $\mathfrak{c}^\prime$  would be a non-zero null vector in $H^2(M,\RR)$,
 and would therefore satisfy $c_1^+ =[c_1(L^\prime)]^+\neq 0$ for any Riemannian metric $g$ on $K3$. If the Seiberg-Witten invariant
 were non-zero for any such spin$^c$ structure, Proposition \ref{best} would therefore imply that $K3$ could not admit a Riemannian 
 metric with $s\equiv 0$. But this  contradicts the existence of Ricci-flat metrics on $K3$, which of course {\em   do} 
 have vanishing scalar curvature! 
 
 \label{xoK3}
 However, Kodaira \cite{homK3} discovered a surprising sequence $\{ Y_q\}_{q=1}^\infty$ of other complex surfaces that are homotopy-equivalent to  $K3$,
 and thus, by Theorem \ref{fdmn}, are actually homeomorphic to it. To obtain these, we first choose some regular $T^2$-fiber $F$ of
the {elliptic fibration} $K3\to \CP_1$ constructed above, and then notice that our map  has the nice property that $F$ 
has a tubular neighborhood that is biholomorphic to $F \times D$, where $D \subset \CC$ is the unit $2$-disk.  On the other hand, 
$F$ is  itself biholomorphic to $F=\CC/\Lambda$ for a lattice generated by $1$ and some vector $\mathbf{v}$ in the upper half-plane. 
Finally,   after choosing an odd positive integer $2q+1$, we will replace  our chosen  fiber $F$ with the new complex curve $\mathfrak{f}= F/\ZZ_{2q+1}$.
We do this using a trick due to Kodaira called a {\em logarithmic transformation}. Namely, we consider the $\ZZ_{2q+1}$ action  on $F\times D =(\CC/\Lambda)\times D$ 
generated by 
 $$([z],\zeta ) \mapsto \left(\left[z+\frac{1}{2q+1}\right], e^{2\pi i/(2q+1)}\zeta \right)$$
 and then notice that $[F\times (D-\{0\})]/\ZZ_{2q+1}$ is biholomorphic to $F\times (D-\{ 0\})$ via  the map induced by 
 $$[( z, \zeta )]\longmapsto \left( \left[ z - \frac{1}{2\pi i}\log \zeta \right] ,\zeta^{2q+1}\right).$$
This allows us  delete our fiber $F$ from $K3$ and glue in a copy of $(F\times D)/\ZZ_{2q+1}$
along $F\times (D-\{0\})$, 
thereby replacing $F$ with $\mathfrak{f}= F/\ZZ_{2q+1}$.
The new complex surface $Y_q$ this constructs   is simply connected, and has 
$$c_1(Y_q) = -2q \mathfrak{f}.$$  
This implies  that $Y_q$ is spin, with $\chi =24$ and $2\chi +3\tau =0$, so that $Y_q$ is therefore  homeomorphic to 
$K3 = Y_0$ by Theorem \ref{fdmn}. However, for $q > 0$,  one has $n_{\mathfrak{c}^\prime}(Y_q) \neq 0$ for a spin$^c$ structure 
for which $c_1(L^\prime )=-2q \mathfrak{f}\neq 0$, but for which  $c_1(L^\prime)$ is also divisible by $q$ in $H^2 (Y_q, \ZZ)$. 
By varying  $q$, it follows that infinitely many different elements of $H^2(K3, \ZZ)$ that can be realized as the first Chern classes 
of ``exotic'' integrable complex structures, each of which is compatible with some ``exotic'' smooth structure on our fixed topological $4$-manifold $K3$. 
 But since
$[c_1^+]^2 = |[c_1^-]^2|$ for all these spin$^c$ structures, the bound given by 
Proposition \ref{best} implies that  only  finitely many such spin$^c$ structures could have $n_{\mathfrak{c}^\prime} \neq 0$ 
for any specific  smooth structure, 
or even  for any finite collection of smooth structures. This implies  that there must be an infinite number of
different smooth structures on $K3$, even among the sequence of examples   $\{ Y_q \}$ discovered by Kodaira. 
But also notice that Theorem \ref{ht} guarantees that no such ``exotic'' $K3$ can admit an Einstein metric.
It is also worth pointing out   that, while $\kod (K3)=0$,  we instead have $\kod (Y_q) =1$ for any $q\geq 1$. Indeed, the Kodaira classification of 
complex surfaces \cite{bpv,GH} can easily be used to show   that 
 $\kod =1$ for any complex surface that realizes  an   ``exotic'' differentiable structure on $K3$.  

\bigskip
This starts to give us  a reasonably good     understanding of Einstein metrics on complex surfaces of Kodaira dimension $0$ and $1$, especially 
in the simply-connected case. Let us therefore now turn the problem of understanding 
general  Einstein metrics on  complex surfaces of Kodaira dimension  $2$. 

\begin{cor} Let $M$ be the underlying  smooth compact $4$-manifold of a compact complex surface that admits a K\"ahler-Einstein metric $g$
with $\lambda < 0$. Then $g$ minimizes $\int s^2 d\mu$ among all Riemannian metrics on $M$. Moreover, any other minimizer is also 
a K\"ahler-Einstein metric with $\lambda < 0$. 
\end{cor}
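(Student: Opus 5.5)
The idea is to compare every metric on $M$ against the topological lower bound of Proposition \ref{best}, and then observe that the K\"ahler-Einstein metric attains it.

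\textbf{Step 1: $c_1^2>0$ and the complex surface is of general type.} Let $J$ be the complex structure for which $g$ is K\"ahler-Einstein with $\lambda<0$. Then the Ricci form is $\rho=\lambda\omega$, so $c_1=[\rho/2\pi]$ is represented by a negative multiple of the K\"ahler class. In particular $c_1^2(M,J)=(2\chi+3\tau)(M)>0$ and $K$ is ample. Hence $(M,J)$ is minimal and of general type, and $b_+$ is odd.

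\textbf{Step 2: existence of Seiberg-Witten solutions for every metric.} Let $\mathfrak c$ be the spin$^c$ structure induced by $J$.
\begin{itemize}
\item If $b_+>1$, Theorem \ref{proton} gives $n_{\mathfrak c}\ne 0$.
\item If $b_+=1$, Proposition \ref{subtlety} applies, since $(M,J)$ is of general type.
\end{itemize}
In either case, for every metric $h$ on $M$ the unperturbed equations (\ref{drc}--\ref{sd}) have a solution. When $c_1^+\neq 0$ this follows from the invariant; the degenerate case $c_1^+=0$ gives only a reducible solution, but the bound below is then trivial anyway.

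\textbf{Step 3: the lower bound.} Proposition \ref{best} gives, for every metric $h$,
$$\int s_h^2\,d\mu_h\ \ge\ 32\pi^2 (c_1^+)^2 .$$
Now $c_1^2=(c_1^+)^2+(c_1^-)^2$, with $(c_1^+)^2\ge 0$ and $(c_1^-)^2\le 0$. So $(c_1^+)^2\ge c_1^2$, and therefore
$$\int s_h^2\,d\mu_h\ \ge\ 32\pi^2 c_1^2(M)$$
for all $h$.

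\textbf{Step 4: the K\"ahler-Einstein metric attains the bound.} For the K\"ahler metric $g$, the harmonic representative of $c_1$ is $\rho/2\pi$, since $\rho$ is harmonic as $s$ is constant. Because $\rho=\lambda\omega$ is self-dual, we have $c_1^-=0$ and $(c_1^+)^2=c_1^2$. Moreover $\rho=\frac{s}{4}\omega$, so
$$4\pi^2 c_1^2=\int\rho\wedge\rho=\int\frac{s^2}{16}\cdot\frac{\omega^2}{1}=\frac{1}{8}\int s^2\,d\mu,$$
using $\omega^2/2=d\mu$. Thus $\int s^2\,d\mu=32\pi^2c_1^2$, and $g$ is a minimizer.

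\textbf{Step 5: characterizing other minimizers (the main obstacle).} Let $h$ be another minimizer. Then equality holds throughout Steps 3--4, so $(c_1^+)^2=c_1^2>0$ for $h$, hence $c_1^+\neq0$. The equality clause of Proposition \ref{best} then shows that $h$ is K\"ahler with constant negative scalar curvature, for a complex structure $J'$ inducing $\mathfrak c$. Equality in $(c_1^+)^2\ge c_1^2$ forces $c_1^-=0$, so the harmonic representative of $c_1$ is self-dual.

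It remains to upgrade this to the Einstein condition. On a K\"ahler surface, $\rho=\frac{s}{4}\omega+\rho_0$, where $\rho_0$ is the anti-self-dual primitive part, encoding $\mathring r$. Constant $s$ makes $\rho$ harmonic, so $\rho/2\pi$ is the harmonic representative of $c_1$. Then $c_1^-=0$ gives $\rho_0=0$, so $\mathring r=0$ and $h$ is K\"ahler-Einstein. Its Einstein constant is $\lambda=s/4<0$.
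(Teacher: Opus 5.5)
Your proof is correct and follows essentially the same route as the paper. Ampleness of $K$ gives general type, hence Seiberg--Witten solutions for every metric even when $b_+=1$; Proposition~\ref{best} combined with $[c_1^+]^2\geq c_1^2$ gives the bound; and in the equality case, harmonicity of $\rho$ (from constant $s$) together with $c_1^-=0$ kills the anti-self-dual primitive part of $\rho$, which forces the Einstein condition.

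Your Step~4 usefully makes explicit that $g$ actually attains $32\pi^2c_1^2$, which the paper leaves implicit. One small correction: the degenerate case $c_1^+=0$ in Step~2 never occurs, since $[c_1^+]^2\geq c_1^2>0$ for every metric; if it did occur, the bound $32\pi^2c_1^2$ would not be trivial, as you claim.
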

\begin{proof} Our assumption implies that $c_1(M , J) < 0$, where $J$  is a   complex structure compatible with $g$. 
Since $(M,J)$ therefore has ample canonical line bundle $K$, it then  follows that  $(M,J)$ is of general type. Thus, even if $b_+(M)=1$, there is a solution, 
for the spin$^c$ structure determined by $J$, 
of the unperturbed Seiberg-Witten equations with respect to any other metric $g^{\prime}$ on $M$. But for any such metric, 
one has $$[c_1^+]^2 = c_1^2(M)+ |[c_1^-]^2|$$
where $c_1^-$ is the anti-self-dual part of the harmonic representative  of $c_1(M,J)$. 
Proposition \ref{best} therefore implies that any  metric $g^\prime$ on $M$ satisfies
$$\int s^2 d\mu_{g^\prime} \geq  32\pi^2 c_1^2 (M)$$
with equality iff $g^\prime$ is a K\"ahler metric of constant scalar curvature $s < 0$ for which the harmonic representative
of $c_1(M)$ is self-dual. However, for any  K\"ahler metric of constant scalar curvature,  the Ricci form $\rho$ 
is harmonic, and so is exactly the harmonic representative of 
 $2\pi c_1 (M)$. 
 On the other hand,  the self-dual piece of $\rho$  is  $s\omega /4$ for any K\"ahler metric, where $s$ is the scalar curvature, and where
$\omega$ is the K\"ahler form. Thus, another K\"ahler metric  $g^\prime$ will saturate the  bound 
\begin{equation}
\label{button}
\int s^2 d\mu_{g^\prime} \geq  32\pi^2 c_1^2 (M),
\end{equation}
if and only if $g^\prime$ is a $\lambda < 0$ K\"ahler-Einstein metric, albeit perhaps adapted to quite a different complex structure 
 on our smooth $4$-manifold $M$. 
\end{proof} 

Theorem \ref{cohyp} is  now   a straightforward  consequence \cite{lmo}  of the following:

\begin{cor}\label{alibaba}
Let $(M,g)$ be a smooth compact Einstein $4$-manifold that carries a spin$^c$ structure of almost-complex type for which $n_{\mathfrak{c}} \neq 0$,
and assume that  $g$ is not flat. 
Then 
$M$ satisfies the generalized Miyaoka-Yau inequality 
\begin{equation}
\label{elmo}
\chi (M) \geq 3 \tau (M), 
\end{equation}
with equality iff the universal over $(\widetilde{M},\widetilde{g})$ of $(M,g)$ is isometric to a rescaled version  of the complex hyperbolic plane $\CC\mathcal{H}_2 = SU(2,1)/U(2)$. 
Moreover, when equality holds in \eqref{elmo}, the moduli space $\mathscr{E}(M)$ of Einstein metrics on $M$ consists of a single point, represented by $g$. 
\end{cor}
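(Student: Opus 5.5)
The plan is to combine the Seiberg–Witten scalar-curvature estimate of Proposition \ref{best} with the Einstein versions \eqref{cible} and \eqref{but} of the Gauss–Bonnet and signature integrals. By \eqref{cible}, \eqref{but} and elementary algebra, the identity
$$\chi - 3\tau = \tfrac{3}{4}(2\chi - 3\tau) - \tfrac{1}{4}(2\chi+3\tau)$$
becomes, for Einstein $g$,
$$16\pi^2(\chi-3\tau)(M) = \int_M\Big(\frac{s^2}{12} - 2|W_+|^2 + 6|W_-|^2\Big)d\mu_g .$$
So it suffices to prove the curvature estimate
$$\int_M |W_+|^2\, d\mu_g \leq \int_M \frac{s^2}{24}\, d\mu_g .$$

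For this I would use the hypothesis $n_{\mathfrak c}\neq 0$. When $b_+\geq 2$ it gives a solution of the unperturbed equations (\ref{drc}--\ref{sd}) for $g$; when $b_+=1$ we use the chamber discussion above, for which we may assume $c_1^2(L)\geq 0$. Because $\mathfrak c$ is of almost-complex type, $c_1^2(L) = (2\chi+3\tau)(M)$. Moreover $[c_1^+]^2 = c_1^2(L) + |[c_1^-]^2| \geq (2\chi+3\tau)(M)$. Proposition \ref{best} then yields
$$\int s^2 d\mu \geq 32\pi^2 (2\chi+3\tau)(M) = 8\int\Big(\frac{s^2}{24}+2|W_+|^2\Big)d\mu = \int\Big(\frac{s^2}{3} + 16|W_+|^2\Big)d\mu,$$
where the middle equality is \eqref{cible}. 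Rearranging gives $16\int|W_+|^2 \leq \frac{2}{3}\int s^2$, which is exactly the required estimate. Substituting it into the identity above gives $16\pi^2(\chi - 3\tau) \geq 6\int |W_-|^2 \geq 0$, which proves \eqref{elmo}.

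For the equality case, equality forces $W_-\equiv 0$ and equality in Proposition \ref{best}. If $c_1^+\neq 0$, the equality clause of Proposition \ref{best} makes $g$ Kähler with constant negative scalar curvature, hence Kähler–Einstein with $\lambda<0$. If instead $c_1^+=0$, the chain of inequalities forces $s\equiv 0$ and $W_+\equiv 0$; together with $W_-\equiv 0$ and Einstein, $g$ is then flat, which is excluded. A Kähler–Einstein metric with $W_-=0$ has curvature operator determined by $s$ alone: $W_+$ is the Kähler-type tensor fixed by $s$, and $\mathring{r}=0$. Hence $g$ has constant negative holomorphic sectional curvature. Since $M$ is compact, the universal cover is therefore a rescaled $\CC\mathcal{H}_2$. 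Conversely, such quotients satisfy $\chi = 3\tau$.

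For uniqueness, suppose equality holds in \eqref{elmo}. Any other Einstein metric $g'$ on $M$ satisfies the same hypotheses, since $n_{\mathfrak c}$ is metric-independent and $\chi=3\tau$ is topological. The argument above therefore makes $g'$ complex hyperbolic as well. Both $(M,g)$ and $(M,g')$ are then compact quotients of $\CC\mathcal{H}_2$ with isomorphic fundamental groups. Mostow rigidity \cite{mostow} gives an isometry up to scale, homotopic to the identity and realized by a diffeomorphism, so $\mathscr{E}(M)$ is a single point.

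I expect the main obstacle to be the precise equality analysis. One must check that equality in Proposition \ref{best} really forces the Kähler structure compatible with $\mathfrak c$, and that $W_-=0$ for a Kähler–Einstein metric yields local symmetry. Handling the chamber issue when $b_+=1$ also needs care.
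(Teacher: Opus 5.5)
Your proposal is correct and is essentially the paper's argument. The paper also feeds the Seiberg--Witten bound $\int s^2\,d\mu\geq 32\pi^2[c_1^+]^2\geq 32\pi^2 c_1^2(L)$ (its \eqref{button}) into the Einstein Gauss--Bonnet formulas, but substitutes it directly into \eqref{gb-} to get $3(2\chi-3\tau)\geq 2\chi+3\tau$; your detour through \eqref{cible} to extract $\int|W_+|^2d\mu\leq\int\frac{s^2}{24}d\mu$ is the same computation rearranged, and your equality analysis (K\"ahler--Einstein with $W_-\equiv 0$, hence constant holomorphic sectional curvature, where the paper says $\nabla\mathcal{R}=0$) and Mostow-rigidity uniqueness step likewise match the paper's proof.
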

\begin{proof}
Combining equation  \eqref{gb-} and  inequality \eqref{button} shows that the given Einstein metric $g$ must satisfy 
\begin{eqnarray*}
3(2\chi - 3\tau ) (M)  &=& \frac{3}{4\pi^2} \int_M \left( \frac{s^2}{24} + 2|W_-|^2 \right) d\mu_g\\
 &=& \frac{1}{32\pi^2} \int_M s^2 d\mu_g +  \frac{3}{2\pi^2} \int_M |W_-|^2 d\mu_g \\ &\geq &c_1^2(M) = (2\chi + 3\tau)(M),
\end{eqnarray*}
and since our assumption that $g$ is non-flat guarantees that both sides are non-zero,  
 equality can therefore only occur if  $g$ is a $\lambda < 0$ K\"ahler-Einstein metric with  $W_-\equiv 0$. 
Inequality \eqref{elmo}  thus   follows by a routine   algebraic manipulation,  and equality can only occur  when the
 surviving pieces of the  curvature tensor \eqref{deco-riem}  are parallel. When equality holds, 
  $(M,g)$ therefore satisfies $\nabla \mathcal{R}=0$,  and so 
   is  a locally symmetric space \cite{bes}. 
The form of the curvature tensor $\mathcal{R}\not\equiv 0$ at one point moreover  suffices to force the  universal cover $(\widetilde{M},\widetilde{g})$  
to  be a constant rescaling of the complex hyperbolic plane $\CC\mathcal{H}_2$, and    Mostow rigidity \cite{bcg,mostow2} then guarantees
that $(M,g)$ is determined,  up to isometry and rescaling,  by its  fundamental group $\pi_1(M)$.
\end{proof}

Notice that the inequality $\chi \geq 3\tau$ proved by  Corollary \ref{alibaba} is markedly   stronger than the inequality $\chi \geq \frac{3}{2} \tau$ of Corollary \ref{mira}. 
For discussion of the Miyaoka-Yau inequality \eqref{elmo} in its original context, and  its importance in the geography of  compact complex surfaces, see \cite{bpv,bes,yauma}.

So far, we have just seen that the Seiberg-Witten equations are sensitive to, and  provide  information about, the scalar curvature. 
However, it turns out that they are also sensitive to the self-dual Weyl curvature. The key    result   in this direction is the following \cite{lric,lebsurv2,lebeta}:

 \begin{prop}\label{upscale}
 Let $(M^4,g)$ be a smooth compact oriented Riemannian manifold,
 and suppose, for a fixed spin$^c$ structure $\mathfrak{c}$,  that there is a solution of the unperturbed Seiberg-Witten equations  (\ref{drc}--\ref{sd})  for every 
 metric $\widehat{g}$ in the conformal class of  $g$.  
Then the scalar curvature 
$s$ and self-dual Weyl curvature $W_+$ of $g$ must satisfy
\begin{equation}
\label{voila}
\int_M \left(s-\sqrt{6}|W_+|\right)^{2}d\mu_g  \geq  72\pi^2 [c_1^+]^2 .
\end{equation}
 Moreover, if $c_1^+:= c_1(L)^+$ is non-zero, equality can only occur  if $g$ is an almost-K\"ahler metric for which $s-\sqrt{6}|W_+|$ is a  negative constant,  for which the 
 symplectic form everywhere belongs to the bottom eigenspace of $W_+$,  and for which the two highest eigenvalues of $W_+$ are everywhere equal. 
   \end{prop}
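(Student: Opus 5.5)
The plan is to combine a second-order Weitzenb\"ock formula for the self-dual $2$-form $\psi = 2\sqrt{2}\,\sigma(\Phi)$ with a conformal rescaling trick, in the spirit of the proof of Proposition \ref{best}. That proof produced the estimate $\int s_-^2 \geq 32\pi^2[c_1^+]^2$ from a single metric. Here we instead exploit solutions for every metric $\widehat{g} = f^{-2}g$ in the conformal class. Throughout, $[c_1^+]^2$ and the quantity $\int(s-\sqrt{6}|W_+|)^2 d\mu$ depend only on data that we can track across the conformal class.

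\textbf{Step 1: a Weitzenb\"ock inequality for $\psi$.} For a solution $(\Phi,\theta)$ of (\ref{drc}--\ref{sd}), $\psi$ is a self-dual $2$-form with $|\psi| = |\Phi|^2/\sqrt{2}$ by \eqref{radial}. I would use the Weitzenb\"ock formula for self-dual $2$-forms,
\begin{equation*}
(d+d^*)^2 = \nabla^*\nabla - 2W_+ + \frac{s}{3},
\end{equation*}
together with the Seiberg--Witten equations to compute $\langle \Delta_g \psi,\psi\rangle$. The goal is a pointwise differential inequality of the form
\begin{equation*}
0 \geq \frac{1}{2}\Delta|\psi|^2 + |\nabla\psi|^2 - 2W_+(\psi,\psi) + \frac{s}{3}|\psi|^2 + \frac{1}{2}|\psi|^3 - (\text{terms controlled by } |\nabla_\theta\Phi|^2).
\end{equation*}
Using the algebraic bound $W_+(\psi,\psi)\leq \frac{2}{\sqrt{6}}|W_+||\psi|^2$, which holds because $W_+$ is trace-free on a rank-$3$ bundle, this becomes an inequality
\begin{equation*}
0 \geq \Delta|\psi| + \frac{2}{3}\left(s-\sqrt{6}|W_+|\right)|\psi| + |\psi|^2
\end{equation*}
in a weak sense, after a Kato-type estimate. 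Obtaining exactly these constants is where I would spend care. In particular, the combination $s-\sqrt{6}|W_+|$ and the factor $2/3$ must come out correctly. I expect this to be the main technical obstacle: the cross terms between $\nabla_\theta\Phi$ and $\nabla\psi$ must be absorbed using the refined Kato inequality for $\psi$, which is a twistor/closed-type form here.

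\textbf{Step 2: conformal rescaling.} Apply the inequality for the metric $\widehat{g}=f^{-2}g$, where $f>0$ is arbitrary, using the solution that exists by hypothesis. We know $|W_+|$ scales as $f^2|W_+|$. The scalar curvature $\widehat{s}$ involves $\Delta f$, and it enters linearly. Multiply the inequality by $|\widehat{\psi}|$, integrate, and rewrite everything in terms of $g$, with the $\Delta f$ terms handled by integration by parts. Then apply Cauchy--Schwarz as in Proposition \ref{best}. This should give
\begin{equation*}
\left(\int f^{-2}\left(\tfrac{2}{3}s-\tfrac{2\sqrt{6}}{3}|W_+|\right)_-^{2} d\mu\right)^{1/2}\left(\int f^{4}\ldots\right)^{1/2} \geq \int |\widehat\psi|^2 \ldots,
\end{equation*}
and hence, by the harmonic-projection argument $\int|\widehat{F}^+|^2 \geq 4\pi^2[c_1^+]^2$, a bound of the form
\begin{equation*}
\int_M \left(\tfrac{2}{3}s - \tfrac{2\sqrt{6}}{3}|W_+|\right)^2 f^{4}\, d\mu_g \cdot \left(\int f^{-4}\ldots\right) \geq 32\pi^2[c_1^+]^2 .
\end{equation*}
Since $\frac{9}{4}\cdot 32 = 72$, this is consistent with the constant in \eqref{voila}.

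\textbf{Step 3: optimize over $f$.} Choose $f$ so that the H\"older pairing collapses. Concretely, take $f$ approximating a function of $|s-\sqrt{6}|W_+||$ chosen so that the conformally weighted quantities become constant; where the expression vanishes, use a smoothed approximation and take a limit. The conformal invariance of $\int |W_+|^2 d\mu$ and of $[c_1^+]^2$ is what makes this optimization legitimate. Taking the limit then yields \eqref{voila}.

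\textbf{Step 4: equality case.} Trace back where each inequality is sharp:
\begin{itemize}
\item Cauchy--Schwarz and the approximation force $s-\sqrt{6}|W_+|$ to be a negative constant.
\item The eigenvalue bound for $W_+$ forces $\psi$ to lie in the bottom eigenspace of $W_+$, with the other two eigenvalues equal.
\item Orthogonality to harmonic forms forces $\widehat{F}^+$ to be harmonic, so $\psi$ is closed, self-dual and of constant length.
\end{itemize}
A suitable rescaling of $\psi$ is then a symplectic form compatible with $g$, which makes $g$ almost-K\"ahler.
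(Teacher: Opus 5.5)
Your overall architecture matches the paper's: the Weitzenb\"ock identity \eqref{wnbk}, the $2$-form formula \eqref{outil} with $W_+(\psi,\psi)\le\sqrt{2/3}\,|W_+||\psi|^2$, conformal rescaling, harmonic projection, and $f\to\sqrt{|\mathfrak S|}$, where $\mathfrak S:=s-\sqrt6|W_+|$. However, Step 1 fails as formulated. You aim for a \emph{pointwise} (weak) inequality $0\ge\Delta|\psi|+\frac23\mathfrak S|\psi|+|\psi|^2$. But $\psi=2\sqrt2\,\sigma(\Phi)=-2\sqrt2\,iF^+_\theta$ is not harmonic. So \eqref{outil} only gives $\frac12\Delta|\psi|^2+|\nabla\psi|^2-2W_+(\psi,\psi)+\frac s3|\psi|^2=\langle(d+d^*)^2\psi,\psi\rangle$, and the right-hand side has no pointwise sign. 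The only positivity available is global, namely $\int\langle(d+d^*)^2\psi,\psi\rangle\,d\mu=\int|(d+d^*)\psi|^2d\mu\ge0$, which is exactly what gives \eqref{wiseguy}.

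In fact the pointwise claim is false. Via the identity \eqref{wnbk}, it is equivalent to the pointwise bound $2|\nabla_\theta\Phi|^2\ge|\Phi|^2\bigl(\frac s6+\frac{|\Phi|^2}{2}-\frac{2\sqrt6}{3}|W_+|\bigr)$. Take a K\"ahler metric whose scalar curvature is negative but non-constant. There $\Phi=(\alpha,0)$ with $\alpha$ holomorphic, and $2\Delta\log|\Phi|^2+s+|\Phi|^2=0$. At a non-degenerate minimum of $|\Phi|^2$ one has $\nabla_\theta\Phi=0$ and $|\Phi|^2>-s$, whereas the bound would force $|\Phi|^2\le-s$.

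The repair is to integrate from the start. Multiply \eqref{wnbk} by $|\Phi|^2$ and integrate, so the Laplacian term becomes $2\int|d|\Phi|^2|^2d\mu\ge0$. Then use the elementary bound $|\nabla\psi|^2\le4|\Phi|^2|\nabla_\theta\Phi|^2$; no refined Kato inequality or absorption of cross terms is needed, and note that $|\psi|=|\Phi|^2$, not $|\Phi|^2/\sqrt2$. Finally insert \eqref{wiseguy}.

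Step 2 is missing the idea that actually exploits the conformal class. If you write the inequality for $\widehat g=f^{-2}g$ and transform $\widehat s$, $\widehat W_+$, $d\widehat\mu$ back, the intermediate steps are not conformally invariant. You are then left with $\Delta(f^{-1})$-type terms of no definite sign, and your displayed bounds remain placeholders.

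The paper instead uses the conformal covariance of the Dirac operator and of $\Lambda^+$. This turns the solution for $\widehat g$ into a solution, for the fixed metric $g$, of the rescaled equations: the Dirac equation together with $F^+_\theta=if\sigma(\Phi)$. All curvature is then that of $g$, and $f$ enters only as a weight on the cubic term. This gives $0\ge\int[\mathfrak S|\varphi|^2+f|\varphi|^3]\,d\mu$, with $\varphi=\frac32\psi$ and $f\varphi=-3\sqrt2\,iF^+_\theta$. Because the nonlinearity is cubic, the estimate closes by H\"older with exponents $3$ and $\frac32$, applied twice, rather than by Cauchy--Schwarz. The result is $(\int f^4d\mu)^{1/3}(\int|\mathfrak S|^3f^{-2}d\mu)^{2/3}\ge72\pi^2[c_1^+]^2$.

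In Step 4, you cannot read off the equality case through the limit $f_j\searrow\sqrt{|\mathfrak S|}$, since that limit need not be positive. One first shows that $\mathfrak S$ is constant: equality means $g$ minimizes $\int\mathfrak S^2d\mu$ over $[g]$, and $\mathfrak S$ obeys the Yamabe-type law $\hat{\mathfrak S}u^3=(6\Delta+\mathfrak S)u$. Only then may one set $f=\sqrt{|\mathfrak S|}$ and trace the equalities.
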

 \begin{proof} The Seiberg-Witten equations are certainly not conformally invariant. Instead, 
given a   smooth  function $f> 0$ on $M$, the fact that there is solution of
(\ref{drc},\ref{sd}) for $\widehat{g}=f^{-2}g$ can be rewritten with respect to $g$ as  
saying that there is a solution $(\Phi , \theta )$ of the the ``rescaled'' Seiberg-Witten equations 
\begin{eqnarray} \dir_{\theta}\Phi &=&0\label{rsdrc}\\
 F_{\theta}^+&=&if\sigma (\Phi) \label{rssd}\end{eqnarray}
 and our hypothesis thus insists that there is a solution of (\ref{rsdrc}--\ref{rssd}) 
 for any  choice of smooth function $f > 0$.
But applying our Weitzenb\"ock formula \eqref{wtw} to a solution   $({\Phi}, \theta)$ of (\ref{rsdrc}--\ref{rssd}) now yields 
 $$
0 = 2\Delta |\Phi |^{2} + 4 |\nabla_{A} \Phi |^{2} + s|\Phi |^{2}
 + f|\Phi |^{4} .
$$
Multiplying by $|\Phi |^2$ and integrating, we thus obtain 
the  inequality 
\begin{equation}
\label{two}
0\geq \int_M\left[4 |\Phi |^{2}|\nabla_{A} \Phi |^{2} + s|\Phi |^{4}
 + f|\Phi |^{6}\right] d\mu 
\end{equation} 
 Now set   $\psi = 2\sqrt{2} \sigma (\Phi )$, and 
 observe that   the definition of $\sigma$ then implies that 
$$
|\Phi |^{4} =	|\psi |^{2}   ,~~~~~~~~
	   4 |\Phi |^{2}|\nabla_\theta \Phi |^{2}  \geq  |\nabla \psi |^{2}  . 
$$
Thus, the  inequality  (\ref{two})
 tells us that 
 $$0\geq \int_M\left[|\nabla\psi  |^{2} + s|\psi |^{2}
 + f|\psi |^{3}\right] d\mu . $$
 However, the Weitzenb\"ock formula 
 \begin{equation}
\label{outil}
(d+d^\ast)^2 \psi =     \nabla^{*}\nabla \psi - 2W_{+}(\psi , 
\cdot ) + \frac{s}{3} \psi  
\end{equation}
for the Hodge Laplacian on self-dual $2$-forms, and the  fact that $(d+d^\ast)^2 $ is a non-negative operator, together tell us that 
\begin{equation}
\label{wiseguy}
\int_{M} |\nabla \psi |^{2}d\mu \geq 
\int_M\left(-2\sqrt{\frac{2}{3}}|W_+|-\frac{s}{3}\right)|\psi |^{2} d\mu 
\end{equation}
for any self-dual $2$-form $\psi$.
Combining these facts  therefore yields 
$$
0\geq \int_M\left[  \left(\frac{2}{3} s-2\sqrt{\frac{2}{3}}|W_+|\right) |\psi |^{2} 
 + f|\psi |^{3}\right] d\mu .
$$
Set $\varphi = \frac{3}{2}\psi = 3\sqrt{2}\sigma (\Phi )$. We then have
$$
0\geq \int_M\left[\left( s-\sqrt{6}|W_+|\right) |\varphi |^{2} 
 +   f|\varphi |^{3}\right] d\mu .
$$
Rewriting this as 
$$
\int_M\left[ -\left( s-\sqrt{6}|W_+|\right)f^{-2/3}\right] \left( f^{2/3}|\varphi  |^{2}\right) d\mu \geq 
\int_M f|\varphi  |^{3}  d\mu 
$$
and applying the H\"older inequality to the left-hand side then yields 
    $$
  \left[\int_M \left|s-\sqrt{6}|W_+|\right|^3f^{-2}d\mu\right]^{1/3}
  \left[ \int_M f|\varphi  |^{3}d\mu \right]^{2/3}
   d\mu \geq \int_M f|\varphi  |^{3}d\mu ,
   $$
  which is to say that 
    $$
  \int_M \left|s-\sqrt{6}|W_+|\right|^3f^{-2}d\mu   \geq 
   \int_M f|\varphi  |^{3}d\mu .
   $$
    But the H\"older inequality also tells us that 
    $$
   \left(\int_M f^4d\mu\right)^{1/3}   \left( 
   \int_M   f|\varphi  |^{3}d\mu\right)^{2/3} \geq 
  \int_M f^{4/3} \left[  f^{2/3} |\varphi  |^2\right] d\mu ~,    $$
   where equality holds only if $|\varphi |$ is a constant multiple of $f$. 
 Hence 
  $$
   \left(\int_M f^4d\mu\right)^{1/3}   \left( \int_M \left|s-\sqrt{6}|W_+|\right|^3
   f^{-2}d\mu\right)^{2/3} \geq 
  \int_M f^2 |\varphi |^2 d\mu ~.    $$
   But since $f\varphi = 3\sqrt{2} f\sigma (\Phi ) = 3\sqrt{2}(-iF_\theta^+)$,  we also  have 
   $$ \int_M f^2 |\varphi|^2 d\mu = 18 \int_M |F_\theta^+|^2 d\mu \geq 
  18 (2\pi c_1^+)^2 
  =
   72\pi^2 [c_1^+]^2 $$
   because  $iF_\theta\in 2\pi c_1^\RR (L)$. 
   Thus 
   \begin{equation}\label{clem} 
\left(\int_M f^4d\mu_g \right)^{1/3}\left( \int_{M}\left|s-\sqrt{6}|W_+|\right|^{3}f^{-2}d\mu_g 
 \right)^{2/3}\geq  72\pi^2 [c_1^+]^2 
\end{equation}
 for any smooth positive function $f$ on $M$.
 
 Now choose a sequence of smooth positive functions $f_j$ on 
 $M$ with 
 $$ f_j \searrow \sqrt{\left|  s-\sqrt{6}|W_+|   \right|}$$
uniformly on $M$.  Since the inequality $f_j^2 \geq {\left|  s-\sqrt{6}|W_+|   \right|}$
 implies 
$$
\int_M f_j^4 d\mu \geq  \left(\int_M f_j^4d\mu_g \right)^{1/3}\left( \int_{M}\left|s-\sqrt{6}|W_+|\right|^{3}f_j^{-2}d\mu_g 
 \right)^{2/3},  $$
we  therefore have 
$$
\int_M f_j^4 d\mu \geq  72\pi^2 [c_1^+]^2 
$$
 as a consequence of   (\ref{clem}). But since 
 $$
 \int_M \left(s-\sqrt{6}|W_+|\right)^{2}d\mu = \lim_{j\to \infty} 
 \int_M f_j^4 d\mu , 
 $$
this shows that 
 $$\int_M (s-\sqrt{6}|W_+|)^2d\mu_g \geq 72\pi^2[c_1^+]^2,$$ 
as  claimed. 
 
Finally, in the equality case,  $s-\sqrt{6}|W_+|$ is necessarily  constant, 
because  $g$ minimizes the $L^2$-norm of $\mathfrak{S}= s-\sqrt{6}|W_+|$  in the conformal class $[g]$, and
 this quantity  transforms under conformal changes  by a rule 
 $$\hat{g} = u^2 g \quad \Longrightarrow \quad \hat{\mathfrak{S}}u^3 = (6\Delta + \mathfrak{S})u$$ 
  that is  
exactly parallel to the Yamabe equation  the governs  the behavior  of the usual scalar curvature $s$ under conformal changes. 
Moreover, this constant must be non-zero if $c_1^+\neq 0$. Thus, if equality holds, 
one can just set  $f=\sqrt{\left|s-\sqrt{6}|W_+| \right| }$, and then notice that every inequality in the proof must be an equality 
for the given metric $g$. From this, one  deduces that  $\psi$ is a non-trivial harmonic self-dual $2$-form of constant length with respect to  $g$, making $g$ an  {\em almost-K\"ahler
metric}, in the sense that it is related to a symplectic form $\omega$ by $g=\omega (\cdot , J\cdot )$ for some almost-complex structure $J$. 
Moreover, because this  symplectic form $\omega$ is just  a constant multiple of $\psi$, where the self-dual harmonic $2$-form  $\psi$ saturates the inequality \eqref{wiseguy}, 
$\omega$ must everywhere belong to the bottom eigenspace of $W_+$, and   the top two eigenvalues of $W_+$ must  be equal everywhere. For further details, see
\cite{lric,lebsurv2,lebeta}.
 \end{proof}
 
The estimate \eqref{voila} proved in  Theorem \ref{upscale} is sharp, because    it is actually  saturated by any K\"ahler metric of constant negative scalar curvature. 
Nonetheless, the peculiar way that the scalar curvature and self-dual Weyl curvature are combined in \eqref{voila} is not really perfectly suited to controlling 
the sorts of curvature expressions that arise in Gauss-Bonnet-type  formulas like \eqref{cible}. 
To make headway, we will therefore trade \eqref{voila}   in for the  following, nominally weaker   consequence:

 \begin{cor} \label{servus}
 Let $(M,g)$ be a smooth compact oriented Riemannian manifold,
 and suppose, for a fixed spin$^c$ structure $\mathfrak{c}$,  that there is a solution of the unperturbed Seiberg-Witten equations  (\ref{drc}--\ref{sd})  for every 
 metric $\widehat{g}$ in the conformal class $[g]$ of  $g$.  
Then the scalar curvature 
$s$ and self-dual Weyl curvature $W_+$ of $g$ must satisfy
\begin{equation}
\label{voici}
\frac{1}{4\pi^2} \int_M \left( \frac{s^2}{24} + 2 |W_+|^2 \right)d\mu_g  \geq  \frac{2}{3} [c_1^+]^2 .
\end{equation}
 Moreover, if $c_1^+:= c_1(L)^+$ is non-zero, and if $M$ satisfies both \eqref{htineq} and the strict form of \eqref{revineq}, 
  then   equality cannot hold in \eqref{voici}. 
 \end{cor}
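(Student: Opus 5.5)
The inequality \eqref{voici} should follow from \eqref{voila} of Proposition \ref{upscale} by a pointwise Cauchy--Schwarz estimate. Write
$$s-\sqrt{6}|W_+| = \sqrt{24}\cdot\frac{s}{\sqrt{24}} + (-\sqrt{3})\cdot \sqrt{2}|W_+| ,$$
and view it as the Euclidean inner product of $(\sqrt{24},-\sqrt{3})$ with $(s/\sqrt{24},\sqrt{2}|W_+|)$. At every point this gives
$$\left(s-\sqrt{6}|W_+|\right)^2\leq (24+3)\left(\frac{s^2}{24}+2|W_+|^2\right)=27\left(\frac{s^2}{24}+2|W_+|^2\right).$$
Integrating and inserting \eqref{voila} yields $27\int(\frac{s^2}{24}+2|W_+|^2)d\mu\geq 72\pi^2[c_1^+]^2$. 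Dividing by $27\cdot 4\pi^2$ gives exactly \eqref{voici}, since $72/27=8/3$ and $\frac{8}{3}\cdot\frac{1}{4}=\frac{2}{3}$. So the first claim is routine.

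For the rigidity statement, assume $c_1^+\neq 0$ and that equality holds in \eqref{voici}. Then equality holds both in \eqref{voila} and pointwise in Cauchy--Schwarz. The Cauchy--Schwarz equality forces $(s/\sqrt{24},\sqrt2|W_+|)=\lambda(\sqrt{24},-\sqrt3)$, so $s\leq 0$ and $|W_+|=-\frac{\sqrt3}{8\sqrt2}\,s$. Since $s-\sqrt6|W_+|=\frac{17}{16}s$ is then a negative constant by the equality case of Proposition \ref{upscale}, both $s$ and $|W_+|$ are constant. That equality case also makes $g$ almost-K\"ahler, with $\omega$ in the bottom eigenspace of $W_+$ and the other two eigenvalues equal. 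The eigenvalues of $W_+$ are therefore $(-2a,a,a)$ with $a=|W_+|/\sqrt6=-s/16$. With $|\omega|^2=2$ this gives $W_+(\omega,\omega)=-4a=s/4$.

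I would then plug $\omega$ into the Weitzenb\"ock formula \eqref{outil}. Harmonicity of $\omega$ and constancy of $|\omega|$ give $|\nabla\omega|^2=2W_+(\omega,\omega)-\frac{2s}{3}=-\frac{s}{6}$. Next I would use that equality in the proof of Proposition \ref{upscale} makes $F^+_\theta$ a constant multiple of $\omega$, hence harmonic. This identifies $c_1^+$ with the appropriate multiple of $[\omega]$ and identifies $\mathfrak{c}$ with the spin$^c$ structure of the almost-complex structure $J$. In particular $c_1^2(L)=(2\chi+3\tau)(M)$, and $[c_1^+]^2= c_1^2+|[c_1^-]^2|$.

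The main obstacle is the final contradiction. Here I would feed the now explicit values of $s$, $|W_+|$ and $[c_1^+]^2=\frac{3}{2}\cdot\frac{1}{4\pi^2}\int(\frac{s^2}{24}+2|W_+|^2)d\mu$ into the Gauss--Bonnet-type formulas \eqref{gb+} and \eqref{gb-}. I would combine this with the almost-K\"ahler identity relating the ``star-scalar curvature'' $s^*=2W_+(\omega,\omega)/|\omega|^2\cdot 2+ s/3$ (equivalently the $|\nabla\omega|^2$ computed above) to the $\Lambda^+$-to-$\Lambda^-$ block $\mathring r$ of \eqref{deco-riem}. The aim is to show that these relations force $\int|\mathring r|^2$ and $\int |W_-|^2$ to be so constrained that $(2\chi-3\tau)(M)\leq 0$ when $(2\chi+3\tau)(M)\geq 0$. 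That would contradict the strict form of \eqref{revineq}. Getting the precise almost-K\"ahler curvature identity (Blair-type formula for $\rho^*$ versus $\rho$) right is where I expect the real work; if it proves unwieldy, I would instead try showing that $|\nabla\omega|^2=-s/6>0$ is incompatible with the harmonic representative of $c_1$ being computed by both $\rho^*$ and $F_\theta$, together with $c_1^2\geq 0$.
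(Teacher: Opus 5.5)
You have a genuine gap at the final step, though everything before it is sound.

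\textbf{What is correct.} Your derivation of \eqref{voici} works. The pointwise Cauchy--Schwarz bound $(s-\sqrt6|W_+|)^2\le 27\,(\frac{s^2}{24}+2|W_+|^2)$ is a slightly more direct route than the paper's triangle inequality followed by Cauchy--Schwarz in $\RR^2$, and it has the same equality condition. Your reduction of the equality case to an almost-K\"ahler metric with $\nabla\omega$ nowhere zero is also right in substance. The constants slipped, however. Equality in Cauchy--Schwarz gives $s=24\lambda$ and $\sqrt2|W_+|=-\sqrt3\lambda$, i.e.\ $s=-8\sqrt6|W_+|$. Hence $s-\sqrt6|W_+|=\frac98 s$, $a=-s/48$, $W_+(\omega,\omega)=s/12$, and \eqref{outil} gives $|\nabla\omega|^2=-s/2$. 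Only the positivity of this constant matters, and that survives your errors.

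\textbf{The gap.} You leave the final contradiction as a plan rather than an argument, and the identities you propose to use do not visibly deliver it. They are consistent with the equality data. For instance, the Blair-type formula $\int(s+s^*)\,d\mu=8\pi\,c_1\cdot[\omega]$, with $s^*=s+|\nabla\omega|^2=s/2$ and $c_1^+$ a multiple of $[\omega]$, gives $[c_1^+]^2=\frac{9}{512\pi^2}s^2V$, where $V$ is the total volume. That is exactly the value forced by equality in \eqref{voila}, so no contradiction arises. Meanwhile \eqref{gb-} contains $\int|W_-|^2$ with a positive sign, and the equality case gives you no control over it. So \eqref{gb-} cannot by itself give you $(2\chi-3\tau)(M)\le 0$.

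\textbf{The missing idea} is topological rather than integral. By Armstrong's observation, on an almost-K\"ahler $4$-manifold $\nabla\omega$ is the real part of a uniquely determined section of the rank-$2$ complex bundle $\Lambda^{1,0}\otimes K$. Since $|\nabla\omega|^2$ is a positive constant, this section vanishes nowhere. Therefore $0=c_2(\Lambda^{1,0}\otimes K)=\chi+2c_1^2(M,J)=(5\chi+6\tau)(M)$. But $5\chi+6\tau=\frac94(2\chi+3\tau)+\frac14(2\chi-3\tau)>0$ by \eqref{htineq} and the strict form of \eqref{revineq}, which is the contradiction.

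Your target inequality $(2\chi-3\tau)\le 0$ is exactly what $5\chi+6\tau=0$ yields when $2\chi+3\tau\ge 0$, so you were aiming at the right statement. It is this Euler-class argument that gets you there. The identification of $\mathfrak{c}$ with the spin$^c$ structure of $J$ is not needed.
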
 
 \begin{proof} In light of Proposition \ref{best}, we may assume,  without loss of generality,  that  $s$ is negative somewhere. 
 The triangle inequality then tells us that 
  $$\|s\|_{L^2} + \sqrt{6}  \|W_+\|_{L^2} \geq \left[ \int_M (s-\sqrt{6}|W_+|)^2d\mu_g \right]^{1/2},$$
with equality only if  $|W_+|$ is a constant, non-positive   multiple of $s$. 
However,  the Cauchy-Schwarz inequality in $\RR^2$ also implies that 
$$(24+ 3)^{1/2}  \left[ \int_M \left( \frac{s^2}{24} + 2 |W_+|^2 \right)d\mu_g  \right]^{1/2} \geq (2\sqrt{6}, \sqrt{3}) \cdot (\frac{1}{2\sqrt{6}}\| s\|_{L^2} , \sqrt{2} \|W_+\|_{L^2}),$$
with equality iff $\|s\|_{L^2} = 8\sqrt{6} \|W_+\|_{L^2}$. 
 In conjunction,  these two inequalities therefore   imply  that 
 $$
 \int_M \left( \frac{s^2}{24} + 2 |W_+|^2 \right)d\mu_g \geq \frac{1}{27} \int_M (s-\sqrt{6}|W_+|)^2d\mu_g , 
 $$
 with equality iff $s \equiv - 8\sqrt{6} |W_+|$. When   $c_1^+\neq 0$, combining this with \eqref{voila} therefore yields 
 $$
 \int_M \left( \frac{s^2}{24} + 2 |W_+|^2 \right)d\mu_g \geq \frac{8\pi^2}{3} [c_1^+]^2,
 $$
 with equality only if $(M,g)$ is an almost-K\"ahler manifold on  which $s$ is a negative constant, and for which $W_+(\omega , \omega )\equiv s/12$,
 where  $\omega$ is the associated self-dual symplectic form with $|\omega|^2 \equiv2$.
 But since   in this equality case, $\omega$ is harmonic and of constant length,   the Weitzenb\"ock formula \eqref{outil} then tells us that 
 $$|\nabla \omega|^2 = 2W^+(\omega , \omega ) - \frac{2s}{3} =  -\frac{s}{2} ,$$
 where the right-hand side would also be  a positive constant, and so in particular non-zero everywhere. 
 However, Armstrong \cite{arm1}  elegantly  observed that,  on any 
  almost-K\"ahler $4$-manifold, 
  $\nabla \omega$ can be uniquely expressed as the real part of a 
  section of $\Lambda^{1,0}\otimes K$,  
and    therefore, in the compact case, can  be   non-zero  everywhere only  if $M$   
  satisfies the topological constraint that 
 $$
 (5\chi + 6\tau ) (M) =  \langle c_2 (\Lambda^{1,0}\otimes K),[M]\rangle = \mathsf{e} (\Lambda^{1,0}\otimes K)= 0.
 $$
 But since 
 $$
 (5\chi + 6\tau ) (M) = \frac{9}{4}(2\chi  + 3\tau )(M) + \frac{1}{4}(2\chi  - 3\tau )(M),
 $$
  our assumptions  $(2\chi  + 3\tau )(M) \geq  0$ and $(2\chi  - 3\tau )(M)>0$ together therefore  guarantee that $\nabla \omega$  have a zero somewhere, 
which  would therefore yield a contradiction if equality held in \eqref{voici}. 
We must therefore  have
 $$
\frac{1}{4\pi^2} \int_M \left( \frac{s^2}{24} + 2 |W_+|^2 \right)d\mu_g > \frac{2}{3} [c_1^+]^2
 $$
if  $c_1^+\neq 0$, and if $M$ satisfies both \eqref{htineq} and the strict form of \eqref{revineq}. 
  \end{proof}

 Now let $X$ be a minimal compact complex surface of general type, and then blow it up at $k$ distinct points $p_1 , \ldots , p_k$  in order to produce
 a non-minimal complex surface
$$
M\approx { X}\# { k} { \overline{\CP}_2}, 
$$
that, since $\kod (M)=\kod (X)=2$,  will  also be of  general type.  For the     complex structure $J$ on $M$ produced by the blow-up construction, one  has 
$${ c_1} (M , J) = { c_1}({ X}) - \sum_{j={ 1}}^{{ k}} { E}_j ,$$
where the $E_1 , \ldots , E_k$ are Poincar\'e dual to the homology classes of the $k$ embedded copies of $\CP_1$ with which we have replaced the $k$ chosen blown-up points. 
(These copies of $\CP_1$ are often  called $(-1)$-curves   or {\em exceptional divisors}.) 
However, there are various other complex structures $J^\prime$ on the smooth $4$-manifold $M$ that instead have 
 $${ c_1} (M , J^{\prime}) = { c_1}({ X}) + \sum_{j={ 1}}^{{ k}} (\pm) { E}_j$$
for  each and every one of the $2^k$ choices of the independent $\pm$ signs in this expression. 
In fact, one can actually produce complex structures for each such choice by simply pushing forward  our standard $J$ through a self-diffeomorphism
$\Psi : M\to M$ that is equal the identity outside a union of disjoint tubular neighborhoods of the  $E_1 , \ldots , E_k$, but acts on some or all of the $2$-spheres $E_j$ 
by a reflection. To see this, first notice that there is a 
 self-diffeomorphism of $\RR^4= {\CC^2}$ that  
agrees with complex conjugation ${\CC^2}\to {\CC^2}$ near the origin, \label{checkers} 
but which  is the identity outside a ball about origin. Indeed, for a suitable real-valued function $u (\mathsf{r})$, such a map can be constructed as multiplication by the  $\mathsf{r}$-dependent matrix,
$$\left[ \begin{array}{cccc}
1&0&0&0\\ 0&\cos (u (\mathsf{r} ) )& 0& -\sin (u (\mathsf{r}) )\\
0&0&1&0 \\ 0 & \sin (u (\mathsf{r}) )& 0& \cos (u (\mathsf{r}) )\end{array} \right] , $$
 where $\mathsf{r}$ denotes the  Euclidean radius  in $\RR^4$.
For each $\mathsf{r}$, this matrix belongs to $\mathbf{SO}(4)$, and so preserves (and acts diffeomorphically on)  the $3$-sphere of radius $\mathsf{r}$.
But if we now choose  $u(\mathsf{r})$ to be a smooth ``bump''  function which is $\equiv \pi$ for small $\mathsf{r}$, 
but   $\equiv 0$ for large  $\mathsf{r}$,   
\setlength{\unitlength}{1pt}
\begin{center}
\mbox{
\beginpicture
\setplotarea x from 0 to 300, y from 0 to 100
\arrow <2pt> [1,2] from 0 10  to 258 10
\arrow <2pt> [1,2] from 10 0 to 10 100
\put {{$\mathsf{r}$}} [B1] at 270 7
\put {{\large $0$}} [B1] at -5 5
\put {{\Large $\pi$}} [B1] at -5 55
\put {{$u(\mathsf{r})$}} [B1] at  115 70 
\put {{$u$}} [B1] at  10 105 
{\setlinear   
\plot 10 60 111 60 /
\plot 154 10 250 10  /
\plot 5 10  15 10 /
}
{\setquadratic  
\plot  127 35    138 15   154 10 /
\plot      103 60 117 54 127  35 /
}
\endpicture
}
\end{center}
we have then produced a self-diffeomorphism of $\RR^4= \CC^2$ that  looks like complex conjugation $(z_1, z_2 ) \mapsto (\bar{z}_1, \bar{z}_2)$ 
 on a small ball about the origin, but which agrees with the identity  map outside a specified  larger ball. 
But since blowing up $0\in \CC^2$ amounts to replacing the origin with  the $\CP_1$ of complex lines in $\CC^2$ through $0$,
and since complex conjugation $\CC^2 \to \CC^2$ sends complex lines to complex lines, complex conjugation in $\CC^2$ induces  a well-defined anti-holomorphic 
self-diffeomorphism of the $\mathcal{O}(-1)$ line-bundle that  sends  the zero-section to itself by the orientation-reversing  reflection  $$[z_1:z_2]\mapsto [\bar{z}_1: \bar{z}_2].$$
Applying this model near any chosen collection of the $\{ p_1 , \ldots p_k\}$, and then lifting to the blow-up,  then produces a self-diffeomorphism of $M$ that acts on 
$$H^2 (M , \ZZ) =H^2(X, \ZZ) \oplus \ZZ  E_1 \oplus \cdots \oplus  \ZZ  E_k$$ by the identity on $H^2(X, \ZZ)$,  and multiplies each $E_j$ by $\pm  1$, for any desired  choice  of signs. 
Thus, by making a favorable choice of $\pm$ signs, one sees that, for any given metric $g$ on $M$, there is some  spin$^c$ structure $\mathfrak{c}$ of almost-complex type on $M$ for which 
$$[c_1^+]^2 = [c_1 (X)^+]^2 + 2 \left\langle  c_1(X)^+,   \sum_{j=1}^k \pm E_j^+\right\rangle  + \sum_{j=1}^k [E_j^+]^2 \geq   [c_1 (X)^+]^2\geq c_1^2 (X)$$
and for which $n_{\mathfrak{c}} (M) \neq 0$ if $b_+(M) > 1$, or for which  $n_{\mathfrak{c}} (M,\sphericalangle^+) \neq 0$ if  $b_+(M) =1$.
As a consequence, we   immediately obtain the following:

 \begin{thm} \label{proteus} 
  Let $(M,J)$ be a compact complex surface of general  type, and let $(X, \check{J})$ be its minimal model. 
 Then any Riemannian metric $g$ on $M$ satisfies the following  curvature estimates: 
\begin{eqnarray}
\int_M s^2d\mu &\geq& 32\pi^2 c_1^2 (X) \label{minnie}\\
\int_M (s-\sqrt{6}|W_+|)^2d\mu &\geq& 72\pi^2  c_1^2 (X) \label{moocher}
\end{eqnarray}
where $s$ and $W_+$ respectively denote the scalar and Weyl curvatures of $g$. 
 \end{thm}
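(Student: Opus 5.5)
The plan is to combine the sign-flipping construction just described with Propositions \ref{best} and \ref{upscale}. Fix an arbitrary metric $g$ on $M\approx X\# k\cpbar$. Using the $2^k$ complex structures $J'$ obtained by pushing $J$ forward through the reflection diffeomorphisms, I will choose the signs $\pm$ so that each cross term $\langle c_1(X)^+, \pm E_j^+\rangle$ is non-negative with respect to the self-dual harmonic projection defined by $g$. Since the $E_j$ are $Q$-orthogonal to $H^2(X)$, we have $\langle E_i, E_j\rangle = -\delta_{ij}$, but their self-dual parts satisfy $[E_j^+]^2\geq 0$. For the spin$^c$ structure $\mathfrak{c}$ of this $J'$ we then get
$$[c_1^+]^2\geq [c_1(X)^+]^2\geq c_1^2(X),$$
where the last step uses $[\alpha^+]^2=\alpha^2+|[\alpha^-]^2|\geq \alpha^2$ for $\alpha=c_1(X)$ viewed in $H^2(M)$. (If $c_1^2(X)\le 0$ both claimed inequalities are trivial, but for a minimal surface of general type $c_1^2(X)>0$ anyway.)

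The key step is to guarantee that the unperturbed Seiberg--Witten equations have a solution for $\mathfrak{c}$ and for every metric in $[g]$, not just for $g$. Two points are needed. First, $n_{\mathfrak c}(M)$ is invariant under pullback by diffeomorphisms, so it is non-zero for the $J'$-structure because it is non-zero for $J$. This follows from Theorem \ref{proton} when $b_+>1$, and from Proposition \ref{subtlety} (the future chamber) when $b_+=1$. Second, the choice of signs depends on $g$. I will note that the self-dual harmonic forms $\mathcal{H}^+_g$ are conformally invariant, so the same choice works uniformly for every $\widehat g\in[g]$.

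In the $b_+=1$ case I must also check that $\eta=0$ lies in the future chamber, i.e. that $c_1^+$ is past-pointing for every metric. Here $c_1(M,J')=c_1(X)\pm\sum E_j$ may have $c_1^2<0$ when $k$ is large, so this is the main obstacle. My plan is to observe that $c_1(X)\cdot[\omega]<0$ for a K\"ahler class pulled back from $X$, since $K_X$ is nef and big. The $\pm E_j$ terms can then be handled by the sign choice itself: we chose signs making $c_1^+$ closer to $c_1(X)^+$'s direction, and $[c_1^+]^2\ge c_1^2(X)>0$ together with $c_1(X)^+$ past-pointing forces $c_1^+$ onto the past ray of $\mathcal H^+_g$. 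Indeed, in one dimension $c_1^+=t\,c_1(X)^+$ with $t\geq1$, because the cross term is non-negative.

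Granting this, Proposition \ref{best} applied to $g$ yields \eqref{minnie}, namely $\int s^2d\mu\geq 32\pi^2[c_1^+]^2\geq 32\pi^2c_1^2(X)$. Proposition \ref{upscale}, whose conformal-class hypothesis is supplied by the previous step, yields \eqref{moocher}, namely $\int(s-\sqrt6|W_+|)^2d\mu\geq 72\pi^2[c_1^+]^2\geq72\pi^2c_1^2(X)$.
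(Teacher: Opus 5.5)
Your proposal is correct and follows the paper's route. The reflection diffeomorphisms supply, for each $g$, a spin$^c$ structure with non-trivial invariant and $[c_1^+]^2\geq c_1^2(X)$. The sign choice is uniform on $[g]$ because $\mathcal{H}^+_g$ is conformally invariant. Propositions \ref{best} and \ref{upscale} then give \eqref{minnie} and \eqref{moocher}. Your check that $c_1^+=t\,c_1(X)^+$ with $t\geq 1$ is past-pointing when $b_+=1$ spells out something the paper only asserts.

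One citation should be fixed. When $k\geq c_1^2(X)$, the strict Hitchin--Thorpe hypothesis of Proposition \ref{subtlety} fails, so that proposition does not apply. Take the non-vanishing of $n_{\mathfrak c}(M,\sphericalangle^+)$ instead from the K\"ahler computation behind Theorem \ref{proton}, or from Taubes' $b_+=1$ theorem; either works for any K\"ahler surface with $b_+=1$.
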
 
 
 Indeed, with extra work \cite{lebeta},  one can show that both \eqref{minnie} and \eqref{moocher} are both strict unless $M=X$ and $g$ is a $\lambda < 0$ K\"ahler-Einstein metric.
 But what we've shown here is already quite  sufficient to imply  the following:
 
  \begin{thm} 
  \label{lever} 
  Let $(M,J)$ be a compact complex surface of general  type, and let $(X, \check{J})$ be its minimal model. 
 Then any Riemannian metric $g$ on $M$ satisfies the   curvature estimate 
 $$
\frac{1}{4\pi^2} \int_M \left( \frac{s^2}{24} + 2 |W_+|^2 \right)d\mu_g \geq  \frac{2}{3} c_1^2(X),
 $$
and  the inequality is strict if $M$ satisfies the Hitchin-Thorpe inequality \eqref{htineq}.  
 \end{thm}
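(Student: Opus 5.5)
The plan is to combine the spin$^c$-structure-selection argument given just before Theorem \ref{proteus} with Corollary \ref{servus}. Fix an arbitrary metric $g$ on $M$. Using the self-diffeomorphisms that act on the exceptional classes $E_j$ by $\pm 1$, choose the signs so that the resulting spin$^c$ structure $\mathfrak{c}$ of almost-complex type, with $c_1(L)=c_1(X)+\sum_j(\pm)E_j$, satisfies
$$[c_1^+]^2 \geq [c_1(X)^+]^2 \geq c_1^2(X) .$$
Here $c_1^2(X)>0$ because $X$ is minimal of general type, so in particular $c_1^+\neq 0$. The invariant of $\mathfrak{c}$ is nonzero: $n_{\mathfrak{c}}(M)\neq 0$ if $b_+>1$ by Theorem \ref{proton}, and $n_{\mathfrak{c}}(M,\sphericalangle^+)\neq 0$ if $b_+=1$ by Proposition \ref{subtlety}.

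Next I check the hypothesis of Corollary \ref{servus}: the unperturbed equations must have solutions for \emph{every} metric $\widehat g\in[g]$. When $b_+(M)\geq 2$ this is immediate from $n_{\mathfrak{c}}\neq0$. When $b_+=1$, I would note that $c_1(L)^2=c_1^2(M)$ and that this need not be $\geq 0$ after blowing up. The main obstacle is therefore the chamber issue: I must ensure $\eta=0$ lies in $\sphericalangle^+$ for every $\widehat g$, i.e.\ that $c_1(L)^+$ is past-pointing for every metric.

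For this I would use the fact that $c_1(X)$ (pulled back) is past-pointing timelike, since $c_1^2(X)>0$ and $-c_1(X)$ pairs positively with Kähler classes. The classes $E_j$ are orthogonal to $c_1(X)$ and spacelike. For any metric, $c_1(L)^+$ is the $Q$-orthogonal projection to the ray $\mathcal{H}^+$, so its time-orientation is the sign of $Q(c_1(L),\omega)$ for a future-pointing $\omega\in\mathcal{H}^+$. The sign choice made above forces $Q(c_1(L),\omega)$ to have the same sign as $Q(c_1(X),\omega)$: with $\omega$ normalized, $[c_1^+]^2\geq[c_1(X)^+]^2$ means $|Q(c_1(L),\omega)|\geq|Q(c_1(X),\omega)|$, and the favorable signs give $Q(\sum\pm E_j,\omega)$ the same sign as $Q(c_1(X),\omega)$. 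So $c_1(L)^+$ is past-pointing. Since $\mathcal{H}^+_g$ is conformally invariant, the same $\mathfrak{c}$ works for all of $[g]$, and the solution exists for each $\widehat g$ by the argument in Proposition \ref{subtlety}.

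Corollary \ref{servus} then gives
$$\frac{1}{4\pi^2}\int_M\left(\frac{s^2}{24}+2|W_+|^2\right)d\mu_g\geq\frac23[c_1^+]^2\geq\frac23 c_1^2(X),$$
which is the claimed estimate.

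For strictness, assume $(2\chi+3\tau)(M)\geq 0$. By Corollary \ref{servus}, strictness holds once the strict form of \eqref{revineq} is verified, since $c_1^+\neq0$. Now
$$2\chi-3\tau=(2\chi+3\tau)-6\tau ,$$
and for $M=X\#k\cpbar$ we have $\tau(X)\leq \frac13\chi(X)$ by Miyaoka–Yau, which yields $(2\chi-3\tau)(X)>0$. Blowing up only increases $2\chi-3\tau$, by $5$ per blow-up, so $(2\chi-3\tau)(M)>0$. The hypothesis \eqref{htineq} is given. Hence equality in \eqref{voici} is excluded and the inequality is strict.
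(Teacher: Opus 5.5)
Your proposal is correct and follows the paper's route: you pick the sign-flipped spin$^c$ structure with $[c_1^+]^2\geq c_1^2(X)$ (and $c_1^+$ past-pointing when $b_+=1$), obtain strict $(2\chi-3\tau)(M)>0$ from Miyaoka--Yau, and apply Corollary \ref{servus}; you also spell out the chamber and conformal-class bookkeeping that the paper compresses into ``we have just seen.'' One small citation fix: for $n_{\mathfrak{c}}(M,\sphericalangle^+)\neq 0$ when $b_+=1$, appeal to Taubes' $b_+=1$ theorem stated after Theorem \ref{neutron} (or the K\"ahler argument behind it), because Proposition \ref{subtlety} assumes $(2\chi+3\tau)(M)>0$, which a blown-up $M$ need not satisfy in the non-strict part of the statement.
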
 

 \begin{proof}
 We have just seen that there must be a spin$^c$ structure $\mathfrak{c}$ of almost-complex type on $M$ with   $[c_1^+]^2 \geq c_1^2 (X)$ with respect  to $g$, and    for which either 
   $n_{\mathfrak{c}} (M)\neq 0$ if $b_+(M)>1$, or else for which $c_1^+$ is past-pointing and for which $n_{\mathfrak{c}} (M,\sphericalangle^+)\neq 0$ if $b_+(M)=1$. 
   Since  the  strict form of the reverse Hitchin-Thorpe inequality 
  \eqref{revineq} holds for any complex surface $M$ of general type,  e.g.\  as a consequence  of the Miyaoka-Yau inequality \cite[\S VII.4]{bpv}, the result therefore  follows from Corollary \ref{servus}. 
 \end{proof} 
%

This then yields the following\footnote{While weaker   results of this type were first proved in \cite{lno,lebweyl}, several years  elapsed before  Theorem \ref{nein} was    proved in \cite{lric}.
The proof  presented here  is    based on \cite{lebsurv2,lebeta}.} main result:

\begin{thm} \label{nein}
Suppose that  $X$ is a minimal complex surface of general type, and let $M\approx X\# k \cpbar$ be its blow-up 
at $k$ points. If $k\geq \frac{1}{3}c_1^2(X)$, then the underlying smooth $4$-manifold of  $M$ does not admit  Einstein metrics.
\end{thm}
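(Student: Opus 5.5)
We argue by contradiction: suppose $g$ is an Einstein metric on the smooth $4$-manifold $M\approx X\# k\cpbar$. Since the Einstein condition does not depend on orientation, we may fix the complex orientation of $M$. The plan is to play the Gauss--Bonnet-type identity \eqref{cible}, valid because $\mathring{r}\equiv 0$, against the Seiberg--Witten lower bound of Theorem \ref{lever}, and then convert the result into a statement about characteristic numbers.

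First, since $g$ is Einstein, \eqref{cible} holds:
$$(2\chi+3\tau)(M)=\frac{1}{4\pi^2}\int_M\Big(\frac{s^2}{24}+2|W_+|^2\Big)d\mu_g .$$
In particular $M$ satisfies the Hitchin--Thorpe inequality \eqref{htineq}. Theorem \ref{lever} applies to every metric on $M$, so it applies to $g$. Because \eqref{htineq} holds, the strict form of that estimate applies, and we obtain
$$(2\chi+3\tau)(M)>\frac{2}{3}c_1^2(X).$$

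Second, we compute the left-hand side topologically. Blowing up $k$ times gives $\chi(M)=\chi(X)+k$ and $\tau(M)=\tau(X)-k$. Hence
$$(2\chi+3\tau)(M)=(2\chi+3\tau)(X)-k=c_1^2(X)-k,$$
where we use $c_1^2=2\chi+3\tau$ for the complex surface $X$. The strict inequality therefore becomes $c_1^2(X)-k>\frac23 c_1^2(X)$, that is, $k<\frac13 c_1^2(X)$. This contradicts the hypothesis $k\geq \frac13 c_1^2(X)$, so no Einstein metric exists.

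The only delicate step is making sure the strict version of Theorem \ref{lever} is available, since the non-strict version would only rule out $k>\frac13c_1^2(X)$. Strictness requires \eqref{htineq} for $M$, and this comes for free from the Einstein assumption via \eqref{cible}; the strict reverse inequality \eqref{revineq} has already been handled inside the proof of Theorem \ref{lever}. The case $b_+(M)=1$ needs no separate treatment, because Theorem \ref{lever} already covers it through the chamber discussion.
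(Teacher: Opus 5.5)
Your proposal is correct and follows essentially the same route as the paper. The Einstein condition gives Hitchin--Thorpe, which unlocks the strict case of Theorem \ref{lever}; \eqref{cible} then turns that into $(2\chi+3\tau)(M)>\frac{2}{3}c_1^2(X)$, and $(2\chi+3\tau)(M)=c_1^2(X)-k$ yields $k<\frac{1}{3}c_1^2(X)$.
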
 

\begin{proof} Consider the contrapositive.  If  $M=X\# k \cpbar$  admits an Einstein metric $g$,  then Theorem \ref{ht}  guarantees that  $M$ 
satisfies the Hitchin-Thorpe inequality \eqref{htineq}.   Hence Theorem \ref{lever} tells us  that $(M,g)$  satisfies
$$ \frac{1}{4\pi^2} \int_M \left( \frac{s^2}{24} + 2 |W_+|^2 \right)d\mu_g >  \frac{2}{3} c_1^2(X).$$
However,  since $g$ is Einstein, equation \eqref{gb+} then tells us that this inequality can be rewritten as 
$$ (2\chi +3\tau )(M) > \frac{2}{3} c_1^2(X).$$
But since $c_1^2 (M) = (2\chi +3\tau )(M)$, and each blow-up reduces $c_1^2 =2\chi + 3\tau$ by 
precisely $1$,  this inequality can be rewriten as  
$$c_1^2 (X) - k > \frac{2}{3} c_1^2(X).$$
Thus, the existence of an Einstein metric on $M=X\# k \cpbar$  implies that $c_1^2(X)/3 > k$. 
The result therefore    follows by contraposition. 
\end{proof}

It should be  emphasized that a  complex surface of general type 
can only admit a {\em K\"ahler-Einstein} metric if it is minimal, which would correspond   to the  $k=0$ case in our discussion. 
By contrast, since  $(2\chi + 3\tau )(M) = c_1^2 (X) -k$,  the Hitchin-Thorpe inequality only rules out Einstein metrics 
on our blow-up $M=X\# k \cpbar$   for  much larger values of $k$, namely when $k\geq c_1^2 (X)$.  On the other hand, 
the following prototypical example will  make it clear that, in the intermediate range, 
the improved estimate offered by Theorem \ref{nein}  genuinely depends on diffeomorphism type, rather than just  on homeomorphism type.

\begin{xpl}
Let $X$ be the sextic hypersurface $X_6\subset \CP_3$ introduced on page \pageref{hypersurface}, and let $M\approx X\# 8 \cpbar$ be its blow-up at eight points. 
Because $X$ is a minimal surface of general type with $c_1^2 (X) = 24$, and because  we have constructed $M$ by blowing up $X$ at exactly  $c_1^2 (X) /3$ points,
Theorem \ref{nein} therefore guarantees  that the smooth manifold $M$  cannot admit Einstein metrics.

But now, by analogy to the Kummer construction  of $K3$, consider the complex surface $N$ obtained in the following manner: start with  the product of
a compact complex curve of genus $2$ and a hyper-elliptic complex curve of genus $5$, and then mod out by the $\ZZ_2$ action given by
simultaneously applying the hyper-elliptic involution to both factors: 

\begin{center}
\includegraphics[scale=.5]{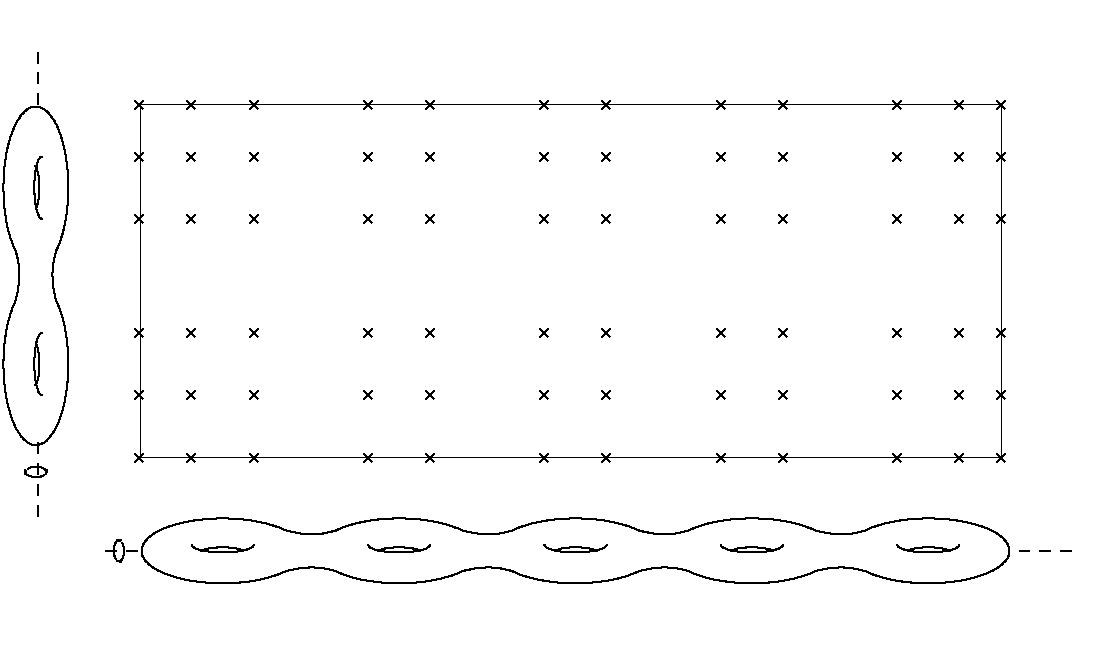}
\end{center} 

\noindent The quotient is then a complex orbifold with $72$ orbifold singularities modeled on $\CC^2/\{ \pm 1\}$.
We will now  replace a neighborhood of each orbifold point with a neighborhood of the zero section in 
$T^*S^2$, and then deform the complex structure in order to yield a  smooth complex surface $N$ with   $c_1 (N) < 0$. We can actually do this explicitly by
first noticing  that our orbifold model is a double branched cover of $\CP_1 \times \CP_1$, branched at $$(\{ 12~pts\} \times \CP_1)\cup (\CP_1 \times \{ 6~pts\}).$$
We can now smooth the singularities of this model  by defining $N$ to be the double branched cover of $\CP_1 \times \CP_1$ with branch locus 
a smooth curve of bidegree $(12, 6)$. The canonical line bundle $K$ of this branched cover $N$ is then just the pull-back of the positive line bundle $\mathcal{O} (4,1)$
from $\CP_1\times \CP_1$, and {\em Nakai's criterion} \cite{bpv}  therefore implies that $K$ is itself positive,  the key point being  that its pairing with any compact 
holomorphic curve in $N$ is positive.
 The Aubin-Yau theorem \cite{aubin,yau}  therefore guarantees
that $N$ admits a $\lambda < 0$  K\"ahler-Einstein metric. In particular, the underlying smooth $4$-manifold of $N$ therefore admits Einstein metrics. 

However, the simply-connected complex surfaces $M$ and $N$ both have $c_1^2 = 16$ and $h^{2,0} = 10$. 
This implies that  they both have Euler characteristic $\chi = 116$ and signature $\tau = -72$.
Since  $\tau$ is therefore not divisible by $16$ for either manifold,  
 Rokhlin's theorem therefore guarantees  that  $M$ and $N$ are both non-spin. Theorem \ref{fdmn}, therefore asserts that  $M$ is  
homeomorphic to $N$. However, these manifolds  are most certainly {\em not  diffeomorphic}, because $N$ admits an Einstein metric, while  $M$ does not!
\end{xpl}

It is now straightforward to  generalize this  example by  blowing up any  hypersurface  $X_\ell\subset \CP_3$ of  degree $\ell \geq 6$ at an appropriate number of points, and then producing a homeomorphic 
K\"ahler-Einstein partner  as  a  branched cover of a Hirzebruch surface. 
This shows, in particular,  that this phenomenon occurs for infinitely many homeotypes of  simply connected $4$-manifolds. For other  simple
  concrete examples of this kind, see  \cite{lric,lebsurv2}. However, we will prove much stronger results by more refined  methods in \S \ref{monopoly}  below. 

\bigskip

For concreteness, we have,  until now,  emphasized constructions rooted in complex geometry. However, 
because the K\"ahler form $\omega = g(J\cdot , \cdot)$ of any K\"ahler metric $g$ is a closed, non-degenerate $2$-form, 
it also defines definies a {\em symplectic structure} on the relevant manifold. Theorem \ref{proton} may therefore
 be viewed as being a special case of the following key result of Taubes \cite{taubes}:
 
 \begin{thm}[Taubes]\label{neutron}
 Let $(M^4,\omega )$ be a compact symplectic $4$-manifold with $b_+(M) > 1$,  and let $\mathfrak{c}$ be the 
 spin$^c$ structure on $M$ determined by any almost-complex structure $J$  compatible 
 with $\omega$. Then $n_\mathfrak{c}(M) \neq 0$. 
 \end{thm}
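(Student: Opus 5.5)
We would follow Taubes' original strategy: perturb the Seiberg-Witten equations by a large multiple of the symplectic form, and show that for this perturbation the moduli space consists of exactly one point, which is a regular point of the monopole map. Because $b_+(M)>1$, metric-independence and chamber-independence of $n_{\mathfrak c}(M)$ will then give $n_{\mathfrak c}(M)=1\neq 0$.

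First we choose an almost-complex structure $J$ compatible with $\omega$ and set $g=\omega(\cdot,J\cdot)$, so that $g$ is almost-K\"ahler. Then $\omega$ is a self-dual harmonic form with $|\omega|^2\equiv 2$, and
$$\mathbb{V}_+=\Lambda^{0,0}\oplus\Lambda^{0,2}, \qquad \mathbb{V}_-=\Lambda^{0,1},$$
exactly as in the K\"ahler discussion following Theorem \ref{proton}. There is a canonical connection $\theta_0$ on $L=K^{-1}$ for which $\Phi_0=(1,0)$ satisfies $\dir_{\theta_0}\Phi_0=0$. This is the step that uses $d\omega=0$: the Dirac operator differs from $\sqrt2(\bar\partial+\bar\partial^*)$ only by zeroth-order terms built from $\nabla\omega$, and these annihilate $(1,0)$. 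For a large parameter $r>0$ we then take the perturbation
$$\eta_r = iF^+_{\theta_0}+\sigma(\sqrt r\,\Phi_0),$$
so that $(\sqrt r\,\Phi_0,\theta_0)$ solves \eqref{drc} and \eqref{ptsd}. Since $\eta_r$ is essentially $\frac{r}{4}\omega$, whose harmonic part grows without bound, condition \eqref{no-harm} holds for all large $r$, and all these perturbations lie in one chamber.

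The main step is uniqueness modulo gauge. Write any solution as $\Phi=\sqrt r\,(\alpha,\beta)$ and $\theta=\theta_0+2a$. We will use the Weitzenb\"ock formula \eqref{pwbk} together with the maximum principle to get the pointwise bound $|\alpha|^2+|\beta|^2\le 1+O(1/r)$. Next, pairing the Dirac equation with the curvature equation, we aim for integral estimates of the form
$$\int r(1-|\alpha|^2)^2+\int r|\beta|^2+\int|\nabla\beta|^2\le C.$$
Since $iF^+_\theta$ equals $\frac{r}{8}(1-|\alpha|^2-\ldots)\omega$ plus bounded terms, its $\omega$-component is constrained. The cohomological constraint $\int iF_\theta\wedge\omega=2\pi c_1(L)\cdot[\omega]$, which is independent of $r$, then forces $1-|\alpha|^2$ and $\beta$ to vanish as $r\to\infty$. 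After a gauge transformation making $\alpha$ real and positive, we linearize around $(\sqrt r\Phi_0,\theta_0)$. We then show that for $r$ large the linearized operator has no kernel. The reason is that the equation $\bar\partial_a\alpha=O(\beta)$ couples with the $r$-weighted term to give a coercive estimate, of size roughly $r\|\delta\|^2$, for any infinitesimal deformation $\delta$ orthogonal to the gauge orbit. This single estimate yields both uniqueness of the solution and its nondegeneracy, i.e.\ transversality.

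Finally, since the expected dimension \eqref{mdim} is zero for almost-complex spin$^c$ structures, the index of the monopole map \eqref{raw} is one. Its kernel at our solution is then exactly the $\mathscr G_0$-orbit direction, so the solution is a regular point and the moduli space is a single reduced point. By the cobordism argument already given for $b_+>1$, this yields $n_{\mathfrak c}(M)=1$.

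The hardest part will be the uniform a priori estimates for large $r$ that exclude other solutions. The difficulty is that $\beta$ could concentrate, or $|\alpha|$ could vanish along $J$-holomorphic curves; this is exactly where Taubes' "SW$\Rightarrow$Gr" phenomena appear. Here, however, $c_1(L)$ is the canonical class itself, so the associated curve class is zero. We would exploit this by showing that the energy $\int r(1-|\alpha|^2)$ is bounded by $2\pi\big([\omega]\cdot(c_1(L)-c_1(K^{-1}))\big)+O(1)=O(1)$. Any zero of $\alpha$ would cost energy of order $r$ on a ball of radius $r^{-1/2}$, and we would use this to rule such zeros out.
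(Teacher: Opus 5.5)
Your set-up matches the paper's sketch of Taubes' argument: the perturbation $\eta_r=iF^+_{\theta_0}+\frac r4\omega$, the model solution $(\sqrt r\,\Phi_0,\theta_0)$, and the plan to show it is the only solution and is regular. The pointwise bound from \eqref{squeeze} is correct, and so is the identity $\int(1-|\alpha|^2+|\beta|^2)\,d\mu=0$ obtained from $\int(F_\theta-F_{\theta_0})\wedge\omega=0$.

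The gap is in the global uniqueness step. Bounds of the type you aim for, $\int r(1-|\alpha|^2)^2+\int r|\beta|^2+\int|\nabla\beta|^2\le C$, give only integral smallness. They do not show that $\alpha$ is nowhere zero. So you cannot gauge $\alpha$ to be real and positive, and coercivity of the linearization at the model only yields uniqueness in a small neighbourhood of the model.

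Your device for excluding zeros of $\alpha$ also fails on scaling. In dimension four, a ball of radius $r^{-1/2}$ on which $1-|\alpha|^2\geq\frac12$ contributes only about $r\cdot r^{-2}=r^{-1}$ to $\int r(1-|\alpha|^2)\,d\mu$. It contributes about $r^{-2}$ to $\int(1-|\alpha|^2)^2\,d\mu$. Both are invisible against your $O(1)$ energy bound and your $O(1/r)$ bound respectively. Zeros of $\alpha$ are not point-like; they spread along approximately pseudo-holomorphic curves. Excluding them by energy would therefore need both an $o(1)$ energy bound and a monotonicity lemma giving energy $\geq c>0$ near any zero. That is essentially the SW$\Rightarrow$Gr machinery, which your sketch does not supply.

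What is missing is that, because $c_1(L)=c_1(K^{-1})$, a single global integral identity closes and forces exact agreement with the model.

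\begin{itemize}
\item Write the Dirac equation as $\bar\partial_a\alpha+\bar\partial_a^*\beta+\mathfrak t\beta=0$, with $\mathfrak t$ of order zero, and expand $0=\|\bar\partial_a\alpha+\bar\partial_a^*\beta+\mathfrak t\beta\|^2_{L^2}$.
\item The cross term is $2\,\mathrm{Re}\int\langle\bar\partial_a\bar\partial_a\alpha,\beta\rangle$ plus $O(\int|\nabla_a\alpha||\beta|)$. Here $\bar\partial_a\bar\partial_a\alpha=F_a^{0,2}\alpha+\mathcal N(\partial_a\alpha)$, with $\mathcal N$ the Nijenhuis term.
\item The curvature equation makes $F_a^{0,2}$ a positive multiple of $r\bar\alpha\beta$. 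So this term is a positive multiple of $r\int|\alpha|^2|\beta|^2$, up to $O(\int|\nabla_a\alpha||\beta|)$.
\item Stokes' theorem and $d\omega=0$ give $\int(|\partial_a\alpha|^2-|\bar\partial_a\alpha|^2)\,d\mu=\kappa\int|\alpha|^2\langle iF_a,\omega\rangle\,d\mu$ with $\kappa>0$. By \eqref{ptsd}, $\langle iF_a,\omega\rangle$ is a positive multiple of $r(1-|\alpha|^2+|\beta|^2)$.
\item Substitute, and use $\int(1-|\alpha|^2+|\beta|^2)=0$ to rewrite $r\int|\alpha|^2(1-|\alpha|^2)=-r\int(1-|\alpha|^2)^2-r\int|\beta|^2$. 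Together with $|\alpha|^2\le 1+C/r$, this gives, for some $c>0$,
\[
\int|\nabla_a\alpha|^2\,d\mu+c\,r\int(1-|\alpha|^2)^2\,d\mu+(c\,r-C)\int|\beta|^2\,d\mu\;\le\;C\int|\nabla_a\alpha|\,|\beta|\,d\mu .
\]
\end{itemize}

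For $r$ large the right-hand side is absorbed by the left, so $\beta\equiv0$, $\nabla_a\alpha\equiv0$ and $|\alpha|\equiv1$. Every solution is then exactly gauge-equivalent to $(\sqrt r\,\Phi_0,\theta_0)$, with no vortex analysis. Your coercivity estimate for the linearization is needed only for transversality.

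Note also where closedness of $\omega$ enters. It is used essentially in the two Stokes arguments above, not merely in producing $\theta_0$. A connection with $\dir_{\theta}\Phi_0=0$ exists on any almost-Hermitian manifold, since $b\mapsto b\bullet\Phi_0$ is an isomorphism $\Lambda^1\to\mathbb{V}_-$.
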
 

Taubes' proof  is analogous  to the one we  sketched in the K\"ahler case, but is considerably  more delicate. 
Namely, he shows that if we choose a perturbation $\eta$ which is a large multiple  of the symplectic form $\omega$, 
plus a carefully chosen bounded term, there is exactly one solution of the Seiberg-Witten equations for the spin$^c$ structure
determined by any $\omega$-compatible almost-complex structure. The same proof also demonstrates the following:

 \begin{thm}[Taubes]
 Let $(M^4,\omega )$ be a compact symplectic $4$-manifold, and let $\mathfrak{c}$ be the 
 spin$^c$ structure on $M$ determined by an almost-complex structure $J$  compatible 
 with $\omega$. If   $b_+(M) = 1$, then $n_\mathfrak{c}(M, \sphericalangle^+) \neq 0$,
where $ \sphericalangle^+$ is the chamber containing large positive multiples of the symplectic form $\omega$. 
 \end{thm}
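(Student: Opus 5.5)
The plan is to reproduce Taubes' argument in the $b_+=1$ setting, noting that it is essentially local and never uses $b_+\geq 2$ except to make the invariant chamber-independent. Fix $\omega$, a compatible $J$, and the associated metric $g=\omega(\cdot,J\cdot)$. For this almost-K\"ahler metric $\omega$ is self-dual, harmonic and of constant length $\sqrt{2}$. Write $\mathbb{V}_+=\Lambda^{0,0}\oplus\Lambda^{0,2}$, and let $\theta_0$ be the canonical connection on $K^{-1}$ for which $\Phi_0=(1,0)$ satisfies $\dir_{\theta_0}\Phi_0=0$. This uses the fact that $d\omega=0$, which makes the Dirac operator $\sqrt2(\bar\partial+\bar\partial^*)$ up to zeroth-order terms annihilating constants.

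Next I would take the perturbation
$$\eta_r=\frac{r}{4}\omega+iF^+_{\theta_0}$$
with $r\gg 1$, scaled so that $\sigma(\sqrt r\,\Phi_0)=\frac r4\omega$. Then $(\sqrt r\,\Phi_0,\theta_0)$ solves \eqref{drc} and \eqref{ptsd} exactly. The harmonic part of $\eta_r$ is $\frac r4\omega+2\pi[c_1(L)]^+$, and for $r$ large this is a large positive multiple of $\omega$ plus a bounded term. Hence \eqref{no-harm} holds, and $(g,\eta_r)$ lies in the chamber $\sphericalangle^+$ containing large positive multiples of $\omega$; this is exactly the chamber named in the statement. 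So it suffices to show that for $r$ large this solution is, modulo $\mathscr{G}$, the unique solution, and that it is a regular point of the monopole map \eqref{raw}. Then $n_{\mathfrak c}(M,\sphericalangle^+)=1\bmod 2$.

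The uniqueness step is the main obstacle, and I would follow Taubes' estimates. Write a solution as $\Phi=\sqrt r(\alpha,\beta)$ and $\theta=\theta_0+a$. The Weitzenb\"ock formula \eqref{pwbk} together with the maximum principle first gives $|\alpha|^2\le 1+O(1/r)$. Next, a Weitzenb\"ock identity for the $\beta$-component, combined with the fact that the equation $F^+$ involves the term $\frac r8(1-|\alpha|^2-|\beta|^2)\omega$, gives $|\beta|^2\le C r^{-1}(1-|\alpha|^2)+C r^{-2}$. Then integrate $\omega\wedge iF_\theta$; the integral of $\omega\wedge iF_{\theta_0}$ is topological and independent of $r$. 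This yields
$$\frac r8\int(1-|\alpha|^2)\,d\mu = O(1),$$
so $1-|\alpha|^2$ is small in $L^1$. Combining this with the Weitzenb\"ock bound on $|\nabla_\theta\alpha|^2$ and an elliptic estimate for $(1-|\alpha|^2)$ upgrades it to $|\alpha|\to1$ uniformly. The delicate part is exactly this step: excluding concentration of $1-|\alpha|^2$, i.e.\ pseudoholomorphic-curve-like regions, and it is where the constant-length almost-K\"ahler structure is used. Once $\alpha$ is nowhere zero, gauge-fix so that $\alpha>0$. Then $a$ and $\beta$ satisfy a linear elliptic system whose linearization at $(\sqrt r\Phi_0,\theta_0)$ has lowest eigenvalue of order $r$. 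An implicit-function or contraction argument therefore forces the solution to be gauge-equivalent to $(\sqrt r\Phi_0,\theta_0)$.

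The same spectral bound, of order $r$ for the linearized operator $\dir\oplus d^+\oplus d^*$ at this solution, shows that the cokernel of \eqref{raw} vanishes there. So the solution is a regular, isolated point, consistent with the expected dimension $0$, since $c_1^2(L)=(2\chi+3\tau)(M)$ for an almost-complex spin$^c$ structure. Finally, when $b_+=1$, the pair $(g,\eta_r)$ for $r$ large can be joined within $\sphericalangle^+$ to any other regular pair in that chamber. The cobordism argument used to define $n_{\mathfrak c}$ then shows that $n_{\mathfrak c}(M,\sphericalangle^+)=1\neq0$.
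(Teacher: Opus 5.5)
Your overall architecture is the one the paper has in mind. The paper just cites Taubes: perturb by a large multiple of $\omega$ plus a bounded term, and show there is exactly one nondegenerate solution. Several of your steps are fine as they stand: placing $(g,\eta_r)$ in $\sphericalangle^+$ via $\eta_H-2\pi[c_1(L)]^+=\frac r4\omega$, checking that $(\sqrt r\Phi_0,\theta_0)$ is an exact solution, the regularity claim, and the cobordism inside the connected chamber.

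The gap is the uniqueness step. Your chain is: $|\alpha|^2\le 1+O(1/r)$, the bound on $|\beta|$, $\frac r8\int(1-|\alpha|^2)\,d\mu=O(1)$, an energy bound on $\nabla\alpha$, and then an ``elliptic upgrade'' to $|\alpha|\to1$ uniformly. None of it uses that $\mathfrak c$ is the canonical spin$^c$ structure. Run it for $L=K^{-1}\otimes E^2$ with $c_1(E)\cdot[\omega]>0$: every estimate you list still holds, and the integral becomes $4\pi c_1(E)\cdot[\omega]$, which is still $O(1)$. Yet there the solutions concentrate along pseudoholomorphic curves on which $\alpha$ vanishes, so uniform convergence is false. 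Quantitatively, a vortex tube around a curve of small area $A$ costs only $O(A)$ in both $\int r(1-|\alpha|^2)$ and $\int|\nabla\alpha|^2$. So $L^1$ and energy bounds cannot exclude it, and the constant length of $\omega$ is not what rules it out. Separately, your closing contraction argument would need quantitative closeness in $\sqrt r$-adapted norms, not merely that $\alpha$ is nowhere zero.

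The missing input is that $E$ is trivial here, and you must use this exactly, not as $O(1)$. Write $\theta=\theta_0+2a$, with $a$ a connection on the trivial bundle. Then $\int iF_a\wedge\omega=0$, and the curvature equation gives $\int(1-|\alpha|^2+|\beta|^2)\,d\mu=0$. With the paper's $\sigma$, $\langle iF_a,\omega\rangle=\frac r4(1-|\alpha|^2+|\beta|^2)$. The identity $\int|\nabla_a\alpha|^2=2\int|\bar\partial_a\alpha|^2+\int\langle iF_a,\omega\rangle|\alpha|^2$ needs only $d\omega=0$. Subtracting $\frac r4$ times the vanishing integral from it yields
\[
\int|\nabla_a\alpha|^2+\frac r4\int(1-|\alpha|^2)^2=2\int|\bar\partial_a\alpha|^2-\frac r4\int|\beta|^2(1-|\alpha|^2).
\]
The Dirac equation gives $|\bar\partial_a\alpha|\le C(|\nabla_a\beta|+|\beta|)$. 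In the $\beta$-component of the Weitzenb\"ock formula, $\alpha$ enters only through $\nabla_a\alpha$, because $\dir_{\theta_0}\Phi_0=0$ and $\eta_r$ absorbs $F^+_{\theta_0}$. That component yields $\int|\nabla_a\beta|^2+r\int|\beta|^2\le\frac Cr\int|\nabla_a\alpha|^2$. Hence $\int|\nabla_a\alpha|^2+\frac r4\int(1-|\alpha|^2)^2\le\frac Cr\int|\nabla_a\alpha|^2$. For $r\gg1$ this forces $\nabla_a\alpha\equiv0$, $|\alpha|\equiv1$ and $\beta\equiv0$. That is global uniqueness directly, with no pointwise upgrade or contraction argument.
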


In a stunning pair  of sequels \cite{taubes2,taubes3}, Taubes then extended these ideas in ways 
that  completely revolutionized $4$-dimensional symplectic geometry, thereby  putting  the earlier   trail-blazing 
work  of Gromov  \cite{gromsym}
and McDuff  \cite{mcrules,dusa}   in a vivid new  context. While McDuff had previously shown that
blowing up and down had natural generalizations to symplectic $4$-manifolds, Taubes now showed that every symplectic
$4$-manifold $M$ with $b_+(M)> 1$ has a unique minimal model $X$ with $c_1^2(X)\geq 0$, and that 
symplectic minimality or non-minimality for such manifolds depends only on diffeomorphism type. 
Taubes' also showed that the canonical class $-c_1$ on any such symplectic manifold is always
represented by one of Gromov's  pseudo-holomorphic curves,  so that every symplectic $4$-manifold with 
$b_+> 1$ in particular  satisfies $Q(c_1 , [\omega ]) < 0$. When $b_+=1$,  Taubes' results still imply \cite{liliu2,mcd-sal-turk}
that every symplectic $4$-manifold has a minimal model, and that non-minimality is still  detected by the existence of embedded 
 pseudo-holomorphic $2$-spheres of self-intersection $-1$, although in this setting,
as in the complex-surface case, minimal models may  no longer be unique, and may not necessarily  have  $c_1^2\geq 0$. 
One beautiful application of these ideas is the following \cite{ohno}:

\begin{thm}[Ohta-Ono]  
\label{optimist} 
Let $(M,\omega)$ be a compact symplectic  $4$-manifold, and suppose that the smooth manifold $M$ also admits 
a Riemannian metric $g$ with scalar curvature $s>0$. Then $(M,\omega)$ is obtained by symplectic blow-up of a standard symplectic structure  on either 
 $\CP_2$ or else a $\CP_1$-bundle over a compact Riemann surface. In particular, $M$ is diffeomorphic to a rational or ruled  complex surface,  with $\kod = -\infty$ and $b_1$ even.
 \end{thm}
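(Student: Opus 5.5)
Since $g$ has $s>0$, the pointwise bound \eqref{squeeze}, applied with $\eta=0$, shows that the unperturbed Seiberg-Witten equations (\ref{drc}--\ref{sd}) have no irreducible solution for $g$ and any spin$^c$ structure. The plan is to play this off against Taubes' non-vanishing results, splitting into the cases $b_+(M)>1$ and $b_+(M)=1$.

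\textbf{Case $b_+(M)>1$.} Theorem \ref{neutron} gives $n_{\mathfrak{c}}(M)\neq 0$ for the spin$^c$ structure $\mathfrak{c}$ of $\omega$. By the discussion preceding Theorem \ref{proton}, there is then a solution of (\ref{drc}--\ref{sd}) for every metric, and in particular for $g$. Two sub-cases arise:
\begin{itemize}
\item If $[c_1(L)]^+\neq 0$ for $g$, the solution is irreducible. This contradicts \eqref{squeeze}.
\item If $[c_1(L)]^+=0$, we perturb the metric slightly, keeping $s>0$. By the submersivity of the period map (Proposition \ref{snap} and \cite{don}), this makes $[c_1(L)]^+\neq 0$ unless $c_1(L)$ is a torsion class.
\end{itemize}
In the torsion case, Taubes' result $Q(c_1,[\omega])<0$ already gives a contradiction. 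Hence $b_+(M)=1$.

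\textbf{Case $b_+(M)=1$.} This is the main obstacle. Taubes' theorem only gives non-vanishing in the chamber $\sphericalangle^+$ containing large multiples of $\omega$, and $\eta=0$ may lie in the other chamber. I would pass to a symplectic minimal model $X$ of $(M,\omega)$, obtained by blowing down $(-1)$-spheres \cite{liliu2,mcd-sal-turk}. Then $M\approx X\#k\cpbar$ carries a metric of positive scalar curvature, and I argue by contradiction that $X$ must be rational or ruled.

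Suppose $X$ is neither. By the Li--Liu work on minimal symplectic manifolds with $b_+=1$, one then has $c_1^2(X)\geq 0$ and $Q(c_1(X),[\omega])\leq 0$. I would then follow the mechanism of Proposition \ref{subtlety} in the general-type case: for every metric on $M$, $c_1^+$ is past-pointing, so $\eta=0$ lies in $\sphericalangle^+$. As in the reflection trick before Theorem \ref{proteus}, I would use the spin$^c$ structures $c_1(X)+\sum\pm E_j$ to secure this time-orientation property. The nonzero invariant $n_{\mathfrak{c}}(M,\sphericalangle^+)$ then forces a solution for $g$. This solution is irreducible, because $c_1^+\neq 0$ (perturbing $g$ if needed as before), and that contradicts $s>0$.

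The delicate points are:
\begin{itemize}
\item the null case $c_1^2(X)=0$, where one must check that $c_1(X)$ is nonzero and lies in the closure of the past cone;
\item the wall-crossing bookkeeping needed to show that $[c_1(L)]^+$ cannot be future-pointing.
\end{itemize}
I would handle these via the light-cone lemma: two non-zero vectors in the closed time-like cone with non-negative pairing lie in the same nappe.

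\textbf{Conclusion.} The minimal model $X$ is therefore a rational or ruled symplectic manifold. By McDuff's classification \cite{mcrules,dusa}, it is symplectomorphic to a standard structure on $\CP_2$ or on a $\CP_1$-bundle over a Riemann surface. Hence $M$ is a blow-up of such a manifold, diffeomorphic to a rational or ruled surface, with $\kod=-\infty$ and $b_1$ even.
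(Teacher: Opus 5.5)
Your strategy can be made to work, but as written it fails in both cases exactly when the canonical class is torsion.

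\textbf{The case $b_+(M)>1$.} Taubes' theorem only gives $Q(c_1,[\omega])\le 0$. Equality certainly occurs when $c_1$ is torsion: $K3$ and $T^4$ are symplectic with $b_+=3$ and $c_1=0$. So the strict inequality you invoke does not dispose of your torsion sub-case.

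\textbf{The case $b_+(M)=1$.} The step ``check that $c_1(X)$ is nonzero'' in your null case cannot be carried out. The Enriques surface is a minimal K\"ahler, hence symplectic, manifold with $b_+=1$ and torsion $c_1$, and it is neither rational nor ruled. When $M=X$ is such a manifold ($k=0$), $c_1^+=0$ for \emph{every} metric. The unperturbed equations then have only reducible solutions, perturbing $g$ changes nothing, and $s>0$ yields no contradiction.

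\textbf{The repair.} Both holes close with one device: use a small perturbation $\eta$ instead of $\eta=0$. This also makes your sub-cases, the metric perturbation, and the wall-crossing bookkeeping unnecessary.
\begin{itemize}
\item If $s>0$ and $2\sqrt{2}\,|\eta|<\min s$, then \eqref{squeeze} forces $\Phi\equiv 0$.
\item If also $\eta_H\neq 2\pi c_1^+$, there are no reducible solutions either.
\item The moduli space is then empty, so the invariant vanishes in the chamber containing $(g,\eta)$.
\end{itemize}
For $b_+>1$ this contradicts Theorem \ref{neutron} directly; it is how the paper rules out positive scalar curvature on the $X_\ell$.

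For $b_+=1$, your Li--Liu inequalities and the light-cone lemma only show that $c_1(X)$ lies in the closed past cone, with zero allowed. That is enough. Let $\omega_g$ be the future-pointing generator of $\mathcal{H}^+_g$. After your metric-dependent choice of signs, $c=c_1(X)+\sum\pm E_j$ satisfies $Q(c,[\omega_g])\le 0$. Taking $\eta=\epsilon\,\omega_g$ with $\epsilon>0$ small makes $\eta_H-2\pi c^+$ future-pointing, so $(g,\eta)\in\sphericalangle^+$. Emptiness of the moduli space then gives $n_{\mathfrak{c}}(M,\sphericalangle^+)=0$, contradicting Taubes.

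\textbf{Comparison with the paper.} With this repair, your proof takes a genuinely different route. Ohta--Ono, as sketched after the statement, use Taubes' SW$\Rightarrow$Gr results on $M$ itself to produce an embedded $J$-holomorphic sphere $\Sigma$ with $[\Sigma]^2\ge 0$, and McDuff's theorem finishes; no minimal model or numerical classification is needed. You instead import the Li--Liu characterization of minimal symplectic manifolds that are neither rational nor ruled ($c_1^2\ge 0$ and $Q(c_1,[\omega])\le 0$). You then run the chamber-plus-reflection argument of Proposition \ref{subtlety} and Theorem \ref{proteus}. This ties the result neatly to the paper's own machinery and shows that positive scalar curvature is obstructed by exactly these numerical conditions. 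However, the Li--Liu characterization is itself proved with the same SW$\Rightarrow$Gr and McDuff tools, so your argument moves the hard step into a citation rather than avoiding it.
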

 
 The proof proceeds by using Taubes' results to produce a $J$-holomorphic $2$-sphere  $\Sigma \subset M$ with $[\Sigma ]^2 \geq 0$, and then invokes results of
 McDuff \cite{mcrules} to conclude  that $(M,\omega)$ must therefore be rational or ruled.
  
  Taubes' results  also allow us to  immediately  generalize Theorem \ref{nein} to the symplectic setting 
by    simply replacing complex surfaces of general type with the following 
  class of  symplectic $4$-manifolds  \cite{lno,likod}: 
 
 \begin{defn} 
 \label{sympathy} 
 A symplectic $4$-manifold $(M,\omega)$ is said to be of {\sf general type} 
 iff its minimal model $(X, \check{\omega})$ satisfies 
 $c_1^2(X) > 0$  and 
 $Q( c_1(X) , [\check{\omega}]) < 0$.
  \end{defn}

\section{Monopole Classes and Einstein Metrics}
\label{monopoly}

Even though Witten's invariant yields  many wonderful  results, the applications of Seiberg-Witten theory to 
Einstein metrics we have described   have 
generally  only depended on the existence of  a solution of the Seiberg-Witten equation for each 
metric on a given $4$-manifold. As it turns out, this can happen even in contexts where Witten's invariant vanishes
for every spin$^c$ structure. For example, Bauer and Furuta  discovered a generalization of
the $\ZZ_2$-valued Seiberg-Witten invariant $n_\mathfrak{c}$ that implies such an existence 
statement in contexts where the expected dimension \eqref{mdim} of the moduli space is positive \cite{baufu,bauer2}. 
In the simply-connected case, this invariant belongs to  an equivariant stable cohomotopy group
$\pi_{S^1}^{b_+} (\Ind \dir_\theta)$, and encodes homotopy-theoretic  information concerning 
 the monopole map \eqref{raw}  that arises, via a finite-dimensional-approximation scheme,  from a hierarchy of  $S^1$-equivariant   maps between 
one-point compactifications of finite-dimensional vector spaces. While all the main results illustrated  in this section 
will actually stem from  the Bauer-Furuta invariant, our aim here  will be to present a flexible and user-friendly  approach to the subject that 
is equally at home with solutions of the Seiberg-Witten equations that arise from  other known invariants, such as \cite{ozsz,rio},
or that even arise for completely unknown reasons. 
Because of the utility of this property,  the following codification, first proposed  by   Kronheimer \cite{K},
has proved to be extremely convenient:

\begin{defn}
Let $M$ be a smooth compact oriented $4$-manifold
with $b_{+}\geq 2$. An element $\mathbf{a}\in  H^{2}(M,\ZZ )/
\mbox{\rm torsion}$, $\mathbf{a}\neq 0$,  is  called a {\sf monopole
class} of $M$ iff there is some  spin$^{c}$ structure
$\mathfrak{c}$ 
on $M$ with first Chern class 
$$c_{1}(L)\equiv \mathbf{a} ~~~\bmod \mbox{\rm torsion}$$ for which   the  unperturbed  Seiberg-Witten 
equations (\ref{drc}--\ref{sd})
have a solution for every Riemannian  metric $g$ on $M$. 
\end{defn}

Notice that we have limited ourselves here,
for simplicity, 
 to the case of $b_+ > 1$;  
but  for an analogous theory when $b_+=1$,  see \cite{lebeta} for the notion of 
a {\sl retroactive class}.
 In any case, the notion of a monopole class, while something of  a black box, 
does  provide a user-friendly   repackaging  of many of the most important 
 Riemannian implications of Seiberg-Witten theory.

The following finiteness result was carefully proved in  \cite{lebsurv2}; cf.\   \cite{il1,il2,lebeta}. 

\begin{prop} \label{control}
If $M$ is  a smooth compact oriented manifold with $b_+ \geq 2$, consider the diffeomorphism-invariant subset of its cohomology defined  by 
$$\mathfrak{C} = \{ \mbox{monopole classes $\mathbf{a}$ on } M\} \subset H^2(M, \RR).$$
Then  $\mathfrak{C}$
is a {\sf finite} set, and is  invariant under reflection  through the origin
$$\mathbf{a} \longmapsto - \mathbf{a}.$$
\end{prop}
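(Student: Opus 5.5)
The plan is to prove the two assertions separately: the symmetry $\mathbf{a}\mapsto -\mathbf{a}$ by a charge-conjugation argument, and finiteness by combining the curvature estimate of Proposition \ref{best} with the discreteness of the integer lattice.

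For symmetry, I will use the fact that every spin$^c$ structure $\mathfrak{c}$ has a \emph{conjugate} structure $\bar{\mathfrak{c}}$. Its twisted spinor bundles are $\overline{\mathbb{V}}_\pm\cong \mathbb{V}_\pm\otimes L^{-1}$, using $\mathbb{V}_\pm^*\cong \overline{\mathbb{V}}_\pm$ and the quaternionic structure ${\zap j}$ of $\mathbb{S}_\pm$ in \eqref{formality}. Its determinant line bundle is $\bar{L}=L^{-1}$, so $c_1(\bar{L})=-c_1(L)$. Given a solution $(\Phi,\theta)$ of (\ref{drc}--\ref{sd}) for $\mathfrak{c}$, the pair $({\zap j}\Phi,\bar\theta)$ solves the same equations for $\bar{\mathfrak{c}}$. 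Here ${\zap j}$ is antilinear and commutes with Clifford multiplication by real forms, so $\dir_{\bar\theta}({\zap j}\Phi)={\zap j}\,\dir_\theta\Phi=0$. Also $\sigma({\zap j}\Phi)=-\sigma(\Phi)$, which I will check from the defining formula for $\sigma$ together with \eqref{clef}. Finally $F_{\bar\theta}=-F_\theta$. Since this works for every metric $g$, $-\mathbf{a}$ is a monopole class whenever $\mathbf{a}$ is; note $\mathbf{a}\neq 0$ implies $-\mathbf{a}\neq 0$.

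For finiteness, I will fix one metric $g$ on $M$ and suppose $\mathbf{a}$ is a monopole class. There is then a solution of the unperturbed equations for $g$, so Proposition \ref{best} gives
$$[\mathbf{a}^+]^2\leq \frac{1}{32\pi^2}\int_M s_g^2\,d\mu_g=:C_g .$$
This bounds only the $\mathcal{H}^+_g$-component of $\mathbf{a}$. The main obstacle is bounding the anti-self-dual part $\mathbf{a}^-$ as well.

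To bound $\mathbf{a}^-$, my first approach is to use the dimension count. A monopole class gives solutions for every metric, including metrics where the expected dimension \eqref{mdim} would otherwise force an empty moduli space for generic perturbations. A nonempty unperturbed moduli space for all $g$ should force $\dim\mathfrak{M}\geq 0$, that is,
$$\mathbf{a}^2=[\mathbf{a}^+]^2-|[\mathbf{a}^-]^2|\geq (2\chi+3\tau)(M).$$
Heuristically this holds because the reducible locus is avoided once $\mathbf{a}^+\neq 0$, and a generic small perturbation near a solvable $g$ still has solutions by compactness and \eqref{squeeze}. I am not fully confident in this step, since nonempty unperturbed solution sets need not survive perturbation.

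A more robust alternative is to vary the metric instead. The period map has no critical points and $\bigcup_g(\mathcal{H}^+_g-0)$ is open (Proposition \ref{snap}), so I can choose finitely many metrics $g_1,\ldots,g_N$ whose spaces $\mathcal{H}^+_{g_i}$ together span enough positive directions. Then the quadratic forms $\mathbf{a}\mapsto[\mathbf{a}^+_{g_i}]^2$ sum to a positive-definite form on $H^2(M,\RR)$. Concretely, each $\mathcal{H}^+_{g_i}$ is a positive-definite $b_+$-plane, and nearby planes obtained from perturbations in all directions of the Grassmannian jointly detect every vector. Applying the bound from Proposition \ref{best} for each $g_i$ gives $\sum_i[\mathbf{a}^+_{g_i}]^2\leq\sum_i C_{g_i}$, which confines $\mathfrak{C}$ to a compact ellipsoid.

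In either route, $\mathfrak{C}$ lies in a bounded region. It is also contained in the lattice $H^2(M,\ZZ)/\mbox{torsion}\subset H^2(M,\RR)$, so it is finite. Diffeomorphism invariance is immediate from the definition, since pulling back metrics and spin$^c$ structures by a diffeomorphism carries solutions to solutions.
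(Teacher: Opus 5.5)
Your second, metric-varying route is essentially the paper's proof. The paper uses Proposition \ref{snap} to choose $b_2$ metrics $g_j$ and classes $[\phi_j]\in\mathcal{H}^+_{g_j}$ with $Q([\phi_j],[\phi_j])=1$ that form a basis of $H^2(M,\RR)$. Proposition \ref{best} then bounds each linear functional, $|Q([\phi_j],\mathbf{a})|\leq \frac{1}{4\pi\sqrt{2}}\|s_{g_j}\|_{L^2}$; this is equivalent to your positive-definite sum of the forms $[\mathbf{a}^+_{g_i}]^2$, whose positivity holds exactly because the $\mathcal{H}^+_{g_i}$ span $H^2(M,\RR)$. Your conjugation argument for $\mathbf{a}\mapsto-\mathbf{a}$ likewise matches the paper's use of $(\Phi,\theta)\mapsto(\bar{\Phi},\bar{\theta})$.

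You should drop the dimension-count route: as you suspected, it is not justified as written. Compactness only shows that solutions for small $\eta$ converge to unperturbed ones; it does not show that unperturbed solutions persist under perturbation.
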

\begin{proof} Since any non-empty open set of a finite-dimensional real vector space spans the space, Proposition \ref{snap}
implies that there is a collection $\{ ([\phi_j], g_j)\}_{j=1}^{b_2(M)}$ of pairs, each consisting   of a deRham classes $[\phi_j]\in H^2(M, \RR)$
and a smooth metric $g_j$, such that  the  harmonic representative $[\phi_j]$ with respect to $g_j$ is self-dual, 
 and such that $\{ [\phi_j]\}_{j=1}^{b_2(M)}$ is a basis for $H^2(M, \RR)$.
We may then normalize our basis so that $Q([\phi_j], [\phi_j]) = 1$ for each $j$. Proposition \ref{best} then guarantees that the  linear functionals
\begin{eqnarray*}
L_j : H^2 (M, \RR) &\longrightarrow& \RR\\
\alpha &\longmapsto& Q ( [\phi_j], \alpha )
\end{eqnarray*}
must  satisfy 
$$
|L_j (\mathbf{a})| \leq \kappa_j
$$
for every monopole class $ \mathbf{a} \in \mathfrak{C}$,
where 
$$\kappa_j:= \frac{1}{4\pi \sqrt{2}}\|s_{g_j}\|_{L^2_{g_j}}.$$
This shows that $\mathfrak{C}$ is a subset of the   parallelepiped 
$$\{ \alpha \in H^2 (M,\RR)~|~ |L_j (\alpha )|\leq \kappa_j, j=1,\ldots , b_2\},$$
which is a compact subset of $H^2(M, \RR )$.
However,  $\mathfrak{C}$ is also, by definition, a subset of  the integer lattice $H^2(M,\ZZ )/\mbox{torsion}$, and 
so  is automatically  a  discrete subset of $H^2(M,\RR)$. It therefore follows that $\mathfrak{C}$ must be finite. 

To prove the invariance of $\mathfrak{C}$ under reflection through the origin, first notice that 
complex conjugation 
$\mathbb{V}_+\to \mathbb{V}_+^*$
acts on Seiberg-Witten solutions by 
$(\Phi, \theta ) \mapsto (\overline{\Phi}, \overline{\theta})$.
The effect of this on $L=\det ( \mathbb{V}_+)$ is then
$$c_1(L) \rightsquigarrow c_1(L^*) = -c_1(L).$$
Thus, if  $c_1(L) \in H^2_{dR}(M,\RR)$ is a monopole class, then 
$-c_1(L)\in H^2_{dR}(M,\RR)$ is a monopole class, too. 
\end{proof}

Notice  that the natural homomorphism $H^2(M,\ZZ )\to H^2(M, \RR)$ induces
an inclusion $H^2(M,\ZZ) /\mbox{torsion}\hookrightarrow H^2(M,\RR)$. In the above proof, and in many applications, 
we  may  thus 
  consider the set of monopole classes to be a subset of the real vector space 
$H^2(M,\RR)$.  From this perspective, we will now     define $\hull (\mathfrak{C})\subset H^2(M,\RR)$ to be the convex hull of the set of monopole classes.
Proposition \ref{control} then  allows us to 
define  a  non-negative numerical invariant of any smooth compact oriented $4$-manifold \cite{lebeta} that provides   a transparent 
distillation 
of many  key arguments in the subject:

\begin{defn}
Let $M$ be a smooth compact oriented $4$-manifold
with $b_{+}\geq 2$. If $\mathfrak{C}\neq  \varnothing$, we now define 
$$\beta^2 (M) = \max \{ ~Q(\alpha  , \alpha ) ~|~ \alpha \in \hull (\mathfrak{C}) \} .$$
Otherwise, if $\mathfrak{C}=  \varnothing$, we set $\beta^2 (M) =0$.
\end{defn}

Here the finiteness of $\mathfrak{C}$ guarantees that $\hull (\mathfrak{C})$  is compact; thus, 
when $\mathfrak{C}\neq \varnothing$,  the   above maximum
is actually achieved. On the other hand,  the refection-invariance  of $\mathfrak{C}$  guarantees  that $0\in \hull (\mathfrak{C})$ whenever  
 $\mathfrak{C}\neq \varnothing$, so we always have  $\beta^2(M) \geq 0$. 
Still,   it is important to remember  that, since   the intersection form $Q$ is typically  indefinite, the function 
$\alpha \mapsto \alpha^2:= Q(\alpha , \alpha)$ is usually  not   convex. Thus,   the maximum of 
$Q(\alpha , \alpha )$ on $\hull (\mathfrak{C})$  can (and, indeed,  often  does)    occur
 away from the vertices  of  the convex hull.

 Propositions \ref{best} and \ref{upscale} then allow one to  prove \cite{lebeta} the following generalization of Theorem \ref{proteus}:

\begin{thm} \label{summa} 
Let $M$ be a compact oriented $4$-manifold with $b_+\geq 2$. 
Then any metric $g$ on $M$ satisfies the curvature estimates 
\begin{eqnarray}
\int_M s^2d\mu &\geq& 32\pi^2 \beta^2(M)\label{mocha}\\
\int_M (s-\sqrt{6}|W_+|)^2d\mu &\geq& 72\pi^2 \beta^2(M)\label{java}
\end{eqnarray}
where $s$ and $W_+$ respectively denote the scalar and Weyl curvatures of $g$. 
Moreover, if $M$ carries a non-zero monopole class, 
equality  occurs in either (\ref{mocha}) or (\ref{java})   
iff $g$ is K\"ahler-Einstein, with negative Einstein constant. 
\end{thm}

\noindent Here, the most delicate part of the proof concerns  the equality case  of \eqref{java}, 
where one uses   detailed curvature properties of   the canonical line bundle on 
 an almost-K\"ahler $4$-manifold. For details, see  \cite[Theorem 4.10]{lebeta}.

Combining \eqref{java} with Corollary \ref{servus} therefore yields the following:

\begin{thm} \label{laude} 
Let $M$ be a compact oriented $4$-manifold with $b_+\geq 2$. 
Then any metric $g$ on $M$ satisfies the curvature estimates 
\begin{equation}
\label{ecco}
\frac{1}{4\pi^2} \int_M \left( \frac{s^2}{24} + 2 |W_+|^2 \right)d\mu_g  \geq  \frac{2}{3} \beta^2 (M) .
\end{equation}
Moreover, if $\beta^2 (M)>0$, 
equality never holds in    \eqref{ecco}. 
\end{thm}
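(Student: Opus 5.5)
The plan is to derive \eqref{ecco} from the estimate \eqref{java} of Theorem \ref{summa}, using the same elementary $L^2$ manipulation as in the proof of Corollary \ref{servus}. Strictness will then come from comparing the equality conditions of that manipulation with the rigid equality case of \eqref{java}.

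First I will establish the pointwise-free algebraic estimate
$$
\int_M \left( \frac{s^2}{24} + 2 |W_+|^2 \right)d\mu_g \geq \frac{1}{27} \int_M (s-\sqrt{6}|W_+|)^2d\mu_g ,
$$
valid for any metric $g$. It comes from two inequalities.
\begin{itemize}
\item The triangle inequality in $L^2$ gives $\|s-\sqrt{6}|W_+|\|_{L^2}\leq \|s\|_{L^2}+\sqrt{6}\|W_+\|_{L^2}$.
\item The Cauchy--Schwarz inequality in $\RR^2$, applied to the vectors $(2\sqrt{6},\sqrt{3})$ and $(\frac{1}{2\sqrt6}\|s\|_{L^2},\sqrt2\|W_+\|_{L^2})$, bounds the right-hand side above by $\sqrt{27}\,\big[\int(\frac{s^2}{24}+2|W_+|^2)\,d\mu\big]^{1/2}$.
\end{itemize}
No sign assumption on $s$ is needed for this step. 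Combining it with \eqref{java}, I get
$$\int(\tfrac{s^2}{24}+2|W_+|^2)\,d\mu\geq \tfrac{72\pi^2}{27}\beta^2(M)=\tfrac{8\pi^2}{3}\beta^2(M).$$
Dividing by $4\pi^2$ gives \eqref{ecco}.

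For the strictness claim, suppose $\beta^2(M)>0$ and that equality holds in \eqref{ecco}. Since $\beta^2(M)>0$, the set $\mathfrak{C}$ is non-empty, so $M$ carries a non-zero monopole class. Equality in \eqref{ecco} forces equality both in \eqref{java} and in the algebraic estimate above.

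By the equality clause of Theorem \ref{summa}, equality in \eqref{java} means $g$ is K\"ahler--Einstein with $\lambda<0$, so $s$ is a negative constant. On the other hand, equality in the triangle and Cauchy--Schwarz steps forces $|W_+|$ to be a non-positive constant multiple of $s$ with $\|s\|_{L^2}=8\sqrt6\|W_+\|_{L^2}$; together these give $|W_+|\equiv -s/(8\sqrt6)$.

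For any K\"ahler metric, however, $W_+$ is determined by $s$: its eigenvalues are $(s/6,-s/12,-s/12)$, so $|W_+|^2=s^2/24$, that is, $|W_+|=|s|/(2\sqrt6)$. This is compatible with $|W_+|=|s|/(8\sqrt6)$ only if $s\equiv0$, which contradicts $\lambda<0$. Hence equality never holds when $\beta^2(M)>0$. This route avoids the Armstrong argument used in Corollary \ref{servus}, and needs no Hitchin--Thorpe hypotheses.

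The only delicate ingredient is the equality case of \eqref{java}. I take it as given from Theorem \ref{summa}; the text attributes its proof to \cite[Theorem 4.10]{lebeta}. I also need to check that the normalization convention for $|W_+|$ used there agrees with the K\"ahler identity $|W_+|^2=s^2/24$ implicit in \eqref{gb+}. That identity is consistent with a K\"ahler--Einstein metric saturating \eqref{java}, since then $s-\sqrt6|W_+|=\frac{3}{2}s$ and $\frac94\int s^2=72\pi^2c_1^2$.
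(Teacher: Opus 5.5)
Your proof is correct and follows the paper's own route. You deduce \eqref{ecco} from \eqref{java} via the triangle-inequality and Cauchy--Schwarz step taken from Corollary \ref{servus}, and you obtain strictness because equality would force $g$ to be a $\lambda<0$ K\"ahler--Einstein metric, for which \eqref{ecco} is strict. Your explicit comparison of $|W_+|=|s|/(2\sqrt6)$ with the Cauchy--Schwarz equality condition $|W_+|=|s|/(8\sqrt6)$ spells out the detail the paper leaves implicit; it relies on the K\"ahler form being self-dual for the given orientation, which the equality case of Proposition \ref{upscale} supplies.
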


 \noindent Here the last  observation follows from the fact that \eqref{ecco} is implied by  \eqref{java} via the triangle inequality and Cauchy-Schwarz 
 arguments used in the proof of Corollary \ref{servus},  while one nonetheless  has strict inequality in \eqref{ecco}  for the K\"ahler-Einstein metrics that saturate \eqref{java}. 
 
 \bigskip
 
 The finiteness of $\mathfrak{C}$ also makes the following definition \cite{il1} possible:

\begin{defn}\label{band}
	Let $M$ be a smooth compact oriented $4$-manifold 
	with $b_{+}(M)\geq 2$. Let ${\mathfrak C}\subset H^{2}(M, \ZZ )
	/\mbox{\rm torsion}$ 
	be the set of
monopole classes of $M$.  
If ${\mathfrak C}$ contains a non-zero
element (and hence at least two elements), we define
the {\sf bandwidth} of $M$ to be 
$$\BW (M )  ~=  
\max ~ \left\{ n\in \ZZ^{+}~|~ \exists ~\mathbf{a},\mathbf{b} \in {\mathfrak C}, ~\mathbf{a}\neq 
\mathbf{b}, 
~{\rm s.t.}~ 
2n|(\mathbf{a}-\mathbf{b}) ~ \right\} .$$
On the other hand, if ${\mathfrak C}\subset \{ 0\}$,
we define the bandwidth $\BW (M )$
 to be $0$. 
	\end{defn}
	
\noindent Here,  we say that  $2n|(\mathbf{a}-\mathbf{b})$ if  there is some $\mathbf{c}\in H^2(M,\ZZ)/\mbox{torsion}$ such that 
	 $\mathbf{a}-\mathbf{b}=2n \mathbf{c}$. Including a factor of $2$ in this definition is of course purely conventional, but is also  perfectly natural insofar as the difference 
	 between the Chern classes of any two spin$^c$ structures is automatically divisible by $2$. 

The Bauer-Furuta invariant  \cite{bauer2,baufu,il1,il2} provides many remarkable examples of monopole classes that are undetected by the Witten  invariant $n_\mathfrak{c}$. 
In particular, this machinery yields the following useful result:

\begin{prop}\label{vela} 
			Let $X$, $Y$, and $Z$ be compact oriented 		
	$4$-manifolds with  $b_1=0$ and $b_{+}\equiv 3\bmod 4$,
	and suppose that each of them is equipped with an almost-complex structure
	for which the corresponding   spin$^c$ structure 
	satisfies  $n_\mathfrak{c}\neq 0$. 
	Then, with respect to the canonical isomorphism 
			$$
			H^{2}(X\# Y\# Z, \ZZ)  =   H^{2}(X, \ZZ)\oplus H^{2}(Y, \ZZ) 
			\oplus H^{2}(Z, \ZZ) , 
             $$
	the cohomology classes 
	$$
	\pm c_{1}(X)   \pm c_{1}(Y)   \pm c_{1}(Z) 	  \in 
	H^{2}(X, \ZZ)\oplus H^{2}(Y, \ZZ) 
			\oplus H^{2}(Z, \ZZ) 
	$$
	are monopole classes  on the connected sum 
	on $X\# Y\# Z$.
	Here the $\pm$ signs are arbitrary, and  are completely independent
	of one another. 
		\end{prop}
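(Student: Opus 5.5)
The plan is to deduce this from the Bauer–Furuta stable cohomotopy invariant and its connected-sum formula, as in \cite{baufu,bauer2,il1}. The invariant of a spin$^c$ structure $\mathfrak{c}$ on $M$ with $b_1=0$ is the $S^1$-equivariant stable homotopy class of the finite-dimensional approximation of the monopole map \eqref{raw}. If this class is non-zero, then for every metric $g$ the unperturbed equations (\ref{drc}--\ref{sd}) must have a solution. Otherwise $(0,0,0)$ would not lie in the image of the monopole map, and after finite-dimensional approximation the map would factor through the complement of the origin, forcing the class to be null-homotopic. So the first step is to record that a non-vanishing Bauer–Furuta invariant makes $c_1(L)$ a monopole class in the sense of the definition above.

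The second step is to compute the invariant for each summand. For an almost-complex structure with $n_\mathfrak{c}\neq 0$, the expected dimension \eqref{mdim} is zero. In that case the Bauer–Furuta invariant of $X$ lies in a stable cohomotopy group $\pi^{b_+-1}_{S^1,\mathrm{st}}$ whose Hurewicz-type image recovers the mod $2$ count $n_\mathfrak{c}(X)$, so it is non-zero. The invariant is also natural under conjugation of spin$^c$ structures, $(\Phi,\theta)\mapsto(\overline\Phi,\overline\theta)$, exactly as in the proof of Proposition~\ref{control}. Hence $-c_1(X)$ carries a non-trivial invariant as well, and the same holds for $Y$ and $Z$.

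Next comes Bauer's gluing theorem: the invariant of a connected sum $X\#Y\#Z$ with the glued spin$^c$ structure is the smash product of the three individual invariants. Since $b_1=0$, the spin$^c$ structures on the connected sum are exactly the triples of spin$^c$ structures on the summands, and Mayer–Vietoris gives $c_1 = c_1^X\oplus c_1^Y\oplus c_1^Z$. Independent sign choices therefore correspond to independently conjugating the three summand structures. This reduces the proposition to showing that the smash product of three non-zero degree-zero invariants is non-zero.

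This last step is the main obstacle, and it is precisely where the hypothesis $b_+\equiv 3\bmod 4$ enters. Each summand's invariant maps to the element $\eta$ in the relevant stable homotopy group (the Hopf class lives in $\pi_1^s$). A product of two summands gives $\eta^2\neq 0$ and a product of three gives $\eta^3\neq 0$ in $\pi_3^s\cong\ZZ_{24}$, while a product of four would give $\eta^4=0$. I would follow Bauer's computation: restrict to the $S^1$-fixed-point-free part and use the Pin$(2)$/$S^1$-equivariant structure to identify the class of a single summand with $b_+\equiv 3 \bmod 4$ as $\eta$ times a unit. The gluing formula then sends the triple product to $\eta^3\neq0$, and combining this with the first step gives the monopole classes.
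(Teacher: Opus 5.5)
Your proposal is correct and follows the same route as the paper. The paper only sketches this step: it notes that the Bauer--Furuta invariant of a connected sum is the smash product of the summands' invariants and cites \cite{bauer2,il2}. Your mechanism is the right one for three summands: forgetting the $S^1$-action sends each summand's invariant to $n_{\mathfrak{c}}\,\eta\in\pi_1^s$, because the composite $S^{2d-1}\to\CP_{d-1}\to S^{2d-2}$ is stably $\eta$ exactly when the Dirac index $d=(b_++1)/2$ is even, i.e.\ when $b_+\equiv 3 \bmod 4$, and then $\eta^3\neq 0$. Two small corrections: there is no $\mathbf{Pin}(2)$ symmetry for these non-self-conjugate spin$^c$ structures (the plain forgetful map is all you need), and $\eta^4=0$ only shows that the non-equivariant image vanishes for four summands, whereas the paper notes that the result still holds for four summands via a finer argument.
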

		
		The proof of  involves the fact that Bauer-Furuta invariants of a
		connected sum are smash products of the corresponding invariants of 
		the summands. Similar results hold for   connected sums of up to four
		summands  \cite{bauer2,il2}. However, one does not obtain any such  result for  more  $b_+\equiv 3 \bmod 4$ summands, as the corresponding smash
		products then  turn out to vanish. 
		
		\medskip 
		
	Since the set of monopole metrics must be invariant under all orientation-preserving self-diffeomophisms of $M$, the 
	self-diffeomorphisms constructed on page \pageref{checkers},  transplanted to the symplectic setting, now yield the following:
		
		\begin{cor} Let $X$, $Y$, and $Z$ be compact  simply-connected minimal symplectic 		
	$4$-manifolds with  $b_{+}\equiv 3\bmod 4$. Then, for any $k\geq 0$, the cohomology classes 
	$$
	\pm c_{1}(X)   \pm c_{1}(Y)   \pm c_{1}(Z) 	 \pm E_1 \cdots  \pm E_k  \in 
	H^{2}(X \# Y\# Z\# k \overline{\CP}_2 , \ZZ )
	$$
are monopole classes on the connected sum $X \# Y\# Z\# k \overline{\CP}_2$, where $E_j$ is 
the generator of $H^2(\overline{\CP}_2, \ZZ )$ for the $j^{\rm th}$ copy of $\overline{\CP}_2$. 
\end{cor}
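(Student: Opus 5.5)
The plan is to reduce the statement to Proposition \ref{vela} together with two facts established earlier: the set of monopole classes is invariant under orientation-preserving self-diffeomorphisms, and the reflection diffeomorphisms constructed on page \pageref{checkers} are available on each $\overline{\CP}_2$ summand.

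\textbf{Step 1: the summands $X$, $Y$, $Z$.} Each is a compact simply-connected symplectic manifold, so $b_1=0$. Since $b_+\equiv 3 \bmod 4$, we have $b_+\geq 3>1$. Theorem \ref{neutron} then gives $n_{\mathfrak c}\neq 0$ for the spin$^c$ structure of an $\omega$-compatible almost-complex structure. Proposition \ref{vela} therefore applies, and the classes $\pm c_1(X)\pm c_1(Y)\pm c_1(Z)$ are monopole classes on $X\#Y\#Z$.

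\textbf{Step 2: absorbing the $k$ copies of $\overline{\CP}_2$.} The cleanest route is to note that $X\#Y\#Z\#k\overline{\CP}_2$ is the connected sum $X\#Y\#(Z\#k\overline{\CP}_2)$. Here $Z\#k\overline{\CP}_2$ is the $k$-fold symplectic blow-up of $Z$, in the sense of McDuff. It is still simply connected, it has $b_+(Z\#k\overline{\CP}_2)=b_+(Z)$, and it is symplectic with first Chern class $c_1(Z)-\sum E_j$. Applying Theorem \ref{neutron} to this blow-up and then Proposition \ref{vela} to $X$, $Y$, $Z\#k\overline{\CP}_2$ shows that
$$\pm c_1(X)\pm c_1(Y)\pm\Big(c_1(Z)-\sum_j E_j\Big)$$
are monopole classes on $M=X\#Y\#Z\#k\overline{\CP}_2$. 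Minimality of the summands is not needed here; it only matters for the symplectic interpretation of the classes.

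\textbf{Step 3: independent signs on the $E_j$.} The signs obtained so far are tied together: each $E_j$ carries the same sign as $c_1(Z)$. To decouple them, use the self-diffeomorphisms from page \pageref{checkers}. These act by complex conjugation near a chosen collection of exceptional spheres and by the identity elsewhere. The resulting map $\Psi$ is orientation-preserving, because conjugation on $\CC^2$ preserves orientation. It acts on $H^2(M,\ZZ)$ as the identity on $H^2(X)\oplus H^2(Y)\oplus H^2(Z)$ and sends $E_j\mapsto -E_j$ for any chosen subset of indices $j$.

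Since the Seiberg-Witten equations are natural under orientation-preserving diffeomorphisms, pulling back spin$^c$ structures and metrics shows that $\Psi^*$ permutes monopole classes. Applying these maps to the classes from Step 2 produces all $2^k$ sign patterns on the $E_j$. The three signs in front of $c_1(X)$, $c_1(Y)$, $c_1(Z)$ are already independent by Step 2, which gives exactly the family claimed in the statement.

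\textbf{Main obstacle.} The only delicate point is the justification in Step 2 that the blow-up of $Z$ satisfies the hypotheses of Proposition \ref{vela}. If one prefers not to invoke symplectic blow-up, the alternative is a blow-up formula: a monopole class $\mathbf a$ on $N$ gives rise to the monopole class $\mathbf a\pm E$ on $N\#\overline{\CP}_2$. This follows from the standard gluing of Seiberg-Witten solutions across a neck with a metric of positive scalar curvature on $\overline{\CP}_2$, or equivalently from the smash-product description of Bauer-Furuta invariants. I expect the symplectic route to be the intended one.
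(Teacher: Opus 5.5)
Your proposal is correct and is essentially the paper's own (one-line) argument. You realize $Z\# k\overline{\CP}_2$ as a symplectic blow-up so that Taubes' theorem and Proposition \ref{vela} apply. You then use the orientation-preserving reflection diffeomorphisms of page \pageref{checkers}, which preserve the set of monopole classes, to make the signs on the $E_j$ independent.

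The only quibble concerns your fallback remark. A blow-up formula $\mathbf{a}\rightsquigarrow \mathbf{a}\pm E$ is only justified for monopole classes detected by an invariant such as Bauer--Furuta, because gluing alone produces solutions only for long-neck metrics, not for every metric. Your main route does not need it.
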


%

\noindent Because this implies that $c_1(X)+c_1(Y)+c_1(Z)\in \hull (\mathfrak{C})$, we therefore immediately obtain the following:

\begin{prop} \label{manamana}
Let $X$, $Y$, and $Z$ be minimal simply-connected  symplectic $4$-manifolds   with $$b_+\equiv 3\bmod 4.$$
Then, for any $k\geq 0$, 
\begin{equation}
\label{monomono}
\beta^2 (X\# Y \# Z \# k \overline{\CP}_2) \geq  c_1^2 (X) + c_1^2 (Y)+ c_1^2 (Z).
\end{equation}
\end{prop}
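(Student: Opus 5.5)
Since $\beta^2(M)$ is defined as the maximum of $Q(\alpha,\alpha)$ over $\hull(\mathfrak{C})$, the plan is to exhibit one specific element $\alpha\in\hull(\mathfrak{C})$ for $M:=X\# Y\# Z\# k\overline{\CP}_2$ with $Q(\alpha,\alpha)= c_1^2(X)+c_1^2(Y)+c_1^2(Z)$. First I check that the preceding Corollary applies: $X$, $Y$, $Z$ are simply connected, so $b_1=0$; and by Taubes' Theorem \ref{neutron} (all have $b_+\geq 3>1$), the spin$^c$ structures determined by their symplectic almost-complex structures have $n_\mathfrak{c}\neq 0$. Hence Proposition \ref{vela} applies, and the Corollary then says every class
$$\pm c_1(X)\pm c_1(Y)\pm c_1(Z)\pm E_1\pm\cdots\pm E_k\in H^2(M,\ZZ)$$
is a monopole class, with all $2^{k+3}$ sign choices allowed.

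Next I fix the signs on the first three summands as $+$ and average over the signs of the $E_j$. The $2^k$ classes $c_1(X)+c_1(Y)+c_1(Z)+\sum_j \epsilon_j E_j$, $\epsilon_j=\pm1$, all lie in $\mathfrak{C}$, and their barycenter is $\alpha:=c_1(X)+c_1(Y)+c_1(Z)$. Concretely, one can pair each class with the one having all $\epsilon_j$ reversed, whose midpoint is $\alpha$. So $\alpha\in\hull(\mathfrak{C})$, and in particular $\mathfrak{C}\neq\varnothing$, so the max definition of $\beta^2$ is the relevant one.

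Finally, I compute $Q(\alpha,\alpha)$ using the canonical splitting
$$H^2(M)=H^2(X)\oplus H^2(Y)\oplus H^2(Z)\oplus\bigoplus_j \ZZ E_j.$$
This splitting is $Q$-orthogonal, because the intersection form of a connected sum is the orthogonal direct sum of the summands' forms. Therefore the cross terms vanish and
$$Q(\alpha,\alpha)=c_1^2(X)+c_1^2(Y)+c_1^2(Z),$$
which gives $\beta^2(M)\geq Q(\alpha,\alpha)$, i.e. \eqref{monomono}. The only nontrivial input is the monopole-class statement (Proposition \ref{vela} together with the reflection diffeomorphisms on page \pageref{checkers}), which is taken from the preceding Corollary. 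Everything else is convexity and orthogonality of the intersection form, so I expect no real obstacle beyond verifying the Corollary's hypotheses as above.
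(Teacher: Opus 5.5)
Your proof is correct and matches the paper's argument. The paper just observes that the preceding Corollary puts $c_1(X)+c_1(Y)+c_1(Z)$ in $\hull(\mathfrak{C})$; you supply the details the paper leaves implicit: the hypotheses are checked via Taubes, the sign-reversed pairs over the $E_j$ have midpoint $c_1(X)+c_1(Y)+c_1(Z)$, and the connected-sum splitting is $Q$-orthogonal. One trivial edge case is worth noting: if $k=0$ and $c_1(X)=c_1(Y)=c_1(Z)=0$, then $\alpha=0$ is not a monopole class by definition, so $\mathfrak{C}$ need not be nonempty. In that case the right-hand side is $0\leq\beta^2(M)$, so the inequality still holds.
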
 

\begin{rmk} In the  special case where $X$, $Y$, and $Z$ are actually complex surfaces,  one can moreover show that equality in fact holds  in \eqref{monomono}. This is proved
\cite{il2} by constructing a suitable sequence of Riemannian metrics $g_j$ on the connected sum for which $\int s^2d\mu$ tends to $32\pi^2[c_1^2 (X) + c_1^2 (Y)+ c_1^2 (Z)]$,  and then by appealing to \eqref{java}. However, in what  follows, the inequality \eqref{monomono}, in the form stated  above, will be sufficient  for our purposes. 
\end{rmk} 

\begin{thm}\label{oui}
	If  $X,$ $Y,$ and $Z$  are simply-connected 
	symplectic $4$-manifolds
	with  $b_{+}\equiv 3 \bmod 4$,
	then  the smooth manifold $X\# Y  \# Z \# k\overline{\CP}_{2}$  cannot
	admit an Einstein metric if 
    $$k+8 \geq \frac{c_{1}^{2}(X) + c_{1}^{2}(Y)+c_{1}^{2}(Z)}{3}.$$
	\end{thm}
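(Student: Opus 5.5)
The plan is to combine the Gauss--Bonnet-type identity \eqref{gb+} for Einstein metrics with the lower bound of Theorem \ref{laude}, using Proposition \ref{manamana} to control $\beta^2$. We argue by contraposition: suppose $M := X\# Y\# Z\# k\overline{\CP}_2$ carries an Einstein metric $g$, and show that $k+8 < \frac{1}{3}[c_1^2(X)+c_1^2(Y)+c_1^2(Z)]$.

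\emph{Reduction to minimal summands.} Proposition \ref{manamana} requires $X$, $Y$, $Z$ to be minimal, so we first reduce to that case. By the results of Taubes and McDuff recalled above, each of $X$, $Y$, $Z$ is a symplectic blow-up of a minimal model, say $X \approx X'\# j_X\overline{\CP}_2$, and similarly for $Y$ and $Z$. Blowing down preserves simple connectivity and $b_+$, so $X'$, $Y'$, $Z'$ still satisfy the hypotheses. Blowing down also changes $c_1^2$ by $c_1^2(X)=c_1^2(X')-j_X$. Setting $j=j_X+j_Y+j_Z$, we get
\[
M\approx X'\# Y'\# Z'\# (k+j)\overline{\CP}_2 .
\]
If $k+8\geq \frac13\sum c_1^2(X)$, then $(k+j)+8\geq \frac13\bigl(\sum c_1^2(X')\bigr) - \frac{j}{3} + j \geq \frac13\sum c_1^2(X')$. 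So the hypothesis persists, and we may assume $X$, $Y$, $Z$ are minimal.

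\emph{Topology and curvature.} Each summand has $b_1=0$ and an almost-complex structure, so $c_1^2=2\chi+3\tau$ for it. Under connected sum $\chi$ drops by $2$ while $\tau$ is additive, and each $\overline{\CP}_2$ summand lowers $2\chi+3\tau$ by $1$. Hence
\[
(2\chi+3\tau)(M)=c_1^2(X)+c_1^2(Y)+c_1^2(Z)-8-k .
\]
Moreover $b_+(M)=b_+(X)+b_+(Y)+b_+(Z)\geq 9\geq 2$, so Theorem \ref{laude} applies. Since $g$ is Einstein, \eqref{gb+} (in the form \eqref{cible}) together with \eqref{ecco} and Proposition \ref{manamana} gives
\[
(2\chi+3\tau)(M)=\frac{1}{4\pi^2}\int_M\Bigl(\frac{s^2}{24}+2|W_+|^2\Bigr)d\mu_g \;\geq\; \frac23\beta^2(M)\;\geq\;\frac23\bigl[c_1^2(X)+c_1^2(Y)+c_1^2(Z)\bigr].
\]

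\emph{Strictness, the main point.} The remaining obstacle is to make this inequality strict, since with equality allowed we would only obtain $k+8\leq \frac13\sum c_1^2$, which is the borderline of the claimed range. There are two cases.
\begin{itemize}
\item If $\beta^2(M)>0$, Theorem \ref{laude} says equality never holds in \eqref{ecco}, so the first inequality above is strict.
\item If $\beta^2(M)=0$, then $\sum c_1^2\leq 0$ by \eqref{monomono}. Then $(2\chi+3\tau)(M)\leq -8-k<0$, which contradicts the Hitchin--Thorpe inequality of Theorem \ref{ht} outright.
\end{itemize}
In the first case, rearranging $\sum c_1^2-8-k>\frac23\sum c_1^2$ yields $k+8<\frac13\sum c_1^2$. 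This contradicts the hypothesis of the theorem, so no Einstein metric can exist on $M$.
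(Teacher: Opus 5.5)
Your proof is correct and takes essentially the same route as the paper. You compute $(2\chi+3\tau)(M)=\sum c_1^2-8-k$, use \eqref{monomono} with the Hitchin--Thorpe inequality to force $\beta^2(M)>0$, and then combine the strict form of \eqref{ecco} from Theorem \ref{laude} with \eqref{gb+} to obtain $k+8<\frac13\sum c_1^2$. Your preliminary reduction to minimal summands is a worthwhile extra: the paper applies Proposition \ref{manamana}, which is stated for minimal manifolds, without noting that the $X$, $Y$, $Z$ of Theorem \ref{oui} need not be minimal. Your check that passing to minimal models preserves the hypothesis (the left side grows by $j$, the right side only by $j/3$) closes that small gap.
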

\begin{proof}
We begin by noticing that 
\begin{equation}
\label{tiptop} 
(2\chi +3\tau) (X\# Y  \# Z \# k\overline{\CP}_{2}) = c_1^2 (X) + c_1^2 (Y)+  c_1^2 (Z) -k - 8,
\end{equation}
since $c_1^2 (X)= (2\chi +3\tau) (X) = 4 +5b_+(X) -b_-(X)$, and similarly  for  $Y$ and $Z$.
By \eqref{monomono} and the Hitchin-Thorpe inequality \eqref{htineq}, it then  follows that there cannot be an Einstein metric on 
 $M= X\# Y  \# Z \# k\overline{\CP}_{2}$ unless  $\beta^2(M) >0$. 
 If $M$  admits an Einstein metric $g$, inequality 
   \eqref{ecco} is therefore strict for $g$, and in conjunction with 
   equation  \eqref{gb+}  this  then    implies  that 
$$(2\chi + 3\tau ) (X\# Y  \# Z \# k\overline{\CP}_{2}) = \frac{1}{4\pi^2} \int_M \left( \frac{s^2}{24} + 2 |W_+|^2 \right)d\mu_g  >   \frac{2}{3} \beta^2 (M).$$
Combining this with \eqref{monomono}  and \eqref{tiptop} now  yields 
$$c_1^2 (X) + c_1^2 (Y)+  c_1^2 (Z) -k - 8 >   \frac{2}{3} [c_1^2 (X) + c_1^2 (Y)+ c_1^2 (Z)].$$
The existence of an Einstein metric on $X\# Y  \# Z \# k\overline{\CP}_{2}$ thus implies that 
$$ \frac{c_1^2 (X) + c_1^2 (Y)+ c_1^2 (Z)}{3} > k+8,$$
so that  our claim therefore now follows by contraposition. 
\end{proof}

%
%

One consequence of this is 
 an  improved   version  of    \cite[Theorem 12]{il1}:

\begin{cor}
	Let $(\ell, m)$ be a pair of natural numbers
	with 
	 $\ell \equiv 1 \bmod 4$  and $\ell \geq 9$.  If 
	$m \geq  \frac{7}{3}\ell +8$, there are infinitely many 
	distinct  smooth structures on $\ell \CP_{2}\# m \overline{\CP}_{2}$
	 for which no  compatible Einstein metric exists. Nonetheless,
there are also infinitely many choices of   	$(\ell , m)$  satisfying these conditions 
for which  $\ell \CP_{2}\# m \overline{\CP}_{2}$  also admits 
a different smooth structure for which Einstein metrics do actually  exist.
	\end{cor}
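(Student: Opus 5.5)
The plan is to apply Theorem \ref{oui} to connected sums in which one summand ranges over Kodaira's exotic $K3$ surfaces $Y_q$, and to separate the resulting smooth structures by their bandwidth $\BW$ (Definition \ref{band}).

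\emph{Choice of summands.} Set $n=(\ell-5)/2$. Because $\ell\equiv 1 \bmod 4$ and $\ell\geq 9$, $n$ is an even integer with $n\geq 2$. For each $q\geq 1$, let $X$ be a fixed simply-connected symplectic $4$-manifold with $b_+(X)=2n-1=\ell-6\equiv 3\bmod 4$. Let $Y=K3$, and let $Z=Y_q$ be Kodaira's logarithmic transform from page \pageref{xoK3}. This $Z$ is Kähler, hence symplectic, simply connected, with $b_+=3$ and $c_1^2=0$.

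\emph{Topology and the numerical condition.} The connected sum $M_q:=X\#Y\#Z\#k\cpbar$ has $b_+=\ell$ and $b_-=b_-(X)+38+k$. It is non-spin once $k\geq 1$ (or if $X$ is non-spin), so Theorem \ref{fdmn} identifies it with $\ell\CP_2\# m\cpbar$ for
$$m=b_-(X)+38+k.$$
Since each summand has $b_1=0$, we have $c_1^2=4+5b_+-b_-$ for each. The hypothesis of Theorem \ref{oui}, namely $3(k+8)\geq c_1^2(X)$, is therefore a linear inequality in $k$, and it reduces to the form $m+2k\geq 5\ell-12$. One now checks that, by taking $X$ to be a simply-connected symplectic manifold with $b_+(X)=\ell-6$ and suitably large $c_1^2(X)$, and choosing $k$ appropriately, every $m\geq \frac73\ell+8$ is realized while this inequality holds. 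For large $m$ the elliptic surface $E(n)$ works. For smaller $m$, $X$ should be a minimal symplectic or complex surface of general type near the Noether line. Checking this bookkeeping over the whole range is the first place I expect friction, and it is where the constant $\frac73$ should come from. Once it is done, Theorem \ref{oui} shows that no $M_q$ admits an Einstein metric.

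\emph{Infinitely many smooth structures.} This is the main obstacle. Proposition \ref{vela}, pushed through the blow-up self-diffeomorphisms of page \pageref{checkers}, shows that $\pm c_1(X)\pm c_1(Y)\pm c_1(Z)\pm E_1\cdots\pm E_k$ are monopole classes of $M_q$. Flipping only the $Z$-sign gives two monopole classes whose difference is $2c_1(Y_q)=-4q\mathfrak f$. Since $c_1(Y_q)$ is divisible by $q$, this difference is divisible by $2q$, so $\BW(M_q)\geq q$. On the other hand, $\BW(M_q)<\infty$ because $\mathfrak C$ is finite by Proposition \ref{control}. Hence $\BW(M_q)\to\infty$ as $q\to\infty$. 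Since bandwidth is a diffeomorphism invariant, infinitely many of the $M_q$ are pairwise non-diffeomorphic.

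\emph{Existence part.} For the second assertion, I would imitate the Example after Theorem \ref{nein}. Take $N$ to be a double cover of $\CP_1\times\CP_1$, or of a Hirzebruch surface, branched along a smooth curve of bidegree $(2a,2b)$ with $a,b$ large. Then $K_N$ is the pullback of an ample bundle, so $N$ is Kähler--Einstein with $\lambda<0$ by Aubin--Yau. Such $N$ is simply connected, and it is non-spin for suitable parity choices of $a,b$. One computes $b_\pm(N)$ as functions of $(a,b)$. Choosing an arithmetic family of $(a,b)$ with $b_+\equiv 1\bmod 4$ and $b_-/b_+$ near $5$ (well above $\frac73$) then yields infinitely many $(\ell,m)$ in the required range that carry Einstein metrics. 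By Theorem \ref{fdmn}, each such $N$ is homeomorphic to $\ell\CP_2\#m\cpbar$, giving the different smooth structure on which Einstein metrics exist.
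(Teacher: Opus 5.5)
Your overall strategy matches the paper's: apply Theorem \ref{oui} to a connected sum containing Kodaira's $Y_q$, and separate the smooth structures by letting $\BW\to\infty$. Your bound $\BW(M_q)\ge q$ is weaker than the available $2q$, but it suffices. The gap is in the bookkeeping you deferred: with one summand the standard $K3$ and another $Y_q$, it cannot be done.

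\emph{Why your summands cannot reach the range.} Every $X\# K3\# Y_q\# k\cpbar$ has $b_-\ge 38$. So, for example, the admissible pair $(\ell,m)=(9,29)$ is never realized. More generally, with $b_+(X)=\ell-6$ you get $m=5\ell+12-c_1^2(X)+k$. The hypothesis $k+8\ge c_1^2(X)/3$ of Theorem \ref{oui} then forces $m\ge 5\ell+4-\frac23 c_1^2(X)$. Reaching $m=\lceil\frac73\ell+8\rceil$ therefore needs $c_1^2(X)\ge 4\ell-7$, i.e.\ $\tau(X)=b_+(X)-b_-(X)\ge 13$. That is impossible for $\ell=9,13,17$, where it would force $b_-(X)<0$. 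In general it would require simply-connected minimal symplectic manifolds of signature at least $13$ for every prescribed $b_+\equiv 3\bmod 4$, and you have no source for these.

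\emph{Why your fallbacks do not help.} $E(n)$ only covers $m\ge 5\ell+13$. Surfaces near the Noether line go the wrong way: they have the \emph{smallest} $c_1^2$ for given $b_+$, hence the largest $b_-$. Your own K\"ahler--Einstein examples, with $\ell=2(a-1)(b-1)+1$ and $m=6ab-2a-2b+3$, satisfy $m-(5\ell+13)=-4(a-2)(b-2)-9<0$. So as written, your non-existence argument does not even cover the pairs you use for the existence half.

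\emph{The missing idea.} The standard $K3$ summand is pure waste here: it costs $19$ in $b_-$ and contributes nothing to $\beta^2$. The paper keeps a single $K3$-type summand, $Y_q$, which is all the bandwidth argument needs. The other two summands are Akhmedov--Park manifolds, which have $b_-=b_++1$ and hence $c_1^2=4b_++3$. Take $X_1$ ($b_+=3$, $b_-=4$, $c_1^2=15$) and $X_j$ ($b_+=4j-1$, $b_-=4j$, $c_1^2=16j-1$). Then $X_1\# X_j\# Y_q\# k\cpbar$ has $\ell=4j+5$ and $m=4j+23+k$. The condition $k+8\ge(16j+14)/3$ of Theorem \ref{oui} is then exactly $m\ge\frac73\ell+8$, and $k=m-\ell-18\ge 2$ holds automatically. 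This is where the constant $\frac73$ and the hypothesis $\ell\ge 9$ (i.e.\ $j\ge 1$) come from.

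With that choice of summands, the rest of your argument goes through. Your branched double covers of $\CP_1\times\CP_1$ are a legitimate alternative to the paper's double covers of $\CP_2$ branched along curves of degree $2p$ with $p\equiv 2\bmod 4$. Take bidegree $(2a,2b)$ with $a,b\ge 3$ odd. Then $\ell\equiv 1\bmod 4$, $\tau=-4ab$ is not divisible by $16$, and $m-\frac73\ell-8=\frac43ab+\frac83(a+b)-12>0$. One small correction: $b_-/b_+$ tends to $3$, not $5$, as $a,b$ grow, though it stays comfortably above $\frac73$.
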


\begin{proof} For each positive integer $j$,  Akhmedov and Park \cite{akpa} have constructed  a 
compact simply-connected non-spin symplectic manifold
$X_j$ with $b_+(X_j)=4j-1$ and $b_-(X_j)=4j$. Among these, we will assign a rather privileged role to 
 $X_1$, which therefore  has $b_+(X_1)=3$ and $b_-(X_1) =4$. 
 Next, let $Y_q$ denote the sequence of ``exotic'' $K3$ surfaces
described on page \pageref{xoK3}. 
For any positive integers $j,k,q$, we  then  define
$$M_{j,k,q} =X_1\#  X_ j\#  Y_{q}\# k \overline{\CP}_{2}.$$ 
Since $c_1^2 (X_1) = 15$, 
  $c_1^2 (X_j)= 16j-1$,   and  $c_1^2 (Y_q)=0$, we therefore have 
$$c_1^2(X_1)+c_1^2(X_j)+c_1^2(Y_q) = 16j+14,$$
and since $X_1$, $X_j$ and $Y_q$ are simply-connected symplectic $4$-manifolds with $b_+\equiv 3 \bmod 4$, 
Theorem \ref{oui} therefore implies   that $M_{j,k,q}$ does not admit Einstein metrics if
\begin{equation}
\label{condiment}
k +8 \geq \frac{16j+14}{3}.
\end{equation}
However, if  we now set  
$$\ell =b_+(M_{j,k,q}) = 4j+5, \quad m =b_-(M_{j,k,q}) =4j+23+k,$$
then \eqref{condiment} becomes
$$m-\ell -10\geq \frac{4(\ell -5)+14}{3} = \frac{4}{3}\ell  -2$$
or in other words 
\begin{equation}
\label{whatwhat}
m\geq  \frac{7}{3} \ell  +8.
\end{equation}
Every positive-integer pair $(\ell, m)$ satisfying  $\ell \equiv 1 \bmod 4$, $\ell \geq 9$,  and \eqref{whatwhat} 
now arises from a positive-integer pair $(j,k)$ satisfying \eqref{condiment}, and so, 
 for every such choice of $(j,k)$  and any positive integer $q$, the  smooth  simply-connected non-spin $4$-manifold $M_{j,k,q}$ 
  does not 
admit compatible Einstein metrics, but 
is also  homeomorphic to $\ell \CP_2\# m \cpbar$  by Theorem \ref{fdmn}.
However, we also have 
$\BW (M_{j,k,q})\geq 2q$,  and hence 
$$\lim_{q\to \infty} \BW (M_{j,k,q}) =\infty$$
for each   fixed ($j,k)$. It therefore follows  that, as $q$ varies, 
the $M_{j,k,q}$ must realize infinitely many different smooth structures without Einstein metrics on the corresponding topological $4$-manifold 
$\ell \CP_2\# m \cpbar$.

However, for any  integer $p\geq 6$ with 
		$p\equiv 2 \bmod 4$,  we may in particular consider  	
	$\ell =p^{2}-3p+3$ and $m=3p^{2}-3p+1$. We then have
	$\ell \equiv 1\bmod 4$, $\ell  > 9$,  and $m >  \frac{7}{3}\ell +8$, 
 so the preceding  argument proves 
	  that  there are 
	infinitely many smooth structures on $\ell \CP_{2}\# m \overline{\CP}_{2}$
     for which no  Einstein metric exists. However, this
	 homeotype is also realized  by the ramified double 
	  cover of $\CP_{2}$,  branched at  a smooth  complex-algebraic curve $B$ of degree
	 $2p$.  \begin{center}
\mbox{
\beginpicture
\setplotarea x from 30 to 400, y from -30 to 120
{\setlinear 
\plot 65 100 135 75 /
\plot 60 75  130 100 /
\plot 60 0 97 15 /
\plot 97 15 135 0 /
\plot 130 100 127 79 /
\plot 65 100 63 77 /
}
{\setquadratic
\plot 133 75 130 37 133 0 /
\plot 56 75 52 37 56 0 /
\plot 94 89 90 49 93 15 /
}
\putrectangle corners at 220 110 and 380 -10
\circulararc 180 degrees from 332 68  center at 325 75
\circulararc -180 degrees from 332 32  center at 325 25
\circulararc -180 degrees from 268 68  center at 275 75
\circulararc 180 degrees from 268 32  center at 275 25
\put {${\mathbb C \mathbb P}_2$} [B1] at 365 -5
\put {$B$} [B1] at 258 10
\put {$B$} [B1] at 89 3
\arrow <6pt> [1,2] from 145 50  to 200 50
\put {$N$} [B1] at 39 45
{\setquadratic
\plot 282 82  300 72  318 82 /
\plot 282 18  300 28  318 18 /
\plot  332 32  322 50   332  68 /
\plot  268 32  278 50   268  68 /
}
\endpicture
}
\end{center}
For  example,  we
 could   take $B$ to be curve in $\CP_2=\{ [x:y:z]\}$ defined by 
	 $x^{2p}+y^{2p}+z^{2p}=0$, and then define  $N$ to be the set of elements $w$
	 of  the $\mathcal{O}(p)$ line-bundle over $\CP_2$  that satisfy 
	 $$w^2 =  x^{2p}+y^{2p}+z^{2p}.$$
	The canonical line bundle $K$ of $N$ is then isomorphic to the pull-back of
	$\mathcal{O}(p-3)$ from $\CP_2$. Nakai's criterion \cite{bpv} therefore  implies that 
	 $N$	 has ample canonical line bundle
	 \cite{bpv}; and  the Aubin/Yau theorem
	 \cite{aubin,yau} therefore  yields  a $\lambda < 0$ K\"ahler-Einstein metric
	  $N$, compatible with the given complex structure. For these choices of $(m,n)$, there are is thus also a  smooth structures on the 
	  topological  manifold $\ell \CP_{2}\# m \overline{\CP}_{2}$  for which Einstein metrics do indeed exist. 	  
	 \end{proof}

In the non-spin case, this shows that it is  in fact quite  common for a simply-connected topological $4$-manifold to admit one smooth structure for which there exists a smooth Einstein metric, while also 
 admitting infinitely many others for which no Einstein metric can exist. In the spin case, such examples are harder to produce. Still,   we saw  in \S \ref{sweet} that $K3$
does  fulfill these desiderata, and we will now show that  there is at least one other simply-connected  spin homeotype  with these properties:

\begin{cor} 
The topological spin manifold
\begin{equation}
\label{delish} 
3(K3) \# 4 (S^2 \times S^2)
\end{equation}
admits  one smooth structure for which there is a compatible  Einstein metric, but  also admits   infinitely-many other  smooth structures
for which no smooth   Einstein metric can exist. 
\end{cor}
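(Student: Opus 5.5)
The plan is to realize the homeotype \eqref{delish} in two ways, both certified by Theorem \ref{fdmn}. One realization will be a connected sum of three spin symplectic pieces to which Theorem \ref{oui} applies. The other will be a minimal complex surface of general type with ample canonical bundle.

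\textbf{Invariants.} First record the invariants of \eqref{delish}. It is spin and simply connected, with $b_+ = 3\cdot 3+4 = 13$ and $b_- = 3\cdot 19 + 4 = 61$. Hence
$$\chi = 76, \qquad \tau = -48, \qquad 2\chi + 3\tau = 8, \qquad (\chi+\tau)/4 = 7 .$$
By Theorem \ref{fdmn}, any smooth simply-connected spin $4$-manifold with these values of $\chi$ and $\tau$ is homeomorphic to \eqref{delish}.

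\textbf{Non-existence for infinitely many structures.} I would take
$$M_{q,q'} = Y_q \# Y_{q'} \# Z ,$$
where $Y_q$ and $Y_{q'}$ are Kodaira's exotic $K3$ surfaces from page \pageref{xoK3}. These are simply connected and K\"ahler, hence symplectic, with $b_+ = 3$ and $c_1^2 = 0$. The third summand $Z$ must be a simply-connected spin symplectic manifold with $b_+ = 7 \equiv 3 \bmod 4$. The identity \eqref{tiptop} with $k=0$ forces $c_1^2(Z) = 16$, and then $b_-(Z) = 23$, so $Z$ is homeomorphic to $K3 \# 4(S^2\times S^2)$. Then $M_{q,q'}$ is spin, simply connected, and has $\chi = 76$, $\tau = -48$, so it is homeomorphic to \eqref{delish}. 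Theorem \ref{oui} with $k = 0$ requires $8 \geq (0+0+16)/3$, which holds, so no $M_{q,q'}$ admits an Einstein metric. To get infinitely many distinct smooth structures, fix $q'$ and let $q \to \infty$. Proposition \ref{vela} and the divisibility of $c_1(Y_q)$ by $q$ give $\BW(M_{q,q'}) \geq q$. Since the bandwidth is a diffeomorphism invariant, infinitely many of the $M_{q,q'}$ are pairwise non-diffeomorphic.

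\textbf{Existence for one structure.} Here I need a simply-connected minimal surface $N$ of general type with ample $K$, $K$ divisible by $2$ (so $N$ is spin), $K^2 = 8$ and $\chi(\mathcal{O}) = 7$, i.e.\ $p_g = 6$ and $q = 0$. The Aubin--Yau theorem then gives $N$ a $\lambda<0$ K\"ahler--Einstein metric, and Theorem \ref{fdmn} identifies $N$ with \eqref{delish}. Note that $K^2 = 8 < 3\chi(\mathcal{O})\cdot 3$, so the Miyaoka--Yau inequality is no obstruction. I would first try a double or bidouble cover of $\CP_1\times \CP_1$ (or of a Hirzebruch surface), choosing branch divisors so that $K$ is twice a pulled-back ample class, checking ampleness by Nakai's criterion, and getting simple connectivity from a Lefschetz-type argument for branched covers.

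\textbf{Main obstacles.} I expect the main obstacle to be producing the two building blocks with exactly the right invariants: the summand $Z$ (spin, simply connected, symplectic, $b_+ = 7$, $c_1^2 = 16$) and the K\"ahler--Einstein surface $N$ ($K^2 = 8$, $\chi(\mathcal{O}) = 7$, $K$ even). For $Z$ I would look among spin complex surfaces of general type with $p_g = 3$, $q = 0$, $K^2 = 16$, or else take a symplectic construction (Fintushel--Stern/Park-type) homeomorphic to $K3\#4(S^2\times S^2)$. The rest of the argument is bookkeeping via Theorems \ref{fdmn} and \ref{oui}.
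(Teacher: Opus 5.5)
The architecture of your proof matches the paper's. You take a connected sum of three simply-connected symplectic pieces with $b_+\equiv 3 \bmod 4$: one homeomorphic to $K3\# 4(S^2\times S^2)$ with $c_1^2=16$, the others $K3$ or Kodaira's $Y_q$. You then apply Theorem \ref{oui} with $k=0$, use unbounded bandwidth to get infinitely many diffeotypes, and use an Aubin--Yau surface with ample $K$ for existence. Your bound $\BW\geq q$ is weaker than the available $2q$, but that is harmless.

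The gap is that the two building blocks you flag as the main obstacle are never produced, and they are not bookkeeping. The first half of the statement \emph{is} the existence of $N$, and the second half needs an actual $Z$. For $Z$, the paper uses Proposition 3.1 of Park--Szab\'o. This is a minimal symplectic manifold obtained by Fintushel--Stern knot surgery, homeomorphic to $K3\#4(S^2\times S^2)$, with $c_1^2=16$; minimality is what Proposition \ref{manamana} uses. You need to cite such a result rather than leave it as a search.

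For $N$, the route you sketch fails.
\begin{itemize}
\item Suppose $N\to\mathbb{F}_n$ is a double cover with $K_N=2\pi^*D$ and $D$ ample. Then $8=K_N^2=8D^2$ forces $D^2=1$. But an ample $D=\alpha C_0+\beta F$ on $\mathbb{F}_n$ has $\alpha\geq 1$ and $\beta\geq n\alpha+1$, hence $D^2=\alpha(2\beta-n\alpha)\geq 2$.
\item Drop the factor $2$ and take the double cover of $\CP_1\times\CP_1$ with the right numerics. Its branch curve $B$ has bidegree $(6,8)$, so $K_N=\pi^*\mathcal{O}(1,2)$ is ample, $K_N^2=8$, and $\chi(\mathcal{O}_N)=7$. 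This surface is \emph{not} spin.
\item To see this, choose $B$ smooth and tangent at three points to a ruling line $D$ with $D\cdot B=6$. Then $\pi^{-1}(D)$ splits into two curves $D_1,D_2$, each mapping isomorphically onto $D$. The projection formula gives $K_N\cdot D_1=\mathcal{O}(1,2)\cdot D=1$, which is odd, so $K_N$ is not divisible by $2$.
\end{itemize}
That surface is therefore homeomorphic to $13\CP_2\#61\cpbar$, not to $3(K3)\#4(S^2\times S^2)$. (You are on the Noether line $K^2=2p_g-4$, where the cover of $\CP_2$ is the spin type.)

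The fix is the paper's choice: let $N$ be the double cover of $\CP_2$ branched along a smooth curve of degree $10$. Then:
\begin{itemize}
\item $K_N=\pi^*\mathcal{O}(2)$ is visibly divisible by $2$, so $N$ is spin.
\item $K_N$ is ample, as the pull-back of an ample bundle by a finite map.
\item $K_N^2=2\cdot 4=8$, $p_g=h^0(\CP_2,\mathcal{O}(2))=6$ and $q=0$, giving $\chi=76$ and $\tau=-48$.
\end{itemize}
Aubin--Yau and Theorem \ref{fdmn} then finish the existence half.
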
 
\begin{proof}
Park and Szabo \cite[Proposition 3.1]{parkszabo}, using the  knot-surgery technique of  Fintushel and Stern \cite{fistknot}, 
constructed a   symplectic $4$-manifold  $X$ with $c_1^2(X)=16$ that is  homeomorphic to $K3\# 4(S^2 \times S^2)$.
For each positive integer $q$, we now set 
$M_q=X\# Y_0 \#Y_q$, where $Y_0$ denotes  $K3$ with its standard differentiable structure, and  $\{ Y_q\}$ is the sequence of  ``exotic''
complex surfaces homeomorphic to $K3$   described 
on page  \pageref{xoK3}. Since $b_+(X)=7\equiv 3 \bmod 4$, while  $b_+(Y_0)=b_+(Y_q)=3$, 
and since Theorem  \ref{manamana} tells us that 
$$\beta^2(M_q) \geq c_1^2(X) + c_1^2 (Y_0) + c_1^2(Y_q) = 16 $$
for each $q$,
 the $k=0$ case of 
Theorem \ref{oui}
tells us that none of the smooth spin manifolds $M_q= X\# Y_0\# Y_q$ can admit an Einstein metric. 
On the other hand, $\BW (M_q) \geq 2q$, so 
$$\lim_{q\to \infty} \BW (M_q) =\infty.$$
Since Theorem \ref{control} implies that the supremum of the bandwidth over any finite collection of smooth structures is finite, 
it 
 therefore follows 
 that there must be infinitely many distinct diffeotypes among the $M_q$.
Because each $M_q$ is  homeomorphic  to \eqref{delish}, this construction thus shows that the topological manifold 
\eqref{delish} consequently admits infinitely many distinct smooth structures for which no Einstein metric exists.

But now let  $N$ be the smooth simply-connected complex surface obtained by taking a ramified double cover of $\CP_2$, branched at a smooth curve $B$ of degree $10$.
The canonical line bundle $K$ of $N$ is then isomorphic to  the pull-back to $N$ of the $\mathcal{O}(2)$ line-bundle on   $\CP_2$. 
This then implies, by Nakai's criterion \cite{bpv}, that $K$ is ample, so that 
 the Aubin-Yau theorem \cite{aubin,yau} theorem implies that $N$ carries a $\lambda < 0$  K\"ahler-Einstein metric that is compatible with its given complex structure. 
But the fact that $K$ is  the pull-back of  $\mathcal{O}(2)$  also implies that $N$ is spin, that $c_1^2 (N) =8$, and that $h^{2,0}(N) =6$. Thus, $N$ is a simply-connected 
spin manifold with 
$\chi (N)= 76$ and $\tau (N) = -48$, and Theorem \ref{fdmn} therefore guarantees that $N$ is 
 homeomorphic to \eqref{delish}. 
 \end{proof}

 Of course, these corollaries only provide a few illustrative consequences of  Theorem \ref{laude}. In particular, there are many examples of monopole
 classes that are not detected by the Bauer-Furuta invariant, but are still distinguished  by the better-known integer-valued Seiberg-Witten invariant 
 \cite{fistknot,morgan,taubes} that 
  refines the 
 $\ZZ_2$-valued invariant $n_\mathfrak{c}$ that we have chosen to  highlight in this article.
  For more  
 applications  of this circle of   ideas to the theory of Einstein $4$-manifolds, see for example \cite{bayish,brownguard,biss,rio,donsurv,ishno,ishpin,irs,ishisasa,outhouse,jp2,raresioana}.

\pagebreak 

\section{When the Einstein Constant is Positive} 

Until this point, our approach  has emphasized results concerning  $4$-manifolds that do not admit metrics of 
positive scalar curvature, and therefore, in particular, cannot admit $\lambda > 0$ Einstein metrics. However,
the results we have already explored in this article  also play a key role in proving certain  classification
results regarding the   $\lambda > 0$ case, such as the following \cite{chenlebweb}:

\begin{thm} \label{smack} 
Suppose that $M$ is  a smooth compact oriented $4$-manifold that  
carries an integrable, orientation-compatible  complex structure $J$.
Then $M$  also admits an  ({\em a priori} unrelated) Einstein metric $g$ with ${\lambda} > 0$ if and only if 
$$
M\stackrel{{\mbox{\tiny diff}}}{{\approx}} \begin{cases}\CP_2\# {k} \overline{\CP}_2, &0 \leq {k} \leq 8,\\ ~~~~\text{or}& \\
{S^2 \times S^2}.& 
\end{cases}
$$
\end{thm}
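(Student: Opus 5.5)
We prove the two directions separately, taking the ``if'' direction first because it amounts to citing known existence results. On $\CP_2$ we use the Fubini-Study metric, and on $S^2\times S^2$ we use the product metric. On $\CP_2\# k\cpbar$ with $3\leq k\leq 8$ we realize the manifold as a del Pezzo surface, i.e.\ a blow-up of $\CP_2$ at $k$ points in general position, which has $c_1>0$. Tian's theorem (generalizing Siu's result on the Fermat cubic quoted in \S\ref{sweet}) then supplies a $\lambda>0$ K\"ahler-Einstein metric. The cases $k=1,2$ are not K\"ahler-Einstein, because these blow-ups have non-reductive automorphism groups. For them we instead invoke the Page metric on $\CP_2\#\cpbar$ and the Chen-LeBrun-Weber conformally K\"ahler Einstein metric on $\CP_2\# 2\cpbar$. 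These two cases are where new geometry is genuinely needed; here we simply cite them.

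For the ``only if'' direction, suppose $g$ is Einstein with $\lambda>0$, so that $s>0$. Myers' theorem gives $\pi_1(M)$ finite, hence $b_1(M)=0$, which is even, so $(M,J)$ is of K\"ahler type and therefore symplectic. Since the smooth manifold $M$ carries a metric with $s>0$, the Ohta-Ono theorem (Theorem \ref{optimist}) shows that $M$ is diffeomorphic to a rational or ruled surface. Ruled surfaces over curves of genus $\geq 1$ have infinite fundamental group, so they are excluded. Hence $M$ is rational, and therefore diffeomorphic either to $S^2\times S^2$ or to $\CP_2\# k\cpbar$ for some $k\geq 0$. Here we use that even Hirzebruch surfaces are diffeomorphic to $S^2\times S^2$, and that odd Hirzebruch surfaces, and the blow-ups of $S^2\times S^2$, are diffeomorphic to $\CP_2\#k\cpbar$.

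It remains to bound $k$. We have $(2\chi+3\tau)(\CP_2\#k\cpbar)=9-k$, so the Hitchin-Thorpe inequality \eqref{htineq} gives $k\leq 9$. Moreover, equality $k=9$ would force a finite cover to be flat or $K3$, which is impossible on a simply-connected rational manifold. Thus $k\leq 8$. The real content here is that, once Ohta-Ono is available, the Hitchin-Thorpe bound is already sharp, so the exclusion of $k=9$ via the equality case of Theorem \ref{ht} is the only delicate point in this direction.

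I expect the main obstacle to lie in the existence direction for $k=1,2$, which requires a non-K\"ahler construction; we rely on citations for these cases.
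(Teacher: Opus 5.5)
Your proof is correct. The existence half is the same as the paper's: the Fubini--Study and product metrics, K\"ahler--Einstein metrics on the del Pezzo surfaces with $3\le k\le 8$, Matsushima's obstruction for $k=1,2$, and the Page and Chen--LeBrun--Weber metrics for those two cases.

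The nonexistence half follows a genuinely different route. The paper's sketch stays within complex-surface theory. It uses Seiberg--Witten theory for K\"ahler-type surfaces (Theorem~\ref{proton}, plus the chamber analysis of Proposition~\ref{subtlety} when $b_+=1$) to force $\kod (M,J)=-\infty$. It then applies the strict Hitchin--Thorpe inequality to get $c_1^2>0$; this already excludes ruled surfaces over curves of genus $g\ge 1$, since these have $c_1^2=8(1-g)-k\le 0$. The Enriques--Kodaira classification finishes the argument.

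You instead use Myers' theorem to get $b_1=0$, hence K\"ahler type, hence a symplectic form, and then apply Ohta--Ono. In effect you reduce the complex statement to the argument the paper gives for its symplectic analogue, Theorem~\ref{wack}. Finiteness of $\pi_1$, rather than Hitchin--Thorpe, then disposes of the irrational ruled surfaces. Your route is shorter, and the $b_+=1$ chamber subtleties and non-K\"ahler surfaces never have to be handled by hand. The price is that it rests on the much heavier Taubes--McDuff machinery behind Theorem~\ref{optimist}. The paper's route needs only Witten's K\"ahler computation, wall-crossing for $b_+=1$, and classical surface classification.

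One simplification: excluding $k=9$ is not delicate. Since $s=4\lambda>0$, the integral in \eqref{cible} is strictly positive, so $(2\chi+3\tau)(M)>0$ immediately, with no appeal to the equality case of Theorem~\ref{ht}.
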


There are actually two  wildly different aspects to this 
 statement. On one hand, it excludes every diffeomorphism type other than the ten possibilities listed. 
 On the other hand, it claims that each of these ten listed $4$-manifolds admits a  $\lambda > 0$ Einstein metric, and also  admits
  an integrable  complex structure that is  {\em a priori} allowed to have nothing whatsoever to do with the metric. 
 These two different aspects  of the result are in fact proved by using two entirely unrelated sets of tools. 
 
   In one direction, to narrow down the possibilities to only  the ten listed  listed candidates, one first invokes
  Seiberg-Witten theory to show that $M$ must be a complex surface of Kodaira-dimension $-\infty$. One then invokes the 
  Hitchin-Thorpe inequality to show that one  must then  have $c_1^2(M) > 0$. The Kodaira-Enriques classification  \cite{bpv,GH}  then 
 implies that $M$ is diffeomorphic to one of the ten listed manifolds. 
   
  In the other direction, each of the ten allowed diffeotypes
  can be realized by a {\em del Pezzo surface} \cite{dolgachev}, meaning   a compact complex surface for which $c_1$ is a K\"ahler class. 
  On eight of the ten del Pezzo  diffeotypes,    Tian-Yau \cite{ty} and  Siu \cite{s}  showed  that there are $\lambda > 0$ K\"ahler-Einstein metrics that are compatible with 
  some del Pezzo complex structure. On the other hand, the two remaining del Pezzo surfaces, namely  $\CP_2\# \cpbar$ and  $\CP_2\# 2\cpbar$, cannot admit
  K\"ahler-Einstein metrics (by a theorem of Matsushima \cite{mats}, which asserts   that the Lie algebra of holomorphic vector fields on a compact 
   $\lambda >0$ K\"ahler-Einstein manifold must be reductive). Nonetheless, 
   Page \cite{page} had previously written down an explicit $\lambda > 0$ Einstein metric on what was later observed   to be $\CP_2\# \cpbar$, and 
  Derdzi\'nski \cite{derd}  then showed that Page's metric is actually a conformal rescaling of a K\"ahler metric. Indeed, Derdzi\'nski's argument actually implies that 
  the Page metric $g$ takes  the form $g=s^{-2}h$, where $h$ belongs to the family of extremal K\"ahler metrics  on $\CP_2\# \cpbar$ discovered by Calabi \cite{calabix},
  and   where the scalar curvature  $s>0$ of $h$  enjoys the special  property that its gradient $\nabla^h s$ is the real part of a holomorphic 
  vector field. This provided one of the  key clues along the road  that eventually led 
    to a  proof \cite{chenlebweb}  of the existence of a conformally-K\"ahler $\lambda > 0$ Einstein metric on $\CP_2\# 2\cpbar$. A second
    proof of the existence of this Einstein metric on  $\CP_2\# 2\cpbar$ was later found \cite{lebhem10} that, in conjunction with \cite{lebuniq},
    also shows that this  Einstein metric  is uniquely  characterized by the fact that it minimizes $\int |W_+|^2  d\mu$ among all conformally-K\"ahler metrics on 
    the fixed complex surface. Indeed, this variational characterization  also holds for all the other Einstein metrics used in proof of Theorem of \ref{smack}, including the 
    Page metric.

By combining Theorem \ref{optimist} with the above arguments, we immediately also   obtain  \cite{chenlebweb} 
 a  symplectic analog  of  Theorem \ref{smack}: 

\begin{thm} \label{wack}
Suppose that $M$ is  a smooth compact oriented $4$-manifold that
carries an orientation-compatible  symplectic form $\omega$.
Then $M$  also admits an ({\em a priori} unrelated) Einstein metric $g$ with ${\lambda} > 0$ if and only if 
$$
M\stackrel{{\mbox{\tiny diff}}}{{\approx}} \begin{cases}\CP_2\# {k} \overline{\CP}_2, &0 \leq {k} \leq 8,\\ ~~~~\text{or}& \\
{S^2 \times S^2}.& 
\end{cases}
$$
\end{thm}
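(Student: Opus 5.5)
The plan is to follow the same two-directional structure used for Theorem \ref{smack}, with Theorem \ref{optimist} taking the place of the Seiberg-Witten/Kodaira argument in the restrictive direction.

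\emph{Sufficiency.} The listed diffeotypes are all del Pezzo surfaces. A K\"ahler form on any of them provides an orientation-compatible symplectic form, and the $\lambda>0$ Einstein metrics discussed after Theorem \ref{smack} exist on every one of these ten manifolds:
\begin{itemize}
\item the Tian-Yau-Siu K\"ahler-Einstein metrics on eight of them;
\item the Page metric on $\CP_2\#\cpbar$;
\item the Chen-LeBrun-Weber metric on $\CP_2\# 2\cpbar$.
\end{itemize}
So this direction is immediate from facts already cited.

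\emph{Necessity.} Suppose $(M,\omega)$ is symplectic and $g$ is Einstein with $\lambda>0$. Then $s=4\lambda>0$, so Theorem \ref{optimist} applies. It tells us that $(M,\omega)$ is a symplectic blow-up of either $\CP_2$ or a $\CP_1$-bundle over a Riemann surface $\Sigma_h$. In particular, $M$ is diffeomorphic to a rational or ruled complex surface, and so it carries an orientation-compatible integrable complex structure.

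At this point Theorem \ref{smack} applies verbatim and yields the list. To avoid circularity, it helps to spell out what is actually needed, namely the Hitchin-Thorpe step. By Theorem \ref{ht}, $(2\chi+3\tau)(M)\ge 0$, with equality only if $M$ is finitely covered by $T^4$ or $K3$. A rational or ruled surface cannot be so covered: it carries positive scalar curvature, has $\pi_2\ne 0$ in the ruled case, and has the wrong Euler characteristic/signature in the rational case. So we get $c_1^2(M)=(2\chi+3\tau)(M)>0$.

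Now compute $c_1^2$ in each case.
\begin{itemize}
\item For a ruled surface over $\Sigma_h$ blown up $k$ times, $c_1^2=8(1-h)-k$. This is positive only if $h=0$, in which case the surface is rational.
\item For a blown-up rational surface, $c_1^2=9-k$ (or $8-k$ starting from a Hirzebruch surface), so positivity forces at most eight blow-ups of $\CP_2$.
\end{itemize}
Next, use the standard diffeomorphisms $F_{2m}\approx S^2\times S^2$ and $F_{2m+1}\approx \CP_2\#\cpbar$, together with $(S^2\times S^2)\#\cpbar\approx \CP_2\#2\cpbar$. These reduce every remaining case to $\CP_2\#k\cpbar$ with $0\le k\le 8$, or to $S^2\times S^2$.

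The main obstacle is the content of Theorem \ref{optimist}: producing a $J$-holomorphic sphere of non-negative self-intersection from Taubes' work, together with McDuff's classification. Since we may assume that result, the remaining care goes into two places. The first is excluding the equality case of Hitchin-Thorpe for ruled surfaces over tori or higher-genus curves, which is handled by the $c_1^2$ computation above together with the equality clause of Theorem \ref{ht}. The second is tracking the diffeomorphism identifications among the blow-ups.
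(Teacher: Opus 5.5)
Your proposal is correct and follows the paper's route exactly. Theorem \ref{optimist} reduces the symplectic case to rational or ruled surfaces, after which the Hitchin--Thorpe step and the classification used for Theorem \ref{smack} (which you spell out via the $c_1^2$ computation) give the list, while the del Pezzo Einstein metrics give the converse. As a minor simplification, the strict inequality $(2\chi+3\tau)(M)>0$ follows at once from \eqref{cible} because $s=4\lambda>0$, so your covering-space discussion of the equality case is unnecessary.
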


Of course, while these results do not suppose that the Einstein metrics under discussion are in any way related to the complex structure $J$ or symplectic forms
$\omega$ that figure in the statements, 
 the metrics used in the existence part of the proof are in fact all conformally K\"ahler, and therefore  intimately related 
to both a particular  complex structure $J$ and a specific symplectic form $\omega$. We will say more about this curious situation in a moment, 
when we discuss the moduli spaces of Einstein metrics on these manifolds.

Of course, assuming that the Einstein constant is positive is essential for these results, as our discussions in \S\S \ref{sweet}--\ref{monopoly}  
make it clear  that the situation becomes  vastly more complicated if the 
Einstein constant is allowed to be negative. However, if we just weaken the $\lambda > 0$ hypothesis to read  $\lambda \geq 0$, Theorems \ref{smack} and \ref{wack}
still have nice generalizations \cite{lebcam}:

 \begin{thm} \label{mortar}
Suppose that $M$ is  a smooth compact oriented $4$-manifold that 
admits an integrable, orientation-compatible  {complex structure} $J$.
Then $M$ also admits an  Einstein metric  $g$ with $\lambda \geq 0$ if
and only if 
$$
M\stackrel{{\mbox{\tiny diff}}}{{\approx}} 
\begin{cases}\CP_2\# {k} \overline{\CP}_2,  \qquad 0 \leq {k} \leq 8, &\\ 
{S^2 \times S^2},& \\
{K3}, &\\ 
{K3}/\ZZ_2, &\\
{T^4},&\\
 {T^4}/\ZZ_2,   {T^4}/\ZZ_3,    {T^4}/ \ZZ_4 ,  {T^4}/ \ZZ_6 , &\\
 {T^4}/(\ZZ_2 \oplus \ZZ_2), 
 {T^4}/(\ZZ_3\oplus \ZZ_3), \mbox{or  }
 {T^4}/ (\ZZ_2 \oplus \ZZ_4).
 \end{cases}
 $$
\end{thm}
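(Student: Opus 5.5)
The $\lambda>0$ case is already settled by Theorem \ref{smack}, so I will reduce to the case $\lambda=0$ and show that the remaining manifolds are exactly the ones on the list. The plan has two directions.

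\emph{Existence.} Every listed manifold carries a $\lambda\geq 0$ Einstein metric. For $\CP_2\# k\cpbar$ with $0\leq k\leq 8$ and for $S^2\times S^2$, Theorem \ref{smack} supplies the metric. On $K3$, Yau's theorem gives Ricci-flat K\"ahler metrics. Flat metrics exist on $T^4$. On the quotients $T^4/\Gamma$ and $K3/\ZZ_2$, I would use Ricci-flat K\"ahler metrics on the corresponding complex surfaces: Enriques surfaces and the hyperelliptic (bielliptic) surfaces. These are quotients of $K3$ or of complex tori by free holomorphic group actions. Averaging a K\"ahler class over the finite group and applying Yau's theorem yields an invariant Ricci-flat metric, which descends to the quotient. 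I would realize the listed groups by the standard bielliptic actions together with the free $\ZZ_2$ action on $T^4$, and in the latter case take a flat metric.

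\emph{Obstruction.} Suppose $g$ is Einstein with $\lambda=0$. Then $g$ is Ricci-flat, and \eqref{cible} together with \eqref{but} forces $\chi(M)=\frac{1}{8\pi^2}\int(|W_+|^2+|W_-|^2)\,d\mu\geq 0$, with equality precisely when $g$ is flat.

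If $g$ is flat, Bieberbach's theorem makes $M$ a flat manifold finitely covered by $T^4$. The complex structure $J$ then has $\chi=0$ and $c_1^2=0$. By the Kodaira classification, $(M,J)$ is a torus, a bielliptic surface, a Kodaira surface, a Hopf surface, or a ruled surface over an elliptic curve, possibly with blow-ups. Blow-ups are excluded because $\chi=0$ while $c_1^2$ would become negative, violating Hitchin--Thorpe. Hopf and Kodaira surfaces are excluded because their fundamental groups are not virtually abelian of rank $4$ acting suitably; for instance, primary Kodaira surfaces are nilmanifolds that are not virtually $\ZZ^4$. Ruled surfaces over an elliptic curve have $\pi_2\neq 0$, whereas flat manifolds are aspherical. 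This leaves the torus and the bielliptic surfaces, whose diffeotypes are exactly the listed $T^4$ quotients (Bagnera--de Franchis), together with $T^4/\ZZ_2$.

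If $g$ is not flat, then by the Cheeger--Gromoll discussion following Theorem \ref{ht}, the universal cover is compact and Ricci-flat, so $\pi_1(M)$ is finite. I then aim to show that some finite cover is $K3$. For this I would use Seiberg--Witten theory. Since $s\equiv 0$, the estimate \eqref{squeeze} and Proposition \ref{best} force every solution to be reducible with $c_1^+=0$. Combined with the chamber analysis in Proposition \ref{subtlety}, this rules out general type and non-minimal surfaces; it also rules out Kodaira dimension $1$, exactly as the exotic $Y_q$ were ruled out. Among Kodaira dimension $-\infty$ with finite $\pi_1$, only rational surfaces remain. These are excluded because $\chi>0$ forces $b_2$ to be large while the holonomy reduction is impossible; alternatively, on a rational surface with $b_+=1$ a solution exists in the appropriate chamber via \eqref{voila}, contradicting $s\equiv 0$ unless the metric is K\"ahler with negative scalar curvature. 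Kodaira dimension $0$ with finite $\pi_1$ gives $K3$ or Enriques surfaces, whose diffeotypes are $K3$ and $K3/\ZZ_2$.

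The main obstacle is the $b_+=1$ case of the non-flat obstruction. This covers rational surfaces and Kodaira dimension-$1$ surfaces with $p_g=0$, where chamber subtleties arise. There I would first try to invoke Theorem \ref{optimist}-type reasoning: a Ricci-flat metric with $s\equiv 0$ that is not flat can be perturbed conformally to have $s>0$ unless it is Ricci-flat Kähler. Failing that, I would fall back on the holonomy argument: the flatness of $\Lambda^+$ on a finite cover makes that cover a Calabi--Yau surface, hence $K3$, and $M$ is then a quotient of $K3$ carrying a complex structure, which classification identifies as Enriques.
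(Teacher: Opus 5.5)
Your existence direction and flat case are essentially fine, but the non-flat branch has a genuine gap. You try to prove that a non-flat Ricci-flat metric on a complex surface forces a finite cover to be $K3$. That would in particular show that no del Pezzo diffeotype, such as $\CP_2$ or $S^2\times S^2$, carries a Ricci-flat metric. None of these tools can decide that, and your specific arguments fail:

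\emph{Rational surfaces.} For $\kod=-\infty$, Proposition \ref{subtlety} places $\eta=0$ in the chamber where $n_{\mathfrak c}=0$. These manifolds carry $s>0$ metrics, for which there are no irreducible solutions at all. So Seiberg--Witten theory produces no solution for your $g$, and \eqref{voila} yields nothing. The remark about $b_2$ and holonomy is not an argument.

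\emph{Conformal fallback.} A metric with $s\equiv 0$ has Yamabe constant $0$ in its conformal class. So no conformal change makes $s>0$, whether or not $g$ is K\"ahler.

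\emph{Holonomy fallback.} For Ricci-flat $g$ the curvature of $\Lambda^+$ is just $W_+$. By \eqref{cible}, $\Lambda^+$ is flat iff $(2\chi+3\tau)(M)=0$, which is exactly what you do not know.

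\emph{$\kod=1$ with $b_+=1$.} Proposition \ref{subtlety} assumes $c_1^2>0$, so it cannot handle these surfaces, which you leave open. The paper excludes the $Y_q$ via Theorem \ref{ht}, not via chambers.

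The missing idea is to split according to Hitchin--Thorpe for the complex orientation, $c_1^2(M,J)=(2\chi+3\tau)(M)\geq 0$. You should not try to exclude rational surfaces at all, since they are on the list.

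If $c_1^2=0$, Theorem \ref{ht} gives a finite cover that is a flat $T^4$ or a Calabi--Yau $K3$. Kodaira dimension is unchanged under finite unramified covers, and complex surfaces diffeomorphic to $T^4$ or $K3$ have $\kod=0$ (for $K3$ by the Seiberg--Witten argument of \S\ref{sweet}). So $(M,J)$ is a minimal $\kod=0$ surface finitely covered by a torus or a K3 surface, hence a torus, bielliptic, K3, or Enriques surface. This one step subsumes your flat case, every $\kod=1$ case, and the rational surfaces with $c_1^2=0$, with no chamber issues.

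If $c_1^2>0$, then $M$ is of K\"ahler type, and $\kod\geq 0$ would force general type. General type is excluded by Theorem \ref{proton} or Proposition \ref{subtlety} together with Proposition \ref{best}, because $s\equiv 0$ while $[c_1^+]^2\geq c_1^2>0$. Hence $M$ is rational with $c_1^2>0$, i.e.\ $S^2\times S^2$ or $\CP_2\# k\cpbar$ with $k\leq 8$, which is already on the list. This is the route indicated in the paper's sketch after Theorem \ref{smack}.

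Two minor points. The $T^4/\ZZ_2$ of the list is the $\ZZ_2$ type among the seven Bagnera--de Franchis bielliptic surfaces, not an extra case. Your list of surfaces with $\chi=0$ omits Inoue and properly elliptic surfaces, though the same $\pi_1$ argument disposes of them.
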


\begin{thm}\label{pestle} 
Suppose that $M$ is  a smooth compact oriented $4$-manifold that 
admits an orientation-compatible symplectic form  $\omega$.
Then $M$ also admits an  Einstein metric  $g$ with $\lambda \geq 0$ if
and only if 
$$
M\stackrel{{\mbox{\tiny diff}}}{{\approx}} 
\begin{cases}\CP_2\# {k} \overline{\CP}_2, \qquad 0 \leq {k} \leq 8, &\\ 
{S^2 \times S^2},&\\
{K3}, &\\ 
{K3}/\ZZ_2, &\\
{T^4},&\\
 {T^4}/\ZZ_2,   {T^4}/\ZZ_3,    {T^4}/ \ZZ_4 ,  {T^4}/ \ZZ_6 , &\\
 {T^4}/(\ZZ_2 \oplus \ZZ_2), 
 {T^4}/(\ZZ_3\oplus \ZZ_3), \mbox{or  }
 {T^4}/ (\ZZ_2 \oplus \ZZ_4).
 \end{cases}
 $$
\end{thm}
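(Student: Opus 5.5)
The plan is to reduce the statement to Theorems \ref{wack} and \ref{mortar}. For the ``if'' direction, every manifold on the list carries a complex structure of K\"ahler type: del Pezzo surfaces, $K3$, Enriques surfaces, complex tori, and the hyperelliptic and bielliptic quotients of $T^4$. Each is therefore symplectic, and Theorem \ref{mortar} supplies the Einstein metric with $\lambda\geq 0$. For the ``only if'' direction, if $\lambda>0$ we are done by Theorem \ref{wack}. So the real work is the case where $g$ is Ricci-flat on a symplectic $(M,\omega)$, and the aim there is to show that $M$ admits a complex structure, so that Theorem \ref{mortar} finishes the argument.

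In the Ricci-flat case, let $\mathfrak{c}$ be the spin$^c$ structure of an $\omega$-compatible $J$, so that $c_1(L)=c_1(M,\omega)$. First I would show that the unperturbed equations (\ref{drc}--\ref{sd}) have a solution for $g$, unless $c_1^+=0$ already, in which case the next step is immediate.
\begin{enumerate}
\item If $b_+\geq 2$, this is Theorem \ref{neutron}.
\item If $b_+=1$, the Hitchin-Thorpe inequality \eqref{htineq} gives $c_1^2=(2\chi+3\tau)(M)\geq 0$. If $c_1$ is torsion then $c_1^+=0$. Otherwise $c_1$ is a non-zero timelike or null class, so $[c_1]^+$ lies in a fixed nappe for every metric. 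I would use Taubes' result that $Q(c_1,[\omega])\leq 0$ in this setting (via Li--Liu) to see that this nappe is the past one. Then $\eta=0$ lies in the chamber $\sphericalangle^+$ containing large multiples of $\omega$, where Taubes' $b_+=1$ theorem gives $n_\mathfrak{c}\neq 0$, and a solution again exists.
\end{enumerate}

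Since $s\equiv 0$, integrating \eqref{wnbk} forces $\Phi\equiv 0$, hence $F^+_\theta=0$ and so $c_1^+=0$. (In case 2 this contradicts $c_1^+\neq 0$ for a non-zero null or timelike class, so in fact $c_1$ is torsion there.) In general $c_1^+=0$ gives $c_1^2=-\big|[c_1^-]^2\big|\leq 0$. Combined with \eqref{htineq}, this shows that equality holds in \eqref{htineq}, so Theorem \ref{ht} says that $(M,g)$ is finitely covered by a flat $T^4$ or a Calabi--Yau $K3$. In particular $(\Lambda^+,\nabla)$ is flat, so $W_+=0$ and $s=0$.

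The Weitzenb\"ock formula \eqref{outil} then reduces on self-dual forms to $(d+d^*)^2=\nabla^*\nabla$. Hence the harmonic self-dual representative $\psi$ of the non-zero class $[\omega]^+$ (non-zero because $[\omega]^2>0$) is parallel. After normalizing $\psi$ to length $\sqrt2$, it is a K\"ahler form for $g$, and the corresponding $J=g^{-1}\psi$ is integrable. Thus $M$ carries an orientation-compatible complex structure, and Theorem \ref{mortar} gives the list.

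The main obstacle I expect is the $b_+=1$ chamber bookkeeping, namely justifying that $c_1^+$ is past-pointing relative to $[\omega]$, including the borderline null case. My first attempt would be to argue directly from $Q(c_1,[\omega])<0$ for non-rational, non-ruled symplectic manifolds. The rational or ruled case is separate: there $c_1^2\geq 0$ and $c_1\cdot[\omega]>0$, so $c_1^+$ is future-pointing and no solution is guaranteed. For that case I would instead use Theorem \ref{optimist}-style classification, via Liu's theorem, to reduce to known manifolds, namely blow-ups of $\CP_2$ and ruled surfaces. These already carry complex structures, so Theorem \ref{mortar} applies to them directly.
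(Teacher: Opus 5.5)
Your proposal is correct and follows the route the paper indicates but leaves to \cite{lebcam}: Theorem \ref{wack} handles $\lambda>0$. In the Ricci-flat case, Seiberg--Witten theory (Taubes' non-vanishing, plus the chamber argument and Liu's rational/ruled alternative when $b_+=1$) together with $s\equiv 0$ forces $c_1^+=0$. This gives equality in \eqref{htineq}, hence $W_+=0$ and a parallel self-dual harmonic form making $g$ K\"ahler, after which Theorem \ref{mortar} applies.

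The borderline null case you worry about is harmless. For non-rational, non-ruled $M$ only $Q(c_1,[\omega])\le 0$ holds in general, with equality when $c_1$ is torsion. But a non-zero causal class always pairs non-trivially with the timelike class $[\omega]$, so this already makes $c_1^+$ past-pointing for every metric.
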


The ten manifolds added to the list in passing from Theorem \ref{smack} and \ref{wack} to Theorems \ref{mortar} and \ref{pestle} are
 exactly the complex surfaces that admit Ricci-flat K\"ahler metrics, and, on these manifolds,  Theorem \ref{ht} implies  that, conversely, every Einstein metric
 is Ricci-flat K\"ahler.  This allows one to show that  the  Einstein moduli spaces $\mathscr{E}(M)$ are connected for each of these ten added  manifolds.
By  contrast, however, we currently have only  a partial understanding of the moduli spaces $\mathscr{E}(M)$  for the ten  manifolds on the  shorter original list  of Theorems  \ref{smack} and \ref{wack}.
The known Einstein metrics on these ten del Pezzo manifolds are all conformally K\"ahler, and this condition can be shown \cite{lebcake,lebdet,pengwu} to be open and closed in this context. 
In conjunction with \cite{lebuniq,sunspot}, this allows one to show that the  known Einstein metrics sweep out exactly one connected component of the moduli space $\mathscr{E}(M)$.
Whether there are other connected components, however, currently remains a mystery.

%
%
%
  
\section{When the Fundamental Group is Large}

Most of the results discussed in this article have been based on the fact that 
the Seiberg-Witten equations  yield estimates for  the scalar and self-dual-Weyl curvatures in dimension four, in a manner that 
strongly depends on 
the smooth structure  the  $4$-manifold in question. While we have often illustrated 
these results  by applying them to simply-connected
manifolds, they are of course also  applicable to  manifolds with enormous 
fundamental groups, as we saw for example  in Corollary \ref{alibaba} and Theorem \ref{cohyp}. 
On the other hand, these results are intrinsically four-dimensional in nature, and 
tell us nothing at all about Riemannian geometry in 
 any   other dimension.

By contrast, the proof of Theorem \ref{rehyp} hinges on a remarkable 
set of results that are of great interest in any dimension $n$. 
 Let $(M,g)$ be a compact Riemannian manifold, and let 
$(\widetilde{M},\widetilde{g})$ be its universal cover. Let $x\in \widetilde{M}$,
and let $B_{\zap r}(x)\subset \widetilde{M}$ be the Riemannian distance ball of radius ${\zap r}$ centered at $x$,
which by definition consists of points that can be joined to $x$ by some smooth path of length 
$< {\zap r}$, and   let $\vol(B_{\zap r}(x))$
denote the volume of this distance-ball with respect to $\widetilde{g}$. Then 
the {\em volume entropy} of $(M,g)$ is defined to be 
$${\zap h} (M,g)=\lim_{{\zap r} \to \infty}
 \frac{\log \vol (B_{\zap r}(x))}{{\zap r}}.$$
This is independent of the base-point $x$, and  can be 
non-zero only if the fundamental group of $M$ is infinite. 
For example,  the volume entropy of
a flat $n$-torus is zero, while the volume entropy of   a compact 
  $n$-manifold of constant sectional curvature $K<0$
equals  $(n-1)|K|^{1/2}$. 

This last example makes it clear  that the volume entropy is certainly  not 
scale-invariant.
However, we can remedy this by instead considering the {\em normalized
volume entropy}
$$\widehat{\zap h} (M,g) = [\vol (M,g)]^{1/n}{\zap h} (M,g).$$
In  terms of this invariant, Besson, Courtois, and Gallot \cite{bcg} were able to prove the following truly remarkable result: 
 
\begin{thm}[Besson-Courtois-Gallot] \label{ntp}
Let $M$ be any compact quotient of a real, complex, quaternionic,
or octonionic hyperbolic space, and let $g_0$  be the tautological  metric
on $M$. Then any  metric $g$ on $M$ satisfies
$$\widehat{\zap h} (M,g) \geq \widehat{\zap h} (M,g_0),$$
with equality iff $g$ is locally symmetric.
\end{thm}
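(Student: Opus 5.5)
The plan is to follow the barycenter method of Besson, Courtois and Gallot. The aim is a map $F:M\to M$ homotopic to the identity, hence of degree one, whose Jacobian satisfies the pointwise bound
$$|\mathrm{Jac} F| \leq \left(\frac{{\zap h}(M,g)}{{\zap h}(M,g_0)}\right)^n .$$
Integrating this gives $\vol(M,g_0)\leq ({\zap h}(g)/{\zap h}(g_0))^n\vol(M,g)$, which is exactly $\widehat{\zap h}(M,g)\geq\widehat{\zap h}(M,g_0)$.

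\emph{Step 1: the map.} I will write $\widetilde M$ for the universal cover, with metric $\widetilde g$ from $g$, and identify it with the symmetric space $(X,g_0)$. Here $\partial X$ is the visual boundary, $\theta$ is the visual measure seen from a fixed base point $o$, and $B(y,\xi)$ is the Busemann function. Fix $c>{\zap h}(g)$. For $x\in\widetilde M$, define the probability measure $\sigma_x^c$ on $\partial X$ by averaging the $\widetilde g$-exponentially weighted measure $e^{-c\,d_{\widetilde g}(x,z)}\,d\mu_{\widetilde g}(z)$ on $\widetilde M$ against the Poisson kernels $e^{-{\zap h}_0 B(z,\xi)}$. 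The choice $c>{\zap h}(g)$ is exactly what makes the integrals over $\widetilde M$ converge. Next, let $F_c(x)$ be the barycenter of $\sigma^c_x$, namely the unique minimizer of $y\mapsto\int_{\partial X}B(y,\xi)\,d\sigma^c_x(\xi)$. This minimizer exists and is unique because Busemann functions on a rank-one symmetric space of negative curvature are strictly convex and tend to $+\infty$ in the right way. By construction $F_c$ is $\pi_1(M)$-equivariant and smooth, so it descends to a map $M\to M$ homotopic to the identity.

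\emph{Step 2: the Jacobian estimate.} Differentiating the barycenter equation $\int dB_{(F(x),\xi)}\,d\sigma_x^c=0$ implicitly gives $K\circ dF = H$. Here $K=\int DdB\,d\sigma$ and $H$ is built from $\int dB\otimes d(\log\text{density})\,d\sigma$. Cauchy--Schwarz bounds $|\det H|$ by $c^n$ times $\det(\int dB^2\,d\sigma)^{1/2}$ times a factor $n^{-n/2}$. For the symmetric space one has the explicit form $DdB = g_0 - dB^2 - \sum_i (J_i dB)^2$, with $J_i$ the complex, quaternionic or octonionic structures. This reduces the problem to a purely linear-algebra inequality: for a positive symmetric matrix $H$ of trace $1$,
$$\frac{\det H^{1/2}}{\det\!\big(I-H-\sum_i J_iHJ_i^{*}\big)}\leq \left(\frac{\sqrt n}{{\zap h}_0}\right)^n ,$$
with equality iff $H=I/n$. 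Letting $c\searrow{\zap h}(g)$ then gives the bound on $|\mathrm{Jac}F_c|$. I expect this determinant inequality to be the main obstacle. It is straightforward in the real hyperbolic case, but in the complex and quaternionic cases it needs a careful convexity argument (concavity of $\log\det$ combined with the structure of the $J_i$), and the cases $n=3,4$ need special handling.

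\emph{Step 3: equality.} If equality holds, the limit maps $F_c$ converge, via Arzel\`a--Ascoli, to a $1$-Lipschitz map, after normalizing $g$ so that ${\zap h}(g)={\zap h}(g_0)$. This limit preserves volume and has $dF$ an isometry almost everywhere, because equality in the determinant inequality forces $H=I/n$. Hence the limit is a Riemannian isometry $(M,g)\to(M,g_0)$, and so $g$ is locally symmetric.
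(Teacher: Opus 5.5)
Your route is the Besson--Courtois--Gallot barycenter method, which is what the paper's brief sketch describes. The map $x\mapsto(\sigma^c_x)^{1/2}$ embeds $\widetilde M$ in $L^2(\partial X)$, and $F_c^*d\mu_{g_0}$ is the closed $n$-form whose integral over a fundamental domain is compared with the entropy. The architecture of Steps 1--2 is right, but the Hessian formula in Step 2 has the wrong sign, and this makes your key inequality false. With curvature normalized to lie in $[-4,-1]$,
\[
DdB=g_0-dB\otimes dB+\sum_{i=1}^{d-1}(dB\circ J_i)\otimes(dB\circ J_i).
\]
Its eigenvalues are $0$ on $\nabla B$, $2$ on each $J_i\nabla B$, and $1$ on the rest, so $\mathrm{tr}\,DdB=n+d-2={\zap h}_0$; your formula would give $n-d$. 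Writing $H=\int dB\otimes dB\,d\sigma^c_x$, the correct matrix is therefore $K=I-H+\sum_i J_iHJ_i^*$. With your sign, $H=I/n$ already gives the value $(\sqrt n/(n-d))^n$, which exceeds $(\sqrt n/{\zap h}_0)^n$ whenever $d\geq 2$. The real-hyperbolic inequality is also not valid in all dimensions; it needs $n\geq 3$. For $n=2$ and $H=\mathrm{diag}(t,1-t)$ the quotient is $1/\sqrt{t(1-t)}\geq 2$, so $H=I/2$ is a minimum, and hyperbolic surfaces (Katok's theorem) need a different argument. A minor point: a single Busemann function is only convex. It is $\int B\,d\sigma^c_x$ that is strictly convex and proper, because $\sigma^c_x$ has a positive density.

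The genuine gap is Step 3. The bound $|\mathrm{Jac}\,F_c|\le (c/{\zap h}_0)^n$ gives no control of $\|dF_c\|$, so Arzel\`a--Ascoli needs a separate uniform estimate on $dF_c$ as $c\searrow{\zap h}(g)$, which you do not supply. Nor can you conclude that $dF$ is an isometry from ``equality in the determinant inequality''. The limit is not one of your barycenter maps, since the integrals defining $\sigma^c_x$ diverge at $c={\zap h}(g)$, so Step 2 does not apply to it. For each $c>{\zap h}(g)$, equality need hold nowhere. After normalizing ${\zap h}(g)={\zap h}_0$, so that $\vol(M,g)=\vol(M,g_0)$, all the equality hypothesis yields is the integrated statement
\[
\int_M\Big[\big(c/{\zap h}_0\big)^n-\mathrm{Jac}\,F_c\Big]d\mu_g=\Big[\big(c/{\zap h}_0\big)^n-1\Big]\vol(M,g)\longrightarrow 0 .
\]
To exploit this, you need the following ingredients:
\begin{enumerate}
\item A stability version of the algebraic inequality: a near-maximal Jacobian should force $dF_c$ to be close to a homothety of ratio $c/{\zap h}_0$. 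This requires both $H$ and the second Cauchy--Schwarz matrix (the normalized average of $d_xd(x,z)\otimes d_xd(x,z)$ against $e^{-c\,d(x,z)}$) to be nearly isotropic, since $H\approx I/n$ alone only gives $\|dF_c\|\lesssim\sqrt n\,c/{\zap h}_0$.
\item An argument, for example of Fubini type, showing that subsequential limits are $1$-Lipschitz.
\item The lemma that a $1$-Lipschitz degree-one map between compact Riemannian manifolds of equal volume is a metric isometry.
\item Myers--Steenrod, to make that isometry smooth.
\end{enumerate}
These are exactly the delicate parts of BCG's equality case, and without them your Step 3 does not prove rigidity.
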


The proof  is essentially a calibrated geometry argument. One  embeds the universal cover  $\widetilde{M}$ in
$L^2$ of its sphere at infinity, and   compares the
entropy with the integral of a certain closed $n$-form 
over a fundamental domain; for details see \cite{bcg}.
One  dramatic consequence   is the following:

\begin{cor} \label{ctp}
Let $(M,g_0)$ be a compact quotient of hyperbolic
space ${\cal H}^n$,
and let $g$ be any metric on $M$ with Ricci curvature
$\geq -(n-1)$. Then $\vol (M,g) \geq \vol (M,g_0)$,
with equality iff $g$ has  sectional curvature $\equiv -1$. 
\end{cor}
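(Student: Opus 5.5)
The plan is to combine Theorem \ref{ntp} with the Bishop--Gromov volume comparison theorem, and then to analyze the equality case.

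\emph{Step 1: entropy bound from the Ricci bound.} If $\mathrm{Ric}_g \geq -(n-1)g$, the same lower bound holds on the universal cover $(\widetilde{M},\widetilde{g})$. Bishop--Gromov then gives
$$\vol (B_{\zap r}(x)) \leq \vol_{{\cal H}^n}(B_{\zap r})$$
for every ${\zap r}$. The right-hand side grows like $e^{(n-1){\zap r}}$ up to a polynomial factor, so ${\zap h}(M,g)\leq n-1 = {\zap h}(M,g_0)$.

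\emph{Step 2: combine with Theorem \ref{ntp}.} That theorem gives
$$\vol(M,g)^{1/n}{\zap h}(M,g)\geq \vol(M,g_0)^{1/n}(n-1).$$
Since $0<{\zap h}(M,g)\leq n-1$, dividing yields $\vol(M,g)\geq \vol(M,g_0)$. Positivity of ${\zap h}(M,g)$ is automatic from the displayed inequality, because its right-hand side is positive.

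\emph{Step 3: equality case.} Suppose $\vol(M,g)=\vol(M,g_0)$. Then $\vol(M,g)^{1/n}{\zap h}(M,g)\leq \vol(M,g_0)^{1/n}(n-1)$ by Step 1, so equality holds in Theorem \ref{ntp}. Hence $g$ is locally symmetric, and the normalized entropies agree. Both metrics then have the same volume and the same entropy, so ${\zap h}(M,g)=n-1$.

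To conclude, we identify $g$ up to isometry. By Mostow rigidity, a locally symmetric metric on the quotient of ${\cal H}^n$ must be a constant rescaling of the hyperbolic one, since the homotopy type forces the symmetric space type. That rescaling constant is fixed by the volume equality, so $g$ has sectional curvature $\equiv -1$. For $n=2$, where Mostow rigidity fails, we argue directly instead. The entropy equality ${\zap h}=1$ together with $K\geq -1$ and Gauss--Bonnet forces $\int K\,d\mu = \int (-1)\,d\mu$, hence $K\equiv -1$.

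\emph{Main obstacle.} The delicate point is the precise equality clause of Theorem \ref{ntp}: whether "locally symmetric" there already means "isometric to $g_0$ up to scaling". If it does, this step is immediate. If not, I would first try invoking Mostow rigidity as sketched above. I would also check that the Ricci bound is used only through Bishop--Gromov on the cover, which is the only place curvature enters.
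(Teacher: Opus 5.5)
Your proof is correct and takes the same route as the paper: Bishop--Gromov on the universal cover gives ${\zap h}(M,g)\leq n-1={\zap h}(M,g_0)$, and Theorem \ref{ntp} then forces $\vol(M,g)\geq \vol(M,g_0)$; your treatment of the equality case, via Mostow rigidity for $n\geq 3$ and Gauss--Bonnet for $n=2$, fills in detail the paper only asserts. Neither you nor the paper spells out the converse direction, but it is immediate: if $g$ has curvature $\equiv -1$, it is locally symmetric with ${\zap h}(M,g)=n-1$, so equality in Theorem \ref{ntp} gives $\vol(M,g)=\vol(M,g_0)$.
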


Notice that, like Theorem \ref{ntp},  this result holds in any dimension $n$, but that
$(M,g_0)$ is now specifically  assumed to be a {\em real-hyperbolic} manifold. Thus, since we've 
assumed that the Ricci curvature of $(\widetilde{M},\widetilde{g})$ is greater than or equal to 
than the Ricci curvature  of real-hyperbolic space  ${\cal H}^n$,  the 
{\em Bishop-Gromov inequality} \cite{bishop} says that the volume of a Riemannian ball of radius 
${\zap r}$ in  $(\widetilde{M},\widetilde{g})$ must be less than or equal to  the volume of a corresponding 
 ball in ${\cal H}^n$. We therefore 
have 
$${\zap h}  (M,g) \leq (n-1) = {\zap h}  (M,g_0) .$$
But since we also have $[\vol(M,g) ]^{1/n}{\zap h}  (M,g)
\geq [\vol(M,g_0) ]^{1/n}{\zap h}  (M,g_0)$ by Theorem \ref{ntp}, 
it therefore  follows that $\vol(M,g) \geq \vol(M,g_0)$,
and that equality moreover only occurs if $g$ has constant curvature
$-1$. 

\bigskip

However, despite the fact that Theorem \ref{ntp} holds in all dimensions, it is only in dimension $n=4$ that it implies  a rigidity result,  like Theorem \ref{rehyp}, for Einstein metrics.
Here, the  crucial  extra ingredient is  the remarkably simple form taken by the $4$-dimensional Gauss-Bonnet formula \eqref{cgb}, which for a compact $4$-dimensional  Einstein manifold 
$(M,g)$ becomes
\begin{equation}
\label{goober}
\int_M \left( \frac{s^2}{24}+ |W_+|^2 +|W_-|^2 \right) d\mu_g = 8\pi^2 \chi (M).
\end{equation}
Now suppose that $(M,g_0)$ is  a compact real-hyperbolic $4$-manifold, with sectional curvature $-1$, and thus with Einstein constant $-3$ and scalar curvature $-12$. 
If $g$ is another Einstein metric on $M$, it  cannot have Einstein constant $\lambda \geq 0$, because $\pi_1(M)$ is an infinite group that (by Preissmann's theorem) does not contain $\ZZ\oplus \ZZ$;  thus, the  $\lambda >0$ case is  ruled out by Myers' Theorem \cite{myers}, while  the $\lambda =0$ case is ruled out (cf. page \pageref{ricflat}) by the Cheeger-Gromoll Splitting Theorem \cite{cg} and 
 Bieberbach's Theorem \cite{bie}.
Thus, our rival Einstein metric would have $\lambda < 0$,
and, by replacing  it with a suitable  constant multiple, we can  therefore  arrange for  $g$, like $g_0$, to have Ricci curvature $-3$ and scalar curvature $-12$. 
However, because  $g_0$ is  locally conformally flat, and so has vanishing Weyl curvature, equation  \eqref{goober} therefore tells us that 
$$ \vol (M,g) \leq \vol (M, g_0)= \frac{4\pi^2}{3} \chi (M).$$
But since $g$ has Ricci curvature $= -3$, Corollary \ref{ctp} also tells us that $\vol(M,g) \geq \vol(M,g_0)$. Hence $(M, g)$ and $(M,g_0)$ 
must actually have equal volume, 
and $g$ must therefore in fact have constant sectional curvature $-1$.  Mostow rigidity thus
guarantees that $g$ must actually just be $g_0$, moved by a self-diffeomorphism of $M$ that is homotopic to the identity, and Theorem \ref{rehyp}
therefore follows.

In fact,  the  calibrated-geometry argument used to prove Theorem \ref{ntp}  also yields a  new, self-contained  proof of Mostow rigidity, as a special case   of    the following stronger result  \cite{bcg}:

\begin{thm}[Besson-Courtois-Gallot] 
\label{dig}
Let $(M,g_0)$ be as in Theorem \ref{ntp}, and let $N$ be a compact oriented 
manifold of the same dimension. If there is a
smooth map $f: N\to M$ of degree $\ell >0$, then 
any metric $g$ on $N$ satisfies
$$\hat{\zap h} (N,g) \geq \ell \hat{\zap h} (M,g_0).$$
Moreover, if $\dim M > 2$,   equality can only be 
achieved if $f$ is homotopic to a covering map which 
is a local isometry. 
\end{thm}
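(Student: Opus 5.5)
The plan is to prove Theorem \ref{dig} by the Besson--Courtois--Gallot \emph{barycenter} method, with the volume entropy of $g$ as the key parameter. Take $n=\dim M\geq 3$ (the surface case is classical). Fix $c>{\zap h}(N,g)$ and lift $f$ to a $\pi_1$-equivariant map $\tilde f:\widetilde N\to\widetilde M$. For $y\in\widetilde N$, push the finite measure $e^{-c\,d_{\tilde g}(y,z)}d\mu_{\tilde g}(z)$ forward by $\tilde f$ to $\widetilde M$. Then convolve with the Poisson kernel of the symmetric space $(\widetilde M,\tilde g_0)$; this gives a measure $\sigma_y$ on the ideal boundary $\partial\widetilde M$. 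Define $F_c(y)$ to be the barycenter of $\sigma_y$, that is, the unique critical point of
\[
x\mapsto\int_{\partial\widetilde M}B_\theta(x)\,d\sigma_y(\theta),
\]
where $B_\theta$ is the Busemann function. Because the Busemann functions of a rank-one symmetric space are strictly convex, this critical point exists and is unique. The resulting map $F_c$ is smooth, equivariant, and homotopic to $f$, so it also has degree $\ell$.

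The core step is a pointwise Jacobian estimate
\[
|\mathrm{Jac}\,F_c|\le\Big(\frac{c}{{\zap h}(M,g_0)}\Big)^n .
\]
To get it, differentiate the implicit equation $\int dB_\theta(F_c(y))\,d\sigma_y=0$ in $y$. This gives
\[
\int\nabla dB_\theta\,d\sigma_y\,(dF_c\,u,\cdot)=c\int dB_\theta(\cdot)\,\langle\nabla_y d,u\rangle\,d\sigma_y .
\]
Apply Cauchy--Schwarz to the right-hand side, using $|\nabla d|\le1$. Then express the Hessian of the Busemann function of the rank-one symmetric space as $\nabla dB_\theta=g_0-dB_\theta^2+(\text{terms from }J\text{-structures})$, so that the trace of $\nabla dB_\theta$ equals ${\zap h}(M,g_0)$. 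The determinant inequality then reduces to a purely linear-algebraic statement: for a positive symmetric matrix $H$ of trace one,
\[
\frac{\det(H)^{1/2}}{\det(\text{model}-H)}\le\Big(\frac{\sqrt n}{{\zap h}_0}\Big)^n .
\]
I expect this to be the main obstacle: in the complex, quaternionic and octonionic cases the extra curvature terms have to be handled by a careful eigenvalue argument, valid only in dimension $>2$. Equality holds only when $H=\frac1n I$ and $dF_c$ is a homothety.

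Integrating the Jacobian bound over a fundamental domain gives
\[
\ell\,\vol(M,g_0)\le\int_N|\mathrm{Jac}\,F_c|\,d\mu_g\le\Big(\frac{c}{{\zap h}_0}\Big)^n\vol(N,g).
\]
Letting $c\searrow{\zap h}(N,g)$ and multiplying both sides by ${\zap h}_0^n$ yields $\hat{\zap h}(N,g)^n\ge\ell\,\hat{\zap h}(M,g_0)^n$, which is the inequality of Theorem \ref{dig} (with the normalized entropy taken to the $n$-th power, as in \cite{bcg}).

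In the equality case, the maps $F_c$ are uniformly Lipschitz, so by Arzel\`a--Ascoli they converge along a subsequence $c\to{\zap h}$ to a limit map $F$. Equality forces $dF$ to be, almost everywhere, a homothety with the correct constant. Hence $F$ is a $1$-Lipschitz map with Jacobian equal to $1$ almost everywhere, of degree $\ell$. From this one shows that $F$ is a Riemannian covering, which is a local isometry after rescaling and is homotopic to $f$.
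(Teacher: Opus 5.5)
Your inequality argument is, in substance, the Besson--Courtois--Gallot barycenter proof that the paper only cites. The paper's description (embed in $L^2$ of the sphere at infinity and integrate a closed $n$-form) is the same mechanism, with $\int_N F_c^*d\mu_{g_0}=\ell\,\vol(M,g_0)$ playing the role of the calibration identity.

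You were right to change the statement, but say so openly rather than calling your conclusion ``the inequality of Theorem \ref{dig}''. As printed, $\hat{\zap h}(N,g)\geq \ell\,\hat{\zap h}(M,g_0)$ is false for $\ell\geq 2$. An $\ell$-sheeted Riemannian covering has $\hat{\zap h}(N,g)=\ell^{1/n}\hat{\zap h}(M,g_0)$, and this is exactly the configuration the equality clause describes. The correct statement is $\hat{\zap h}(N,g)^n\geq \ell\,\hat{\zap h}(M,g_0)^n$, with ``local isometry'' holding only after rescaling $g$ by a constant.

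Some smaller points in the inequality part:
\begin{itemize}
\item A single Busemann function is not strictly convex, since its Hessian annihilates $\nabla B_\theta$. Existence and uniqueness of the barycenter come instead from $\sigma_y$ being absolutely continuous with full support, which makes $x\mapsto\int B_\theta(x)\,d\sigma_y(\theta)$ strictly convex and proper.
\item $F_c$ is only $C^1$, which suffices for the degree formula.
\item The factor $\langle\nabla_y d(y,z),u\rangle$ depends on $z$, so your right-hand side is a double integral over $\widetilde N\times\partial\widetilde M$ before Cauchy--Schwarz.
\item $n\geq 3$ is needed already in the real hyperbolic case: for $n=2$ the ratio $\det(H)^{1/2}/\det(I-H)$ is unbounded.
\end{itemize}

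The genuine gap is the equality case. Your claim that ``in the equality case, the maps $F_c$ are uniformly Lipschitz'' is the crux, and nothing you have established gives it. In the real hyperbolic case, $K_y:=\|\sigma_y\|^{-1}\int\nabla dB_\theta\,d\sigma_y=I-H_y$. Your differentiated identity then yields only $|\langle dF_c(u),e\rangle|\leq c\sqrt{\lambda}/(1-\lambda)$, for $|u|=1$ and $e$ a unit eigenvector of $H_y$ with eigenvalue $\lambda$. This bound degenerates when $\sigma_y$, seen from $F_c(y)$, concentrates near two antipodal boundary points. The Jacobian bound does not help: it controls the product of the singular values of $dF_c$, not the largest one. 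What the equality hypothesis gives directly is only an integrated statement,
\[
\int_N\Big[\Big(\frac{c}{{\zap h}_0}\Big)^n-|\mathrm{Jac}\,F_c|\Big]\,d\mu_g\;\leq\;\Big(\frac{c}{{\zap h}_0}\Big)^n\vol(N,g)-\ell\,\vol(M,g_0)\;\longrightarrow\;0 \quad (c\searrow{\zap h}(N,g)).
\]
To exploit it you need two further ingredients, and this is where the substance of the equality case in \cite{bcg} lies.
\begin{itemize}
\item A quantitative stability version of the linear-algebra lemma. A nearly maximal Jacobian must force both $H_y$ and $H'_y:=\|\sigma_y\|^{-1}\int_{\widetilde N}\nabla_yd(y,z)\otimes\nabla_yd(y,z)\,e^{-c\,d(y,z)}\,d\mu_{\tilde g}(z)$ to be close to $\frac1n I$, which gives $\|dF_c(y)\|\leq(1+\epsilon)c/{\zap h}_0$.
\item A separate argument controlling $F_c$ on the exceptional set of small measure.
\end{itemize}

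Your order of deductions must also be reversed. Derivative bounds do not pass to $C^0$ limits, so ``$dF$ is a.e.\ a homothety'' cannot come first. You must first show the limit $F$ is $1$-Lipschitz, after normalizing ${\zap h}(N,g)={\zap h}_0$. Only then does $\int_N\mathrm{Jac}\,F\,d\mu_g=\ell\,\vol(M,g_0)=\vol(N,g)$ force $\mathrm{Jac}\,F=1$ almost everywhere.

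Finally, passing from ``$1$-Lipschitz with $dF\in\mathbf{SO}(n)$ a.e.'' to a smooth Riemannian covering requires a rigidity theorem of Liouville--Reshetnyak type, or the direct argument of \cite{bcg}. Folding maps have $dF\in\mathbf{O}(n)$ a.e.\ without being local isometries, so the orientation information must genuinely be used.
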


A beautiful application of this last result to Einstein $4$-manifolds was then  discovered by Sambusetti \cite{samba}: 

\begin{thm}[Sambusetti]
Any integer pair $(\chi,\tau)$ with $\tau\equiv \chi \bmod 2$
can be realized as the Euler characteristic   and signature
of a compact  smooth  4-manifold  (with infinite fundamental 
group) 
that  is not homeomorphic to any Einstein manifold.
\end{thm}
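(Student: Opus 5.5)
The plan is to take connected sums $M = N \# X$, where $N$ is a compact real-hyperbolic $4$-manifold and $X$ is a simply-connected "filler" chosen so that $(\chi,\tau)$ comes out right. The obstruction will come from combining Theorem \ref{dig} with the Gauss-Bonnet formula \eqref{goober}, in the same way Theorem \ref{rehyp} was deduced from Corollary \ref{ctp}.

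\textbf{Step 1 (obstruction).} Collapsing the $X$ summand to a point gives a degree-one map $f: M\to N$, so Theorem \ref{dig} yields $\widehat{\zap h}(M,g)\geq \widehat{\zap h}(N,g_0)$ for every metric $g$ on $M$.

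Now suppose $g$ is Einstein on $M$. Because $\pi_1(M)$ surjects onto $\pi_1(N)$, it is infinite and has exponential growth. Myers' theorem excludes $\lambda>0$. For $\lambda=0$, the Cheeger-Gromoll/Bieberbach argument forces a finite cover of $M$ to be a torus or $K3$, so $\pi_1$ would be virtually abelian, which is false; so $\lambda=0$ is excluded too. Thus $\lambda<0$, and after rescaling $\mathrm{Ric}=-3g$. Bishop-Gromov then gives ${\zap h}(M,g)\leq 3={\zap h}(N,g_0)$, and hence $\vol(M,g)\geq \vol(N,g_0)=\frac{4\pi^2}{3}\chi(N)$. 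Meanwhile \eqref{goober} with $s=-12$ gives $6\vol(M,g)\leq 8\pi^2\chi(M)$, that is, $\vol(M,g)\leq \frac{4\pi^2}{3}\chi(M)$. So an Einstein metric forces $\chi(M)\geq\chi(N)$. Equality would force $W\equiv 0$ and equality in Theorem \ref{dig}, making $f$ homotopic to a covering of degree one, which is impossible when $X$ is nontrivial.

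\textbf{Step 2 (topology).} We have $\chi(M)=\chi(N)+\chi(X)-2$ and $\tau(M)=\tau(X)$, since hyperbolic manifolds have $\tau=0$ by \eqref{thr}. So it suffices to choose $\chi(X)\leq 2$. We take $X$ to be a connected sum of copies of $S^1\times S^3$, $\CP_2$ and $\cpbar$: each $S^1\times S^3$ lowers $\chi$ by $2$, and each $\CP_2$ or $\cpbar$ raises $\chi$ by $1$ while shifting $\tau$ by $\pm1$.

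Given a target pair $(\chi,\tau)$ with $\chi\equiv\tau \bmod 2$, let $c$ be the Euler characteristic of some fixed closed hyperbolic $4$-manifold $N$; such $N$ exist, and $c>0$ is even by Gauss-Bonnet. Take $|\tau|$ copies of $\CP_2$ or $\cpbar$ to fix the signature. Then $\chi(M)=c-2+|\tau|-2m$, where $m$ is the number of $S^1\times S^3$ summands. The parity condition makes $\chi-c+2-|\tau|$ even, so the equation can be solved for $m$ provided $c+|\tau|-2\geq\chi$. This needs a hyperbolic $N$ with $\chi(N)$ large, and finite covers supply that.

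Even then we need $\chi(X)=2+|\tau|-2m\leq 2$, i.e. $m\geq |\tau|/2$, together with $\chi(M)<\chi(N)$. The remedy is to include extra $\CP_2\#\cpbar$ pairs: each pair raises $\chi$ by $2$ without changing $\tau$, and each added $S^1\times S^3$ offsets it. This keeps $\chi(X)\leq 0$ while we choose $c$ large. Concretely, we want $\chi(M)=c+\chi(X)-2$ to equal the target with $\chi(X)<2$, so we pick $c>\chi+0$ large and then set $\chi(X)=\chi-c+2<2$, which is achievable since $X$ realizes every $(\chi(X),\tau)$ with $\chi(X)\equiv\tau$ mod 2 (using $b_\pm$ from $\CP_2$'s and $\cpbar$'s and $b_1$ from $S^1\times S^3$'s).

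\textbf{Step 3 (homeomorphism, not just diffeomorphism).} Step 1 used only smooth structures. To exclude every homeomorphic Einstein manifold, note that any smooth $M'$ homeomorphic to $M$ still admits a degree-one map to $N$: compose the homeomorphism, which has degree one, with $f$ and smooth it by homotopy. Theorem \ref{dig} needs only a smooth map of positive degree, so the same estimate applies to $M'$.

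The main obstacle is the equality/strictness bookkeeping in Step 1, specifically confirming that $\chi(M)\leq\chi(N)$ (not just $<$) already yields a contradiction through the rigidity clause of Theorem \ref{dig}, and the elementary arithmetic in Step 2 that guarantees a realization for every parity-compatible pair.
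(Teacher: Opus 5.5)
Your argument is correct and runs on the same engine as the paper's. Theorem \ref{dig} plus Bishop--Gromov give $\vol(M,g)\geq\vol(N,g_0)=\frac{4\pi^2}{3}\chi(N)$ for an Einstein metric normalized by $r=-3g$, and Gauss--Bonnet \eqref{goober} gives $\vol(M,g)\leq\frac{4\pi^2}{3}\chi(M)$. The fillers are built from $S^1\times S^3$, $\CP_2$ and $\cpbar$, and the homeomorphism statement holds because only the homotopy class of the map to the hyperbolic manifold is used.

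Where you differ is in how you make the hyperbolic side large. The paper fixes one hyperbolic $M$ and takes $jM\#k(S^1\times S^3)\#\ell\CP_2\#m\cpbar$ with its obvious degree-$j$ map to $M$. The obstruction is then $\chi<j\chi(M)$, i.e.\ $2+\ell+m<2j+2k$, and large $\chi$ is reached by increasing $j$. You instead use a degree-one collapse onto a single hyperbolic $N$ and make $\chi(N)$ large by passing to finite covers.

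Your version needs only the degree-one case of Theorem \ref{dig}, so it is insensitive to how the degree enters. For degree $\ell>1$ the Besson--Courtois--Gallot inequality should read $\widehat{\zap h}(N,g)^4\geq\ell\,\widehat{\zap h}(M,g_0)^4$, and this is what actually yields the paper's threshold $j\chi(M)$. The price is one extra input: closed hyperbolic $4$-manifolds of arbitrarily large $\chi$, via residual finiteness of cocompact lattices. The paper's $j$-fold connected sum avoids needing that.

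There are some minor slips, none of which affects your final construction:
\begin{itemize}
\item With $|\tau|$ copies of $\CP_2$ or $\cpbar$ and $m$ copies of $S^1\times S^3$, one gets $\chi(M)=c+|\tau|-2m$, not $c-2+|\tau|-2m$.
\item $X$ is not simply connected once $S^1\times S^3$ summands appear.
\item $c$ is even because $\tau(N)=0$ and $\chi\equiv\tau \bmod 2$, not because of Gauss--Bonnet.
\item For $\lambda=0$, Cheeger--Gromoll and Bieberbach only give that $\pi_1(M)$ is finite or virtually $\ZZ^4$. That already suffices; the ``torus or $K3$'' conclusion is the equality case of Theorem \ref{ht} and is not available for an arbitrary Ricci-flat metric.
\item Since you arrange $\chi(M)<\chi(N)$ strictly, the equality-case concern you flag at the end does not arise.
\end{itemize}
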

 
Of course, this statement  is optimal, 
 since the Euler characteristic and signature  of a compact oriented $4$-manifold always  have the same parity. 
  Sambusetti's examples are of the form 
$$N= j M \# k (S^1\times S^3) \# \ell \CP_2 \# m \overline{\CP}_2,$$
where $M$ is some fixed compact oriented real-hyperbolic $4$-manifold. Because there is  an obvious   degree-$j$ map $f: N\to M$,
one can therefore use   Theorem \ref{dig} to show that  $N$ cannot admit Einstein metrics if 
$\chi (N) < j \chi (M),$
and this   happens iff 
$2 +\ell + m < 2j + 2k$.
But letting $j$, $k$, $\ell$, and 
$m$ vary, one can  check that these examples then realize 
 all  possible  $(\chi, \tau)$ with  $\chi \equiv \tau \bmod 2$.
 Moreover, since this  argument only depends on the 
existence of a suitable homotopy class of maps 
$f: N\to M$, changing the differentiable structure
of these examples would not change anything;  thus,  these 
examples 
$N$ are not even {\em homeomorphic} to Einstein manifolds. 
For further details, see \cite{samba}. 
 
 \medskip
 
In this overview, we have frequently  compared and contrasted the \linebreak $4$-manifolds where Einstein metrics are  known to exist
with those where we can prove that their existence  is  obstructed. However, the existence results we have appealed to 
in this regard have generally come from the  theory of K\"ahler-Einstein metrics,
 eventually   supplemented by the modest generalization  offered by conformally-K\"ahler, Einstein metrics.  However, this has simply  been a matter of
 practicality,  because
 K\"ahler geometry has  actually  provided  all the currently-available  examples of  simply-connected Einstein \linebreak $4$-manifolds, aside from  the standard round $4$-sphere. 
 Since  this situation will probably just turn out to  be 
a mere   historical accident,    the  ambitious reader 
 should by all means feel encouraged to spend a small amount of time trying   to dream up new ways  of constructing    examples!
 In any case, it now seems appropriate to  improve the balance of this article  by concluding  with a brief survey  of 
 some of the remarkable recent  constructions that have been found for 
  non-locally-symmetric Einstein metrics of generic holonomy on compact $4$-manifolds with 
infinite  fundamental group. 

The first major conceptual breakthrough in this direction  was made by  Anderson \cite{andcusp}, who proposed a
generalization of Thurston's {\em Dehn filling} construction to higher dimensions. Thurston's original construction began with a finite-volume complete hyperbolic $3$-manifold,
with only  $T^2\times \RR^+$ cusps at infinity, and then showed that any such space can be approximated by a sequence of compact hyperbolic $3$-manifolds
obtained by  {\em Dehn filling}, where each cusp  is first truncated, and then replaced with   a hyperbolic   $S^1 \times D^2$. Anderson's idea  was to 
start with a finite-volume hyperbolic $n$-manifold with only $T^{n-1} \times \RR^+$ cusps at infinity, then  truncate each cusp,   and finally  glue in 
 a  Riemannian-signature ``toroidal-horizon'' AdS Schwarzschild \cite{brillo}  black-hole metric  on $T^{n-2} \times D^2$ along each resulting $T^{n-1} $ boundary component.
  By truncating each cusp further and further way from some base-point, and considering different possible identifications  of the  $T^{n-1}$ boundaries 
and choices of cut-offs, one obtains sequences of Dehn-filled  metrics  which fail to solve the Einstein equation \eqref{eineq} by an ever-smaller  error. 
Anderson's program  
was  then to perturb these approximate solutions into genuine Einstein metrics via an inverse-function-theorem argument. 
But although  Anderson's paper outlines  most of the key geometric and topological  ideas needed to make this program work, 
many of the   analytical issues involved  did not seem to be   convincingly addressed by his paper, especially   in the  case where there are multiple  cusps. 
However,  Bamler \cite{bamcusp} was later able to convincingly complete Anderson's program  by supplying a more precise   approach to the analytic
details,
thus  clearly establishing  the existence of these Dehn-filled Einstein manifolds.  Bamler's approach  encodes each  topological choice for  a Dehn filling by  specifying  the homotopy classes of the meridian 
circles in the finite-volume hyperbolic manifolds with cusps,  and each such  homotopy class can  then be uniquely represented by a closed hyperbolic geodesic.
Bamler's main result  is then  that there is a 
universal constant, depending only on the dimension $n$ and volume of the  given non-compact hyperbolic manifold, such that if the lengths of these closed geodesics 
 are all larger than 
this universal constant, then there is an Einstein metric on the Dehn-filled manifold. The proof hinges on a 
subtle,  non-standard inverse-function theorem in appropriately-weighted H\"older spaces.
This provides a huge menagerie of new examples, not only in dimension $4$, but also in higher dimensions. 

For $n\geq 4$, these Dehn-filled  Einstein $n$-manifolds do not  have constant sectional curvature; indeed, Preissmann's theorem 
\cite{preissmann} guarantees that these spaces 
 cannot even admit metrics of strictly negative sectional curvature, because their fundamental groups contain $\ZZ\oplus \ZZ$  subgroups  arising from the tori glued in  at each cusp. 
Nonetheless, each of these solutions does contain  large regions where the sectional curvature is nearly equal to $-1$. 
And, like hyperbolic manifolds, each of these Einstein manifolds is actually  aspherical, in the sense that each one is a $K(\pi ,1)$.

But these Anderson/Bamler examples also  immediately   raise another question of fundamental interest: 
are there any compact Einstein $4$-manifolds with  strictly negative sectional curvature, other than the standard 
  locally symmetric
examples? An affirmative answer to 
this question was     provided  by Fine and Premoselli  \cite{fineprem}, who  constructed   Einstein metrics of negative sectional curvature on certain  branched covers of 
  hyperbolic $4$-manifolds. Specifically,  one starts with a compact hyperbolic $4$-manifold $X$ which contains a a compact totally-geodesic $2$-surface $\Sigma\subset M$
whose homology class  $[\Sigma ]\in H_2(X,\ZZ)$ is zero; for any chosen positive integer $\ell$ one can then take an $\ell$-fold branched cover $M\to X$ with branch-locus $\Sigma$. 
Here a  key piece of motivation was provided by a result   of 
 Gromov and Thurston \cite{groth}, who had previously constructed metrics of pinched negative curvature on such branched covers. Because many  of these Gromov-Thurston metrics have 
 nearly-constant negative sectional curvature, and therefore nearly constant Ricci curvature, Fine and Premoselli  may  have initially hoped to perturb these Gromov-Thurston metrics
  into genuine Einstein metrics by means 
 an inverse-function theorem.  However, they  eventually realized that the problem becomes much more tractable if one instead  first replaces the metric near  the branch-locus $\Sigma$ 
 with an exact solution of the Einstein equation \eqref{eineq}, namely  the   Riemannian  metric  on $\Sigma \times \RR^2$ that is the ``Wick-rotated'' version 
 of a ``higher-genus horizon'' AdS Schwarzschild metric  \cite{brillo}.
Because the sectional curvature  of this exact solution  rapidly tends to  $-1$ at infinity, one can construct metrics on $M$ that only fail to be Einstein on a $\Sigma \times S^1 \times (a,b)$ transition region 
where   the failure  to solve \eqref{eineq}  can  be arranged to be small  in appropriate weighted  H\"older norms if 
the injectivity radius of $X$ and normal injectivity radius of $\Sigma\subset X$ are both large enough, and the volume of $X$ is huge compared to the area of $\Sigma$.
To make the whole scheme work,  Fine and Premoselli  consider a sequence of hyperbolic manifolds $X_j$, each equipped  with a     totally-geodesic surface $\Sigma_j \subset X_j$, such that all three 
 of these geometric quantities tend to infinity at specified rates. They then prove that, for $j\gg 0$,
 the degree-$\ell$ branched cover $M_j$ of   $X_j\supset \Sigma$ carries
an Einstein metric of  negative sectional curvature that is very close to  their ``approximately Einstein'' metric. In particular, because the AdS Schwarzschild metric
that's been  glued in near the branch locus does not have constant curvature, the constructed Einstein metrics on $M_j$ cannot have constant curvature either. In particular, 
Theorem \ref{rehyp} therefore guarantees that $M_j$ is not even homotopy equivalent to a compact hyperbolic $4$-manifold.

While Fine and Premoselli \cite{fineprem} were able to construct many non-locally-symmetric compact Einstein $4$-manifolds of negative sectional curvature
in this framework, they were not able to construct such Einstein manifolds  in higher dimensions. However,  Hamenst\"adt and J\"ackel \cite{hamjack} were subsequently 
able to extend the result to all dimensions $\geq 4$ by introducing two types of technical improvements. Most importantly, they were able, by passing to covers,   to construct sequences of 
compact hyperbolic  $n$-manifolds with codimension-$2$ totally-geodesic submanifolds such that the normal  injectivity radius divided by the diameter of the submanifold
goes to infinity. This allows one to glue in the higher-dimensional AdS Schwarzschild metric with negligible damage  in the transition annulus. In addition, they introduced 
several new analytic ideas, such as the systematic use of a spectral gap argument,  that  considerably simplify the inverse-function-theorem proof. Not only does this extend
the Fine-Premoselli  result to higher dimensions, but it even leads to a considerably larger class of examples in dimension four.

Finally,  these developments  also  stimulated a remarkable breakthrough   \cite{gueham} in K\"ahler geometry. While the Aubin-Yau theorem \cite{aubin,yau}    might seem to be the end
of the story when it comes to classifying $\lambda < 0$  K\"ahler-Einstein manifolds, the fact that the K\"ahler-Einstein metrics produced by the theorem are by no means  
explicit  makes it extremely difficult to predict  much at all about the sectional curvature or other geometric properties of these K\"ahler-Einstein metrics.
For instance, in complex dimension $2$, the compact complex surfaces of general type with $\chi=3\tau$ (which    sit on  the Miyaoka-Yau line, and so  saturate the  Miyaoka-Yau inequality $\chi \geq 3\tau$) 
are exactly those of  the form $\CC\mathcal{H}_2/\Gamma$ for some discrete group $\Gamma$  of isometries of the complex-hyperbolic plane. For these special complex surfaces,
 the  K\"ahler-Einstein metrics promised by the Aubin-Yau theorem  thus have negative sectional curvature --- although these examples are also  of course  locally symmetric!  Are 
there other  complex surfaces with  K\"ahler-Einstein metrics of negative sectional curvature? The answer  is {\em yes}.  Indeed, a recent construction  of Guenancia and Hamenst\"adt \cite{gueham}  yields 
a vast hierarchy of  negatively-curved K\"ahler-Einstein metrics on cyclic branched covers of complex-hyperbolic surfaces. 
On the other hand,  does a complex surface with $c_1<0$ have a negative-sectional-curvature K\"ahler-Einstein metric iff
it is close to the Miyaoka-Yau line? Here 
the answer is {\em no}. In fact, many of the Guenancia-Hamenst\"adt examples are  not especially  close to the Miyaoka-Yau line.
And, more shockingly, Roulleau and Urz\'ua \cite{ruler}  have  proved that there are {\em simply-connected} minimal complex surfaces that are arbitrarily close to the 
Miyaoka-Yau line --- and these  of course  cannot carry negative-sectional-curvature metrics!  Thus, while our current  knowledge  sometimes  now allows us to  see many Einstein manifolds
through a glass, darkly, we may still be in for a shock when we  finally   meet some of them  face to face!

\vfill

\noindent {\sf Acknowledgments.} The author would like to thank  the Simons Foundation 
and the Fondation Sciences Math\'ematiques de Paris for their financial support, in the form of a Simons Fellowship
and a Chaire d'Excellence, during his  sabbatical stay in Paris, where a substantial part   of this article was written. 
Part of this work was also supported  by NSF grant DMS-2203572.
\pagebreak 
%

\end{document}